\newtheorem{theorem}{Theorem}[section]
\newtheorem{claim}[theorem]{Claim}
\newtheorem{corollary}[theorem]{Corollary}
\newtheorem{definitionandproposition}[theorem]{Definition and Proposition}
\newtheorem{lemma}[theorem]{Lemma}
\newtheorem{observation}[theorem]{Observation}
\newtheorem{question}[theorem]{Question}
\theoremstyle{definition}
\newtheorem{definition}[theorem]{Definition}
\newtheorem{remark}[theorem]{Remark}
\newtheorem{example}[theorem]{Example}
\newcommand{\cf}{\mathrm{cf}}
\newcommand{\bb}{\mathbb}
\newcommand{\chm}{\check{\mathrm{H}}}
\date{}
\begin{document}

\title[The Cohomology of the Ordinals I]{The Cohomology of the Ordinals I: \\ Basic Theory and Consistency Results}
\author{Jeffrey Bergfalk}

\address{Centro de Ciencas Matem\'{a}ticas\\
UNAM\\
A.P. 61-3, Xangari, Morelia, Michoac\'{a}n\\
58089, M\'{e}xico}

\email{jeffrey@matmor.unam.mx}

\author{Chris Lambie-Hanson}

\address{Department of Mathematics and Applied Mathematics \\ 
Virginia Commonwealth University \\ 
Richmond, VA 23284 \\ 
United States}

\email{cblambiehanso@vcu.edu}

\thanks{{\it Date}: \today.\newline
{\it 2010 MSC}: 03E10, 55N05, 54B40, 03E04, 03E35.\newline
{\it Key words and phrases.} ordinal, coherent sequence, walks on ordinals, presheaf, constant sheaf, \v{C}ech cohomology, n-coherence, weakly compact cardinal, strongly compact cardinal, incompactness, V=L, square sequence.\newline
{The research of the first author was partly supported by the NSF grant DMS-1600635.}}

\begin{abstract} In this paper, the first in a projected two-part series, we describe an organizing framework for the study of infinitary combinatorics. This framework is \v{C}ech cohomology. We show in particular that the \v{C}ech cohomology groups of the ordinals articulate higher-dimensional generalizations of Todorcevic's walks and coherent sequences techniques, and begin to account for those techniques' ``unreasonable effectiveness'' on $\omega_1$. This discussion occupies the first half of our paper and is written with a general mathematical audience in mind.

We turn in the paper's second half to more properly set-theoretic considerations. We describe a number of consistency results on the cohomology groups of the ordinals which certify their status as a graded family of incompactness principles. We show in particular that nontrivial cohomology groups on the ordinals are in some tension with large cardinals, and are maximally extant in G\"{o}del's model $\mathrm{L}$. We describe forcings to add, then trivialize, nontrivial $n$-cocycles, and conclude with some comparison of these principles with those benchmark incompactness phenomena, the existence of square sequences and failures of stationary reflection.
\end{abstract}
\maketitle

\section{Introduction}\label{zero}

Among the most consequential developments in the study of infinitary combinatorics in recent decades have been the arrival and elaboration of Todorcevic’s method of minimal walks (see 
\cite{pairs}, \cite{Coherentsequences}, \cite{walks}). This method is rich in applications and startlingly elementary in principle. It powerfully consolidates our understanding of the $\mathsf{ZFC}$ combinatorics of $\omega_1$, even as its importance to the analysis of combinatorial assumptions supplementary to the $\mathsf{ZFC}$ axioms on cardinals above $\omega_1$ continues to grow.

In the following, we describe an organizing framework for the study of infinitary combinatorics. This framework is \v{C}ech cohomology. We show in particular that the \v{C}ech cohomology groups of the ordinals articulate higher-dimensional generalizations of Todorcevic's walks and coherent sequences techniques, and begin to account for those techniques' ``unreasonable effectiveness'' on $\omega_1$.

This work is the first in a projected two-part series. Its aim is to introduce the aforementioned groups and to describe a number of consistency results thereon. By facts discussed in Section \ref{I.5} below, such results are essentially only possible for groups $\chm^n(\xi)$ in which $n$ is positive and the cofinality of $\xi$ is greater than $\omega_n$. In contrast, the groups $\chm^n(\omega_n)$ each articulate nontrivial $(n+1)$-dimensional combinatorial relations first appearing at the ordinal $\omega_n$ in any model of the $\mathsf{ZFC}$ axioms; as suggested, the case of $n=1$ is simply, at present, the best understood. These groups $\chm^n(\omega_n)$ form the main subject of the forthcoming \textit{The Cohomology of the Ordinals II: ZFC Results} \cite{CohomologyII}.

The plan of the present paper is the following:
\begin{itemize}
\item In Section \ref{O.1} we record our main notational conventions.
\item In Section \ref{I.1} we review Todorcevic's method of minimal walks.
\item In Section \ref{I.2} we review the definition of \v{C}ech cohomology with respect to a presheaf $\mathcal{P}$ of abelian groups. We show that the material of Section \ref{I.1} witnesses that $\chm^1(\omega_1,\mathcal{P})\neq 0$ in a wide variety of cases.
\item In Section \ref{I.3} we define higher-dimensional generalizations of classical nontrivial coherence.
\item In Section \ref{I.4} we show that the non-$n$-trivial $n$-coherent families of Section \ref{I.3} correspond precisely to the higher-dimensional \v{C}ech cohomology groups $\chm^n(\xi,\mathcal{A}_d)$ of any given ordinal $\xi$. Here $\mathcal{A}_d$ denotes the sheaf of locally constant functions to an abelian group $A$.
\item In Section \ref{I.5} we describe several $\mathsf{ZFC}$ theorems constraining the possibilities for this paper's \emph{Section \ref{sectiontwo}: Consistency Results}.
\end{itemize}
A main aim of the above sequence is to render the \v{C}ech cohomology groups of the ordinals more amenable to set-theoretic study. In subsequent sections, we turn more directly to that study itself; the groups described below are in general computed with respect to some arbitrary nontrivial constant abelian sheaf:
\begin{itemize}
\item In Section \ref{II.1} we show that the \v{C}ech cohomology groups $\chm^n(\lambda)$ are trivial for $n\geq 1$ when $\lambda$ is a weakly compact cardinal. Similarly for any $\lambda$ greater than or equal to a strongly compact cardinal $\kappa$.
\item In Section \ref{II.2} we describe two scenarios --- one via the L\'{e}vy collapse of a strongly compact cardinal, and one, due to Todorcevic, via the P-Ideal Dichotomy --- for the vanishing of all groups $\chm^1(\lambda)$ in which $\lambda$ is a regular cardinal greater than $\aleph_1$.
\item In Section \ref{II.3} we show in contrast that in G\"{o}del's constructible universe $\mathrm{L}$, the \v{C}ech cohomology groups of the ordinals are nontrivial anywhere they can be (this latter phrase has a precise and extensive meaning grounded in the results of Section \ref{I.5} and \ref{II.1}).
\item In Section \ref{II.4} we describe a family of forcing notions for adding non-$n$-trivial $n$-coherent families of functions of height $\lambda$ (and, hence, for adding nontrivial cohomology groups $\chm^n(\lambda)$). We describe also related forcing notions for $n$-trivializing those families in turn.
\item In Section \ref{II.5} we apply results of Section \ref{II.4} in several arguments distinguishing nontrivial coherence from several other phenomena very close in spirit --- namely, the existence of square sequences and failures of stationary reflection.
\end{itemize}
Conspicuous in the above sequence is the set-theoretic theme of \emph{incompactness}, i.e., of phenomena at a cardinal $\lambda$ abruptly unlike phenomena below (see \cite{compactness}). Such phenomena generally signal some affinity for G\"{o}del's model $\mathrm{L}$ and some tension with large cardinal principles, criteria the existence of non-$n$-trivial $n$-coherent families of functions very plainly fulfills. From this perspective, the \v{C}ech cohomology groups of the ordinals constitute a graded family of incompactness principles whose interrelations we are only just beginning to understand. Most immediately, it remains an open question under what conditions the group $\chm^2(\aleph_3)$, for example, may vanish, and it is with this and related questions that we conclude.

\subsection{Notations, conventions, and references}\label{O.1}
We describe first the sorts of knowledge presumed of our readers. In Sections \ref{zero} and \ref{sectionone} we assume little more than a basic familiarity with the ordinals and with cochain complexes; the further facts we require about stationary sets are reviewed at the end of this subsection. Readers unfamiliar with chain homotopies or with the derivation of a long exact sequence from a short one are referred to \cite[Sections 1.4 and 1.3, respectively]{weibel}. We invoke but touch only lightly on the subject of sheafification (similarly for the relations between \v{C}ech and sheaf cohomology); readers looking for more are referred to the standard references \cite{bredonsheaves}, \cite{Godement}, \cite{Tohoku}, or \cite{Hartshorne}. Readers looking for more on the subjects of walks on the ordinals and coherent sequences are referred to \cite{pairs}, \cite{Coherentsequences}, and \cite{walks}; valuable prior recognitions of set-theoretic combinatorial phenomena as cohomological in nature are \cite{blass}, \cite{talaycoi}, and \cite{talaycoii}. We should perhaps reiterate, though, that Sections \ref{zero} and \ref{sectionone} are, by design, largely self-contained.

In Section \ref{sectiontwo} we assume rather more set-theoretic knowledge of our reader. We follow broadly the conventions of \cite{kunen} and \cite{kanamori}; several points, though, merit particular mention or emphasis:
\begin{itemize}
\item The first lemma of Section \ref{sectiontwo} assumes some knowledge of the categories $\mathsf{inv}$-$\mathrm{Ab}$, $\mathsf{pro}$-$\mathrm{Ab}$, and of the functoriality of $\mathrm{lim}^n$ over either. Readers are referred to \cite[Chapters 1 and 15]{strongshape} for an accessible account. However, this machinery may be safely ignored; it is not invoked elsewhere and readers so inclined may argue the lemma by elementary means.
\item We write $x =^* y$ if the symmetric difference of two sets (or of two functions viewed as sets) $x$ and $y$ is finite.
\item We write $\varphi\!\restriction\!X$ for the restriction of the function $\varphi$ to the domain $X$.
\item We write $\text{cf}(\xi)$ for the cofinality of $\xi$, that is, for the minimal order-type of a cofinal subset of $\xi$. We write $\text{otp}(A)$ for the order-type of a set $A$ of ordinals. We write $\text{Cof}(\xi)$ for the class of ordinals of cofinality $\xi$.
\item We are somewhat lax in notationally distinguishing between the roles of any cardinal as an ordinal and as a cardinal; our main concern is for readability.
\item We write $[A]^n$ for the family of $n$-element subsets of any collection of ordinals $A$, and frequently identify such subsets with their natural orderings
$\vec{\alpha}=(\alpha_0,\dots,\alpha_{n-1})$. For $j < n$, $\vec{\alpha}^j$ denotes that element of $[A]^{n-1}$ obtained via deletion of the $j$-indexed coordinate of $\vec{\alpha}$, i.e., $\vec{\alpha}^j = (\alpha_0,\dots,\alpha_{j-1},\alpha_{j+1},\dots,\alpha_{n-1})$. We often notate these tuples as concatenations when they are in the position of subscripts, writing $\varphi_{\alpha\beta}$ instead of $\varphi_{(\alpha, \beta)}$ or $\varphi_{\vec{\alpha}\beta}$ instead of $\varphi_{\vec{\alpha}^\frown (\beta)}$, for example. Though we have imperfectly succeeded, we have aimed in general for alphabetical orderings of symbols consistent with their intended values.
\item Topological considerations are always with respect to the order topology on an ordinal $\xi$, i.e., with respect to the topology generated by the initial and the terminal segments of $\xi$.\footnote{Higher derived limits are sometimes cast as \v{C}ech cohomology groups of their index-sets endowed with the \emph{initial segments} topology (see \cite{jensencu}); a distinction of the present work is its focus on the ordinal topologies of set-theoretic practice.}
\end{itemize}
It is with respect to this topology that one of the most essential of set-theoretic notions is defined: the closed unbounded subsets of an ordinal, which are typically termed \textit{clubs}. 
Subsets of $\beta$ intersecting every club subset of $\beta$ are termed \textit{stationary}. Each of these classes of subsets carries connotations of largeness. For example, the club subsets of an ordinal $\beta$ of uncountable cofinality form a filterbase on $\beta$. A measure analogy is then natural, in which $\beta$'s club filter collects the ``measure one'' subsets of $\beta$, and the stationary subsets of $\beta$ are in consequence the ``sets of positive outer measure.'' 
\begin{lemma}[The Pressing Down Lemma (Alexandroff, 1929)]\label{pdl}
 Let $\kappa$ be a regular uncountable cardinal. Let $S\subseteq\kappa$ be stationary and let $f:S\rightarrow\kappa$ be be such that $f(\alpha)<\alpha$ for all $\alpha\in S$. Then $f$ is constant on some stationary $T\subseteq S$.
\end{lemma}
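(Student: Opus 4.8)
The plan is to argue by contradiction, the key tool being the diagonal intersection of a $\kappa$-indexed family of clubs. Suppose toward a contradiction that $f$ is constant on no stationary subset of $S$. Then for each $\gamma<\kappa$ the fiber $f^{-1}(\{\gamma\})=\{\alpha\in S: f(\alpha)=\gamma\}$ is nonstationary, so we may fix a club $C_\gamma\subseteq\kappa$ witnessing this, i.e.\ one with $f(\alpha)\neq\gamma$ for every $\alpha\in S\cap C_\gamma$.

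The crux is then the standard fact that, for $\kappa$ regular and uncountable, the \emph{diagonal intersection}
\[
C:=\Big\{\alpha<\kappa: \alpha\in\bigcap_{\gamma<\alpha}C_\gamma\Big\}
\]
of the family $\langle C_\gamma:\gamma<\kappa\rangle$ is again a club subset of $\kappa$. I would establish this as a preliminary: closure of $C$ is inherited directly from the closure of the individual $C_\gamma$, while unboundedness is obtained by a routine recursion: given $\beta_0<\kappa$, build an increasing $\omega$-sequence $\beta_0<\beta_1<\cdots$ with each $\beta_{n+1}\in\bigcap_{\gamma<\beta_n}C_\gamma$ and set $\beta=\sup_n\beta_n$; regularity of $\kappa$ keeps $\beta<\kappa$, and a short check shows $\beta\in C$. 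The recursion step uses that an intersection of fewer than $\kappa$ many clubs is unbounded, itself a consequence of regularity.

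Granting this, since $S$ is stationary and $C$ is club they meet, so we may pick $\alpha\in S\cap C$. By the definition of $C$ we have $\alpha\in C_\gamma$ for every $\gamma<\alpha$; as $f$ is regressive, $f(\alpha)<\alpha$, whence $\alpha\in C_{f(\alpha)}$. But $\alpha\in S$ as well, so $\alpha\in S\cap C_{f(\alpha)}$, and the defining property of $C_{f(\alpha)}$ forces $f(\alpha)\neq f(\alpha)$, a contradiction. Hence some fiber $f^{-1}(\{\gamma\})$ is after all stationary, and $T=f^{-1}(\{\gamma\})$ is the desired set. The main obstacle is the club-ness of the diagonal intersection, and within it the unboundedness argument; everything else is bookkeeping. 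It is precisely here that the hypotheses that $\kappa$ is regular and uncountable enter, so I would be careful to flag exactly where each is used.
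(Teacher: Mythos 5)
Your proof is correct: it is the standard argument for the Pressing Down Lemma (Fodor's lemma) via diagonal intersection of clubs, and all the key points are in place --- the reduction of the negation to ``every fiber is nonstationary,'' the verification that the diagonal intersection $C$ is club (with regularity used for the $<\kappa$-intersection step and uncountability used to keep $\sup_n\beta_n<\kappa$), and the final contradiction at a point of $S\cap C$. Note, however, that the paper itself states this lemma as a classical result (attributed to Alexandroff, 1929) and gives no proof of it, so there is no in-paper argument to compare against; your write-up supplies exactly the standard proof that the authors implicitly rely on.
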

We will apply this lemma repeatedly below, by way of the following corollary.
\begin{corollary}\label{thesubcover} Let $S$ be a stationary subset of a regular uncountable cardinal $\kappa$. Then for any family $\mathcal{U}$ of bounded open subintervals of $\kappa$ such that $\cup\,\mathcal{U}\supseteq S$ there exist an $\alpha < \kappa$, a stationary $T\subseteq S$, and a collection $\mathcal{V}=\{(\alpha,\beta_\xi)\,|\,\xi\in T\}\subseteq\mathcal{U}$ with the property that $\xi\in(\alpha,\beta_\xi)$ for all $\xi\in T$.
\end{corollary}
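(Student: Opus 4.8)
The plan is to apply the Pressing Down Lemma to the left endpoints of a suitably chosen selection of intervals from $\mathcal{U}$. First, for each $\xi \in S$, since $\cup\,\mathcal{U} \supseteq S$ I may fix (invoking the axiom of choice) some interval $(\alpha_\xi, \beta_\xi) \in \mathcal{U}$ with $\xi \in (\alpha_\xi, \beta_\xi)$. Membership in this open interval forces $\alpha_\xi < \xi < \beta_\xi$; in particular $\alpha_\xi < \xi$, so the assignment $f \colon \xi \mapsto \alpha_\xi$ is a regressive function on $S$. I would note along the way that the mere requirement that some bounded open interval contain $\xi$ already forces $\xi > 0$ for every $\xi \in S$, so that $f$ is well-defined and regressive at every point of $S$.

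Next I would invoke Lemma \ref{pdl}: since $S$ is stationary in the regular uncountable cardinal $\kappa$ and $f$ is regressive, there is a stationary $T \subseteq S$ on which $f$ is constant, say with common value $\alpha < \kappa$. Thus $\alpha_\xi = \alpha$ for every $\xi \in T$, whence $(\alpha, \beta_\xi) = (\alpha_\xi, \beta_\xi) \in \mathcal{U}$ and $\xi \in (\alpha, \beta_\xi)$ for each such $\xi$. Setting $\mathcal{V} = \{(\alpha, \beta_\xi) \mid \xi \in T\}$ then yields exactly the collection demanded by the statement.

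There is no substantial obstacle here beyond bookkeeping: the entire content is the regressivity of the map sending each point to the left endpoint of a covering interval, after which the Pressing Down Lemma does all the work. The only point genuinely meriting care is the verification that $f$ is regressive --- that is, that the left endpoint of a covering interval lies strictly below the point it covers --- together with the incidental observation that $0 \notin S$ is automatic, since were $0 \in S$ no bounded open interval could contain it, contradicting $\cup\,\mathcal{U} \supseteq S$.
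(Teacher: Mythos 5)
Your proof is correct and is essentially identical to the paper's own argument: choose a covering interval $(\alpha_\xi,\beta_\xi)\in\mathcal{U}$ for each $\xi\in S$ and apply the Pressing Down Lemma (Lemma \ref{pdl}) to the regressive map $\xi\mapsto\alpha_\xi$. The extra bookkeeping you supply (regressivity of the left-endpoint map, and the automatic exclusion of $0$ from $S$) is exactly what the paper leaves implicit.
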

\begin{proof} For each $\xi\in S$ let $(\alpha_\xi,\beta_\xi)\in\mathcal{U}$ be such that $\xi\in(\alpha_\xi,\beta_\xi)$. Apply Lemma \ref{pdl} to the function $f:\xi\mapsto\alpha_\xi$.
\end{proof}
In particular, no stationary $S\subseteq\kappa$ is paracompact in the subspace topology. Hence no stationary subspace $S\subseteq\kappa$ is metrizable. (The metrizable subspaces of $\omega_1$, in fact, are precisely the nonstationary $N\subseteq\omega_1$; in the measure analogy, they are the ``vanishingly small'' subsets of $\omega_1$.) This underscores the oddness of our results below: techniques developed expressly for the study of polyhedra and manifolds, techniques moreover in which connectedness assumptions figure centrally, will at times appear tailor-made for the study of a family of spaces at once highly discrete and far from paracompact. The family we mean, of course, is that of the ordinals $\beta$ of uncountable cofinality.

\section{Coherence and Cohomology}\label{sectionone}

\subsection{Walks and coherence}\label{I.1}

For reasons that should soon be clear, we begin our account with the eponymous \textit{coherent sequences} of \cite{Coherentsequences}. This subsection is an inevitably incomplete overview of the material of that work together with that of \cite{pairs} and \cite{walks}; this body of results will form our guiding example of what a nontrivial \v{C}ech cohomology group of an ordinal may signify.

\begin{definition}
Let $\delta$ be an ordinal. A \emph{C-sequence on $\delta$} is a family $\langle\,C_\alpha\,|\,\alpha\in\delta\,\rangle$ in which each $C_\alpha$ is a cofinal subset of $\alpha$.
\end{definition}

Any C-sequence on $\delta$ determines a system of \textit{walks} on the pairs of ordinals in $\delta$. Such a walk from any $\beta$ in $\delta$ down to any lower $\alpha$ is conceived of as a finite decreasing sequence of \textit{steps}
$$\big[\,\beta_0=\beta\,\big]>\dots>\big[\,\beta_i=\min(C_{\beta_{i-1}}\backslash \alpha)\,\big]>\dots>\big[\,\beta_k=\min(C_{\beta_{k-1}}\backslash \alpha)=\alpha\,\big]$$
The collection of such steps is denoted $\text{Tr}(\alpha,\beta)$. In definitions such as the following, assume some C-sequence on $\delta$ to have been fixed.

\begin{definition}\label{uppertrace} The \emph{upper trace function} $\textnormal{Tr}:[\delta]^2\rightarrow[\delta]^{<\omega}$ is recursively defined as follows:
$$\textnormal{Tr}(\alpha,\beta)=\{\beta\}\cup\textnormal{Tr}(\alpha,\textnormal{min}(C_\beta\backslash\alpha))$$
where $\textnormal{Tr}(\alpha,\alpha)=\{\alpha\}$ for each $\alpha\in\delta$.
\end{definition}

The core constructions in this subject are, like Tr$(\,\cdot\,,\,\cdot\,)$, all recursive on the input of the C-sequence. Stronger conditions on that sequence tend accordingly to manifest as more informative or intricate combinatorial outputs. Articulating much of this information are Todorcevic's \textit{rho functions}, the so-called \textit{statistics} or \textit{characteristics} of the walks.  Two of the more natural and prominent such characteristics are a walk's ``height'' and ``length,'' the functions $\rho_1$ and $\rho_2$, respectively, of the following definition:
\begin{definition} The \emph{maximum weight} and \emph{number of steps}\footnote{Strictly speaking, this is a simplifying abuse: $\rho_2(\alpha,\beta)$ historically equals $|\textnormal{Tr}(\alpha,\beta)|-1$. The abuse simply streamlines the proof of Claim \ref{rho2claim} below.} functions $[\delta]^2\rightarrow\delta$ are, respectively,
\begin{itemize}
\item $\rho_1(\alpha,\beta)=\textnormal{max}\{\textnormal{otp}(C_\xi\cap\alpha)\,|\:\xi\in \textnormal{Tr}(\alpha,\beta)\backslash\{\alpha\}\,\}$
\item $\rho_2(\alpha,\beta)=|\textnormal{Tr}(\alpha,\beta)|$
\end{itemize}
\end{definition}

A certain rapport between these functions will be thematic below. Observe firstly, for example, that if a C-sequence on $\delta=\omega_1$ satisfies
\begin{align*}
\tag{$\star$} \text{otp}(C_\alpha)=\text{cf}(\alpha)\hspace{.18 cm}\text{for all }\alpha\in\delta
\end{align*}
then $\rho_1(\,\cdot\,,\,\cdot\,)$ and $\rho_2(\,\cdot\,,\,\cdot\,)$ are each integer-valued. In fact in this case, much deeper affinities are apparent:
\begin{theorem}[\cite{pairs}]\label{coherenceofrhos} Fix a C-sequence on $\delta=\omega_1$ satisfying $(\star)$. Then
\begin{align}\label{coh1}\rho_1(\,\cdot\,,\gamma)\!\restriction\!\beta\;=^*\rho_1(\,\cdot\,,\beta)\textnormal{ for all }\beta<\gamma<\omega_1,\end{align}
but there exists no $\tilde{\rho}_1:\omega_1\rightarrow\mathbb{Z}$ such that
\begin{align}\label{triv1}\tilde{\rho}_1(\,\cdot\,)\!\restriction\!\beta\;=^*\rho_1(\,\cdot\,,\beta)\textnormal{ for all }\beta<\omega_1.\end{align}
Similarly, writing $=^b$ for ``equality modulo a bounded function,''
\begin{align*}\tag{1'}\label{coh2}\rho_2(\,\cdot\,,\gamma)\!\restriction\!\beta\;=^b\rho_2(\,\cdot\,,\beta)\textnormal{ for all }\beta<\gamma<\omega_1,\end{align*}
but there exists no $\tilde{\rho}_2:\omega_1\rightarrow\mathbb{Z}$ such that
\begin{align*}\tag{2'}\label{triv2}\tilde{\rho}_2(\,\cdot\,)\!\restriction\!\beta\;=^b\rho_2(\,\cdot\,,\beta)\textnormal{ for all }\beta<\omega_1.\end{align*}
\end{theorem}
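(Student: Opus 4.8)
The plan is to reduce all four assertions to a few structural facts about the walks, deriving the coherence statements from subadditivity and the non-triviality statements from growth properties of the two functions. First I would isolate the \emph{prefix lemma}: for $\alpha<\beta<\gamma$ the walk from $\gamma$ to $\alpha$ follows the walk from $\gamma$ to $\beta$ until it first drops below $\beta$. Hence for each $\beta<\gamma$ there is a $\lambda<\beta$ such that for all $\alpha\in(\lambda,\beta)$ the walk to $\alpha$ passes through $\beta$ and $\mathrm{Tr}(\alpha,\gamma)=\mathrm{Tr}(\beta,\gamma)\cup\mathrm{Tr}(\alpha,\beta)$, the two traces meeting only at $\beta$. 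An induction on the length of the walk (case analysis on the first step relative to $\beta$) upgrades this to the subadditivity inequalities $\rho_1(\alpha,\gamma)\le\max\{\rho_1(\alpha,\beta),\rho_1(\beta,\gamma)\}$ and $\rho_1(\alpha,\beta)\le\max\{\rho_1(\alpha,\gamma),\rho_1(\beta,\gamma)\}$, together with the additive bound $|\rho_2(\alpha,\gamma)-\rho_2(\alpha,\beta)|\le\rho_2(\beta,\gamma)$, all valid for every $\alpha<\beta<\gamma$.

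Coherence is then immediate for $\rho_2$: the right-hand side $\rho_2(\beta,\gamma)$ is independent of $\alpha$, so the additive bound is exactly the assertion that $\rho_2(\,\cdot\,,\gamma)\!\restriction\!\beta$ and $\rho_2(\,\cdot\,,\beta)$ agree modulo a bounded function. For $\rho_1$ I would first prove the \emph{finiteness lemma}: $\{\alpha<\beta:\rho_1(\alpha,\beta)\le n\}$ is finite for every $\beta<\omega_1$ and $n<\omega$. Since $\beta\in\mathrm{Tr}(\alpha,\beta)\setminus\{\alpha\}$ forces $\rho_1(\alpha,\beta)\ge\otp(C_\beta\cap\alpha)$, which exceeds $n$ once $\alpha$ passes the $n$-th point of $C_\beta$, the sublevel set is bounded in $\beta$ (the successor case being trivial); partitioning that bounded part according to the first step $\min(C_\beta\setminus\alpha)$, noting $\rho_1(\alpha,\beta)\ge\rho_1(\alpha,\min(C_\beta\setminus\alpha))$, and applying the inductive hypothesis at these smaller targets completes the induction on $\beta$. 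Granting this, the two subadditivity inequalities force $\rho_1(\alpha,\gamma)=\rho_1(\alpha,\beta)$ whenever $\rho_1(\alpha,\beta)>\rho_1(\beta,\gamma)$, and by finiteness this can fail for only finitely many $\alpha<\beta$, giving the asserted $=^*$ coherence of $\rho_1$.

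The non-triviality of $\rho_1$ is a counting argument built on the same finiteness lemma. Were some $\tilde\rho_1$ to trivialize the family, each fibre $\tilde\rho_1^{-1}(n)$ would meet every initial segment of $\omega_1$ in a set differing finitely from a sublevel set of $\rho_1(\,\cdot\,,\beta)$, hence finitely; so each fibre would be countable and $\omega_1=\bigcup_{n\in\mathbb{Z}}\tilde\rho_1^{-1}(n)$ a countable union of countable sets, which is absurd. This route is closed for $\rho_2$, whose sublevel sets contain the infinite sets $C_\beta$. Instead I would sharpen the prefix relation on the final segment to the exact identity $\rho_2(\alpha,\gamma)=\rho_2(\alpha,\beta)+\rho_2(\beta,\gamma)-1$. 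If $\tilde\rho_2$ trivialized the family mod bounded, say $|\tilde\rho_2(\alpha)-\rho_2(\alpha,\delta)|\le n_\delta$ for all $\alpha<\delta$, then picking for each pair $\beta<\gamma$ one $\alpha$ in the final segment and subtracting yields $\rho_2(\beta,\gamma)\le n_\beta+n_\gamma+1$; a pigeonhole on $\delta\mapsto n_\delta$ produces an uncountable $S\subseteq\omega_1$ with $n_\delta\equiv n$, whence $\rho_2\!\restriction\![S]^2\le 2n+1$.

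The main obstacle is precisely this last contradiction: one must know that $\rho_2$ is \emph{unbounded on $[S]^2$ for every uncountable $S\subseteq\omega_1$}. This is the sole genuinely non-elementary ingredient --- in effect Todorcevic's theorem that the walks between members of an arbitrary uncountable set can be forced arbitrarily long --- and it is exactly what separates $\rho_2$ from a trivializable coherent family. Everything else reduces to careful bookkeeping with the prefix lemma and the two inductions above.
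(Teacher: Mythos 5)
Both of your coherence arguments rest on inequalities that are false. Neither $\rho_1$ nor $\rho_2$ is subadditive in the sense you claim: once the walk from $\gamma$ to $\alpha$ diverges from the walk from $\gamma$ to $\beta$ --- which happens at the first step $\xi$ of the latter with $C_\xi\cap[\alpha,\beta)\neq\varnothing$, possibly already at $\xi=\gamma$ --- its remaining steps pass through ordinals whose $C$-sets are completely unconstrained by $\textnormal{Tr}(\alpha,\beta)$ and $\textnormal{Tr}(\beta,\gamma)$. Concretely, fix limit ordinals $\alpha<\mu<\beta<\mu_1<\gamma$ and arrange, compatibly with $(\star)$, that $C_\gamma=\{\mu,\mu_1,\mu_2,\dots\}$, that $\beta=\min C_{\mu_1}$ and $\alpha=\min C_\beta$, that $\mathrm{otp}(C_\mu\cap\alpha)=50$, and that the walk from $\mu$ down to $\alpha$ takes $k$ steps. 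Then $\textnormal{Tr}(\beta,\gamma)=\{\gamma,\mu_1,\beta\}$ and $\textnormal{Tr}(\alpha,\beta)=\{\beta,\alpha\}$, so $\rho_1(\beta,\gamma)=1$, $\rho_1(\alpha,\beta)=0$, $\rho_2(\beta,\gamma)=3$, $\rho_2(\alpha,\beta)=2$; but the walk from $\gamma$ to $\alpha$ begins $\gamma\to\mu$, so $\rho_1(\alpha,\gamma)\geq 50$ and $\rho_2(\alpha,\gamma)=k+2$. Thus your first $\rho_1$ inequality and your $\rho_2$ bound fail arbitrarily badly (and a configuration with $\alpha,\beta$ both in $C_\gamma$ and $\mathrm{otp}(C_\beta\cap\alpha)$ large refutes the second $\rho_1$ inequality). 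Max-subadditivity is precisely the property that distinguishes Todorcevic's function $\rho$ from $\rho_1$; you have transplanted the standard coherence argument for $\rho$ onto functions that do not support it. What survives is only your prefix lemma: $\rho_2(\,\cdot\,,\gamma)-\rho_2(\,\cdot\,,\beta)$ equals the constant $\rho_2(\beta,\gamma)-1$ on the tail $(\max L(\beta,\gamma),\beta)$. But $=^b$ demands boundedness on all of $\beta$, and the infinite initial segment below $\max L(\beta,\gamma)$ is where the real work lies; the paper does it by contradiction --- if $|\rho_2(\alpha_k,\gamma)-\rho_2(\alpha_k,\beta)|>k$ along an increasing sequence with supremum $\alpha^*$, then for $\alpha_k$ above $\max\{\max L(\alpha^*,\beta),\max L(\alpha^*,\gamma)\}$ both traces decompose through $\alpha^*$, making the difference eventually constant. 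Statement (1) likewise needs its own argument (the induction of \cite{pairs}, or a sup argument of the same shape); your route to it is closed.

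The second gap is in (2'). Your reduction --- tail points give $\rho_2(\beta,\gamma)\leq n_\beta+n_\gamma+1$, and a pigeonhole then bounds $\rho_2$ on $[S]^2$ for some uncountable $S$ --- is correct bookkeeping, but you then cite the unboundedness of $\rho_2$ on squares of uncountable sets as an external theorem of Todorcevic. That unboundedness is not an admissible input here: the statement being proved is itself Todorcevic's, and the entire mathematical content of the paper's proof of (2') is a self-contained proof of exactly this fact, namely Claim~\ref{rho2claim}, argued by induction on $\ell$: the Pressing Down Lemma is applied to the regressive map $\xi\mapsto\max L(\xi,\gamma_{i(\xi)})$ to stabilize the lower traces on a stationary set, and the decomposition $\textnormal{Tr}(\beta_{i(\xi)},\gamma_{i(\zeta)})=\textnormal{Tr}(\beta_{i(\xi)},\zeta)\cup\textnormal{Tr}(\zeta,\gamma_{i(\zeta)})$ then yields one additional step. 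Omitting any argument of this kind leaves the hard part of (2') unproved: you have reduced it to itself. For the record, the parts of your proposal that do hold up are the prefix lemma, the finiteness lemma for $\rho_1$ together with its induction, and the counting proof of (2): each fibre of a would-be trivialization $\tilde\rho_1$ meets every $\beta<\omega_1$ finitely, hence is countable, so $\omega_1$ would be a countable union of countable sets. Of the theorem's four assertions, (2) is the only one your proposal actually establishes.
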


In short, for any C-sequence on $\omega_1$ satisfying $(\star)$, each rho function exhibits relations of local agreement (1, 1') that cannot be globalized (2, 2'). The term \emph{nontrivial coherence} refers broadly to relations of this sort; we will assign it more concrete meaning below. Such relations are our main focus in this and the following subsection; to that end, until otherwise indicated, let $\delta=\omega_1$ and fix a C-sequence on $\delta$ as above.

Useful in arguing Theorem \ref{coherenceofrhos} will be the following function.

\begin{definition} Adopt the convention that $\max (\varnothing)=0$. The \emph{maximum of the lower trace function} $max\: L:[\omega_1]^2\rightarrow\omega_1$ is defined as follows:
$$max\: L(\alpha,\beta)=
\textnormal{max}\{\max(C_\xi\cap\alpha)\,|\:\xi\in \textnormal{Tr}(\alpha,\beta)\backslash\{\alpha\}\,\}$$
\end{definition}

The condition $(\star)$ ensures that this function is well-defined. The function's utility is the following: for any $\alpha<\beta<\gamma$,
$$\big[max\: L(\beta,\gamma)<\alpha\big]\Rightarrow\big[\beta\in\textnormal{Tr}(\alpha,\gamma)\big]\Rightarrow\big[\textnormal{Tr}(\alpha,\gamma)=\textnormal{Tr}(\alpha,\beta)\cup\textnormal{Tr}(\beta,\gamma)\big],$$
as the reader may verify.

\begin{proof}[Proof of Theorem \ref{coherenceofrhos}] We argue only (\ref{coh2}) and (\ref{triv2}). The proofs of  (\ref{coh1}) and (\ref{triv1}) are similar in spirit and simpler in their details (cf.\ \cite{MooreLSpace}).

\textit{The coherence of $\rho_2$}:
Suppose towards contradiction that for some $\beta<\gamma$ in $\omega_1$, $$|\rho_2(\alpha_k,\gamma)-\rho_2(\alpha_k,\beta)|>k$$ for each $\alpha_k$ in some increasing sequence of ordinals $\langle \alpha_k\,|\,k\in\omega\rangle\subseteq\beta$. Let $\alpha=\sup_{k\in\omega}\alpha_k$. Let $\eta=\max\{ max\: L(\alpha,\beta),max\: L(\alpha,\gamma)\}$. Observe that for $\alpha_k>\eta$,
\begin{align*}
\textnormal{Tr}(\alpha_k,\beta) & =\textnormal{Tr}(\alpha_k,\alpha)\cup\textnormal{Tr}(\alpha,\beta) \\
\textnormal{Tr}(\alpha_k,\gamma) & =\textnormal{Tr}(\alpha_k,\alpha)\cup\textnormal{Tr}(\alpha,\gamma)
\end{align*}
In consequence, $\alpha_k>\eta$ implies that $|\rho_2(\alpha_k,\gamma)-\rho_2(\alpha_k,\beta)|=|\rho_2(\alpha,\gamma)-\rho_2(\alpha,\beta)|$. Hence any $\alpha_k>\eta$ with $k\geq |\rho_2(\alpha,\gamma)-\rho_2(\alpha,\beta)|$ will contradict our premise.

\textit{The nontriviality of $\rho_2$}: Suppose towards contradiction that for some $\tilde{\rho}_2:\omega_1\rightarrow\mathbb{Z}$, for all $\gamma\in\omega_1$ there exists a $k(\gamma)\in\mathbb{N}$ such that
$$|\tilde{\rho}_2(\beta)-\rho_2(\beta,\gamma)|\leq k(\gamma)\text{ for all }\beta<\gamma\;$$
The function $\tilde{\rho}_2$ constantly equals $m$ on some cofinal $B\subseteq\omega_1$, and the function $k$ constantly equals $n$ on some cofinal $G\subseteq\omega_1$. The following claim will furnish the desired contradiction.
\begin{claim}\label{rho2claim} Let $\mathcal{A}=\{(\beta_i,\gamma_i)\,|\,i\in\omega_1\}\subseteq \omega_1\times\omega_1$ satisfy $\max\{\beta_i,\gamma_i\}<\min\{\beta_j,\gamma_j\}$ for every $i<j$ in $\omega_1$. Then for any $\ell\in\mathbb{N}$ there exists a cofinal $\Gamma\subseteq\omega_1$ such that $\rho_2(\beta_i,\gamma_j)>\ell$ for any $i<j$ in $\Gamma$.
\end{claim}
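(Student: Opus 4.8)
The plan is to induct on $\ell$. The base cases are immediate: since $i<j$ forces $\beta_i\le\max\{\beta_i,\gamma_i\}<\min\{\beta_j,\gamma_j\}\le\gamma_j$, the points $\gamma_j$ and $\beta_i$ are distinct members of $\mathrm{Tr}(\beta_i,\gamma_j)$, so $\rho_2(\beta_i,\gamma_j)\ge 2$ for every $i<j$ and one may take $\Gamma=\omega_1$ for $\ell\le 1$.

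For the inductive step I would first invoke the inductive hypothesis on $\mathcal{A}$ itself to pass to a cofinal subfamily on which $\rho_2(\beta_i,\gamma_j)>\ell$ for all $i<j$; reindexing, I assume this holds on all of $\omega_1$. The goal is then to thin once more so as to gain a single additional step. The engine for this is the splitting identity recorded above: whenever $\beta_i<\delta<\gamma_j$ and $\max L(\delta,\gamma_j)<\beta_i$ one has $\delta\in\mathrm{Tr}(\beta_i,\gamma_j)$ and hence $\mathrm{Tr}(\beta_i,\gamma_j)=\mathrm{Tr}(\beta_i,\delta)\cup\mathrm{Tr}(\delta,\gamma_j)$, so that $\rho_2(\beta_i,\gamma_j)=\rho_2(\beta_i,\delta)+\rho_2(\delta,\gamma_j)-1$. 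Since $\delta<\gamma_j$ gives $\rho_2(\delta,\gamma_j)\ge 2$, it suffices to locate, for each pair $i<j$ drawn from the sought $\Gamma$, an intermediate splitting point $\delta$ lying on the walk with $\rho_2(\beta_i,\delta)>\ell$; the identity then yields $\rho_2(\beta_i,\gamma_j)>\ell+1$.

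The natural choice of splitting point is a limit point of the family, say $\delta_\lambda=\sup_{h<\lambda}\gamma_h$ for a limit index $\lambda$. For fixed $\delta_\lambda$ the threshold $\max L(\delta_\lambda,\gamma_j)$ is an ordinal below the countable ordinal $\delta_\lambda$, so a pigeonhole over its countably many possible values isolates an uncountable set of indices $j$ with a common value $\tau_\lambda<\delta_\lambda$; for any $i<\lambda$ with $\beta_i\in(\tau_\lambda,\delta_\lambda)$ and any such $j>i$ the walk $\mathrm{Tr}(\beta_i,\gamma_j)$ then passes through $\delta_\lambda$. That the lower segment satisfies $\rho_2(\beta_i,\delta_\lambda)>\ell$ I would extract from the inductive hypothesis by a symmetric application of the same splitting at a point $\gamma_h$ with $i<h<\lambda$ and $\gamma_h$ close enough to $\delta_\lambda$, using $\rho_2(\beta_i,\gamma_h)>\ell$.

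The main obstacle is assembling these local choices into one cofinal $\Gamma$ that works uniformly: the two families of thresholds, $\max L(\delta_\lambda,\gamma_j)$ and $\max L(\gamma_h,\delta_\lambda)$, must be stabilized simultaneously across all $\omega_1$ many pairs, and the placement of each $\beta_i$ relative to the appropriate $\tau_\lambda$ must be compatible for every later $j$. This bookkeeping is where I expect the difficulty to concentrate, and it is precisely the situation that the Pressing-Down Lemma \ref{pdl}, through Corollary \ref{thesubcover}, together with the limit-point technique already deployed in the coherence half of the proof, is designed to handle; I would run the final thinning as a recursion of length $\omega_1$ that at each stage fixes a new limit point and applies this stabilization to extend $\Gamma$ by one index.
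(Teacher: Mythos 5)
Your inductive architecture — splitting walks at limit points $\delta_\lambda$ of the family and stabilizing the upper thresholds $\max L(\delta_\lambda,\gamma_j)$ by a counting/pressing-down argument — matches the paper's proof, and that half of your argument is sound. The gap is in the lower-segment step, and it is not a gap that can be patched as stated: you want $\rho_2(\beta_i,\delta_\lambda)>\ell$, and you propose to get it from the inductive bound $\rho_2(\beta_i,\gamma_h)>\ell$ by arranging $\gamma_h\in\mathrm{Tr}(\beta_i,\delta_\lambda)$ for some $h$ with ``$\gamma_h$ close enough to $\delta_\lambda$.'' The geometry of walks runs exactly the other way. The first step of the walk from $\delta_\lambda$ down to $\beta_i$ is $c:=\min(C_{\delta_\lambda}\setminus\beta_i)$, so every member of $\mathrm{Tr}(\beta_i,\delta_\lambda)\setminus\{\delta_\lambda\}$ is $\leq c$; equivalently, once $\gamma_h>c$ one has $\max L(\gamma_h,\delta_\lambda)\geq\max(C_{\delta_\lambda}\cap\gamma_h)\geq c\geq\beta_i$, violating the splitting criterion. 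Since $C_{\delta_\lambda}$ is cofinal in $\delta_\lambda$, every $\gamma_h$ sufficiently close to $\delta_\lambda$ lies above $c$ — so the points you want to use can never lie on the walk. What you would need is some $\gamma_h$ in the window $(\beta_i,c\,]$ whose entire lower trace toward $\beta_i$ stays below $\beta_i$, and nothing in the hypothesis on $\mathcal{A}$ produces such an $h$: the walk from $\delta_\lambda$ is dictated by the C-sequence and need not pass through any member of the family.

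The paper's proof avoids ever transferring a lower-segment estimate from pairs of $\mathcal{A}$: it applies the induction hypothesis to a \emph{derived} family whose second coordinates are the limit points themselves, namely $\{(\beta_{i(\xi)},\xi)\mid\xi\in R\}$, where $i(\xi)$ is the least index with $\beta_{i(\xi)},\gamma_{i(\xi)}>\xi$. The claim's hypothesis only requires the pairs to lie in increasing blocks — it never requires the first coordinate to be below the second — so this family is legitimate, and the induction hypothesis then directly yields $\rho_2(\beta_{i(\xi)},\zeta)>\ell$ for lower segments ending at limit points $\zeta$. A single application of the Pressing Down Lemma to $\xi\mapsto\max L(\xi,\gamma_{i(\xi)})$ gives a uniform threshold $\eta$ on a stationary set, after which the splitting at $\zeta$ finishes the step. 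Note that this also dissolves the bookkeeping problem you flag at the end: because the pair is chosen as a function of the limit point, one stationary set and one further invocation of the induction hypothesis produce the cofinal $\Gamma$ outright, with no recursion of length $\omega_1$. The repair to your argument, then, is to apply the induction hypothesis not to $\mathcal{A}$ itself but to pairs of the form (index just above a limit point, the limit point).
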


Taking an $\mathcal{A}\subseteq B\times G$ as in the claim, we may conclude that there exists a $\beta\in B$ and $\gamma\in G\backslash\beta$ with $\rho_2(\beta,\gamma)>m+n$. In consequence, 
$$n=k(\gamma)\geq |\tilde{\rho}_2(\beta)-\rho_2(\beta,\gamma)|>|m-(m+n)|=n$$
This is a contradiction.
\begin{proof}[Proof of Claim \ref{rho2claim}] We argue by induction on $\ell\in\mathbb{N}$. The case $\ell=0$ is clear. Therefore suppose that the claim holds for some $\ell\in\mathbb{N}$. We show that it holds for $\ell+1$ as well:

For each limit ordinal $\xi\in\omega_1$ let $i(\xi)$ be the least $i$ such that $\beta_i$ and $\gamma_i$ are greater than $\xi$. The associated function $f:\xi\mapsto max\:L(\xi,\gamma_{i(\xi)})$ is regressive; hence $f''S=\{\eta\}$ for some $\eta\in\omega_1$ and stationary $S\subseteq\omega_1$. Thin $S$ if necessary to some cofinal $R\subseteq S$ such that $\beta_{i(\xi)}<\zeta<\gamma_{i(\zeta)}$ for all $\xi<\zeta$ in $R$. Now apply the induction hypothesis to the family $\{(\beta_{i(\xi)},\xi)\,|\,\xi\in R\}$ to find a cofinal $P\subseteq R$ such that $\rho_2(\beta_{i(\xi)},\zeta)>\ell$ whenever $\xi<\zeta$ are in $P$. By definition, for all $\xi<\zeta$ in $R$,
$$\textnormal{Tr}(\beta_{i(\xi)},\gamma_{i(\zeta)})=\textnormal{Tr}(\beta_{i(\xi)},\zeta)\cup\textnormal{Tr}(\zeta,\gamma_{i(\zeta)})$$Hence $\rho_2(\beta_{i(\xi)},\gamma_{i(\zeta)})>\ell+1$ for any $\xi<\zeta$ in $P$, and letting $\Gamma=\{i(\xi)\,|\,\xi\in P\}$ completes the induction step.
\end{proof}

\end{proof}
Significant below will be the following less-remarked variant of (\ref{coh2}) and (\ref{triv2}).
\begin{corollary}\label{locallyconstantcoherence}Let  $=^c$ denote ``equality modulo a locally constant function.'' Then
\begin{align*}\tag{1''}\label{coh3}\rho_2(\,\cdot\,,\gamma)\!\restriction\!\beta\;=^c\rho_2(\,\cdot\,,\beta)\textnormal{ for all }\beta<\gamma<\omega_1,\end{align*}
but there exists no $\tilde{\rho}_2:\omega_1\rightarrow\mathbb{Z}$ such that
\begin{align*}\tag{2''}\label{triv3}\tilde{\rho}_2(\,\cdot\,)\!\restriction\!\beta\;=^c\rho_2(\,\cdot\,,\beta)\textnormal{ for all }\beta<\omega_1.\end{align*}
\end{corollary}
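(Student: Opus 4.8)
The plan is to read both assertions off the trace decomposition recorded just after the definition of $max\: L$: if $max\: L(b,c)<a$ then $\mathrm{Tr}(a,c)=\mathrm{Tr}(a,b)\cup\mathrm{Tr}(b,c)$, and hence, since this union overlaps only in $\{b\}$, $\rho_2(a,c)=\rho_2(a,b)+\rho_2(b,c)-1$. For the coherence (\ref{coh3}) I would fix $\beta<\gamma$, write $e=\rho_2(\,\cdot\,,\gamma)\!\restriction\!\beta-\rho_2(\,\cdot\,,\beta)$, and prove the stronger statement that $e$ is \emph{constant on a tail below every limit} $\alpha<\beta$. Indeed, for any such $\alpha$ and any $\alpha'$ with $\zeta_\alpha:=\max\{max\: L(\alpha,\beta),max\: L(\alpha,\gamma)\}<\alpha'<\alpha$, the decomposition applied to $(\alpha',\alpha,\beta)$ and to $(\alpha',\alpha,\gamma)$ gives $\rho_2(\alpha',\beta)=\rho_2(\alpha',\alpha)+\rho_2(\alpha,\beta)-1$ and $\rho_2(\alpha',\gamma)=\rho_2(\alpha',\alpha)+\rho_2(\alpha,\gamma)-1$; subtracting, the $\rho_2(\alpha',\alpha)$ terms cancel and $e(\alpha')=\rho_2(\alpha,\gamma)-\rho_2(\alpha,\beta)=e(\alpha)$. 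Thus $e$ equals $e(\alpha)$ throughout $(\zeta_\alpha,\alpha]$, and $\zeta_\alpha<\alpha$ since each $max\: L$ value is below $\alpha$.

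To turn this into local constancy I would invoke the relevant feature of the order topology: because $\alpha+1$ is a successor, the interval $(\zeta_\alpha,\alpha]=(\zeta_\alpha,\alpha+1)$ is an \emph{open} neighborhood of $\alpha$, so ``constant on a tail below $\alpha$ with value $e(\alpha)$'' is exactly local constancy at $\alpha$; at successor ordinals and at $0$ local constancy is automatic, these points being isolated. Hence $e$ is locally constant and (\ref{coh3}) holds.

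For the nontriviality (\ref{triv3}) I would argue by contradiction, supposing some $\tilde{\rho}_2:\omega_1\to\mathbb{Z}$ with $g_\gamma:=\tilde{\rho}_2\!\restriction\!\gamma-\rho_2(\,\cdot\,,\gamma)$ locally constant on $\gamma$ for every $\gamma$. First I would check that for each limit $\alpha$ the function $h_\alpha:=\tilde{\rho}_2\!\restriction\!\alpha-\rho_2(\,\cdot\,,\alpha)$ is eventually constant below $\alpha$: fixing any $\gamma>\alpha$, local constancy of $g_\gamma$ at the non-isolated point $\alpha$ makes $g_\gamma$ constant on some $(\zeta,\alpha]$, while the decomposition gives $\rho_2(\beta,\gamma)-\rho_2(\beta,\alpha)=\rho_2(\alpha,\gamma)-1$ for all $\beta\in(max\: L(\alpha,\gamma),\alpha)$; so $h_\alpha$ differs from $g_\gamma$ by a constant on a tail below $\alpha$ and is eventually constant there too. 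Thus there are $\zeta_\alpha<\alpha$ and $v_\alpha\in\mathbb{Z}$ with $\tilde{\rho}_2(\beta)=\rho_2(\beta,\alpha)+v_\alpha$ for all $\beta\in(\zeta_\alpha,\alpha)$. Pressing down (Lemma \ref{pdl}) on the regressive map $\alpha\mapsto\zeta_\alpha$, and then refining so that the $\mathbb{Z}$-valued $v_\alpha$ is constant (only countably many values occur, so one fiber stays stationary), I obtain a stationary $S$, a fixed $\zeta^*$, and a fixed $v^*$ with $\rho_2(\beta,\alpha)=\tilde{\rho}_2(\beta)-v^*$ for every $\alpha\in S$ and every $\beta\in(\zeta^*,\alpha)$.

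The contradiction then comes, perhaps surprisingly, not from Claim \ref{rho2claim} but from a head-on clash with (\ref{coh3}). For any $\alpha<\alpha'$ in $S$ the identity above forces $\rho_2(\beta,\alpha)=\tilde{\rho}_2(\beta)-v^*=\rho_2(\beta,\alpha')$ for every $\beta\in(\zeta^*,\alpha)$, i.e.\ \emph{exact} agreement of $\rho_2(\,\cdot\,,\alpha)$ and $\rho_2(\,\cdot\,,\alpha')$ on that interval. But the decomposition applied to $(\beta,\alpha,\alpha')$ gives $\rho_2(\beta,\alpha')=\rho_2(\beta,\alpha)+\rho_2(\alpha,\alpha')-1$ for all $\beta\in(max\: L(\alpha,\alpha'),\alpha)$. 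Choosing $\beta$ in the interval $(\max\{\zeta^*,max\: L(\alpha,\alpha')\},\alpha)$, which is nonempty because the limit $\alpha$ exceeds both $\zeta^*$ and $max\: L(\alpha,\alpha')$, these two facts force $\rho_2(\alpha,\alpha')=1$, contradicting $\rho_2(\alpha,\alpha')=|\mathrm{Tr}(\alpha,\alpha')|\geq 2$ (the trace of $\alpha<\alpha'$ contains the two distinct points $\alpha$ and $\alpha'$). The step I expect to be least obvious, and the crux of the whole argument, is precisely this last observation: local-constant triviality upgrades coherence to \emph{exact} equality of neighboring $\rho_2(\,\cdot\,,\alpha)$ on overlaps, which collides directly with the $\big(+\,\rho_2(\alpha,\alpha')-1\big)$ shift built into the coherence of $\rho_2$, so that the combinatorial weight of Claim \ref{rho2claim} is not needed here at all.
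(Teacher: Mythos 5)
Your proposal is correct, and the two halves compare differently with the paper. For the coherence (\ref{coh3}) your argument is essentially the paper's: both extract, from the trace decomposition recorded after the definition of $max\: L$, constancy of the difference on an interval $(\eta,\alpha]$ below each limit $\alpha$, and your additive form $\rho_2(a,c)=\rho_2(a,b)+\rho_2(b,c)-1$ is indeed valid under the paper's convention $\rho_2=|\mathrm{Tr}|$, since the two traces overlap exactly in $\{b\}$. For the nontriviality (\ref{triv3}), however, you take a genuinely different route. The paper makes the same opening move as you do --- evaluating at an index above $\beta$ so that $\beta$ is an interior point, extracting constancy on a tail, and pressing down to a stationary set with a uniform left endpoint --- but it then downgrades the constant differences to merely \emph{bounded} ones and reruns the $=^b$ nontriviality argument of Theorem \ref{coherenceofrhos}, whose engine is the inductive Claim \ref{rho2claim}. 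You instead exploit the full strength of constancy: by additionally uniformizing the constant value $v^*$ on a stationary (in fact merely uncountable) subfamily, you obtain \emph{exact} agreement of $\rho_2(\,\cdot\,,\alpha)$ and $\rho_2(\,\cdot\,,\alpha')$ on a common tail for $\alpha<\alpha'$ in $S$, which collides head-on with the shift $\rho_2(\beta,\alpha')=\rho_2(\beta,\alpha)+\rho_2(\alpha,\alpha')-1$ and the trivial bound $\rho_2(\alpha,\alpha')\geq 2$. This bypasses Claim \ref{rho2claim} entirely, making your proof of (\ref{triv3}) more elementary and self-contained; what the paper's route buys is economy (it reuses machinery already on the page), and it is also the route that is \emph{forced} in the $=^b$ setting, where pressing down yields only boundedness rather than exactness --- so your shortcut is genuinely a feature of the locally constant case, exactly as you observe at the end.
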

%``Equality modulo a continuous function,'' of course, is an equivalent reading of $=^c$ when $\mathbb{Z}$ has the discrete topology.
\begin{proof} By the above argument for the ($=^b$) coherence of $\rho_2$, for every $\beta<\gamma$ in $\omega_1$ and limit ordinal $\alpha\in\beta$,
$$\rho_2(\,\cdot\,,\gamma)\!\restriction\!\beta\,-\rho_2(\,\cdot\,,\beta)$$ is constant on some nontrivial interval $(\eta,\alpha)$. This shows (\ref{coh3}).

To see (\ref{triv3}), assume towards contradiction that for some $\tilde{\rho}_2:\omega_1\rightarrow\mathbb{Z}$,
\begin{align*}\tilde{\rho}_2(\,\cdot\,)\!\restriction\!\beta+1\;=^c\rho_2(\,\cdot\,,\beta+1)\textnormal{ for all }\beta<\omega_1\end{align*}
for all limit ordinals $\beta\in\omega_1$. Then there exists some $\eta\in\omega_1$ and stationary $S$ such that the functions
\begin{align*}\big(\tilde{\rho}_2(\,\cdot\,)-\rho_2(\,\cdot\,,\beta+1)\big)\!\!\restriction\![\eta,\beta]\hspace{.7 cm}(\beta\in S)\end{align*}
are each constant, and therefore bounded. From this family of functions, a contradiction follows exactly as in the proof of Theorem \ref{coherenceofrhos}.
\end{proof}
We close this subsection with a representative corollary and several concluding remarks.
\begin{corollary}\label{aronszajn} For $i=1$ or $2$, the tree $T(\rho_i):=\{\rho_i(\,\cdot\,,\beta)\!\restriction\!\alpha\:|\,\alpha\leq\beta<\omega_1\}$, ordered by extension, is an Aronszajn tree.
\end{corollary}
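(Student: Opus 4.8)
The plan is to verify the three defining properties of an Aronszajn tree for $T(\rho_i)$: height $\omega_1$, countable levels, and no uncountable (equivalently, cofinal) branch. The first is immediate. The node $\rho_i(\,\cdot\,,\beta)\!\restriction\!\alpha$ sits at level $\alpha$; every level $\alpha<\omega_1$ is inhabited (take $\beta=\alpha$), and no node has domain $\geq\omega_1$, so the height is exactly $\omega_1$. Once the levels are shown to be countable, any uncountable chain must meet uncountably many, hence cofinally many, levels, so it suffices to rule out cofinal branches.

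For the absence of a cofinal branch I would argue uniformly for $i=1,2$ from coherence and nontriviality. A cofinal branch is the union of a chain meeting cofinally many levels, that is, a function $\tilde f\colon\omega_1\to\mathbb{Z}$ such that for each $\alpha<\omega_1$ there is a $\beta_\alpha\geq\alpha$ with $\tilde f\!\restriction\!\alpha=\rho_i(\,\cdot\,,\beta_\alpha)\!\restriction\!\alpha$. Fixing $\beta<\omega_1$ and choosing any $\alpha>\beta$ gives $\tilde f\!\restriction\!\beta=\rho_i(\,\cdot\,,\beta_\alpha)\!\restriction\!\beta$, which by the coherence relation (\ref{coh1}) (resp.\ (\ref{coh2})) is $=^*\rho_1(\,\cdot\,,\beta)$ (resp.\ $=^b\rho_2(\,\cdot\,,\beta)$). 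As $\beta$ was arbitrary, $\tilde f$ would be a trivialization of exactly the form forbidden by (\ref{triv1}) (resp.\ (\ref{triv2})), a contradiction.

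The remaining and principal task is the countability of the levels. For $i=1$ this is easy: restricting the coherence (\ref{coh1}) to $\alpha$ shows that all nodes $\rho_1(\,\cdot\,,\beta)\!\restriction\!\alpha$ with $\beta\geq\alpha$ lie in a single $=^*$-class, and since $\alpha$ is countable and the functions are integer-valued, such a class is countable. For $i=2$ the coherence is only modulo a bounded function, and $=^b$-classes are uncountable, so a finer analysis is required. Here I would exploit the $max\,L$ decomposition recorded just before the proof of Theorem \ref{coherenceofrhos}: applied to $\xi<\alpha\leq\beta$, it yields for every $\xi$ with $max\,L(\alpha,\beta)<\xi<\alpha$ the identity $\textnormal{Tr}(\xi,\beta)=\textnormal{Tr}(\xi,\alpha)\cup\textnormal{Tr}(\alpha,\beta)$, a union overlapping only in $\alpha$, whence $\rho_2(\xi,\beta)=\rho_2(\xi,\alpha)+\rho_2(\alpha,\beta)-1$. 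Thus, writing $\eta_\beta=max\,L(\alpha,\beta)<\alpha$, the node $\rho_2(\,\cdot\,,\beta)\!\restriction\!\alpha$ agrees on $(\eta_\beta,\alpha)$ with the fixed function $\rho_2(\,\cdot\,,\alpha)$ shifted by the constant $\rho_2(\alpha,\beta)-1$, and is otherwise determined by its restriction $\rho_2(\,\cdot\,,\beta)\!\restriction\!(\eta_\beta+1)$, a node of level $\eta_\beta+1$.

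I would then conclude by induction on $\alpha$ that level $\alpha$ is countable. Successor steps are trivial, since a node at level $\alpha_0+1$ is determined by its predecessor at level $\alpha_0$ together with the single further value $\rho_2(\alpha_0,\beta)\in\omega$. For limit $\alpha$ the displayed decomposition shows that the map $\beta\mapsto\big(\eta_\beta,\ \rho_2(\alpha,\beta)-1,\ \rho_2(\,\cdot\,,\beta)\!\restriction\!(\eta_\beta+1)\big)$ determines the node $\rho_2(\,\cdot\,,\beta)\!\restriction\!\alpha$; since $\eta_\beta+1<\alpha$, the induction hypothesis makes the third coordinate range over a countable set, while the first two range over $\alpha\times\omega$, so level $\alpha$ is countable. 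The main obstacle is precisely this level-countability for $\rho_2$: the bounded coherence (\ref{coh2}) is by itself too weak, and it is the combinatorial geometry of the walks --- concretely, the $max\,L$ decomposition localizing the walk below $\alpha$ --- that must replace the finite-difference bookkeeping available for $\rho_1$.
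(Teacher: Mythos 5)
Your proof is correct, and its skeleton --- coherence yields countable levels, nontriviality forbids a cofinal branch --- is exactly the paper's. The difference is that the paper's proof is a single sentence asserting both halves, and you have correctly identified the one point where that sentence cannot be taken at face value: for $T(\rho_2)$, the coherence relation (\ref{coh2}) is only modulo bounded functions, and the $=^b$-class of a fixed integer-valued function on an infinite countable ordinal has size $2^{\aleph_0}$, so ``coherence ensures countable levels'' is a one-step argument only for $i=1$ (where $=^*$-classes of integer-valued functions on a countable domain really are countable). Your repair is the right one, and it is the standard walks argument from \cite{walks}: the identity $\rho_2(\xi,\beta)=\rho_2(\xi,\alpha)+\rho_2(\alpha,\beta)-1$ for $max\:L(\alpha,\beta)<\xi<\alpha$ (the $-1$ correctly reflecting this paper's convention $\rho_2=|\mathrm{Tr}|$, with the two traces overlapping exactly in $\{\alpha\}$) exhibits each level-$\alpha$ node as a constant tail-shift of the fixed function $\rho_2(\,\cdot\,,\alpha)$ glued to a node of strictly lower level, and then induction on $\alpha$ injects level $\alpha$ into $\alpha\times\omega\times\bigcup_{\eta<\alpha}(\text{level }\eta+1)$, a countable set. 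The branch argument is also correct and is the intended one; note it works uniformly for $=^*$ and $=^b$ precisely because the union of a cofinal branch would instantiate the function whose existence (\ref{triv1}), respectively (\ref{triv2}), denies. In short: same route as the paper, but you have supplied the genuinely nontrivial ingredient --- the lower-trace decomposition and the induction on levels --- that the paper's one-line proof leaves implicit.
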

\begin{proof}
Coherence in each case ensures that the levels of the tree are countable, while nontriviality precludes the existence of a cofinal branch.
\end{proof}

\begin{remark}\label{remark} Four summary observations are at this point in order:
\begin{enumerate}
\item[(i)] The above sequence, culminating %with Corollary \ref{aronszajn}
in the succinct construction of a canonical combinatorial object on $\omega_1$, was representative in the following strong sense: by way of the induced nontrivial coherent families of functions, the walks technique ``can be used to derive virtually all known other structures that have been defined so far on $\omega_1$'' (\cite{walks}, p. 19). With only a little more work, for example, we might equally have concluded with the construction of an uncountable linear order whose square is a union of countably many chains, or of strong witnesses to negative partition relations on $[\omega_1]^2$.
\item[(ii)] The combinatorics of the above-mentioned structures tend overwhelmingly to be ``two dimensional''; they tend in other words to fundamentally involve two-coordinate arguments. Underlying these structures, as we have just seen, are what might themselves be regarded as \textit{rigidity of dimension} phenomena: the nontrivial coherence relations of Theorem \ref{coherenceofrhos}. For \textit{coherence} names relations in which the second coordinate does not matter much, while \textit{nontriviality} names relations in which the second coordinate, nevertheless, cannot be done without.
\item[(iii)] Relatedly, Theorem \ref{coherenceofrhos} and Corollary \ref{locallyconstantcoherence} hint at the variety of forms of nontrivial coherence obtaining on $\omega_1$.

\item[(iv)] Clearly, generalizations of so versatile a technique as we have described to ordinals above $\omega_1$ would be of great value. However,\\

 \begin{quote} \textit{An interesting phenomenon that one realizes while analyzing walks on ordinals is the special role of the first uncountable ordinal $\omega_1$ in this theory. [...] The first uncountable cardinal is the only cardinal on which the theory can be carried out without relying on additional axioms of set theory.} (\cite{walks}, p. 14)\\ \end{quote}
 
\noindent For example, when $\delta=\omega_1$, a limited invocation of the axiom of choice --- the condition $(\star)$ above --- induces nontrivial coherence relations of the sorts described in Theorem \ref{coherenceofrhos} and Corollary \ref{locallyconstantcoherence}. As we will see below, the existence of such relations on an ordinal $\delta$ of any other cofinality is either independent of, or outright inconsistent with, the ZFC axioms.
\end{enumerate}
\end{remark}
\subsection{Coherence and \v{C}ech cohomology}\label{I.2}

Cohomological techniques arguably offer a uniquely cohesive approach to the above family of phenomena. The plurality of forms evoked in item (iii), for example, will manifest naturally as parameters in sheaf-theoretic framings of this material. We now describe these framings. For reasons that should soon be clear, we begin by fixing a primary mathematical meaning for \textit{nontrivial coherence}.

\begin{definition}\label{fixntc} Let $A$ be an abelian group and let $\Phi = \{\varphi_\beta:\beta \rightarrow A\,|\,\beta\in \delta\}$ 
be a family of functions.
\begin{itemize}
	\item $\Phi$ is \emph{coherent} if 
		\begin{align*}
			\varphi_\gamma\!\restriction\!\beta\,=^*\varphi_\beta
		\end{align*}
		for all $\beta\leq\gamma$ in $\delta$.
	\item $\Phi$ is \emph{trivial} if, for some $\varphi:\delta \rightarrow A$,
		\begin{align*}
			\varphi\!\restriction\!\beta\,=^*\varphi_\beta
		\end{align*} 
		for all $\beta$ in $\delta$.
\end{itemize}
\end{definition}
By the identification of any ordinal with its set of predecessors, such a family $\Phi$ may be regarded as indexed by the collection of \textit{open sets} $\{\beta\,|\,\beta\in\delta\}$. When $\delta$ is a limit ordinal, this collection is an open cover of $\delta$. Presheaves are natural frameworks for the study of families of this sort.

\begin{definition}
\emph{A presheaf of abelian groups $\mathcal{P}$ on a topological space $X$} is a contravariant functor from $\tau(X)$ to the category of abelian groups. It is, in other words, an assignment of a group $\mathcal{P}(U)$ to each $U\in\tau(X)$, together with homomorphisms $p_{UV}:\mathcal{P}(U)\rightarrow\mathcal{P}(V)$ for each $U\supseteq V$ in $\tau(X)$, such that $p_{UU}=\text{id}$ and $p_{VW}\circ p_{UV}=p_{UW}$ for all $U\supseteq V\supseteq W$ in $\tau(X)$. It is convenient to require that $\mathcal{P}(\varnothing)=0$ as well.
\end{definition}

Elements of $\mathcal{P}(U)$ are termed \textit{sections of $\mathcal{P}$ over $U$}. The maps $p_{UV}$ are termed, and sometimes written as, \textit{restriction maps}, even when they are mathematically something else. These maps will be ``honest'' restriction maps in all but one of our examples, which we therefore define simply by way of the assignments $U\mapsto\mathcal{P}(U)$.

\begin{example} For any space $X$ and group $A$, the functor $\mathcal{D}_A:U\mapsto \bigoplus_U A$ is a presheaf.
\end{example}

\begin{definition}\label{sheef}
\emph{A sheaf on $X$} is a presheaf $\mathcal{S}$ on $X$ with the following additional property: for any $\mathcal{V}\subseteq\tau(X)$, if $\{s_V\in \mathcal{S}(V)\,|\,V\in\mathcal{V}\}$ are such that $$s_V\!\restriction\!V\cap W\,=s_W\!\restriction\!V\cap W\text{ for all }V,W\in\mathcal{V}$$then there exists a unique $s\in\mathcal{S}(\cup\mathcal{V})$ such that $s\!\restriction\!V\,=s_V$ for all $V\in\mathcal{V}$.
\end{definition}

\begin{example} $\mathcal{D}_A$ is not a sheaf on any infinite Hausdorff space $X$. Neither is the \emph{constant presheaf} $\mathcal{C}_A:U\mapsto A$ $($with restriction maps the identity$)$ if $\tau(X)$ contains disjoint open sets.
\end{example}

The morphisms between two presheaves on a space $X$ are the natural transformations between those presheaves viewed as functors. In this way, the presheaves of abelian groups on a fixed space $X$ define a category; a universal property therein associates to each presheaf $\mathcal{P}$ some ``nearest'' presheaf $\mathtt{sh}(\mathcal{P})$ satisfying the conditions of Definition \ref{sheef}. We follow \cite{Hartshorne} in the definition and proposition below, in which all (pre)sheaves under discussion are on some fixed space $X$:

\begin{definitionandproposition} For any presheaf $\mathcal{P}$ there exists a sheaf $\mathtt{sh}(\mathcal{P})$ and a morphism $p:\mathcal{P}\to\mathtt{sh}(\mathcal{P})$ such that for any sheaf $\mathcal{S}$ and morphism $s:\mathcal{P}\to\mathcal{S}$, there exists a unique $r:\mathtt{sh}(\mathcal{P})\to\mathcal{S}$ such that $s=rp$.
\end{definitionandproposition}

See any standard reference, or Definition \ref{defprop2} below, for explicit constructions; often this optimal modification $\mathtt{sh}(\mathcal{P})$ of a presheaf $\mathcal{P}$ amounts simply to a relaxation of conditions on sections to local ones. The \textit{constant sheaf} $\mathtt{sh}(\mathcal{C}_A)$ is a fundamental example.

\begin{example} Write $\mathcal{A}_d$ for $\mathtt{sh}(\mathcal{C}_A)$, the sheaf $$U\mapsto\{f:U\rightarrow A\,|\,f\textnormal{ is \textit{locally} constant}\}.$$ More generally: for $A$ a topological abelian group, write $\mathcal{A}$ for the sheaf $$U\mapsto\{f:U\rightarrow A\,|\,f\textnormal{ is continuous}\}.$$ $\mathcal{A}_d$, of course, is a special case of the latter: the subscript ``d'' signals a discrete topology on $A$. Similarly, $$U\mapsto\{f:U\rightarrow A\,|\,f\textnormal{ is \textit{locally} finitely supported}\}$$is the sheafification $\mathtt{sh}(\mathcal{D}_A)$ of $\mathcal{D}_A$.
\end{example}

All of our examples will be \textit{presheaves of functions to $A$}, i.e., $\mathcal{P}(U)$ always will be some subcollection of the functions from $U$ to $A$, added pointwise.

Any presheaf $\mathcal{P}$ on a space $X$ induces a series of cohomology groups; prominent (and, typically, most concrete) among these are the \v{C}ech cohomology groups $\check{\mathrm{H}}^n(X,\mathcal{P})$.

\begin{definition}\label{chc} Fix $\mathcal{V}=\{V_\alpha\,|\,\alpha\in\delta\}$, an open cover of $X$. Write $\mathrm{H}^n(\mathcal{V},\mathcal{P})$ for the $n^{th}$ cohomology group of the cochain complex \begin{align}\label{firstcochain}
\mathcal{L}(\mathcal{V},\mathcal{P}):\hspace{.6 cm}0\rightarrow L^0(\mathcal{V},\mathcal{P})\rightarrow\cdots\rightarrow L^j(\mathcal{V},\mathcal{P})\xrightarrow{d^j} L^{j+1}(\mathcal{V},\mathcal{P})\rightarrow\dots
\end{align}
Here $$L^j(\mathcal{V},\mathcal{P})=\prod_{\vec{\alpha}\in [\delta]^{j+1}}\mathcal{P}(V_{\vec{\alpha}}),$$
where $V_{\vec{\alpha}}=V_{\alpha_0}\cap\dots\cap V_{\alpha_{j-1}}$. Write then $p_{\vec{\alpha}\vec{\beta}}$ for $p_{V_{\vec{\alpha}}V_{\vec{\beta}}}$, and define $d^j:L^j(\mathcal{V},\mathcal{P})\rightarrow L^{j+1}(\mathcal{V},\mathcal{P})$ by
$$d^j\!f\,:\,\vec{\alpha}\,\mapsto\,\sum_{i=0}^{j+1}(-1)^i p_{\vec{\alpha}^i\vec{\alpha}}(f(\vec{\alpha}^i))$$
Write $\mathcal{V}\leq\mathcal{W}$ if the open cover $\mathcal{W}$ refines $\mathcal{V}$, i.e., if there exists some $r_{\mathcal{W}\mathcal{V}}:\mathcal{W}\rightarrow\mathcal{V}$ such that $W\subseteq r_{\mathcal{W}\mathcal{V}}(W)$ for each $W\in\mathcal{W}$. The induced $r^{*}_{\mathcal{W}\mathcal{V}}:\mathrm{H}^n(\mathcal{V},\mathcal{P})\rightarrow\mathrm{H}^n(\mathcal{W},\mathcal{P})$ is independent of the choice of refining map $r_{\mathcal{W}\mathcal{V}}$. Hence these $r^{*}_{\mathcal{W}\mathcal{V}}$ $(\mathcal{V}\leq\mathcal{W})$ define, in turn, a direct limit
\begin{align}
\check{\mathrm{H}}^n(X,\mathcal{P}):=\varinjlim_{\mathcal{V}\in\text{Cov}(X)}\mathrm{H}^n(\mathcal{V},\mathcal{P})
\end{align}
This limit is the $n^{th}$ \textit{\v{C}ech cohomology group of $X$, with respect to the presheaf $\mathcal{P}$}.
%\todo{Footnote re: degenerate indices, $+$ tacit assumption that $\mathcal{P}(\varnothing)=0$.} 
\end{definition}

\begin{example}\label{zerogroup} $\check{\mathrm{H}}^0(\omega,\mathcal{D}_A)=\prod_\omega A$. This is easy to see: ordered by refinement, the open covers of $\omega$ have a maximum, namely $\mathcal{V}_\omega=\{\{i\}\,|\,i\in\omega\}$. Hence $\check{\mathrm{H}}^0(\omega,\mathcal{D}_A)=\mathrm{H}^0(\mathcal{V}_\omega,\mathcal{D}_A)=\ker(d^0)=L^0(\mathcal{V}_\omega,\mathcal{D}_A)$ $($recall that $\mathcal{D}_A(\varnothing)=0)$, which is, clearly, $\prod_\omega A$. In contrast, $\check{\mathrm{H}}^0(\delta,\mathcal{D}_A)=\bigoplus_\delta A$, for an ordinal $\delta$ of any cofinality other than $\omega$. This is essentially because for any basic open cover $\mathcal{V}$ of $\delta$ and disjoint $\{V_i\,|\,i\in\omega\}\subseteq\mathcal{V}$,  there is some $k\in\omega$ and $V\in\mathcal{V}$ such that $j>k\Rightarrow V_j\subseteq V$.
\end{example}

This sensitivity of $\check{\mathrm{H}}^0(\,\cdot\,,\mathcal{D}_A)$ to the cardinality $\aleph_0$ is the beginning of a much more general phenomenon.

Example \ref{zerogroup} also relates to the previous example via the observation that, for any ordinal $\delta$, $$\check{\mathrm{H}}^0(\delta,\mathcal{D}_A)=\mathtt{sh}(\mathcal{D}_A)(\delta),$$ as the reader may verify. The groups $\check{\mathrm{H}}^0(\,\cdot\,,\,\cdot\,)$ in fact afford us a more concrete characterization of the sheafification process:

\begin{definitionandproposition}[Example 6.7.13 of \cite{spanier}] \label{defprop2} The functor $U\mapsto\check{\mathrm{H}}^0(U,\mathcal{P})$, together with the natural restriction maps, defines the sheafification $\mathtt{sh}(\mathcal{P})$ of a presheaf $\mathcal{P}$ on a topological space $X$.
\end{definitionandproposition}

For the remainder of this section, let $\mathcal{U}_\delta$ denote $\{\alpha\,|\,\alpha\in\delta\}$, the initial segments open cover of a limit ordinal $\delta$. More generally, write $\mathcal{U}_C$ for any cofinal $C\subseteq\delta$ viewed as a cover.

\begin{lemma}\label{apply}
Let $\delta$ be an ordinal of uncountable cofinality. A nontrivial coherent \linebreak $\Phi=\{\varphi_\alpha:\alpha\rightarrow A\,|\,\alpha\in\delta\}$ witnesses that $\mathrm{H}^1(\mathcal{U}_{\delta},\mathcal{D}_A)\neq 0$, in the following sense:

For $\alpha<\beta<\delta$, let $f^{\Phi}:\,(\alpha,\beta)\,\mapsto\,\varphi_\beta\!\!\upharpoonright\!\alpha - \varphi_\alpha$. Then $0\neq [f^{\Phi}]\in\mathrm{H}^1(\mathcal{U}_\delta,\mathcal{D}_A)$.

\end{lemma}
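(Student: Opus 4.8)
The plan is to verify three things in turn: that $f^{\Phi}$ is a well-defined $1$-cochain, that it is a cocycle, and that it is not a coboundary. For the first, I would note that under the initial-segments cover $\mathcal{U}_\delta$ we have $V_\alpha\cap V_\beta=\alpha$ whenever $\alpha<\beta$, so $f^{\Phi}(\alpha,\beta)$ is required to lie in $\mathcal{D}_A(\alpha)=\bigoplus_\alpha A$, i.e.\ to be a \emph{finitely supported} function on $\alpha$. This is exactly what the coherence of $\Phi$ supplies: $\varphi_\beta\restriction\alpha=^*\varphi_\alpha$ means $\varphi_\beta\restriction\alpha-\varphi_\alpha$ has finite support. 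So $f^{\Phi}\in L^1(\mathcal{U}_\delta,\mathcal{D}_A)$.

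For the cocycle condition I would simply expand $d^1 f^{\Phi}(\alpha,\beta,\gamma)$ for $\alpha<\beta<\gamma$. Since the triple intersection $V_\alpha\cap V_\beta\cap V_\gamma$ is again $\alpha$ and every restriction map of $\mathcal{D}_A$ is an honest restriction, this reads
\[
 d^1 f^{\Phi}(\alpha,\beta,\gamma)=f^{\Phi}(\beta,\gamma)\restriction\alpha-f^{\Phi}(\alpha,\gamma)+f^{\Phi}(\alpha,\beta).
\]
Substituting the definition of $f^{\Phi}$ and using $(\varphi_\gamma\restriction\beta)\restriction\alpha=\varphi_\gamma\restriction\alpha$, all six resulting terms cancel in pairs, giving $0$. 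Hence $f^{\Phi}$ represents a class in $\mathrm{H}^1(\mathcal{U}_\delta,\mathcal{D}_A)$.

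The substance of the lemma is the nontriviality of that class. Suppose toward a contradiction that $f^{\Phi}=d^0 g$ for some $g=\langle\,g_\alpha\in\bigoplus_\alpha A\mid\alpha\in\delta\,\rangle\in L^0(\mathcal{U}_\delta,\mathcal{D}_A)$. Unwinding $d^0$ over the same intersection pattern yields $g_\beta\restriction\alpha-g_\alpha=\varphi_\beta\restriction\alpha-\varphi_\alpha$ for all $\alpha<\beta$, which rearranges to $\psi_\beta\restriction\alpha=\psi_\alpha$ where $\psi_\alpha:=\varphi_\alpha-g_\alpha$. Thus the $\psi_\alpha$ cohere \emph{exactly} (not merely modulo finite), and since $\delta$ is a limit ordinal they glue to a single $\psi:\delta\rightarrow A$ with $\psi\restriction\beta=\psi_\beta$ for every $\beta$. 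But then $\psi\restriction\beta=\varphi_\beta-g_\beta=^*\varphi_\beta$, because each $g_\beta$ is finitely supported; this exhibits $\Phi$ as trivial, contradicting the hypothesis. Therefore $[f^{\Phi}]\neq 0$.

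The routine computations of the first two paragraphs carry no real difficulty; the conceptual crux is the third step, where one must recognize that the coboundary relation $f^{\Phi}=d^0 g$ is precisely the statement that the finite-support corrections $g_\alpha$ convert $\Phi$ into an exactly coherent family, which the limit ordinal $\delta$ always globalizes — and that this globalization, read modulo finite support, is exactly a witness to the triviality of $\Phi$. This dictionary between $1$-coboundaries and trivializations is what makes the lemma a genuine translation rather than a formal restatement.
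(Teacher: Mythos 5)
Your proposal is correct and follows essentially the same route as the paper's proof: coherence gives membership in $L^1(\mathcal{U}_\delta,\mathcal{D}_A)$, the cocycle identity follows by direct cancellation, and a putative coboundary relation $f^{\Phi}=d^0g$ is converted into a trivialization of $\Phi$ — your glued function $\psi$ is exactly the paper's $\varinjlim_{\beta<\delta}(\varphi_\beta-g(\beta))$. You merely spell out the gluing step (exact coherence of the corrected family $\varphi_\beta-g(\beta)$, then mod-finite agreement with $\Phi$) that the paper compresses into its direct-limit notation.
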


\begin{proof}
Coherence ensures that $f^\Phi\in L^1(\mathcal{U}_\delta,\mathcal{D}_A)$. 
As $f^\Phi$ is a coboundary, it is in $\text{ker}\,d^1$: for all $\alpha<\beta<\gamma<\delta$,
\begin{align}\label{1pt5} d^1 \! f^\Phi(\alpha,\beta,\gamma) & = f^\Phi(\beta,\gamma)-f^\Phi(\alpha,\gamma)+f^\Phi(\alpha,\beta) \\ & = (\varphi_\gamma-\varphi_\beta-\varphi_\gamma+\varphi_\alpha+\varphi_\beta-\varphi_\alpha)\!\restriction\!\alpha =0.\nonumber\end{align}
If, however, $f^\Phi=d^{0\!}(g)$ for some $g\in L^0(\mathcal{U}_\delta,\mathcal{D}_A)$, so that \begin{align}\label{geez}\varphi_\beta-\varphi_\alpha=g(\beta)\!\restriction\!\alpha\!-\,g(\alpha)\textnormal{ for all }\alpha<\beta<\delta,\end{align} then \begin{align}\label{LIMLIM}\varphi:=\varinjlim_{\beta<\delta} (\varphi_\beta-g(\beta))\end{align}
would trivialize the nontrivial family $\Phi$, a contradiction. Hence $$0\neq [f^{\Phi}]\in\mathrm{H}^1(\mathcal{U}_\delta,\mathcal{D}_A).$$\end{proof}

By the above lemma together with the following argument, for every $\delta$ of cofinality $\omega_1$ the function $\rho_1$ induces a nontrivial cohomology group $\mathrm{H}^1(\mathcal{U}_\delta,\mathcal{D}_\mathbb{Z})$. A third rho function, the \textit{last step function}  $\rho_3:[\omega_1]^2\rightarrow\{0,1\}$, effects a stronger corollary:

\begin{corollary}\label{DA} Let $\delta$ be an ordinal of cofinality $\omega_1$. Then $\mathrm{H}^1(\mathcal{U}_\delta,\mathcal{D}_A)\neq 0$ for any nontrivial abelian group $A$.
\end{corollary}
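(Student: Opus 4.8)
The plan is to invoke Lemma~\ref{apply}: to prove the corollary it suffices to exhibit, for each ordinal $\delta$ with $\cf(\delta)=\omega_1$ and each nontrivial abelian group $A$, a nontrivial coherent family $\Phi=\{\varphi_\alpha:\alpha\to A\mid\alpha\in\delta\}$. I would assemble such a $\Phi$ from two ingredients: a single $\{0,1\}$-valued nontrivial coherent family on $\omega_1$ furnished by $\rho_3$, together with a transfer of that family up to $\delta$ along a club of order type $\omega_1$. The reason for routing through $\rho_3$ rather than $\rho_1$ is that its two-element range is what lets the construction survive passage to an arbitrary target group $A$.

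First, the $\omega_1$ case. By the methods of Theorem~\ref{coherenceofrhos}, the family $\{\rho_3(\,\cdot\,,\beta):\beta\to\{0,1\}\mid\beta<\omega_1\}$ is coherent and nontrivial as an integer-valued family. Fixing any $a\in A\setminus\{0\}$, I set $\psi_\beta=a\cdot\rho_3(\,\cdot\,,\beta)$. Coherence of $\{\psi_\beta\}$ is immediate, since scaling by $a$ preserves agreement off a finite set. For nontriviality, suppose $\tilde\psi:\omega_1\to A$ trivialized $\{\psi_\beta\}$. Then for every $\beta$ the value $\tilde\psi(\xi)$ equals $\rho_3(\xi,\beta)\,a\in\{0,a\}$ for all but finitely many $\xi<\beta$; as every proper initial segment of the exceptional set is finite, that set is finite, so $\tilde\psi$ is $\{0,a\}$-valued off a finite subset of $\omega_1$. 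Writing $\tilde\psi=\epsilon\cdot a$ with $\epsilon:\omega_1\to\{0,1\}$ and using that $a\neq 0$ forces $\epsilon(\xi)=\rho_3(\xi,\beta)$ wherever $\epsilon(\xi)a=\rho_3(\xi,\beta)a$ (since $\epsilon-\rho_3\in\{-1,0,1\}$), we would obtain an integer-valued trivialization $\epsilon$ of $\rho_3$, a contradiction. This step, in which the nontriviality of a $\{0,1\}$-valued family is promoted to nontriviality over every nontrivial $A$, is the crux of the stronger corollary, and is exactly what the boundedness of $\rho_3$'s range buys.

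Second, the transfer. I would fix a continuous increasing cofinal map $e:\omega_1\to\delta$ with range the club $C=\{c_\xi\mid\xi<\omega_1\}$, and set $\pi(\eta)=\otp(C\cap\eta)$, so that $\pi(c_\xi)=\xi$. Define $\varphi_\alpha(\eta)=\psi_{\pi(\alpha)}(\pi(\eta))$ when $\eta\in C\cap\alpha$ and $\varphi_\alpha(\eta)=0$ otherwise; note $\pi(\eta)<\pi(\alpha)$ automatically for $\eta\in C\cap\alpha$, so $\psi_{\pi(\alpha)}$ is evaluated inside its domain. The delicate point, and the main obstacle, is preserving $=^*$-coherence: the difference $\varphi_\gamma\!\restriction\!\beta-\varphi_\beta$ is supported on $\{c_\xi\in C\cap\beta:\rho_3(\xi,\pi(\gamma))\neq\rho_3(\xi,\pi(\beta))\}$, which is finite precisely because the support was confined to $C$, where $\pi$ is injective; had one instead spread $\varphi_\alpha$ across the successor intervals determined by $C$, a single exceptional index $\xi$ would contribute an entire interval of ordinals and destroy finiteness. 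Nontriviality transfers back down: given a trivialization $\varphi$ of $\Phi$, the map $\tilde\psi(\xi)=\varphi(c_\xi)$ would trivialize $\{\psi_\beta\}$ on $\omega_1$ — testing $\varphi\!\restriction\!\alpha=^*\varphi_\alpha$ against $\alpha=c_\zeta$ yields $\tilde\psi\!\restriction\!\zeta=^*\psi_\zeta$ — contradicting the previous paragraph. With $\Phi$ so constructed, Lemma~\ref{apply} gives $\mathrm{H}^1(\mathcal{U}_\delta,\mathcal{D}_A)\neq 0$, as desired.
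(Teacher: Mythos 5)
Your proof is correct and follows essentially the same route as the paper's: both scale the $\{0,1\}$-valued nontrivial coherent family $\{\rho_3(\,\cdot\,,\beta)\}$ into an arbitrary nontrivial $A$ via a nonzero $a\in A$, stretch it to $\delta$ along a club $C$ with supports confined to $C$ (the paper's formula (\ref{stretchedrho3}) is exactly your $\varphi_\alpha$), and finish with Lemma~\ref{apply}. The only difference is that you write out the verifications (promotion of nontriviality from $\{0,1\}$ to $A$, and coherence/nontriviality of the stretched family) that the paper leaves to the reader or to \cite{walks}.
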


\begin{proof} Under a mild (ZFC) strengthening of the assumption $(\star)$, the family $$\{\rho_3(\,\cdot\,,\beta)\,|\,\beta\in\omega_1\}$$ is nontrivial coherent in the sense of Definition \ref{fixntc} (see \cite{walks} II.4). We make this assumption. Strictly speaking, $\rho_3$ should be group-valued for Definition \ref{fixntc} to apply, but of course this is the point: for \textit{any} fixed abelian group $A$ and nonzero $a\in A$, the map $e: 0\mapsto 0,\,1\mapsto a$ determines a nontrivial coherent $A$-valued family of functions $\{e\circ\rho_3(\,\cdot\,,\beta)\,|\,\beta\in\omega_1\}$.

Now fix $\delta\in\text{Cof}\,(\omega_1)$ and a closed and cofinal $C_\delta=\{\eta_i^\delta\,|\,i<\omega_1\}\subseteq\delta$. For all $\alpha<\beta<\delta$, let
\smallskip
\begin{equation}\label{stretchedrho3}
  \varphi^{\delta}_\beta(\alpha) =
  \begin{cases}
                                   e\circ\rho_3(\text{otp}(C_\delta\cap\alpha),\text{otp}(C_\delta\cap\beta)) & \text{if $\alpha\in C_\delta$} \\
                                   0 & \text{otherwise.}
  \end{cases}
\end{equation}
\smallskip
Then $\Phi=\{\varphi^{\delta}_\beta:\beta\to A\,|\,\beta\in\delta\}$ is a nontrivial coherent family of functions on $\delta$. Lemma \ref{apply} completes the proof.
\end{proof}
A perfectly analogous argument applies to $\mathcal{A}_d$ and $\rho_2$, for $A=\mathbb{Z}$: by Corollary \ref{locallyconstantcoherence}, $\rho_2$ witnesses that $\mathrm{H}^1(\mathcal{U}_{\omega_1},\mathbb{Z}_d)\neq 0$, again via a function
$$f:\,(\alpha,\beta)\,\mapsto\,\rho_2(\,\cdot\,,\beta)\!\restriction\!\alpha-\,\rho_2(\,\cdot\,,\alpha),$$
just as above. Again a closed and cofinal $C_\delta\subseteq\delta$ ``stretches'' this witness to one for $\mathrm{H}^1(\mathcal{U}_\delta,\mathbb{Z}_d)\neq 0$, via the function
\begin{align}\label{stretchedrho2}\rho_2^\delta(\alpha,\beta):=\rho_2(\text{otp}(C_\delta\cap\alpha),\text{otp}(C_\delta\cap\beta))\hspace{.3 cm}\textnormal{ for }\alpha<\beta<\delta\end{align}
The similarities of these constructions reflect a more general principle --- namely, the essential 
equivalence, in our contexts, of $\mathcal{D}_A$ (encoding the set-theoretic theme of \textit{mod finite} relations) and $\mathcal{A}_d$ (standard in more geometric settings):
\begin{lemma}\label{gddg} For cofinal $C\subseteq\delta\cap\text{Lim}$, the cochain complexes $\mathcal{L}(\mathcal{U}_C,\mathcal{A}_d)$ and \linebreak $\mathcal{L}(\mathcal{U}_C,\mathtt{sh}(\mathcal{D}_A))$ are chain-isomorphic, and the complexes $\mathcal{L}(\mathcal{U}_C,\mathtt{sh}(\mathcal{D}_A))$ and \linebreak $\mathcal{L}(\mathcal{U}_C,\mathcal{D}_A)$ are chain homotopy equivalent. In consequence, $\mathrm{H}^n(\mathcal{U}_C,\mathcal{D}_A)\cong\mathrm{H}^n(\mathcal{U}_C,\mathcal{A}_d)$ for all $n\geq0$.
\end{lemma}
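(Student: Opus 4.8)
The plan is to reduce both comparisons to facts about the three presheaves on the poset of initial segments, exploiting that every nonempty intersection $V_{\vec\alpha}$ occurring in these complexes is simply $[0,\min\vec\alpha)$, and then to feed short exact sequences of presheaves into the long exact cohomology sequence. I first record the local picture: a section over $[0,\lambda)$ is a function on $\lambda$, and $\mathcal{A}_d([0,\lambda))$, $\mathtt{sh}(\mathcal{D}_A)([0,\lambda))$, $\mathcal{D}_A([0,\lambda))$ are respectively the locally constant, the locally finitely supported, and the finitely supported such functions. Because $C\subseteq\delta\cap\mathrm{Lim}$, the forward difference $Df(\gamma)=f(\gamma+1)-f(\gamma)$ makes sense and commutes with restriction to these initial segments (if $\gamma<\lambda$ then $\gamma+1<\lambda$); likewise the inclusion $\mathcal{D}_A\hookrightarrow\mathtt{sh}(\mathcal{D}_A)$ is a presheaf map, inducing a chain map $\iota_*$.

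For the homotopy equivalence of $\mathcal{L}(\mathcal{U}_C,\mathtt{sh}(\mathcal{D}_A))$ and $\mathcal{L}(\mathcal{U}_C,\mathcal{D}_A)$ I would use the short exact sequence of presheaves $0\to\mathcal{D}_A\to\mathtt{sh}(\mathcal{D}_A)\to\mathcal{Q}\to0$, where $\mathcal{Q}$ is the quotient. The decisive observation is that every restriction map of $\mathcal{Q}$ between distinct members of our family vanishes: a locally finitely supported function on $[0,\beta)$ has closed discrete support, which therefore cannot accumulate below $\beta$, so its restriction to any $[0,\alpha)$ with $\alpha<\beta$ is finitely supported and dies in $\mathcal{Q}$. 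Since the coefficient group $\mathcal{Q}([0,\alpha_0))$ depends only on the minimal coordinate and all remaining restrictions are the identity, the complex $\mathcal{L}(\mathcal{U}_C,\mathcal{Q})$ splits as a product, over $\alpha_0\in C$, of the augmented simplicial cochain complexes on the vertex sets $C\cap(\alpha_0,\delta)$ (nonempty, since $C$ is cofinal) with constant coefficients. Each of these is explicitly contractible by the cone homotopy on the least vertex above $\alpha_0$, which exists because $C$ is well ordered, so $\mathcal{L}(\mathcal{U}_C,\mathcal{Q})$ is contractible. I would then lift this cone operation verbatim to $\mathcal{L}(\mathcal{U}_C,\mathtt{sh}(\mathcal{D}_A))$ — inserting the least vertex above the minimum preserves minima, hence the relevant identity restrictions — and check that $dH+Hd$ differs from the identity only by the face maps that lower the minimum, whose outputs are finitely supported. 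This exhibits $\iota_*$ as a chain homotopy equivalence with an explicit inverse.

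For the comparison of $\mathcal{A}_d$ and $\mathtt{sh}(\mathcal{D}_A)$ I would analyze the forward difference. A locally constant $f$ has closed discrete jump set, so $Df$ is locally finitely supported; conversely $Df=g$ is solved by the running sum, so $D\colon\mathcal{A}_d\to\mathtt{sh}(\mathcal{D}_A)$ is a presheaf epimorphism whose kernel is exactly the constant presheaf $\mathcal{C}_A$. The resulting short exact sequence $0\to\mathcal{C}_A\to\mathcal{A}_d\xrightarrow{D}\mathtt{sh}(\mathcal{D}_A)\to0$ yields a short exact sequence of complexes; and since $\mathcal{C}_A$ has all restriction maps equal to the identity, $\mathcal{L}(\mathcal{U}_C,\mathcal{C}_A)$ is the cochain complex of the (contractible) nerve with constant coefficients, whose cohomology is $A$ in degree $0$ and vanishes above. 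The long exact sequence then gives $\mathrm{H}^n(\mathcal{U}_C,\mathcal{A}_d)\cong\mathrm{H}^n(\mathcal{U}_C,\mathtt{sh}(\mathcal{D}_A))$ for all $n\geq1$, and in degree $0$ an extension by a single copy of $A$ that is absorbed into the ambient infinite direct sum. Composing with the previous paragraph gives $\mathrm{H}^n(\mathcal{U}_C,\mathcal{D}_A)\cong\mathrm{H}^n(\mathcal{U}_C,\mathcal{A}_d)$ for every $n$.

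The main obstacle is the chain-level strength of the two assertions, as opposed to the cohomological conclusion. There is no natural presheaf isomorphism $\mathcal{A}_d\cong\mathtt{sh}(\mathcal{D}_A)$ — the constant functions of $\mathcal{A}_d$ have no localized counterpart in $\mathtt{sh}(\mathcal{D}_A)$ — and the formal device of splitting the short exact sequences degreewise is unavailable, since $\mathcal{D}_A([0,\omega))=\bigoplus_\omega A$ is not a direct summand of $\mathtt{sh}(\mathcal{D}_A)([0,\omega))=\prod_\omega A$ (a Baer--Specker phenomenon). The chain isomorphism and the chain homotopy equivalence must therefore be produced by the explicit, non-natural constructions indicated above — the lifted cone homotopy, and the assembly of the degreewise abstract isomorphisms after splitting off the contractible complex $\mathcal{L}(\mathcal{U}_C,\mathcal{C}_A)$ — and verifying their compatibility with the differentials, rather than merely with cohomology, is the real work; the long exact sequences alone already deliver the isomorphisms of cohomology groups asserted in the conclusion.
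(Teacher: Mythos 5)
There is a genuine gap, and it concerns the first assertion of the lemma. Your proposal establishes the chain homotopy equivalence of $\mathcal{L}(\mathcal{U}_C,\mathtt{sh}(\mathcal{D}_A))$ with $\mathcal{L}(\mathcal{U}_C,\mathcal{D}_A)$ and the isomorphisms $\mathrm{H}^n(\mathcal{U}_C,\mathcal{A}_d)\cong\mathrm{H}^n(\mathcal{U}_C,\mathtt{sh}(\mathcal{D}_A))$ for $n\geq 1$, but the asserted chain isomorphism $\mathcal{L}(\mathcal{U}_C,\mathcal{A}_d)\cong\mathcal{L}(\mathcal{U}_C,\mathtt{sh}(\mathcal{D}_A))$ is never constructed: you defer it as ``the real work,'' and you justify the deferral by claiming that no presheaf isomorphism $\mathcal{A}_d\cong\mathtt{sh}(\mathcal{D}_A)$ exists because the constants have no localized counterpart. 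That claim is mistaken, and it is exactly where the paper's proof lives. Since every open in sight is $[0,\alpha_0)$ with $\alpha_0$ a limit, one may send $\varphi\in\mathcal{A}_d([0,\alpha_0))$ to $\beta\mapsto\varphi(\beta)-e_\varphi(\beta)$, where $e_\varphi(\beta)$ is the eventual value of $\varphi$ below $\beta$ and, crucially, $e_\varphi(0):=0$; the basepoint convention localizes the constants (the constant $c$ goes to the function supported at $\{0\}$ with value $c$), the map commutes with restriction, and after composing with a shift moving supports off limit ordinals it is inverted by the running sum --- this is the paper's cochain isomorphism. Alternatively, your own framework closes the gap: your forward-difference epimorphism $D$ admits a \emph{presheaf-level} section, namely the running sum $g\mapsto\bigl(\gamma\mapsto\sum_{\xi<\gamma}g(\xi)\bigr)$, which commutes with restriction to initial segments; hence $0\to\mathcal{C}_A\to\mathcal{A}_d\xrightarrow{D}\mathtt{sh}(\mathcal{D}_A)\to 0$ splits as presheaves and $\mathcal{A}_d\cong\mathcal{C}_A\oplus\mathtt{sh}(\mathcal{D}_A)$. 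Moreover $\mathcal{C}_A$ is also a presheaf direct summand of $\mathtt{sh}(\mathcal{D}_A)$ itself (the functions supported at $\{0\}$), with complement isomorphic to $\mathtt{sh}(\mathcal{D}_A)$ via $\xi\mapsto 1+\xi$; so $\mathcal{A}_d\cong\mathcal{C}_A\oplus\mathtt{sh}(\mathcal{D}_A)\cong\mathtt{sh}(\mathcal{D}_A)$ as presheaves on $\mathcal{U}_C$ after all, and the chain isomorphism follows. These same two observations are also what your one-line degree-zero claim --- that the extension $0\to A\to\mathrm{H}^0(\mathcal{U}_C,\mathcal{A}_d)\to\mathrm{H}^0(\mathcal{U}_C,\mathtt{sh}(\mathcal{D}_A))\to 0$ is ``absorbed'' --- actually requires.

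The parts you do prove are correct and take a genuinely different route from the paper's. For the homotopy equivalence the paper fixes an increasing $\bar m:C\to C$ with $\alpha<\bar m(\alpha)$, takes $m(f)(\vec\alpha)=f(\bar m(\vec\alpha))\restriction\alpha_0$ as the up-to-homotopy inverse of the inclusion, and verifies the equations with prism operators; your argument --- the quotient presheaf $\mathcal{Q}$ has vanishing restriction maps because a closed discrete support meets each compact $[0,\alpha]$ finitely, so $\mathcal{L}(\mathcal{U}_C,\mathcal{Q})$ splits into contractible cones, and the lifted single-vertex cone homotopy has defect exactly the minimum-deleting faces, which land in $\mathcal{D}_A$ --- rests on the same two facts (restriction along a strict inclusion of initial segments is finitely supported; an insertion operator built from a choice function on $C$) but packages them as a cone rather than a prism, which is arguably cleaner. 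The long-exact-sequence derivation of the cohomology isomorphisms for $n\geq 1$ is likewise fine, since the nerve of $\mathcal{U}_C$ is a full simplex. But since the lemma asserts chain-level statements and not merely their cohomological consequence, the missing chain isomorphism is a real gap --- one which, as explained above, is considerably easier to fill than your closing paragraph suggests.
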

\begin{proof} Fix $\gamma\in C$ and define the following maps: for $\varphi\in \mathcal{A}_d(\gamma)$ and $\beta\in\gamma$, let $e_{\varphi}(\beta)$ denote the eventual value of $\varphi$ below $\beta$ (in particular, $e_{\varphi}(\alpha+1)=\varphi(\alpha)$). As $\varphi$ is locally constant, this is well-defined --- except at 0: let $e_{\varphi}(0)=0$. Define then $\partial\varphi\in \mathtt{sh}(\mathcal{D}_A)(\gamma)$ by
$$\partial\varphi(\beta)=\varphi(\beta)-e_{\varphi}(\beta)$$for $\beta\in\gamma$. This operation is reversible: for $\psi\in\mathtt{sh}(\mathcal{D}_A)(\gamma)$, define $\partial^{-1}\psi$ by $$\partial^{-1}\psi(\beta)=\sum_{\alpha\leq\beta}\psi(\alpha)$$for $\beta\in\gamma$. Clearly $\partial^{-1}\circ\partial=\text{id}\,:\,\mathcal{A}_d(\gamma)\rightarrow\mathcal{A}_d(\gamma)$. $\partial^{-1}$ isn't quite an isomorphism from $\mathtt{sh}(\mathcal{D}_A)$ to $\mathcal{A}_d$, though: $\partial^{-1}\psi\notin\mathcal{A}_d(\gamma)$ if $\text{supp}(\psi)\cap\text{Lim}\neq\varnothing$. However, this is easily addressed: for $\psi\in\mathtt{sh}(\mathcal{D}_A)(\gamma)$, define
\smallskip
\begin{equation}\label{bumpup}
  r\psi(\beta) =
  \begin{cases}
                                  \psi(\beta) & \text{if $\beta\in\omega$} \\
                                   0 & \text{if $\beta$ is a limit} \\
                                   \psi(\alpha) & \text{if $\beta=\alpha+1$ and $\beta>\omega$}
  \end{cases}
\end{equation}
\smallskip
  for $\beta\in\gamma$. For $\varphi\in\mathcal{A}_d(\gamma)$, define
\smallskip
\begin{equation}\label{bumpdown}
  r^{-1}\varphi(\beta) =
  \begin{cases}
                                  \varphi(\beta) & \text{if $\beta\in\omega$} \\
                                   \varphi(\beta+1) & \text{otherwise}
  \end{cases}
\end{equation}
\smallskip
  for $\beta\in\gamma$. Then $\partial^{-1}\circ r\,:\,\mathtt{sh}(\mathcal{D}_A)(\gamma)\rightarrow\mathcal{A}_d(\gamma)$ and $r^{-1}\circ \partial\,:\,\mathcal{A}_d(\gamma)\rightarrow\mathtt{sh}(\mathcal{D}_A)(\gamma)$ are inverse, as desired. They also extend to cochain isomorphisms, as the reader may verify: both
\begin{align*}
\mathcal{L}(\mathcal{U}_C,\mathcal{A}_d)\xrightarrow{r^{-1}\partial}\mathcal{L}(\mathcal{U}_C,\mathtt{sh}(\mathcal{D}_A))\xrightarrow{\partial^{-1}r}\mathcal{L}(\mathcal{U}_C,\mathcal{A}_d)\end{align*}
and
\begin{align*}
\mathcal{L}(\mathcal{U}_C,\mathtt{sh}(\mathcal{D}_A))\xrightarrow{\partial^{-1}r}\mathcal{L}(\mathcal{U}_C,\mathcal{A}_d)\xrightarrow{r^{-1}\partial}\mathcal{L}(\mathcal{U}_C,\mathtt{sh}(\mathcal{D}_A))\end{align*}
are the identity.

We show now that the cochain complexes $\mathcal{L}(\mathcal{U}_C,\mathtt{sh}(\mathcal{D}_A))$ and $\mathcal{L}(\mathcal{U}_C,\mathcal{D}_A)$ are chain homotopy equivalent. To this end, we define chain maps $i:\mathcal{L}(\mathcal{U}_C,\mathcal{D}_A)\to\mathcal{L}(\mathcal{U}_C,\mathtt{sh}(\mathcal{D}_A))$  and $m:\mathcal{L}(\mathcal{U}_C,\mathtt{sh}(\mathcal{D}_A))\to\mathcal{L}(\mathcal{U}_C,\mathcal{D}_A)$ and chain homotopies \linebreak$s:\mathcal{L}(\mathcal{U}_C,\mathtt{sh}(\mathcal{D}_A))\to\mathcal{L}(\mathcal{U}_C,\mathtt{sh}(\mathcal{D}_A))$ and $t:\mathcal{L}(\mathcal{U}_C,\mathcal{D}_A)\to\mathcal{L}(\mathcal{U}_C,\mathcal{D}_A)$ such that \begin{align}\label{eleven}
im-\mathrm{id} & =ds+sd \\ \label{twelve} mi-\mathrm{id} & =dt+td
\end{align}
both hold.

The map $i$ is simply inclusion: for $f\in L^k(\mathcal{U}_C,\mathcal{D}_A)$ and $\vec{\alpha}\in [C]^{k+1}$ let $i(f)(\vec{\alpha})=f(\vec{\alpha})$. To define $m$, first fix an $\bar{m}:C\to C$ with $\alpha<\bar{m}(\alpha)<\bar{m}(\beta)$ for all $\alpha<\beta$ in $C$. Denote also by $\bar{m}$ this map's natural extension to maps of ordered tuples $(\alpha_0,\dots,\alpha_j,\dots)\mapsto(\bar{m}(\alpha_0),\dots,\bar{m}(\alpha_j),\dots)$. For $f\in L^k(\mathcal{U}_C,\mathtt{sh}(\mathcal{D}_A))$ and $\vec{\alpha}\in [C]^{k+1}$ let $m(f)(\vec{\alpha})=f(\bar{m}(\vec{\alpha}))\!\restriction\!\alpha_0$. We omit restriction-notations for the remainder of this proof. 

The sequence $s=\langle\,s^k: L^k(\mathcal{U}_C,\mathtt{sh}(\mathcal{D}_A))\to L^{k-1}(\mathcal{U}_C,\mathtt{sh}(\mathcal{D}_A))\,|\,k\in\omega\,\rangle$ is defined as follows: for $\vec{\alpha} \in [C]^k$ and $k\geq 1$ and $f\in L^k(\mathcal{U}_C,\mathtt{sh}(\mathcal{D}_A))$,
$$s^k(f)(\vec{\alpha})=\displaystyle\sum_{j=0}^{k-1} \,(-1)^j f(\alpha_0,\dots,\alpha_j,\bar{m}(\alpha_j),\dots,\bar{m}(\alpha_k))$$
Necessarily, $s^0$ the zero map; hence the relation
$$f(\bar{m}(\alpha))-f(\alpha)=d^0(f)(\alpha,\bar{m}(\alpha))=0+s^1d^0(f)(\alpha)$$
witnesses the degree-zero instance of equation \ref{eleven}. Equation \ref{eleven} follows more generally from the relation below holding for any $f\in L^k(\mathcal{U}_C,\mathtt{sh}(\mathcal{D}_A))$ with $k\geq 1$: $$f(\bar{m}(\vec{\alpha}))-f(\vec{\alpha})=d^{k-1}s^k(f)(\vec{\alpha})+s^{k+1}d^k(f)(\vec{\alpha})$$
As the reader may verify, the essential point above is that  the terms on the right-hand side with ``hybrid'' arguments $(\alpha_0,\dots,\alpha_j,\bar{m}(\alpha_j),\dots,\bar{m}(\alpha_k))$ all cancel.\footnote{This equation and that for the chain-homotopy $s$ bear comparison with the prism operators standard, for example, in arguments for the homotopy invariance of singular homology (cf. \cite[p. 112]{hatcher}).} Clearly an identical construction and argument will establish equation \ref{twelve} as well; this concludes the proof of the theorem.
\end{proof}

\begin{corollary} \label{omega_one_cor}
Let $\delta$ be an ordinal of cofinality $\omega_1$. Then $\mathrm{H}^1(\mathcal{U}_\delta,\mathcal{A}_d)\neq 0$ for any nontrivial abelian group $A$.
\end{corollary}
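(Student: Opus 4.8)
The plan is to transport the $\mathcal{D}_A$-witness furnished by Corollary \ref{DA} across the correspondence of Lemma \ref{gddg}. By Corollary \ref{DA} together with Lemma \ref{apply}, there is a nontrivial coherent family $\Phi=\{\varphi_\beta:\beta\to A\,|\,\beta\in\delta\}$ with $0\neq[f^\Phi]\in\mathrm{H}^1(\mathcal{U}_\delta,\mathcal{D}_A)$. Since Lemma \ref{gddg} is stated only for covers $\mathcal{U}_C$ indexed by a cofinal set $C$ of \emph{limit} ordinals, I would first pass to such a cover, apply the lemma there, and only afterward return to $\mathcal{U}_\delta$.

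To pass to $\mathcal{U}_C$, set $C=\delta\cap\mathrm{Lim}$; as $\cf(\delta)=\omega_1>\omega$, this $C$ is club, hence cofinal, in $\delta$. The restriction $\Phi\!\restriction\!C=\{\varphi_\beta\,|\,\beta\in C\}$ is visibly still coherent, and it remains nontrivial: a $\varphi$ with $\varphi\!\restriction\!\beta\,=^*\varphi_\beta$ for all $\beta\in C$ would, by coherence together with the cofinality of $C$, satisfy $\varphi\!\restriction\!\beta'\,=^*\varphi_{\beta'}$ for every $\beta'\in\delta$, contradicting the nontriviality of $\Phi$. The argument of Lemma \ref{apply} then applies verbatim with $\mathcal{U}_C$ in place of $\mathcal{U}_\delta$, so $\mathrm{H}^1(\mathcal{U}_C,\mathcal{D}_A)\neq 0$, and Lemma \ref{gddg} gives $\mathrm{H}^1(\mathcal{U}_C,\mathcal{A}_d)\cong\mathrm{H}^1(\mathcal{U}_C,\mathcal{D}_A)\neq 0$.

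It remains to return from $\mathcal{U}_C$ to $\mathcal{U}_\delta$, which is where I expect the real work. The route I would take is to observe that the nonzero class in $\mathrm{H}^1(\mathcal{U}_C,\mathcal{A}_d)$ is again of the form $[f^\Psi]$ for a family $\Psi=\{\psi_\beta:\beta\to A\,|\,\beta\in C\}$ that is coherent and nontrivial \emph{modulo locally constant functions} (the $=^c$-relations of Corollary \ref{locallyconstantcoherence}); indeed, tracing the explicit chain equivalence of Lemma \ref{gddg} through $f^\Phi$ should exhibit such a $\Psi$, obtained by applying the partial-sum operator $\partial^{-1}$ of that proof to $\Phi$ coordinatewise. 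One then checks the evident $\mathcal{A}_d$-analogue of Lemma \ref{apply}: a family whose successive differences $\psi_\gamma\!\restriction\!\beta-\psi_\beta$ are locally constant but which admits no global $\psi:\delta\to A$ with $\psi\!\restriction\!\beta\,=^c\psi_\beta$ witnesses $\mathrm{H}^1(\mathcal{U}_\delta,\mathcal{A}_d)\neq 0$ (its proof copies that of Lemma \ref{apply}, the only change being that the $\psi_\beta$ are now arbitrary rather than finitely-supported-error approximations). Finally I would extend $\Psi$ from $C$ to all of $\delta$, setting $\psi_\beta=\psi_\alpha\!\restriction\!\beta$ for any $\alpha\in C$ above $\beta$; this is well-defined up to a locally constant function, which is all that $=^c$-coherence requires, and $=^c$-nontriviality is preserved under such cofinal extension exactly as above.

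The main obstacle is thus the passage between the full cover $\mathcal{U}_\delta$ and the sub-cover $\mathcal{U}_C$: because Lemma \ref{gddg} speaks only to covers by limit ordinals, one must verify that nonvanishing of $\mathrm{H}^1$ transfers in both directions. I would handle this through the coherent-family characterizations above, which transfer transparently under restriction to, and extension from, a cofinal set; alternatively, one could show directly that the refinement map $\mathrm{H}^1(\mathcal{U}_C,\mathcal{P})\to\mathrm{H}^1(\mathcal{U}_\delta,\mathcal{P})$ is an isomorphism for each $\mathcal{P}\in\{\mathcal{D}_A,\mathcal{A}_d\}$, by building a chain homotopy equivalence of $\mathcal{L}(\mathcal{U}_C,\mathcal{P})$ and $\mathcal{L}(\mathcal{U}_\delta,\mathcal{P})$ modeled on the increasing self-map and prism operators already deployed in the proof of Lemma \ref{gddg}.
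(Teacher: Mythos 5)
Your proposal is correct, and its spine --- transporting the witness of Corollary \ref{DA} across the $\mathcal{D}_A$/$\mathcal{A}_d$ equivalence of Lemma \ref{gddg} --- is exactly the derivation the paper intends (the corollary is stated there without an explicit proof). Where you diverge is in the step you flag as ``the real work'': the passage between the full cover $\mathcal{U}_\delta$ and the limit-indexed subcover $\mathcal{U}_C$. Your hands-on treatment of it is sound: restriction of a nontrivial coherent family to a club of indices is indeed still nontrivial coherent, the composite $\partial^{-1}r$ of Lemma \ref{gddg} does carry $f^\Phi$ to $f^\Psi$ for the coordinatewise-transformed family $\Psi$ (both $\partial^{-1}$ and $r$ commute with restriction to initial segments), the $=^c$-analogue of Lemma \ref{apply} holds by the same gluing argument, and your extension $\psi_\beta=\psi_\alpha\!\restriction\!\beta$ is well defined modulo locally constant functions and preserves $=^c$-coherence and $=^c$-nontriviality. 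But this labor can be short-circuited: $\mathcal{U}_C$ and $\mathcal{U}_\delta$ refine each other ($\mathcal{U}_C\subseteq\mathcal{U}_\delta$, and every $\alpha\in\mathcal{U}_\delta$ is contained in some $\beta\in C$ by cofinality), so by the independence of the induced maps from the choice of refining map, recorded in Definition \ref{chc}, the two compositions of refinement maps are identities and $\mathrm{H}^1(\mathcal{U}_\delta,\mathcal{P})\cong\mathrm{H}^1(\mathcal{U}_C,\mathcal{P})$ for \emph{every} presheaf $\mathcal{P}$, with no explicit families, chain homotopies, or prism operators needed. This mutual-refinement observation is in effect how the paper handles the matter (compare the chain of isomorphisms through $\mathcal{U}_C$ in the proof of Theorem \ref{combicasting}); with it in hand, the corollary reduces to Corollary \ref{DA}, one cover-equivalence in each direction, and Lemma \ref{gddg} in between.
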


Operative in the above arguments was the following:
\begin{observation}\label{transformation} Nontrivial coherence exhibits degrees of transformation invariance befitting a homological property. In particular, we will increasingly view the following principles as routine:
\begin{enumerate}
\item[(i)] Shifts $($as in \textnormal{(\ref{bumpup}), (\ref{bumpdown})}$))$ and re-scalings $($as in \textnormal{(\ref{stretchedrho3}), (\ref{stretchedrho2}}$))$ of a nontrivial coherent family are again nontrivial coherent. In particular, cofinal $C_\delta\subseteq\delta$ of order-type $\text{cf}(\delta)=\kappa$ ``stretch'' or propagate nontrivial coherence from $\kappa$ to $\delta$. This holds for higher-order coherence as well; for this reason, in Section \ref{sectiontwo}, we almost exclusively consider regular cardinals $\kappa$, leaving arguments' extensions to the class of ordinals $\text{Cof}(\kappa)$ to the reader.
\item[(ii)] Fix $\eta\in\delta$ and cofinal $C\subseteq(\eta,\delta)$. A rough converse to (1) is that \textit{the restriction, $\Phi[\eta,C]:=\{\varphi_\beta\!\!\upharpoonright\![\eta,\beta)|\,\beta\in C\}$, of a nontrivial coherent $\Phi=\{\varphi_\beta\,|\,\beta\in\delta\}$ to cofinal indices and/or to cobounded domain is again nontrivial coherent.} 
\end{enumerate}
\end{observation}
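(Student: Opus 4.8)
The plan is to read Observation \ref{transformation} as a bundle of three preservation statements and, for each, to verify the two clauses of Definition \ref{fixntc} directly: coherence (a symmetric-difference count) and nontriviality (the manufacture of a trivialization of one object from one of the other). I would begin with the \emph{re-scalings}, the substantive half of (i). Fix $\kappa=\mathrm{cf}(\delta)$, a closed cofinal $C=\{\eta_i\mid i<\kappa\}\subseteq\delta$ enumerated increasingly, and a nontrivial coherent $\Psi=\{\psi_j:j\to A\mid j\in\kappa\}$. Define the stretched family exactly as in (\ref{stretchedrho3}) and (\ref{stretchedrho2}) by
\[
\varphi_\beta(\alpha)=\psi_{\mathrm{otp}(C\cap\beta)}\big(\mathrm{otp}(C\cap\alpha)\big)\text{ for }\alpha\in C,\qquad \varphi_\beta(\alpha)=0\text{ otherwise.}
\]
The crucial structural feature for coherence is that the \emph{level} $\mathrm{otp}(C\cap\alpha)$ assigned to a point $\alpha$ is independent of the index $\beta$; hence for $\beta<\gamma$ the functions $\varphi_\gamma\restriction\beta$ and $\varphi_\beta$ disagree only at those $\eta_i\in C\cap\beta$ where $\psi_{\mathrm{otp}(C\cap\gamma)}(i)\neq\psi_{\mathrm{otp}(C\cap\beta)}(i)$, a finite set by coherence of $\Psi$ and injectivity of $i\mapsto\eta_i$. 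For nontriviality I would pull a putative trivialization $\varphi:\delta\to A$ back along the enumeration: setting $\psi(i):=\varphi(\eta_i)$ and evaluating $\varphi\restriction\eta_j=^*\varphi_{\eta_j}$ on the subset $C\cap\eta_j$ yields $\psi\restriction j=^*\psi_j$ for every $j<\kappa$, trivializing $\Psi$, a contradiction. The ``in particular'' clause then follows, since this construction sends a nontrivial coherent family on $\kappa$ to one on $\delta$.

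For the \emph{shifts} of (\ref{bumpup}) and (\ref{bumpdown}) I would isolate the three abstract properties that make them work and check them once: such a transformation $\sigma$ is (a) computed from each input by a rule of bounded \emph{downward} dependence, uniform in the index, so that it commutes exactly with restriction, $\sigma(\varphi_\beta)\restriction\beta'=\sigma(\varphi_\beta\restriction\beta')$; (b) $=^*$-preserving, since altering finitely many input values alters finitely many output values; and (c) invertible, with inverse again of this form. Properties (a) and (b) force coherence to transfer, via $\sigma(\varphi_\gamma)\restriction\beta=\sigma(\varphi_\gamma\restriction\beta)=^*\sigma(\varphi_\beta)$, and send a global trivialization $\varphi$ to a trivialization $\sigma(\varphi)$ of the shifted family; invertibility (c) lets the same reasoning run in reverse, so nontrivial coherence is preserved in both directions.

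For part (ii) I would fix $\eta\in\delta$ and cofinal $C\subseteq(\eta,\delta)$ and work with $\Phi[\eta,C]$ indexed by $C$ with domains $[\eta,\beta)$. Coherence is immediate because restricting either the domain or the index only shrinks symmetric differences: $(\varphi_\gamma\restriction[\eta,\gamma))\restriction[\eta,\beta)=(\varphi_\gamma\restriction\beta)\restriction[\eta,\beta)=^*\varphi_\beta\restriction[\eta,\beta)$. The nontriviality clause is the heart of the matter and I would argue it by extending a partial trivialization to a global one. Given $\varphi:[\eta,\delta)\to A$ with $\varphi\restriction[\eta,\beta)=^*\varphi_\beta\restriction[\eta,\beta)$ for all $\beta\in C$, I would glue on $\varphi_{\beta_0}\restriction\eta$ for a fixed $\beta_0\in C$ to obtain $\tilde\varphi:\delta\to A$. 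Then for $\beta\leq\eta$ coherence of $\Phi$ gives $\tilde\varphi\restriction\beta=^*\varphi_\beta$ directly, while for $\beta>\eta$ I would bridge to some $\gamma\in C$ above $\beta$: on $[\eta,\beta)$ one has $\varphi=^*\varphi_\gamma=^*\varphi_\beta$ (the trivialization hypothesis at $\gamma$ and then coherence of $\Phi$, both restricted), and on $[0,\eta)$ one has $\tilde\varphi=\varphi_{\beta_0}\restriction\eta=^*\varphi_\beta\restriction\eta$ by coherence. Hence $\tilde\varphi$ trivializes $\Phi$, contradicting its nontriviality.

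I expect the main obstacle in all three cases to be precisely the nontriviality direction, namely the reconstruction of a trivialization of the original object from one of the transformed object: the pullback along $i\mapsto\eta_i$ for re-scalings, invertibility for shifts, and the extension-and-bridging step of the last paragraph for restrictions, which is the one place where cofinality of $C$ and coherence of $\Phi$ are both genuinely used. I would close by remarking that each of these arguments is pure mod-finite bookkeeping, insensitive to the arity of the tuples involved, so all three transfer verbatim to the higher-dimensional ($n$-coherent) families of Section \ref{I.3} once those are defined.
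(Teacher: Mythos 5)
Your verification is correct and is exactly what the paper intends: Observation \ref{transformation} is stated without proof, and the routine checks you supply --- coherence by counting disagreements under the level map $\alpha\mapsto\mathrm{otp}(C\cap\alpha)$, nontriviality by pulling a putative trivialization back along $i\mapsto\eta_i$, by inverting the pointwise relabelings $r$, $r^{-1}$, and by gluing $\varphi_{\beta_0}\!\restriction\!\eta$ to the tail trivialization and bridging through some $\gamma\in C$ above $\beta$ --- are precisely the arguments exemplified and left to the reader in Corollary \ref{DA}, Lemma \ref{gddg}, and Lemma \ref{iff}. The one caveat concerns your closing remark: the transfer to $n$-coherent families is not quite \emph{verbatim} for part (ii), since for $n>1$ an $n$-trivialization of $\Phi[\eta,C]$ is a family indexed by $(n-1)$-tuples and extending it to an $n$-trivialization of $\Phi$ requires the correction mechanism of the lemma inside the proof of Theorem \ref{goblot} (an $n$-trivialization of an initial segment must be adjusted, via an $(n-1)$-coherent difference, to agree with the given tail data) rather than your one-step bridge --- but this touches only a side remark, not the stated observation, whose $n=1$ content you prove in full.
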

\noindent Both properties hold much more generally. They hold, in particular, for all non-$n$-trivial $n$-coherence relations; these we now define.

\subsection{Higher dimensional coherence}\label{I.3}\hfill

Henceforth for readability we leave many of the more obvious restrictions unexpressed, writing $\varphi_{\vec{\alpha}^0}$ for $\varphi_{\vec{\alpha}^0}\!\restriction\!\alpha_0$, for example, as in the sum (\ref{ncoh}) below. The nontrivial coherence of Definition \ref{fixntc} is the case $n=1$ of the following:

\begin{definition}\label{highernontriv}
Let $n$ be a positive integer and let $\Phi_n=\{\varphi_{\vec{\alpha}}:\alpha_0\rightarrow A\,|\,\vec{\alpha}\in [\delta]^{n}\}$ 
be a family of functions. (We say in this case that $\Phi_n$ is \emph{$A$-valued} and \emph{of height} $\delta$.) 
\begin{itemize}
	\item $\Phi_n$ is \emph{$n$-coherent} if 
		\begin{align}
			\label{ncoh}\sum_{i=0}^{n} (\text{-}1)^i\varphi_{\vec{\alpha}^i}=^{*} 0
		\end{align}
		for all $\vec{\alpha}\in [\delta]^{n+1}$.
	\item $\Phi_n$ is \emph{$n$-trivial} if:
		\begin{itemize}
			\item $n = 1$ and $\Phi_n$ is trivial, or
			\item $n > 1$ and there exists $\Psi_{n-1}=\{\psi_{\vec{\alpha}}:\alpha_0\rightarrow A\,|\,\vec{\alpha}\in [\delta]^{n-1}\}$ 
				such that
				\begin{align}\label{ntriv}
					\sum_{i=0}^{n-1} (\text{-}1)^i\psi_{\vec{\alpha}^i}=^{*} \varphi_{\vec{\alpha}}
				\end{align}
				for all $\vec{\alpha}\in [\delta]^{n}$.
		\end{itemize}
\end{itemize}
\end{definition} 

Observe that non-$n$-trivial $n$-coherence so defined exhibits the signature features of nontrivial coherence. Namely:
\begin{enumerate}
\item[(i)] \emph{Rigidity of dimension}. Non-$n$-trivial $n$-coherence names a (mod finite) redundancy of $(n+1)$-dimensional data (\ref{ncoh}) that defies approximation (\ref{ntriv}), nevertheless, on any smaller number of dimensions.
\item[(ii)] \emph{Incompactness}. Just as for the case $n=1$, $n$-coherence and $n$-triviality are two faces of the same thing: a family is $n$-coherent if and only if its proper initial segments are each $n$-trivial. Equivalently, a family is $n$-trivial if and only if some $n$-coherent family properly extends it.
In either view, non-$n$-trivial $n$-coherence on $\delta$ marks an abrupt \textit{change in behavior at}  $\delta$ --- a phenomenon known in set-theoretic circles as \emph{incompactness}, as noted above \cite{compactness}. This may be clearest in an example.
\end{enumerate}
\begin{example}\label{elpmaxe}
$\Phi_2=\{\varphi_{\alpha\beta}:\alpha\rightarrow A\,|\,(\alpha,\beta)\in [\delta]^2\}$ is 2-coherent if
\begin{align}\label{113}\varphi_{\beta\gamma}\!\restriction\!\alpha-\,\varphi_{\alpha\gamma}+\varphi_{\alpha\beta}=^{*}0\hspace{.45 cm}\textnormal{ for all }(\alpha,\beta,\gamma)\in [\delta]^3\end{align}
%(The reader might compare the forms of (\ref{113}) and (\ref{1pt5}), above; their relation reflects the sequence (\ref{aitchtwo}), below.)
$\Phi_2$ is 2-trivial if there exists a $\Psi_1=\{\psi_\alpha\,|\,\alpha\in\delta\}$ such that
\begin{align*}\psi_\beta\!\restriction\!\alpha-\,\psi_\alpha=^{*}\varphi_{\alpha\beta}\hspace{.45 cm}\textnormal{ for all }(\alpha,\beta)\in [\delta]^2\end{align*} or equivalently if \begin{align}\label{114}-\psi_\beta\!\restriction\!\alpha-\,-\psi_\alpha+\varphi_{\alpha\beta}=^{*}0\end{align}for all $(\alpha,\beta)\in [\delta]^2$. By (\ref{113}), the family $\{\psi_\beta:=-\varphi_{\beta\gamma}\,|\,\beta\in\gamma\}$ fulfills (\ref{114}), for all $(\alpha,\beta)\in [\gamma]^2$. In other words, this family 2-trivializes the initial segment $\{\varphi_{\alpha\beta}\,|\,(\alpha,\beta)\in [\gamma]^2\}$ of $\Phi_2$.

This holds very generally: fix $n>1$ and let $\Phi_n=\{\varphi_{\vec{\alpha}}:\alpha_0\rightarrow A\,|\,\vec{\alpha}\in [\delta]^n\}$ be $n$-coherent, and let $\Phi_n^\gamma:=\{\varphi_{\vec{\alpha}\gamma}\,|\,\vec{\alpha}\in[\gamma]^{n-1}\}$ and $\Phi_n\!\restriction\!\gamma\,:=\{\varphi_{\vec{\alpha}}\,|\,\vec{\alpha}\in [\gamma]^n\}$, for any $\gamma<\delta$. The fact that \begin{align}(-1)^{n+1}\Phi_n^\gamma\,\textnormal{ $n$-trivializes }\,\Phi_n\!\!\upharpoonright\!{\gamma}\end{align} is then immediate from Definition \ref{highernontriv}.
%Returning to $n=2$, observe that for any $\gamma\delta\in [\kappa]^2$, $$\Phi_2^\delta\!\!\upharpoonright_\gamma -\Phi_2^\gamma\,:=\{\varphi_{\alpha\delta}-\,\varphi_{\alpha\gamma}\,|\,\alpha\in\gamma\}$$ is a $1$-trivial family (with trivialization $\varphi_{\gamma\delta}$). This, too, holds very generally: \linebreak 
Relatedly, and almost as immediately,
\begin{align}\label{exercise}\Phi_n^\delta\!\restriction\!\gamma -\,\Phi_n^\gamma\,:=\{\varphi_{\vec{\alpha}\delta}-\varphi_{\vec{\alpha}\gamma}\,|\,\vec{\alpha}\in[\gamma]^{n-1}\}\hspace{.081 cm}\textnormal{ is ($n-1$)-trivial}.\end{align}Of course,\begin{align}\label{cook}\Phi_n=\bigcup_{\gamma\in\delta}\Phi_n^\gamma\end{align}and this fact, together with (\ref{exercise}), shapes the following view:
\end{example}
\begin{enumerate}
\item[(iii)] \emph{An $n$-coherent $\Phi_n$ is a family of families of functions, agreeing modulo  $(n-1)$-} \linebreak \emph{triviality.}

This relates to the case $n=1$ in several ways:
\begin{enumerate}
\item In Corollary \ref{aronszajn}, the initial segments of a 1-coherent $$\Phi_1=\{\varphi_\gamma\,|\,\gamma\in\delta\}=\bigcup_{\gamma\in\delta}\{\varphi_\gamma\}$$ form a height-$\delta$ tree of functions $T(\Phi_1)$ possessing a cofinal branch if and ony if $\Phi_1$ is 1-trivial. Decomposition (\ref{cook}) determines a tree of \textit{families of} functions $T(\Phi_n)$, again of height $\delta$ and possessing a cofinal branch only if $\Phi_n$ is $n$-trivial.
\item This view suggests an equation of \emph{0-trivial} with \emph{finitely supported.} The degree zero nontrivial coherent objects are then, on this suggestion, the very ``material'' of non-1-trivial 1-coherence: they are cofinal sets $C_\beta\subseteq\beta$ of order-type $\text{cf}(\beta)=\omega$, the fundamental inputs in the arguments of Section \ref{I.1}. Such sets are sometimes termed \emph{ladders}.
\end{enumerate}
\end{enumerate}
\begin{definition}\label{zerotriv} A function $\varphi:\beta\rightarrow A$ is \emph{0-trivial} if $$\varphi=^{*} 0$$ i.e., if $\varphi$ has finite support. Just as for higher $n$, then, \emph{0-coherent} means \emph{0-trivial on all initial segments}: $\varphi$ is \textit{0-coherent} if
$$\varphi\!\restriction\!\alpha\,=^{*} 0\hspace{.334 cm}\textnormal{ for all }\alpha<\beta.$$
\end{definition}
Hence a non-0-trivial 0-coherent function is simply a $\varphi:\beta\rightarrow A$ supported on some ladder $C_\beta\subseteq\beta\in\text{Cof}(\omega)$. Under this definition, non-0-trivial 0-coherence displays as plain an affinity for $\omega$ as non-1-trivial 1-coherence does for $\omega_1$: for both $n=0$ and $n=1$, the least ordinal and, consistently, the only cofinality,\footnote{For the $n=1$ case, see Theorems \ref{goblot}, \ref{stronglycmpct}, and \ref{pidtheorem}, below.} admitting non-$n$-trivial $n$-coherent functions, is $\omega_n$.

We note lastly that, as mentioned, higher non-$n$-trivial $n$-coherence has all the ``resilience'' of classical nontrivial coherence:
\begin{enumerate}
\item[(iv)] \emph{Transformation invariance}. The restriction $\Phi[\eta,C]$ (as in Observation \ref{transformation}) of a non-$n$-trivial $n$-coherent family $\Phi$ is again non-$n$-trivial $n$-coherent. This is largely because, as noted, $n$-coherent families are $n$-trivial on initial segments, so that global $n$-triviality is a question only really of the tail.
\end{enumerate}

\subsection{Higher dimensional cohomology}\label{I.4}\hfill

Higher coherence and triviality, of course, are above all cohomological phenomena: for non-$n$-trivial $n$-coherent $\Phi=\{\varphi_{\vec{\alpha}}:\alpha_0\rightarrow A\,|\,\vec{\alpha}\in [\delta]^{n}\}$ define, as before, elements $f^\Phi$ of $L^n(\mathcal{U}_\delta,\mathcal{D}_A)$ by
$$f^\Phi:\,\vec{\alpha}\mapsto \sum_{i=0}^{n} (\text{-}1)^i\varphi_{\vec{\alpha}^i}$$
Again, then,
$$0\neq[f^\Phi]\in \mathrm{H}^n(\mathcal{U}_\delta,\mathcal{D}_A)$$
as the reader may verify. In fact something stronger is true: that \textit{all} the elements of $\mathrm{H}^n(\mathcal{U}_\delta,\mathcal{D}_A)$ are of this form, and that the map $[\Phi]\mapsto [f^\Phi]$ defines an isomorphism between ``mod finite'' cohomology groups and ``finite support'' cohomology groups of the next degree (see (\ref{readone}) and (\ref{readtwo}), below). This, in turn, is an instance of the following.

\begin{definition} Let $\mathcal{P}_A$ denote a presheaf of functions to $A$ (recall that $\mathcal{D}_A$, $\mathcal{A}_d$, and $\mathcal{A}$ are examples).\footnote{More generally, any sheaf is a presheaf of functions to the union $A$ of its stalks \cite{bredonsheaves}.} Write $\mathcal{E}_A$ for the presheaf $U\mapsto \prod_U A$, and $\mathcal{F}_A$ for $\mathcal{E}_A/\mathcal{P}_A:U\mapsto(\mathcal{E}_A(U)/\mathcal{P}_A(U))$.
\end{definition}

Throughout this section, view the presheaves in question as all defined on some fixed $\delta$ of uncountable cofinality.

The natural inclusion and quotient maps,  $\mathcal{P}_A\rightarrow\mathcal{E}_A$ and $\mathcal{E}_A\rightarrow\mathcal{F}_A$, respectively, induce a short exact sequence of cochain complexes ---
\begin{align}
\mathcal{L}(\mathcal{V},\mathcal{P}_A)\rightarrow \mathcal{L}(\mathcal{V},\mathcal{E}_A)\rightarrow\mathcal{L}(\mathcal{V},\mathcal{F}_A)
\end{align}
--- and, in consequence, induce the long exact sequence
\begin{align}
&\mathrm{H}^0(\mathcal{V},\mathcal{P}_A)\hookrightarrow \mathrm{H}^0(\mathcal{V},\mathcal{E}_A)\rightarrow\mathrm{H}^0(\mathcal{V},\mathcal{F}_A)\xrightarrow{d}\mathrm{H}^1(\mathcal{V},\mathcal{P}_A)\rightarrow\mathrm{H}^1(\mathcal{V},\mathcal{E}_A)\rightarrow\dots\label{aitchone}\\ &\dots\rightarrow\mathrm{H}^{n}(\mathcal{V},\mathcal{E}_A)\rightarrow\mathrm{H}^n(\mathcal{V},\mathcal{F}_A)\xrightarrow{d} \mathrm{H}^{n+1}(\mathcal{V},\mathcal{P}_A)\rightarrow\mathrm{H}^{n+1}(\mathcal{V},\mathcal{E}_A)\rightarrow\dots \label{aitchtwo}
\end{align}
We leave to the reader the straightforward verification that $\mathrm{H}^n(\,\cdot\, ,\mathcal{E}_A)=0$ for $n>0$; one might more abstractly deduce this from the fact that $\mathcal{E}_A$ is \textit{flasque}, i.e., flabby \cite{bredonsheaves}. By (\ref{aitchone}), then, 
\begin{align}\label{readone}
\frac{\mathrm{H}^0(\mathcal{V},\mathcal{F}_A)}{\text{im}(\mathrm{H}^0(\mathcal{V},\mathcal{E}_A))}\stackrel{d}{\cong}\mathrm{H}^1(\mathcal{V},\mathcal{P}_A)
\end{align}
Specialization to the case of $\mathcal{V}=\mathcal{U}_\delta$ and $\mathcal{P}_A=\mathcal{D}_A$ reads \begin{align*}\mathrm{H}^1(\mathcal{U}_\delta,\mathcal{D}_A) \textit{ is the group of coherent families of $A$-valued functions indexed by $\delta$,} \\ \textit{quotiented by the group of trivial families of $A$-valued functions indexed by $\delta$.}\end{align*} The group operation here is the natural one: for coherent $\Phi=\{\varphi_\alpha\,|\,\alpha\in\delta\}$ and $\Psi=\{\psi_\alpha\,|\,\alpha\in\delta\}$, the sum $\Phi+\Psi=\{\varphi_\alpha+\psi_\alpha\,|\,\alpha\in\delta\}$ is again coherent. The map $d$ is the familiar $[\Phi]\mapsto [f^{\Phi}]$.

Similarly, (\ref{aitchtwo}) specializes to
\begin{align}\label{readtwo}\mathrm{H}^n(\mathcal{U}_\delta,\mathcal{F}_A)\cong\mathrm{H}^{n+1}(\mathcal{U}_\delta,\mathcal{D}_A)\hspace{.5 cm}\textnormal{ for }n>1,
\end{align}
again via the mapping $[\Phi]\mapsto [f^{\Phi}]$. This relation, of course, determined Definition \ref{highernontriv}, through which
the above reading generalizes:

\begin{theorem}\label{groupreading} For $n>0$, $\mathrm{H}^n(\mathcal{U}_\delta,\mathcal{D}_A)$ is the group of $n$-coherent families of $A$-valued functions indexed by $\delta$, quotiented by the group of $n$-trivial families of $A$-valued functions indexed by $\delta$.\end{theorem}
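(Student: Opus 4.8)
The plan is to deduce the statement from the long exact sequence (\ref{aitchone})--(\ref{aitchtwo}) together with the isomorphisms (\ref{readone}) and (\ref{readtwo}) it yields, by reading off explicitly what the cocycles and coboundaries of the quotient complex $\mathcal{L}(\mathcal{U}_\delta,\mathcal{F}_A)$ are. The connecting homomorphism $d$ furnishes $\mathrm{H}^{n-1}(\mathcal{U}_\delta,\mathcal{F}_A)\cong\mathrm{H}^{n}(\mathcal{U}_\delta,\mathcal{D}_A)$ for $n\geq 2$, and its $n=1$ analogue is (\ref{readone}); moreover this map is precisely the $[\Phi]\mapsto[f^\Phi]$ of the preceding discussion. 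So it suffices to identify $\mathrm{H}^{n-1}(\mathcal{U}_\delta,\mathcal{F}_A)$ with the group of $n$-coherent families modulo the group of $n$-trivial families.

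First I would unwind the cochain groups of $\mathcal{L}(\mathcal{U}_\delta,\mathcal{F}_A)$. Since $V_{\vec\alpha}=\alpha_0$ for the cover $\mathcal{U}_\delta$, an element of $L^{n-1}(\mathcal{U}_\delta,\mathcal{F}_A)$ is a family $\{[\varphi_{\vec\alpha}]\,|\,\vec\alpha\in[\delta]^{n}\}$ of cosets in $\mathcal{F}_A(\alpha_0)=(\prod_{\alpha_0}A)/(\bigoplus_{\alpha_0}A)$, i.e.\ of functions $\varphi_{\vec\alpha}:\alpha_0\to A$ read modulo finite support. The cocycle condition $d^{n-1}[\Phi]=0$ in $\mathcal{F}_A$ then reads $\sum_{i=0}^{n}(\text{-}1)^i\varphi_{\vec\alpha^i}=^{*}0$ for all $\vec\alpha\in[\delta]^{n+1}$, which is exactly the $n$-coherence relation (\ref{ncoh}). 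Likewise $[\Phi]=d^{n-2}[\Psi]$ for some $\Psi=\{\psi_{\vec\alpha}:\alpha_0\to A\,|\,\vec\alpha\in[\delta]^{n-1}\}$ reads $\varphi_{\vec\alpha}=^{*}\sum_{i=0}^{n-1}(\text{-}1)^i\psi_{\vec\alpha^i}$, which is exactly $n$-triviality (\ref{ntriv}). Thus the cocycles are the $n$-coherent families and the coboundaries the $n$-trivial families, in each case taken modulo coordinatewise $=^{*}$ equality.

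The one bookkeeping point to dispatch is that these are quotients by coordinatewise finite equivalence rather than by the honest families. Writing $C$ and $T$ for the groups of $n$-coherent and $n$-trivial families of genuine functions, and $N\subseteq C$ for those families all of whose coordinates are finitely supported, the cochain-level passage to cosets identifies $Z^{n-1}(\mathcal{U}_\delta,\mathcal{F}_A)=C/N$ and $B^{n-1}(\mathcal{U}_\delta,\mathcal{F}_A)=T/N$; here one needs only the trivial observation that $N\subseteq T$, since $\Psi=0$ witnesses the $n$-triviality of any coordinatewise finitely supported family. Hence $\mathrm{H}^{n-1}(\mathcal{U}_\delta,\mathcal{F}_A)=(C/N)/(T/N)\cong C/T$, and combining with (\ref{readtwo}) for $n\geq 2$ completes the identification. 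For $n=1$ one instead invokes (\ref{readone}), noting that $\text{im}(\mathrm{H}^0(\mathcal{U}_\delta,\mathcal{E}_A))$ inside $\mathrm{H}^0(\mathcal{U}_\delta,\mathcal{F}_A)$ is precisely the image of the genuine global functions $\varphi:\delta\to A$, that is, the trivial families, so that again the quotient is $C/T$.

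If one prefers to bypass the quotient complex, the same conclusion follows by analyzing $\theta:\Phi\mapsto f^\Phi$ directly, and this route makes plain where the real content sits. The forward direction is immediate: $n$-coherence (\ref{ncoh}) places $f^\Phi$ in $L^n(\mathcal{U}_\delta,\mathcal{D}_A)$, and $f^\Phi=d^{n-1}\Phi$ computed in $\mathcal{E}_A$ is a cocycle since $d^nd^{n-1}=0$; and an $n$-trivial $\Phi$ maps to a coboundary because $\Phi-d^{n-2}\Psi$ then lies in $L^{n-1}(\mathcal{U}_\delta,\mathcal{D}_A)$ with $f^\Phi=d^{n-1}(\Phi-d^{n-2}\Psi)$. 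The main obstacle, and the only place the special nature of $\mathcal{E}_A$ is used, is bijectivity, which rests on the flasqueness fact $\mathrm{H}^k(\mathcal{U}_\delta,\mathcal{E}_A)=0$ for $k>0$ left to the reader above: surjectivity, because a $\mathcal{D}_A$-cocycle $f$ is an $\mathcal{E}_A$-cocycle and so an $\mathcal{E}_A$-coboundary $f=d^{n-1}\Phi=f^\Phi$ with $\Phi$ automatically $n$-coherent; and injectivity, because $f^\Phi=d^{n-1}g$ with $g\in L^{n-1}(\mathcal{U}_\delta,\mathcal{D}_A)$ forces $d^{n-1}(\Phi-g)=0$, whence $\Phi-g=d^{n-2}\Psi$ in $\mathcal{E}_A$ (using $\mathrm{H}^{n-1}(\mathcal{U}_\delta,\mathcal{E}_A)=0$ for $n\geq 2$, and global sections for $n=1$) and thus $\varphi_{\vec\alpha}=^{*}(d^{n-2}\Psi)(\vec\alpha)$ exhibits $\Phi$ as $n$-trivial. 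Either way, every step is routine once the vanishing of $\mathrm{H}^{>0}(\mathcal{U}_\delta,\mathcal{E}_A)$ is granted.
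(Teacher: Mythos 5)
Correct, and essentially the paper's own argument: the paper obtains Theorem \ref{groupreading} precisely from the long exact sequence of $\mathcal{D}_A\to\mathcal{E}_A\to\mathcal{F}_A$, the vanishing of $\mathrm{H}^{n}(\mathcal{U}_\delta,\mathcal{E}_A)$ for $n>0$, and the observation that cocycles and coboundaries of $\mathcal{L}(\mathcal{U}_\delta,\mathcal{F}_A)$ in degree $n-1$ are exactly the $n$-coherent and $n$-trivial families --- the relation that, as the paper puts it, ``determined Definition \ref{highernontriv}.'' Your alternative second route is just the connecting homomorphism of that same short exact sequence unwound by hand (so not a genuinely different approach), and what you add beyond the paper is only the correct, there-implicit bookkeeping that passing to coordinatewise mod-finite classes and then taking cohomology still yields the group of $n$-coherent families modulo the $n$-trivial ones.
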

Addition in these groups is ``pointwise,'' as in the case $n=1$.

The quotient groups of Theorem \ref{groupreading} are in fact the full \v{C}ech cohomology groups of a given $\delta$:
\begin{theorem}\label{combicasting} $\check{\mathrm{H}}^n(\delta,\mathcal{P}_A)\cong \mathrm{H}^n(\mathcal{V},\mathcal{P}_A)$, for any $n>0$ and $\mathcal{V}\geq\mathcal{U}_\delta$ and presheaf $\mathcal{P}_A$ of functions to $A$. In particular, $\check{\mathrm{H}}^n(\delta,\mathcal{A}_d)\cong\check{\mathrm{H}}^n(\delta,\mathcal{D}_A)\cong\mathrm{H}^n(\mathcal{U}_\delta,\mathcal{D}_A)$ for all $n>0:$ each is naturally isomorphic to the group described in Theorem \ref{groupreading}.
\end{theorem}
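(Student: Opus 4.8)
The plan is to show that the direct limit defining $\chm^n(\delta,\mathcal{P}_A)$ is already attained at the cover $\mathcal{U}_\delta$, and then to extract the ``in particular'' clause from Lemma \ref{gddg} and Theorem \ref{groupreading}. The whole content is a cover-independence (stabilization) statement for $n>0$.

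First I would dispatch a cofinality observation. A cover $\mathcal{V}$ satisfies $\mathcal{V}\geq\mathcal{U}_\delta$ precisely when every member of $\mathcal{V}$ is bounded in $\delta$ (a bounded open set lies in some initial segment, and conversely any member of such a $\mathcal V$ lies in an initial segment, hence is bounded). Such covers are cofinal in $\text{Cov}(\delta)$: every $\xi<\delta$ has arbitrarily small bounded open neighborhoods, so any open cover may be refined to one by bounded open sets. Consequently $\chm^n(\delta,\mathcal{P}_A)=\varinjlim_{\mathcal{V}\geq\mathcal{U}_\delta}\mathrm{H}^n(\mathcal{V},\mathcal{P}_A)$, and it suffices to prove that each transition map $r^{*}_{\mathcal{W}\mathcal{V}}$ with $\mathcal{U}_\delta\leq\mathcal{V}\leq\mathcal{W}$ is an isomorphism. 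By the functoriality $r^{*}_{\mathcal{W}\mathcal{V}}\circ r^{*}_{\mathcal{V}\mathcal{U}_\delta}=r^{*}_{\mathcal{W}\mathcal{U}_\delta}$, this reduces to showing that $r^{*}_{\mathcal{V}\mathcal{U}_\delta}\colon\mathrm{H}^n(\mathcal{U}_\delta,\mathcal{P}_A)\to\mathrm{H}^n(\mathcal{V},\mathcal{P}_A)$ is an isomorphism for every $\mathcal{V}\geq\mathcal{U}_\delta$ and every $n>0$; the colimit is then canonically $\mathrm{H}^n(\mathcal{U}_\delta,\mathcal{P}_A)$, isomorphic to each $\mathrm{H}^n(\mathcal{V},\mathcal{P}_A)$.

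The heart of the argument, and the step I expect to be the main obstacle, is this last isomorphism. The difficulty is genuine: a refinement selector $\rho\colon\mathcal{V}\to\mathcal{U}_\delta$ runs only from fine to coarse, and $\mathcal{U}_\delta$ does not refine $\mathcal{V}$, so $r^{*}_{\mathcal{V}\mathcal{U}_\delta}$ cannot be inverted by a second refinement map. Since our space is far from paracompact, one cannot simply cite a Leray theorem either; the isomorphism must be established by an explicit chain-homotopy argument, exactly in the spirit of the prism operators of Lemma \ref{gddg}. Concretely, I would fix a monotone selector $\rho$ sending each bounded $V\in\mathcal{V}$ into an initial segment containing it, form the induced $\rho^{*}$, and build a chain map $\lambda\colon\mathcal{L}(\mathcal{V},\mathcal{P}_A)\to\mathcal{L}(\mathcal{U}_\delta,\mathcal{P}_A)$ together with homotopies $s,t$ realizing $\lambda\rho^{*}-\mathrm{id}=ds+sd$ and $\rho^{*}\lambda-\mathrm{id}=dt+td$. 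Two structural features make this possible: $\mathcal{U}_\delta$ is totally ordered by inclusion, so $V_{\vec{\alpha}}=[0,\min_i\alpha_i)$ and the nerve is a full simplex; and $\mathcal{P}_A$ is a presheaf of \emph{functions}, so sections over $[0,\alpha)$ restrict to, and are recoverable on, the bounded members of $\mathcal{V}$ lying below $\alpha$. It is the boundedness of each $V\in\mathcal{V}$ that lets the ``hybrid'' straightening terms telescope and cancel as in Lemma \ref{gddg}. Equivalently, one may verify the two Leray-type conditions directly: every cocycle on $\mathcal{V}$ is cohomologous to one pulled back along $\rho^{*}$ from $\mathcal{U}_\delta$, and any pulled-back cocycle that becomes a coboundary on some finer $\mathcal{V}$ was already a coboundary on $\mathcal{U}_\delta$.

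Finally, the ``in particular'' clause assembles from what precedes. Applying the stabilization with $\mathcal{P}_A=\mathcal{D}_A$ and with $\mathcal{P}_A=\mathcal{A}_d$ yields $\chm^n(\delta,\mathcal{D}_A)\cong\mathrm{H}^n(\mathcal{U}_\delta,\mathcal{D}_A)$ and $\chm^n(\delta,\mathcal{A}_d)\cong\mathrm{H}^n(\mathcal{U}_\delta,\mathcal{A}_d)$. Since $\delta\cap\text{Lim}$ is cofinal and $\mathcal{U}_{\delta\cap\text{Lim}}\geq\mathcal{U}_\delta$, stabilization also gives $\mathrm{H}^n(\mathcal{U}_\delta,-)\cong\mathrm{H}^n(\mathcal{U}_{\delta\cap\text{Lim}},-)$ for both presheaves, whereupon Lemma \ref{gddg} supplies $\mathrm{H}^n(\mathcal{U}_{\delta\cap\text{Lim}},\mathcal{A}_d)\cong\mathrm{H}^n(\mathcal{U}_{\delta\cap\text{Lim}},\mathcal{D}_A)$ and hence $\chm^n(\delta,\mathcal{A}_d)\cong\chm^n(\delta,\mathcal{D}_A)$. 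By Theorem \ref{groupreading}, $\mathrm{H}^n(\mathcal{U}_\delta,\mathcal{D}_A)$ is exactly the group of $n$-coherent families of $A$-valued functions on $\delta$ modulo the $n$-trivial ones, so each group in the statement is naturally isomorphic to it, as claimed.
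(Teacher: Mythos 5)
Your overall skeleton is sound and matches the paper's: reduce the \v{C}ech limit to the claim that $r^{*}_{\mathcal{V}\mathcal{U}_\delta}\colon\mathrm{H}^n(\mathcal{U}_\delta,\mathcal{P}_A)\to\mathrm{H}^n(\mathcal{V},\mathcal{P}_A)$ is an isomorphism for every $\mathcal{V}\geq\mathcal{U}_\delta$, and assemble the ``in particular'' clause from Lemma \ref{gddg} and Theorem \ref{groupreading} (the paper does exactly this, via the chain $\mathrm{H}^n(\mathcal{U}_\delta,\mathcal{D}_A)\cong\chm^n(\delta,\mathcal{D}_A)\cong\mathrm{H}^n(\mathcal{U}_C,\mathcal{D}_A)\cong\mathrm{H}^n(\mathcal{U}_C,\mathcal{A}_d)\cong\chm^n(\delta,\mathcal{A}_d)$ with $C=\delta\cap\mathrm{Lim}$). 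But the heart of the matter --- inverting $r^{*}_{\mathcal{V}\mathcal{U}_\delta}$ --- is where your proposal has a genuine gap: you assert that a backward chain map $\lambda$ and prism-style homotopies can be built ``as in Lemma \ref{gddg},'' but you give no construction, and the features you cite (total ordering of $\mathcal{U}_\delta$, boundedness of members of $\mathcal{V}$, $\mathcal{P}_A$ being a presheaf of functions) do not suffice. Concretely: to define $\lambda(f)(\vec{\alpha})$ you must produce a section of $\mathcal{P}_A$ over a full initial segment $[0,\alpha_0)$ from sections over bounded members of $\mathcal{V}$, and a presheaf has no gluing --- for $\mathcal{P}_A=\mathcal{D}_A$, pointwise assembly of finitely supported pieces typically yields a ladder-supported function, which is not a section (this failure of gluing is the very source of the nontrivial classes); for general $\mathcal{P}_A$ there is not even a canonical way to extend a section leftward past a fixed point. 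Moreover, the prism cancellation in Lemma \ref{gddg} works because both complexes there sit over the \emph{same} chain-ordered cover and differ only in the presheaf; over an arbitrary $\mathcal{V}$ the ``hybrid'' tuples mix non-nested sets whose intersections can be tiny, and the telescoping you invoke does not occur.

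The decisive symptom is that your argument never uses the standing hypothesis (stated at the start of Section \ref{I.4}) that $\delta$ has uncountable cofinality, whereas the paper's proof is powered exactly by that hypothesis, through the Pressing Down Lemma: by Corollary \ref{thesubcover}, any $\mathcal{V}\geq\mathcal{U}_\delta$ contains sets $V_\xi\supseteq[\eta,\xi)$ for a \emph{fixed} $\eta$ and cofinal $C\subseteq\delta$, and this is what tames an arbitrary cover (the remark following the theorem says precisely that pressing down is what ``enforces the effects of good covers''). The paper's other key move, which your route also omits, is dimension-shifting through the short exact sequence $\mathcal{P}_A\to\mathcal{E}_A\to\mathcal{F}_A$ with $\mathcal{E}_A$ acyclic: this replaces $\mathrm{H}^{n+1}(\,\cdot\,,\mathcal{P}_A)$ by $\mathrm{H}^{n}(\,\cdot\,,\mathcal{F}_A)$, where the left-endpoint defect over $[0,\eta)$ and all mod-finite ambiguities are absorbed by the quotient, and where injectivity and surjectivity of $r^{*}_{\mathcal{V}\mathcal{U}_\delta}$ become exactly the restriction/extension invariance of non-$n$-trivial $n$-coherence (Observation \ref{transformation}): restriction to a cobounded domain and cofinal index set preserves nontriviality, and any nontrivial restricted family extends to a nontrivial full one. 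Your closing ``equivalently, one may verify the two Leray-type conditions directly'' merely restates injectivity and surjectivity; without the $\mathcal{F}_A$ detour and the pressing-down step, neither verification can get off the ground.
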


\begin{proof} The second assertion follows from the first, together with Lemma \ref{gddg}: $\mathrm{H}^n(\mathcal{U}_\delta,\mathcal{D}_A)\cong\check{\mathrm{H}}^n(\delta,\mathcal{D}_A)\cong\mathrm{H}^n(\mathcal{U}_C,\mathcal{D}_A)\cong\mathrm{H}^n(\mathcal{U}_C,\mathcal{A}_d)\cong\check{\mathrm{H}}^n(\delta,\mathcal{A}_d)$, where $C=\delta\cap\text{Lim}$. We turn then to the first assertion. The heart of the argument is the following square:

\begin{align}\label{chartone}
\xymatrix{\mathrm{H}^n(\mathcal{U}_\delta,\mathcal{F}_A) \ar[r]^d \ar[d]_{r^{*}_{\mathcal{V}\mathcal{U}_\delta}} & \mathrm{H}^{n+1}(\mathcal{U}_\delta,\mathcal{P}_A) \ar[d]^{r^{*}_{\mathcal{V}\mathcal{U}_\delta}}  \\ \mathrm{H}^n(\mathcal{V},\mathcal{F}_A) \ar[r]^d & \mathrm{H}^{n+1}(\mathcal{V},\mathcal{P}_A) }
\end{align}

In the case of $n=0$, replace the left-hand side with the quotients of (\ref{readone}): the point in all cases is that\begin{enumerate}
\item The lateral maps, $d$, are isomorphisms, and
\item The diagram commutes.
\end{enumerate}
Therefore it suffices to show $r^{*}_{\mathcal{V}\mathcal{U}_\delta}:\,\mathrm{H}^n(\mathcal{U}_\delta,\mathcal{F}_A)\rightarrow\mathrm{H}^n(\mathcal{V},\mathcal{F}_A)$ to be an isomorphism; $\check{\mathrm{H}}^{k+1}(\delta,\mathcal{P}_A)$ then will be a direct limit of isomorphisms, and the theorem will follow. This in turn follows from two observations:
\begin{enumerate}
\item By Corollary \ref{thesubcover}, $\mathcal{V}$ contains a collection of the form $\{\,[\eta,\xi)\,|\,\xi\in C\}$, for some $C$ cofinal in $\delta$, in the sense that there exists some $\{V_\xi\,|\,\xi\in C\}\subseteq\mathcal{V}$ with $[\eta,\xi)\subseteq V_\xi$ for all $\xi\in C$.
\item Fix representative $\Phi\in L^k(\mathcal{U}_\delta,\mathcal{E}_A)$ for a class $[\Phi]\in \mathrm{H}^k(\mathcal{U}_\delta,\mathcal{F}_A)$. In this generalized context, the logic of ``if $\Phi$ is non-$n$-trivial $n$-coherent, then so too is $\Phi[\eta, C]$'' continues to apply; here it takes the form
$$0\neq [\Phi]\in\mathrm{H}^k(\mathcal{U}_\delta,\mathcal{F}_G)\textnormal{ implies that }0\neq [r_{\mathcal{V}\mathcal{U}_\delta}(\Phi)]\in\mathrm{H}^k(\mathcal{V},\mathcal{F}_G)$$
This shows that $r^{*}_{\mathcal{V}\mathcal{U}_\delta}:\,\mathrm{H}^k(\mathcal{U}_\delta,\mathcal{F}_A)\rightarrow\mathrm{H}^k(\mathcal{V},\mathcal{F}_A)$ is injective. The reverse observation --- that any non-$n$-trivial $n$-coherent $\Phi[\eta, C]$ extends to a classically non-$n$-trivial $n$-coherent $\Phi$ --- shows that $r^{*}_{\mathcal{V}\mathcal{U}_\delta}$ is surjective.\end{enumerate} Hence $r^{*}_{\mathcal{V}\mathcal{U}_\delta}:\,\mathrm{H}^k(\mathcal{U}_\delta,\mathcal{F}_A)\rightarrow\mathrm{H}^k(\mathcal{V},\mathcal{F}_A)$ is a isomorphism. In consequence, $r^{*}_{\mathcal{V}\mathcal{U}_\delta}:\,\mathrm{H}^{k+1}(\mathcal{U}_\delta,\mathcal{P}_A)\rightarrow\mathrm{H}^{k+1}(\mathcal{V},\mathcal{P}_A)$ is as well, as desired.
\end{proof}

\begin{remark} Essentially,  $\mathcal{U}_\delta$ functions above as a \textit{good cover}; in other words, it marks a stage at which the limit $\check{\mathrm{H}}^1(\delta,\mathcal{P})$ has already arrived. This is the ambiguous effect of the Pressing Down Lemma: at once it (1) rules out paracompactness (Corollary \ref{thesubcover}), which is the usual environment of good covers, and (2) it enforces the effects of good covers.
\end{remark} 
%This is the ambiguous action of the Pressing Down Lemma, which rules out the usual environment of good covers -- paracompactness -- while retaining their effect. Some survival, like metric theory.

Theorems \ref{groupreading} and \ref{combicasting} together
%brokers
describe a combinatorial translation of mixed effect: in the case of $n=1$, they cast that most basic of topological invariants --- cohomology over the constant sheaf --- as a strict measure of that centerpiece of infinitary combinatorics, \textit{nontrivial coherence}. It frames the higher cohomology groups, on the other hand, as phenomena largely without antecedent or meaning in set-theoretic experience: \textit{non-$n$-trivial $n$-coherence}. Why? One reason, as noted, is that non-$2$-trivial $2$-coherence, for example, is simply imperceptible below $\omega_2$. More generally, $\check{\mathrm{H}}^n(\delta,\mathcal{A}_d)=0$ for any $\delta<\omega_n$, a phenomenon we describe in the following section.

\subsection{ZFC constraints}\label{I.5}\hfill

We pause to remark how canonical the groups $\check{\mathrm{H}}^n(\,\cdot\,,\mathcal{A}_d)$ in fact are.
They are, for example, the Alexander-Spanier cohomology groups with coefficients in $A$ \cite{Dowker}; they manifest as well as Ext$^n$ and lim$^n$ groups, and it was in these guises, in homological and ring-theoretic settings, that their fundamental relations to the ordinals $\omega_n$ were first perceived. The surveys \cite{osofsky1}, \cite{osofsky2}, \cite{pierce}, \cite{jensencu}, \cite{rings}, \cite{husainov}, and \cite{strongshape} usefully summarize these recognitions, the most central of which date to the 1960s and early 1970s. Salient among these recognitions are vanishing and non-vanishing results appearing in \cite{goblot} and \cite{rings}, respectively, which manifest in our framework as Theorems \ref{goblot} and \ref{nonntrivncohomegan} below. These results together delimit the proper terrain of \emph{Section \ref{sectiontwo}: Consistency results}: the latter are only meaningfully available for groups $\check{\mathrm{H}}^n(\xi,\mathcal{A}_d)$ in which $\text{cf}(\xi)>\omega_n$.
\begin{theorem}\label{goblot} For any abelian group $A$ and positive integer $n$ and any $\delta$ of cofinality less than $\omega_n$, we have $\check{\mathrm{H}}^n(\delta,\mathcal{A}_d) = 0$.
\end{theorem}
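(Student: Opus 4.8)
The plan is to reduce to a purely combinatorial statement and then induct on $n$. By Theorems \ref{groupreading} and \ref{combicasting}, $\chm^n(\delta,\mathcal{A}_d)$ is the group of $n$-coherent $A$-valued families of height $\delta$ modulo the $n$-trivial ones, so it suffices to show that \emph{every $n$-coherent family of height $\delta$ is $n$-trivial whenever $\mathrm{cf}(\delta)<\omega_n$}. I would prove this by induction on $n\ge 1$, the hypothesis at level $m$ being the full assertion ``for every $\xi$ with $\mathrm{cf}(\xi)<\omega_m$, every $m$-coherent family of height $\xi$ is $m$-trivial.''

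First a reduction. Using the transformation invariance of Observation \ref{transformation}(ii),(iv), the restriction of a given $n$-coherent $\Phi$ to a cofinal $C\subseteq\delta$ of order type $\kappa:=\mathrm{cf}(\delta)$ is again $n$-coherent and is non-$n$-trivial if $\Phi$ is; hence it suffices to trivialize $n$-coherent families of height $\kappa$, for $\kappa$ a \emph{regular} cardinal with $\kappa<\omega_n$, i.e.\ $\kappa\le\omega_{n-1}$. The value of this reduction is the cofinality bookkeeping it buys: every $\gamma<\kappa$ then satisfies $\mathrm{cf}(\gamma)\le|\gamma|<\omega_{n-1}$, so that the induction hypothesis at level $n-1$ applies to every proper initial segment of $\kappa$. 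The base case $n=1$ (where $\kappa=\omega$, finite cofinality being trivial) is the familiar fact that a coherent sequence of countable cofinal length is trivial: one telescopes along an increasing $\omega$-sequence $\langle\delta_k\mid k\in\omega\rangle$ cofinal in $\delta$, defining a global $\varphi$ interval by interval so that $\varphi\restriction\delta_k=^*\varphi_{\delta_k}$ at each stage.

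For the inductive step ($n\ge 2$) I would construct a trivialization $\Psi=\{\psi_{\vec{\alpha}}:\alpha_0\to A\mid\vec{\alpha}\in[\kappa]^{n-1}\}$ of $\Phi$ by transfinite recursion on $\gamma<\kappa$, defining at stage $\gamma$ the values $\psi_{\vec{\alpha}'\gamma}$ for $\vec{\alpha}'\in[\gamma]^{n-2}$ (the tuples of maximum $\gamma$) and maintaining the invariant that $\Psi\restriction[\gamma]^{n-1}$ already $n$-trivializes $\Phi\restriction\gamma$. The recursion is driven by an auxiliary family of height $\gamma$,
$$\Omega^{(\gamma)}=\{\,\varphi_{\vec{\beta}'\gamma}-(-1)^{n-1}\psi_{\vec{\beta}'}\mid\vec{\beta}'\in[\gamma]^{n-1}\,\},$$
whose $(n-1)$-trivializations $\{\chi_{\vec{\alpha}'}\mid\vec{\alpha}'\in[\gamma]^{n-2}\}$ are exactly the admissible choices $\psi_{\vec{\alpha}'\gamma}:=\chi_{\vec{\alpha}'}$ of the new values. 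The crux is that $\Omega^{(\gamma)}$ is $(n-1)$-\emph{coherent}: applying the $n$-coherence of $\Phi$ to the tuples $\vec{\delta}^{\frown}(\gamma)$ gives $\sum_{j<n}(-1)^j\varphi_{\vec{\delta}^j\gamma}=^*(-1)^{n+1}\varphi_{\vec{\delta}}$, while the maintained invariant gives $\sum_{j<n}(-1)^j\psi_{\vec{\delta}^j}=^*\varphi_{\vec{\delta}}$ for $\vec{\delta}\in[\gamma]^n$; since $(-1)^{n+1}=(-1)^{n-1}$, the alternating sum of $\Omega^{(\gamma)}$ vanishes modulo finite. As $\mathrm{cf}(\gamma)<\omega_{n-1}$, the induction hypothesis applies to $\Omega^{(\gamma)}$ and furnishes the required $(n-1)$-trivialization, and a short sign computation confirms that the new values extend the invariant to $\Phi\restriction(\gamma+1)$. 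Limit stages are automatic, since each trivialization constraint mentions only finitely many coordinates and hence is secured at an earlier stage; then $\Psi=\bigcup_{\gamma<\kappa}\Psi\restriction[\gamma]^{n-1}$ $n$-trivializes $\Phi$.

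I expect the main obstacle to be organizational rather than deep: getting the cofinality arithmetic exactly right — it is precisely the reduction to regular $\kappa$ that keeps the slices $\gamma$ of cofinality below $\omega_{n-1}$, where the induction hypothesis lives — together with the verification of the $(n-1)$-coherence of $\Omega^{(\gamma)}$ and the propagation of the invariant through the recursion, all while tracking the sign conventions inherited from Definition \ref{highernontriv} and Example \ref{elpmaxe}.
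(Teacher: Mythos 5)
Your core recursion is sound, and it is essentially the engine that drives the paper's own proof: your auxiliary family $\Omega^{(\gamma)}$ is, up to the sign $(-1)^{n-1}$, the difference between the canonical ``column'' trivialization $\{(-1)^{n+1}\varphi_{\vec{\beta}'\gamma}\mid\vec{\beta}'\in[\gamma]^{n-1}\}$ of $\Phi\restriction\gamma$ (Example \ref{elpmaxe}) and the partial trivialization built so far, and the paper's key extension lemma kills exactly this $(n-1)$-coherent obstruction by appealing to an induction hypothesis at smaller cofinality, just as you do; your sign computations and your handling of limit stages are correct. The organizational difference --- you induct on $n$ with all small cofinalities quantified inside, while the paper inducts on the cofinality index $k$ with all $n>k$ quantified inside and recurses along a club in $\delta$ itself --- is immaterial.

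The genuine gap is your opening reduction to regular $\kappa$, and it matters because the rest of your argument leans on it. Observation \ref{transformation}(ii),(iv) concerns the restriction $\Phi[\eta,C]$, in which the \emph{index set} shrinks to $[C]^n$ but each function keeps its long domain (an ordinal of order type up to $\delta$); such an object is not a family of height $\kappa$ in the sense of Definition \ref{highernontriv}. You therefore face a dilemma. If you keep the long domains, then $\Omega^{(\gamma)}$ is not a height-$\gamma$ family either, and your induction hypothesis --- which you state for genuine height-$\xi$ families --- does not apply to it. If instead you also shrink the domains into $C$, so as to obtain a genuine height-$\kappa$ family, the recursion goes through but the conclusion evaporates at the end: triviality of the domain-shrunk family says nothing about the values of $\Phi$ at points of $\delta\setminus C$ or at indices outside $[C]^{n-1}$, and no part of Observation \ref{transformation} recovers triviality of $\Phi$ from it. (For $n=1$ and $\mathrm{cf}(\delta)=\omega$ the shrunk family is \emph{vacuously} trivial, since all of its functions have finite domains; this is exactly why your own base case rightly ignores the reduction and telescopes on the full family.) The repair is available in the paper: either invoke Lemma \ref{iff}, whose relevant direction passes to a different coefficient group $B=\bigoplus_\eta A$ and is therefore legitimate here only because the statement being proved quantifies over all abelian groups; or drop the reduction entirely and run your recursion along a club $\{\xi_i\mid i<\mathrm{cf}(\delta)\}$ in $\delta$ whose successor-indexed elements are successor ordinals, extending at each step a trivialization of the full initial segment $\Phi\restriction\xi_i$ --- which is precisely the paper's argument.
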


\begin{proof} For the following argument (due essentially to \cite{goblot}), fix the abelian group $A$. By Theorem \ref{combicasting} it will suffice to prove the following:
\begin{claim}\label{31} Let $\text{cf}\,(\delta)=\omega_k$ and let $n$ be a positive integer greater than $k$. Any $n$-coherent $\Phi_n=\{\varphi_{\vec{\beta}}:\beta_0\rightarrow A\,|\,\vec{\beta}\in [\delta]^{n}\}$ is $n$-trivial.
\end{claim}

We argue the claim by induction on $k$. In the base case, $\delta$ is a successor; to streamline the argument, adopt the convention that $\text{cf}\,(\delta)$ then equals $\omega_{-1}$. Take any positive $n$ and $n$-coherent $\Phi$ and let $\gamma=\delta-1$. If $n=1$, then any $\varphi$ extending $\varphi_\gamma$ will trivialize $\Phi$. If $n>1$, then for any $\vec{\beta}\in [\gamma]^{n-1}$ and $\alpha<\beta_0$ let $\psi_{\vec{\beta}}(\alpha)=(-1)^{n-1}\varphi_{\vec{\beta}\gamma}(\alpha)$. For $\vec{\beta}\in [\delta]^{n-1}\backslash[\gamma]^{n-1}$ let $\psi_{\vec{\beta}}$ constantly equal zero. Then $\Psi=\{\psi_{\vec{\beta}}\,|\,\vec{\beta}\in [\delta]^{n-1}\}$ is an $n$-trivialization of $\Phi$, as the reader may verify. This completes the base case.

Suppose now that the claim holds for all $k<m$; we show that it holds for $k=m$ as well. To that end, fix $\delta\in\text{Cof}\,(\omega_m)$ and a closed unbounded $C_\delta=\{\xi_i\,|\,i<\omega_m\}\subseteq\delta$ and an $n$-coherent $\Phi$, with $n>m$. If $m=0$ and $n=1$ then $\varphi:\alpha\mapsto\varphi_{\xi(\alpha)}(\alpha)$ will $n$-trivialize $\Phi$, where $\xi(\alpha)=\min (C_\delta\backslash(\alpha+1))$. For any other $0\leq m<n$ we show how the inductive hypothesis enables a recursive construction of $n$-trivializations $\Psi_\ell$ of $\Phi\!\restriction\!\xi_\ell$ so that $\Psi:=\bigcup_{\ell<\omega_m}\!\Psi_\ell\,$ $n$-trivializes $\Phi$. This will conclude the induction step.

Begin the construction with any $n$-trivialization $\Psi_0$ of $\Phi\!\restriction\!\xi_0$. Suppose then that $n$-trivializations $\Psi_0\subset\dots\subset\Psi_j\subset\dots$ of $\Phi\!\restriction\!\xi_0\subset\dots\subset\Phi\!\restriction\!\xi_j\subset\dots$, respectively, have been constructed for all $j<\ell<\omega_m$. If $\ell$ is a limit ordinal, then $\Psi_\ell:=\bigcup_{j<\ell}\!\Psi_j\,$ $n$-trivializes $\Phi\!\restriction\!\xi_\ell$ and the construction continues. For successor $\ell$ we invoke the following lemma, with $\xi=\xi_{\ell-1}$ and $\eta=\xi_\ell$; this will conclude the proof of Theorem \ref{goblot}.
\begin{lemma} Fix $n>m\geq 0$ and $\xi<\eta$ with $\text{cf}\,(\xi)<\omega_m$ and let $\Phi=\{\varphi_{\vec{\beta}}\,|\,\vec{\beta}\in [\eta]^n\}$ be $n$-trivial. Any $n$-trivialization $\Psi$ of $\Phi\!\restriction\!\xi$ extends to an $n$-trivialization $\Psi'$ of $\Phi$.
\end{lemma}
\begin{proof} We prove the lemma in the course of the above inductive argument; it is here, in other words, that we apply the hypothesis that Claim \ref{31} holds for all $k<m$.

Consider first the following economizing notations: For any $n>0$ and $\Phi=\{\varphi_{\vec{\beta}}\,|\,\vec{\beta}\in [\delta]^n\}$ and $\Psi=\{\psi_{\vec{\beta}}\,|\,\vec{\beta}\in [\delta]^n\}$, write $\Phi=^*\Psi$ if $\varphi_{\vec{\beta}}=^*\psi_{\vec{\beta}}$ for all $\vec{\beta}\in [\delta]^n$. Write $d\Psi$ for the family $\{\theta_{\vec{\gamma}}:=\sum_{i=0}^n(-1)^i\psi_{\vec{\gamma}^i}\,|\,\vec{\gamma}\in [\delta]^{n+1}\}$. Observe that $d(\Phi+\Psi)=d\Phi+d\Psi$, and that $d^2=^* 0$, that is, $d(d(\Phi))$ is a family of finitely supported functions. The expressions $d\Phi=^*0$ and $d\Psi=^*\Phi$ are then useful shorthands for ``$\Phi$ is $n$-coherent'' and ``$\Psi$ $n$-trivializes $\Phi$,'' respectively.

We return now to the statement of the lemma. We argue only the cases of $n>2$; modifications for smaller $n$ are cosmetic and left to the reader. By assumption, there exists an $\Upsilon$ with $d\Upsilon=^*\Phi$; hence $$d(\Upsilon\!\restriction\!\xi-\Psi)=^*\Phi\!\restriction\!\xi-\Phi\!\restriction\!\xi=0$$
In other words, $(\Upsilon\!\restriction\!\xi-\Psi)$ is $(n-1)$-coherent. Since $\text{cf}(\xi)=\omega_k$ for some $k<m\leq n-1$, the inductive hypothesis furnishes us with a $\Theta=\{\theta_{\vec{\beta}}\,|\,\vec{\beta}\in [\xi]^{n-2}\}$ for which $d\Theta =^*\Upsilon\!\restriction\!\xi-\Psi$. Extend $\Theta$ to any $\bar{\Theta}=\{\theta_{\vec{\beta}}\,|\,\vec{\beta}\in [\eta]^{n-2}\}$ and let $\bar{\Psi}=\Upsilon-d\bar{\Theta}$. Observe that \begin{enumerate}
\item $d\bar{\Psi}=d\Upsilon-d(d(\bar{\Theta}))=^*\Phi$, and
\item $\Psi=^*\bar{\Psi}\!\restriction\!\xi$.
\end{enumerate}
In consequence, item (1) will continue to hold for a modified $\bar{\Psi}$, namely one in which the functions indexed by $[\xi]^{n-1}$ are adjusted to equal those of $\Psi$. This modified $\bar{\Psi}$ is the desired extension $\Psi'$ of $\Psi$ which $n$-trivializes $\Phi$.
\end{proof}
\end{proof}
By the following, Theorem \ref{goblot} is sharp, in the sense that it cannot hold for any $\delta$ of cofinality $\omega_n$.
\begin{theorem}[\cite{BergfalkThesis}]\label{nonntrivncohomegan} For all $n\geq 0$, there exists a non-$n$-trivial $n$-coherent family of height $\omega_n$.
\end{theorem}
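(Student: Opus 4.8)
The plan is to prove the equivalent statement $\mathrm{H}^n(\mathcal{U}_{\omega_n},\mathcal{D}_A)\neq 0$ (Theorem \ref{groupreading}) by induction on $n$, with the real content residing in a \emph{stepping-up} step: from a non-$(n-1)$-trivial $(n-1)$-coherent $A$-valued family of height $\omega_{n-1}$ one manufactures a non-$n$-trivial $n$-coherent family of height $\omega_n$. The base case $n=0$ is immediate: any $\varphi:\omega\to A$ with infinite support is $0$-coherent, since its restriction to each finite $\alpha<\omega$ has finite support, yet non-$0$-trivial, by Definition \ref{zerotriv}. (One could equally take $n=1$ as base, with the family $\{\rho_1(\,\cdot\,,\beta)\mid\beta\in\omega_1\}$ of Section \ref{I.1} as witness, by Theorem \ref{coherenceofrhos}.) Since $\omega_n=\omega_{n-1}^+$ and $\omega_{n-1}$ is regular, the theorem follows once the stepping-up step is carried out.

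For the construction, fix a non-$(n-1)$-trivial $(n-1)$-coherent seed $\Psi=\{\psi_{\vec\beta}:\beta_0\to A\mid\vec\beta\in[\omega_{n-1}]^{n-1}\}$. Following the ``family of families'' picture of item (iii) and Example \ref{elpmaxe}, I would build the desired $\Phi$ by recursion on its top coordinate: at stage $\gamma<\omega_n$ one specifies the $(n-1)$-dimensional family $\Phi_n^\gamma=\{\varphi_{\vec\beta\gamma}\mid\vec\beta\in[\gamma]^{n-1}\}$, maintaining as recursive invariant that the partial family assembled so far is $n$-coherent, i.e.\ that consecutive stages agree modulo $(n-1)$-triviality. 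Since $\Phi$ is to be $n$-coherent, each proper initial segment $\Phi\!\restriction\!\gamma$ will be $n$-trivial; indeed Theorem \ref{goblot} guarantees this for every $\gamma<\omega_n$, all of cofinality at most $\omega_{n-1}<\omega_n$. Passing the invariant across successor and limit stages — extending the partial $(n-1)$-trivializations that witness coherence — is exactly the kind of manipulation performed, one cofinality lower, in the extension argument within the proof of Theorem \ref{goblot}, which I would adapt here. The seed $\Psi$ enters by anchoring this recursion, so that coherence is built in by the invariant while the seed's non-$(n-1)$-triviality is encoded into $\Phi$ for use below.

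The crux, and the step I expect to be the main obstacle, is \emph{non}-$n$-triviality. Suppose toward a contradiction that $d\Psi_{n-1}=^*\Phi$ for some $\Psi_{n-1}=\{\psi'_{\vec\alpha}:\alpha_0\to A\mid\vec\alpha\in[\omega_n]^{n-1}\}$. The strategy is to reflect this global trivialization down to the seed. Because $\omega_n$ is regular and uncountable, one locates, for each top coordinate $\gamma$ in a stationary set, a bound below which the relevant trivialization data stabilizes; as this bound defines a regressive function, the Pressing Down Lemma (Lemma \ref{pdl}) returns a single stationary $T\subseteq\omega_n$ on which the pertinent restrictions of $\Psi_{n-1}$ are uniform. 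Transporting this uniform datum back through the anchoring of the construction then yields an $(n-1)$-trivialization of $\Psi$ itself, contradicting the choice of the seed. Making this reflection precise — pinning down exactly which finitely-much-varying data must be stabilized, and verifying that the stabilized datum genuinely $(n-1)$-trivializes $\Psi$ — is where the real difficulty lies; by contrast the coherence bookkeeping in the construction and the cofinality counting in the extension step are routine given Theorem \ref{goblot}.
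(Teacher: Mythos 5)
You are right that the recursion itself never gets stuck: every $\delta<\omega_n$ has cofinality at most $\omega_{n-1}$, so Theorem~\ref{goblot} makes each initial segment $\Phi\!\restriction\!\delta$ $n$-trivial and hence extendable. But this observation only produces \emph{some} $n$-coherent family of height $\omega_n$ --- the identically zero family already does that --- so the entire content of Theorem~\ref{nonntrivncohomegan} lies in non-$n$-triviality, and that is precisely where your argument has a gap. Your construction must defeat every candidate trivialization $\Psi_{n-1}=\{\psi_{\vec{\alpha}}:\alpha_0\to A\mid\vec{\alpha}\in[\omega_n]^{n-1}\}$; there are more than $\omega_n$ many of these, none of them exists at any stage of the length-$\omega_n$ recursion, and your invariant (``$n$-coherent so far, anchored by the seed'') imposes no constraint that a trivialization could ever violate. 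The proposed rescue by pressing down does not work: Lemma~\ref{pdl} uniformizes \emph{regressive ordinal-valued} functions, whereas the data of $\Psi_{n-1}$ below a stage $\gamma$ is a family of $A$-valued functions indexed by $[\gamma]^{n-1}$, and there is no single ordinal whose stabilization captures it. Even in the classical case $n=1$, pressing down merely converts a putative trivialization into ``uniform agreement on a stationary set modulo a fixed bound,'' and one then needs the constructed family to have been built so as to contradict exactly that; in Section~\ref{I.1} this is supplied by the walks-specific Claim~\ref{rho2claim}, not by the genericity of a recursion. Uniformizing function-valued data along a stationary set is what $\diamondsuit$ does, and it is not a theorem of $\mathsf{ZFC}$.

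This is exactly why the paper's own stepping-up result, Theorem~\ref{steppingup} --- which is your template: a degree-$n$ seed at $\kappa$, a recursion of length $\lambda$, the seed re-used to contradict trivializations --- carries the hypotheses $\square(\lambda,S)$ and $\diamondsuit_\lambda(S)$ and consequently yields only consistency results (Section~\ref{II.3}), not a $\mathsf{ZFC}$ theorem. In that proof the seed is not anchored once at the bottom: at stationarily many $\delta\in S$ at which the $\diamondsuit$-sequence correctly guesses a trivialization of $\Phi\!\restriction\!\delta$, a fresh copy of the seed of height $\delta$ (via Lemma~\ref{iff}) is added on top of the guess, and it is this anticipation that later makes the reflection argument close. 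The paper itself does not prove Theorem~\ref{nonntrivncohomegan} at all; it cites \cite{BergfalkThesis}, where the theorem is derived from Mitchell's computation in \cite{rings} that the homological dimension of $\omega_n$ is $n+1$ --- a homological argument the authors describe as elaborate and tedious and, importantly, as producing families valued only in \emph{large} abelian groups $A$. Your argument, if it worked, would prove the stronger statement for every nontrivial $A$ (for instance $A=\mathbb{Z}$, stepping up from walks), a strengthening the paper explicitly places beyond its cited proof and defers to the sequel \cite{CohomologyII}; this alone should signal that the missing step is the theorem's whole difficulty rather than a routine verification.
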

As noted above, this theorem derives ultimately from Barry Mitchell's argument in \cite{rings} that the homological dimension of $\omega_n$ is $n+1$. Together with Theorem \ref{goblot}, it describes $(n+1)$-dimensional combinatorial relations first manifesting at the ordinal $\omega_n$ in any model of the $\mathsf{ZFC}$ axioms. These principles are core topological invariants generalizing nontrivial coherence; in consequence, Theorem \ref{nonntrivncohomegan} suggests strong generalizations of Remark \ref{remark} to the family of all ordinals $\omega_n$ $(n\in\omega)$.

However, the derivation in \cite{BergfalkThesis} of Theorem \ref{nonntrivncohomegan} from \cite{rings} is unsatisfactory in several respects: it is elaborate and tedious, and it only deduces the existence of non-$n$-trivial $n$-coherent families of functions mapping to large abelian groups $A$. Nevertheless, in its course, higher-dimensional generalizations of the walks apparatus of Section \ref{I.1} are uncovered; these are the key to more refined results, and a main focus of the forthcoming \emph{Cohomology of Ordinals II: ZFC Results} \cite{CohomologyII}.

The following theorem summarizes our results so far, and orients the work of the next section:

\begin{theorem} For any ordinal $\delta$ and abelian group $A$,
\smallskip
\begin{equation*}
  \check{\mathrm{H}}^n(\delta,\mathcal{A}_d) =
  \begin{cases}
                                  \text{the group of} & \\[-.5 ex] \text{0-coherent functions }\delta\rightarrow A & \text{if }n=0 \\[1 ex]
                                   \text{the group of} & \\[-.5 ex]
                                   \text{$n$-coherent }\{\varphi_{\vec{\alpha}}:\alpha_0\rightarrow A\,|\,\vec{\alpha}\in [\delta]^n\} & \\[-.5 ex]
                                    \text{modulo the group of} & \\[-.5 ex]
                                   \text{$n$-trivial }\{\varphi_{\vec{\alpha}}:\alpha_0\rightarrow A\,|\,\vec{\alpha}\in [\delta]^n\} & \text{if }n>0 \\
  \end{cases}
\end{equation*}
If $\delta$ is of cofinality $\omega_k$, then $\check{\mathrm{H}}^n(\delta,\mathcal{A}_d)=0$ for all $n>k$. Moreover, there exists in this case a group $B$ such that $\check{\mathrm{H}}^k(\delta,\mathcal{B}_d)\neq 0$.
\end{theorem}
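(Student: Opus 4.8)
The statement bundles three claims, and the plan is to derive each from results already in hand, with attention to one genuinely non-formal step.

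\emph{The case formula.} For $n>0$ I would read it off Theorems \ref{combicasting} and \ref{groupreading}: the former gives $\chm^n(\delta,\mathcal{A}_d)\cong\mathrm{H}^n(\mathcal{U}_\delta,\mathcal{D}_A)$, and the latter identifies the right-hand group with the $n$-coherent families modulo the $n$-trivial ones. The case $n=0$ must be argued separately, since Theorem \ref{combicasting} is restricted to $n>0$; indeed $\chm^0(\delta,\mathcal{A}_d)=\mathcal{A}_d(\delta)$ is literally the group of locally constant functions, \emph{not} the $0$-coherent ones, so the ``$=$'' in the formula is an isomorphism rather than an equality. Here I would use that $\mathcal{A}_d$ is a sheaf, so that $\chm^0(\delta,\mathcal{A}_d)=\mathrm{H}^0(\mathcal{U}_C,\mathcal{A}_d)$ for $C=\delta\cap\text{Lim}$, invoke the degree-zero instance of Lemma \ref{gddg} to pass to $\mathrm{H}^0(\mathcal{U}_C,\mathcal{D}_A)$, and observe that $\ker d^0$ for $\mathcal{D}_A$ over this cofinal cover consists exactly of the finitely supported compatible families gluing to a single $\varphi:\delta\rightarrow A$ with $\varphi\!\restriction\!\alpha=^*0$ for all $\alpha<\delta$, i.e.\ the $0$-coherent functions of Definition \ref{zerotriv}.

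\emph{Vanishing above $k$, and nontriviality at $k$.} The vanishing is immediate from Theorem \ref{goblot}: if $\mathrm{cf}(\delta)=\omega_k$ then $\mathrm{cf}(\delta)<\omega_n$ precisely when $n>k$, whence $\chm^n(\delta,\mathcal{A}_d)=0$ for all such $n$. For the nontriviality, by Theorem \ref{nonntrivncohomegan} I would fix a non-$k$-trivial $k$-coherent family $\Phi$ of height $\omega_k$, valued in some abelian group $B$. Since $\mathrm{cf}(\delta)=\omega_k$, fix a cofinal $C_\delta\subseteq\delta$ of order type $\omega_k$ and stretch $\Phi$ along the reindexing $\xi\mapsto\mathrm{otp}(C_\delta\cap\xi)$ to a family $\Phi^\delta$ of height $\delta$, exactly as in \ref{stretchedrho3} and \ref{stretchedrho2}. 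By Observation \ref{transformation}(i), which asserts that such stretches carry non-$n$-trivial $n$-coherence from $\kappa$ to any $\delta$ of cofinality $\kappa$ in every dimension, $\Phi^\delta$ is again non-$k$-trivial $k$-coherent, so the case formula with $n=k$ yields $0\neq[\Phi^\delta]\in\chm^k(\delta,\mathcal{B}_d)$. (For $k=0$ one does not even need this: the group of $0$-coherent functions $\delta\rightarrow B$ from the case formula is already nonzero for any $B\neq 0$.)

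The step demanding the most care is the propagation of \emph{non}triviality above: coherence transfers pointwise, but nontriviality is a global obstruction. I would secure it by the converse principle in Observation \ref{transformation}(ii): since $\Phi^\delta$ restricted to the indices and domain lying in $C_\delta$ is, via the order isomorphism $\mathrm{otp}(C_\delta\cap\cdot)$, precisely $\Phi$, any $k$-trivialization of $\Phi^\delta$ would restrict to a $k$-trivialization of $\Phi$ (finite support being preserved under this restriction), contradicting the non-$k$-triviality of $\Phi$. The $n=0$ bookkeeping of the first paragraph is the only other subtlety, precisely because the clean isomorphism with $\mathrm{H}^n(\mathcal{U}_\delta,\mathcal{D}_A)$ is unavailable there and must be routed through Lemma \ref{gddg}.
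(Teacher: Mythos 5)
Your proposal is correct and coincides with the paper's own treatment: the theorem is stated there purely as a summary, assembled exactly as you assemble it from Theorems \ref{groupreading} and \ref{combicasting} (the case formula for $n>0$), Theorem \ref{goblot} (vanishing for $n>k$), and Theorem \ref{nonntrivncohomegan} together with the stretching principle of Observation \ref{transformation} (non-vanishing at $n=k$). The two points you elaborate --- routing the $n=0$ case through the degree-zero instance of Lemma \ref{gddg} since Theorem \ref{combicasting} only covers $n>0$, and checking that a $k$-trivialization of the stretched family pulls back along the order isomorphism with $C_\delta$ to a $k$-trivialization of the original --- are precisely the details the paper leaves implicit (cf.\ the proof of Lemma \ref{iff}, where this verification is explicitly left to the reader), and you supply them correctly.
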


When $\delta$ is of cofinality $\omega_1$, we can say more:  in this case, $\check{\mathrm{H}}^1(\delta,\mathcal{A}_d)\neq 0$ for any nontrivial abelian group $A$. In addition, $\check{\mathrm{H}}^1(\delta,\mathcal{A})\neq 0$ for all nontrivial metrizable abelian groups $A$ --- for $A=\mathbb{Q}$, $\mathbb{R}$, $\mathbb{C}$, or the circle group $\mathbb{T}$, for example (see \cite{BergfalkThesis}). Moreover, for any sheaf $\mathcal{S}$ and topological space $X$ the first \v{C}ech and sheaf cohomology groups $\check{\mathrm{H}}^1(X,\mathcal{S})$ and $\mathrm{H}^1(X,\mathcal{S})$, respectively, are isomorphic (see \cite{Tohoku}). Hence many of the sheaves $\mathcal{A}$ most classically valued for the vanishing of their higher cohomology groups $\mathrm{H}^n(\,\cdot\,,\mathcal{A})$ $(n\geq 1)$ in paracompact settings are no longer acyclic in the setting of $\omega_1$.

\section{Consistency Results}\label{sectiontwo}

We begin this section with a useful lemma. It tells us that for any $n\geq 0$ and ordinal $\delta$ the group $\chm^n(\text{cf}(\delta), \mathcal{A}_d)$ is a subgroup (and in fact a retract) of $\chm^n(\delta, \mathcal{A}_d)$. This will motivate our focus for the remainder of the section on the \v{C}ech cohomology groups of regular cardinals.

\begin{lemma}\label{iff} Suppose that $n\geq 0$, that $A$ is an abelian group, and that $C_\varepsilon=\linebreak \langle\,\xi_\alpha\,|\,\alpha\in\delta\,\rangle$ is a closed cofinal subset of $\varepsilon$. Then $C_\varepsilon$ determines an injective homomorphism of $\chm^n(\delta, \mathcal{A}_d)$ into $\chm^n(\varepsilon, \mathcal{A}_d)$. Relatedly, $\chm^n(\delta, \mathcal{B}_d)=0$ for all abelian groups $B$ if and only if $\chm^n(\varepsilon, \mathcal{B}_d)=0$ for all abelian groups $B$. \end{lemma}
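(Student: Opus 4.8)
The plan is to realize $\chm^n(\varepsilon,\mathcal A_d)$ through the cover $\mathcal U_{C_\varepsilon}$ and compare it with $\chm^n(\delta,\mathcal A_d)$ via a pair of cochain maps built from the club. Reindexing $C_\varepsilon=\langle\xi_\alpha\mid\alpha\in\delta\rangle$ by its order type $\delta$, we have $L^j(\mathcal U_{C_\varepsilon},\mathcal A_d)=\prod_{\vec\alpha\in[\delta]^{j+1}}\mathcal A_d(\xi_{\alpha_0})$ while $L^j(\mathcal U_\delta,\mathcal A_d)=\prod_{\vec\alpha\in[\delta]^{j+1}}\mathcal A_d(\alpha_0)$, and since $\mathcal U_{C_\varepsilon}$ refines $\mathcal U_\varepsilon$, Theorem \ref{combicasting} identifies $\mathrm H^n(\mathcal U_{C_\varepsilon},\mathcal A_d)$ with $\chm^n(\varepsilon,\mathcal A_d)$. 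Writing $\pi(\mu)=\otp(C_\varepsilon\cap\mu)$ for the continuous nondecreasing projection $\xi_{\alpha_0}\to\alpha_0$ and $\beta\mapsto\xi_\beta$ for the club enumeration $\alpha_0\to\xi_{\alpha_0}$, I would define a stretch $\sigma$ by $(\sigma f)(\vec\alpha)=f(\vec\alpha)\circ\pi$ and a projection $r$ by $(rg)(\vec\alpha)=g(\vec\alpha)\circ(\beta\mapsto\xi_\beta)$. Both are cochain maps, each being pullback along a fixed continuous monotone map and hence commuting with the restriction maps defining the coboundary; both preserve local constancy; and $\pi(\xi_\beta)=\beta$ gives $r\sigma=\mathrm{id}$. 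Thus $\sigma^{*}$ is an injective homomorphism $\chm^n(\delta,\mathcal A_d)\to\chm^n(\varepsilon,\mathcal A_d)$ split by $r^{*}$, which is the first assertion (and the retract promised in the preamble). The backward direction of the equivalence is then immediate: if every $\chm^n(\varepsilon,\mathcal B_d)$ vanishes, so does each subgroup $\chm^n(\delta,\mathcal B_d)$.

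For the forward direction I would use that $\sigma$ splits the short exact sequence of cochain complexes
$$0\to\mathcal K\to\mathcal L(\mathcal U_{C_\varepsilon},\mathcal A_d)\xrightarrow{r}\mathcal L(\mathcal U_\delta,\mathcal A_d)\to 0,$$
where $\mathcal K=\ker r$. As the splitting is by a cochain map, $\mathcal L(\mathcal U_{C_\varepsilon},\mathcal A_d)\cong\mathcal K\oplus\mathcal L(\mathcal U_\delta,\mathcal A_d)$, whence $\chm^n(\varepsilon,\mathcal A_d)\cong\mathrm H^n(\mathcal K)\oplus\chm^n(\delta,\mathcal A_d)$. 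So under the hypothesis that $\chm^n(\delta,\mathcal B_d)=0$ for all $B$, it suffices to show $\mathrm H^n(\mathcal K)=0$.

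The heart of the matter is the identification of $\mathcal K$. In degree $j$ its value at $\vec\alpha$ is the group of locally constant $f:\xi_{\alpha_0}\to A$ vanishing on $C_\varepsilon\cap\xi_{\alpha_0}$. Such an $f$ must vanish on a neighborhood of every limit point of the club, so the set of gaps $(\xi_\beta,\xi_{\beta+1})$ on which it is nonzero is closed discrete in $\alpha_0$; writing $G_\beta$ for the group of locally constant $A$-valued functions on $(\xi_\beta,\xi_{\beta+1})$ vanishing near both endpoints, this exhibits $\mathcal K$ as $\mathcal L(\mathcal U_\delta,\mathcal K_A)$ for the coefficient system $\mathcal K_A$ assigning to $\alpha$ the locally finitely supported sections $\beta\mapsto g_\beta\in G_\beta$ $(\beta<\alpha)$. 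The inclusions $G_\beta\hookrightarrow G:=\bigoplus_{\beta<\delta}G_\beta$ and the coordinate projections $G\to G_\beta$ exhibit $\mathcal K_A$ as a retract of $\mathtt{sh}(\mathcal D_G)$, and by Lemma \ref{gddg} and Theorem \ref{combicasting} the cohomology of $\mathcal L(\mathcal U_\delta,\mathtt{sh}(\mathcal D_G))$ is $\chm^n(\delta,\mathcal G_d)$. Hence $\mathrm H^n(\mathcal K)$ is a summand of $\chm^n(\delta,\mathcal G_d)$, which vanishes by the hypothesis applied to the auxiliary group $B=G$. Therefore $\chm^n(\varepsilon,\mathcal B_d)=0$ for every $B$, as desired. (The case $n=0$ is degenerate: $\chm^0(\zeta,\mathcal B_d)\neq 0$ for any nonzero $B$ and $\zeta>0$, so both sides of the equivalence simply fail.)

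The main obstacle is exactly this forward direction: the comparison map is only a retract, so a priori vanishing at $\varepsilon$ is strictly stronger than at $\delta$, and the surplus is carried by the gap complex $\mathcal K$. The decisive point is that the coefficient group $G=\bigoplus_\beta G_\beta$ of $\mathcal K$ is \emph{not} the original $A$ but an auxiliary group manufactured from the intervals of $\varepsilon\setminus C_\varepsilon$; this is precisely why the equivalence must be quantified over all coefficient groups. The remaining work is routine bookkeeping I would verify but not belabor: that $\sigma$ and $r$ commute with the coboundaries, that the functions in $\mathcal K_A(\alpha)$ are genuinely locally finitely supported over the gaps, and that the presheaf retract $\mathcal K_A\triangleleft\mathtt{sh}(\mathcal D_G)$ is natural in $\alpha$. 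I note finally that one may phrase the whole argument in $\mathsf{inv}$-$\mathrm{Ab}$ instead, replacing the complexes by their associated inverse systems and $\mathrm H^n$ by $\lim^n$; the split short exact sequence and the retract computation of $\lim^n\mathcal K$ are then formally identical.
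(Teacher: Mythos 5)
Your proof is correct in its essentials, and for the substantive (forward) direction it takes a genuinely different route from the paper's. The paper argues both assertions in the category $\mathsf{inv}$-$\mathrm{Ab}$: the retract comes from a morphism $\mathbf{i}:\mathbf{Q}(\delta)\to\mathbf{Q}(\varepsilon)$ of inverse systems of finitely supported functions with left inverse $\mathbf{r}$, and the forward direction proceeds by reindexing the whole $\varepsilon$-system along the club as a $\delta$-indexed system $\mathbf{P}(\delta)$ with terms $\bigoplus_{\xi_\beta}A$, observing that $\mathbf{P}(\delta)$ is isomorphic to $\mathbf{Q}(\varepsilon)$ in $\mathsf{pro}$-$\mathrm{Ab}$ (though not in $\mathsf{inv}$-$\mathrm{Ab}$), and then realizing $\mathbf{P}(\delta)$ as a retract of the $\delta$-system with coefficients in the single large group $B=\bigoplus_\eta A$. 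You instead stay at the \v{C}ech cochain level with $\mathcal{A}_d$: your split exact sequence isolates the surplus of $\varepsilon$ over $\delta$ as the gap complex $\mathcal{K}$, and your auxiliary group $G=\bigoplus_\beta G_\beta$ plays the role of the paper's $B$. Both proofs turn on the same decisive insight, which you correctly identify --- the comparison map is only a retract, and the complementary part is governed by $\delta$-indexed cohomology with a coefficient group manufactured from the gaps, which is exactly why the equivalence must quantify over all coefficient groups. What the paper's route buys is brevity modulo the $\mathsf{pro}$-$\mathrm{Ab}$ machinery (the pro-isomorphism absorbs all club/gap bookkeeping); what yours buys is self-containedness --- it uses only Theorem \ref{combicasting} and Lemma \ref{gddg} --- together with the sharper structural conclusion $\chm^n(\varepsilon,\mathcal{A}_d)\cong\chm^n(\delta,\mathcal{A}_d)\oplus\mathrm{H}^n(\mathcal{K})$. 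The paper itself remarks that the lemma can be argued ``by elementary means''; your proof realizes that remark.

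Two slips in the write-up need repair, though neither threatens the architecture. First, your projection $\pi(\mu)=\otp(C_\varepsilon\cap\mu)$ does not map $\xi_{\alpha_0}$ into $\alpha_0$ when $\alpha_0$ is a successor, say $\alpha_0=\beta_0+1$: on the top gap $(\xi_{\beta_0},\xi_{\alpha_0})$ one has $\pi(\mu)=\alpha_0$, so $(\sigma f)(\vec{\alpha})=f(\vec{\alpha})\circ\pi$ is undefined there, and ad hoc conventions (e.g.\ assigning the value $0$ on that gap) break the identity $d\sigma=\sigma d$. The clean fix is to run the entire argument on the subcover indexed by $\acc(C_\varepsilon)$, i.e.\ by the $\xi_\alpha$ with $\alpha$ a limit, where $\pi$ does land in $\alpha_0$; Theorem \ref{combicasting}, which you already invoke, guarantees that this subcover computes the same groups on both sides (and it is in any case the setting Lemma \ref{gddg} requires). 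Second, your identification of $\mathcal{K}$ omits the initial segment $[0,\xi_0)$ below $\min C_\varepsilon$: a locally constant $f:\xi_{\alpha_0}\to A$ vanishing on the club may be nonzero there. This is harmless --- assume $0\in C_\varepsilon$, which changes neither $\delta$ nor any group in sight, or note that this part contributes a constant-presheaf factor whose \v{C}ech complex over $\mathcal{U}_\delta$ is acyclic in positive degrees --- but it should be said. With these repairs (and the standing restriction, shared with the paper's own proof, to $\varepsilon$ of uncountable cofinality; when the cofinality is countable both sides vanish for $n\geq 1$ by Theorem \ref{goblot}), your argument is complete.
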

\begin{proof} We begin with the first assertion, from which the ``if'' direction of the second assertion follows immediately. The first assertion's essential content is the following: in its ``stretching'' of $\delta$ to $\varepsilon$, the club $C_\varepsilon$ stretches height-$\delta$ non-$n$-trivial $n$-coherent families of functions to height-$\varepsilon$ non-$n$-trivial $n$-coherent families of functions as well. The argument of Corollary \ref{DA} exemplified the process in the case of $n=1$: derive a height-$\varepsilon$ family $\Phi'$ from a $\Phi=\{\varphi_\beta\,|\,\beta<\delta\}$ by letting $\varphi_{\xi_\beta}(\xi_\alpha)=\varphi_\beta(\alpha)$ whenever $\alpha<\beta$ and then extending this definition in the most obvious way. The extension of this procedure to other $n\geq 0$ and the verification that it transmits non-$n$-trivial $n$-coherence are straightforward but tedious, and left to the interested reader.

The following abstraction of the above argument is conceptually cleaner but requires machinery. We follow \cite{strongshape} in our treatment of the category $\mathsf{inv}$-$\mathrm{Ab}$ of inverse systems of abelian groups, the category $\mathsf{pro}$-$\mathrm{Ab}$, and the actions of $\text{lim}^n$ on each. For readers familiar with this machinery, the argument should be straightforward; we therefore only sketch the proof. Let $\mathbf{Q}(\delta)$ denote the inverse system $(Q_\beta,q_{\beta\gamma},\delta)$, where each $Q_\beta=\bigoplus_\beta A$ and each $q_{\beta\gamma}:Q_\gamma\to Q_\beta$ $(\beta<\gamma<\delta)$ is the restriction map. Then $\alpha\mapsto\xi_\alpha$ defines a morphism $\mathbf{i}:\mathbf{Q}(\delta)\to\mathbf{Q}(\varepsilon)$ as follows: view the elements of any $Q_\beta$ as functions $f:\beta\to A$. The morphism $\mathbf{i}$ is the family of maps $i_{(\beta,\gamma)}:Q_\beta\to Q_\gamma$ indexed by $\sigma=\{(\beta,\gamma)\,|\,\beta<\delta\text{ and }\xi_\beta\geq \gamma>\xi_\alpha\text{ for all }\alpha<\beta\}$ such that \begin{equation*}
  i_{(\beta,\gamma)}(f)(\xi) =
  \begin{cases}
                                  f(\alpha) & \text{if }\xi=\xi_\alpha \\
                                  0 & \text{if }\xi\in\gamma\backslash C_\varepsilon
  \end{cases}
\end{equation*}
The reader may easily supply a left-inverse $\mathbf{r}$ to the morphism $\mathbf{i}$. It is shown in \cite{BergfalkThesis} that $\lim^n\mathbf{Q}(\delta)$ is naturally isomorphic to $\mathrm{H}^n(\mathcal{U}_\delta,\mathcal{D}_A)$. It then follows from Theorem \ref{combicasting} together with the fact that $\text{lim}^n$ is functorial on $\mathsf{inv}$-$\mathrm{Ab}$ that the identity map $\mathbf{r}\circ\mathbf{i}:\mathbf{Q}(\delta)\to\mathbf{Q}(\varepsilon)\to\mathbf{Q}(\delta)$ descends to an identity homomorphism $\chm^n(\delta, \mathcal{A}_d)\to\chm^n(\varepsilon, \mathcal{A}_d)\to\chm^n(\delta, \mathcal{A}_d)$ factoring through $\chm^n(\varepsilon, \mathcal{A}_d)$. This shows the first assertion.

It remains only to show that if $\chm^n(\varepsilon, \mathcal{A}_d)\neq 0$ for some abelian group $A$ then $\chm^n(\delta, \mathcal{B}_d)\neq 0$ for some abelian group $B$. Let $\mathbf{P}(\delta)=(P_\beta,p_{\beta\gamma},\delta)$ where $P_\beta=\bigoplus_{\xi_\beta} A$ and each $p_{\beta\gamma}:P_\gamma\to P_\beta$ is the restriction map, as before. Then the family $f_\beta=\mathrm{id}:Q_{\xi_\beta}\to P_\beta$ $(\beta<\delta)$ defines a morphism $\mathbf{f}: \mathbf{Q}(\varepsilon)\to\mathbf{P}(\delta)$ in $\mathsf{inv}$-$\mathrm{Ab}$. The family of restriction maps $g_{(\beta,\gamma)}: P_\beta\to Q_\gamma$ $((\beta,\gamma)\in\sigma)$ defines a $\mathbf{g}:\mathbf{P}(\delta)\to\mathbf{Q}(\varepsilon)$ such that $\mathbf{g}\circ\mathbf{f}$, though not an isomorphism in $\mathsf{inv}$-$\mathrm{Ab}$, is an isomorphism in the category $\mathsf{pro}$-$\mathrm{Ab}$. As above, apply the functoriality of $\text{lim}^n$ on $\mathsf{pro}$-$\mathrm{Ab}$ to deduce the nontriviality of $\text{lim}^n \mathbf{P}(\delta)$ from that of $\text{lim}^n \mathbf{Q}(\varepsilon)$. Let $\mathbf{Q}^B(\delta)$ denote the inverse system defined like $\mathbf{Q}(\delta)$ but with $B$ in place of $A$. Take then $B=\bigoplus_\eta A$ with $\eta$ sufficiently large that $\mathbf{P}(\delta)$ appears naturally as a subsystem, and indeed as a retract, of $\mathbf{Q}^B(\delta)$. Then argue as in the previous paragraph the nontriviality of $\text{lim}^n \mathbf{Q}(\varepsilon)$, and hence of $\text{lim}^n \mathbf{P}(\delta)$, and hence of $\text{lim}^n \mathbf{Q}^B(\delta)$, and hence of $\chm^n(\delta, \mathcal{B}_d)$, from the nontriviality of $\chm^n(\varepsilon, \mathcal{A}_d)$.
\end{proof}

As the above argument suggests, our overwhelming interest in this section will simply be the question of whether or not the \v{C}ech cohomology groups $\chm^n(\varepsilon, \mathcal{A}_d)$ vanish. Though coarse, this is standard; as Pierce notes in related contexts, ``It is unlikely that [the question of the structure of the abelian groups $\text{Ext}^n(\,\cdot\,,\,\cdot\,)$ even in fairly restricted settings] has a reasonable solution'' \cite[p. 336]{pierce}. Nevertheless, some more refined questions, beginning, e.g., with the question of whether $\chm^1(\omega_1, \mathbb{Z}_d)$ may be free (due to Viale), do seem to us worthwhile; they simply fall beyond the scope of this paper.

\subsection{Compact cardinals}\label{II.1}\hfill

Recall the following characterization of a weakly compact cardinal.

\begin{definition} 
	A cardinal $\kappa$ is \textit{$\Pi_m^n$-indescribable} if whenever $U\subset V_\kappa$ and $\sigma$ is a $\Pi_m^n$ 
	sentence such that $(V_\kappa,\in,U)\vDash\sigma$, then for some $\alpha<\kappa$, $(V_\alpha,\in,U\cap V_\alpha)\vDash \sigma$.
\end{definition}

\begin{theorem}[Hanf-Scott; see \cite{kanamori}] 
	A cardinal $\kappa$ is $\Pi_1^1$-indescribable if and only if it is weakly compact.
\end{theorem}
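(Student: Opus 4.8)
The plan is to prove each direction through a standard intermediary characterization of weak compactness, using throughout that both hypotheses force $\kappa$ to be inaccessible. That a $\Pi^1_1$-indescribable cardinal is inaccessible is a routine preliminary reflection argument (see \cite{kanamori}), and weak compactness entails inaccessibility by definition (or as the Erd\H{o}s--Tarski theorem, depending on which formulation one takes as primitive). So I fix an inaccessible $\kappa$ and work over structures $(V_\kappa,\in,U)$ with $U\subseteq V_\kappa$, repeatedly invoking that since $V_\kappa\models\mathsf{ZFC}$ the first-order satisfaction relation for set models is definable in $V_\kappa$ and, more generally, is $\Delta_1$ and hence absolute between transitive sets containing the model.

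For the forward direction I would first extract from weak compactness the \emph{extension property}: for every $U\subseteq V_\kappa$ there are a transitive set $X\supsetneq V_\kappa$ satisfying a sufficient fragment of $\mathsf{ZF}$ and a $U'\subseteq X$ with $(V_\kappa,\in,U)\prec(X,\in,U')$. Granting it, suppose $(V_\kappa,\in,U)\models\forall Y\,\phi(Y)$ with $\phi$ first-order, and argue by contradiction that this true $\Pi^1_1$ sentence fails to reflect: then for every ordinal $\alpha<\kappa$ there is a $Y\subseteq V_\alpha$ (note $Y\in V_\kappa$) with $(V_\alpha,\in,U\cap V_\alpha,Y)\models\neg\phi$. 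The assertion
\[
\forall\alpha\,\big[\alpha\in\mathrm{Ord}\to\exists Y\,(Y\subseteq V_\alpha\wedge(V_\alpha,\in,U\cap V_\alpha,Y)\models\neg\phi)\big]
\]
is then genuinely \emph{first-order} over $(V_\kappa,\in,U)$ and true, so by elementarity it holds in $(X,\in,U')$. Instantiating it at $\alpha=\kappa$ --- an ordinal of $X$, with $V_\kappa^X=V_\kappa$ and $U'\cap V_\kappa=U$ --- produces a genuine $Y\subseteq V_\kappa$ lying in $X$ with $(V_\kappa,\in,U,Y)\models\neg\phi$, contradicting $\forall Y\,\phi(Y)$. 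Hence every true $\Pi^1_1$ sentence reflects and $\kappa$ is $\Pi^1_1$-indescribable.

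For the reverse direction I would show that $\Pi^1_1$-indescribability yields the tree property, which together with inaccessibility is one of the standard equivalents of weak compactness. Suppose toward a contradiction that $T$ is a $\kappa$-Aronszajn tree, with underlying set arranged to be $\kappa$, so that $T\subseteq V_\kappa$. The conjunction ``$T$ is a tree each of whose levels is a set, whose nodes have extensions at arbitrarily high levels, and which has no cofinal branch'' is $\Pi^1_1$ over $(V_\kappa,\in,T)$: branch-freeness is the clause $\forall B\,(B\text{ a branch}\to B\text{ bounded})$, and the remaining clauses are first-order. It is true, so it reflects to some $\alpha<\kappa$, whence $T\cap V_\alpha=T\!\restriction\!\alpha$ has height $\alpha$ and no cofinal branch. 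But $T$ has height $\kappa>\alpha$, so it has a node $t$ on level $\alpha$, and the set of predecessors of $t$ is a cofinal branch through $T\!\restriction\!\alpha$ --- a contradiction. Thus no $\kappa$-Aronszajn tree exists, and $\kappa$ is weakly compact.

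The main obstacle is the derivation of the extension property from weak compactness in the forward direction; this is where the real weight of the theorem sits and where one must commit to a specific definition of weak compactness (say $\kappa\to(\kappa)^2_2$) and construct the transitive elementary end-extension. The remaining points are bookkeeping: verifying the claimed quantifier complexities --- crucially, that the witnessing $Y$ in the reflection argument lives in $V_\kappa$, so that an ostensibly second-order statement is in fact first-order --- and invoking the absoluteness of satisfaction for set models so that truth computed inside $X$ agrees with truth in $V$.
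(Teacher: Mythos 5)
The paper never proves this statement: it is quoted as a classical theorem of Hanf and Scott, with a pointer to Kanamori, and is invoked as a black box in the proof of Theorem \ref{nowklycpt}. So there is no in-paper argument to measure you against; the benchmark is the standard textbook proof, and your decomposition --- weak compactness yields the extension property, which yields $\Pi^1_1$-reflection via the definability and absoluteness of satisfaction for set structures; $\Pi^1_1$-indescribability yields inaccessibility plus the tree property, which together yield weak compactness --- is exactly that proof. The forward direction as you sketch it is sound: the key observation that the failure-of-reflection statement is first-order over $(V_\kappa,\in,U)$ (because the witnessing $Y\subseteq V_\alpha$ is an \emph{element} of $V_\kappa$), and the instantiation at $\alpha=\kappa$ inside the end-extension $X$, are precisely the standard moves, and your requirement that $X$ satisfy a fragment of $\mathsf{ZF}$ is what guarantees both $\kappa\in X$ and the absoluteness of satisfaction that you use. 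But note that your proposal does not discharge the extension property itself, and you concede this is where the weight of the forward direction lies; if one is permitted simply to quote that characterization from Kanamori, one may as well quote the whole theorem, so a self-contained proof still owes the construction of the elementary end-extension (e.g., from $\kappa\to(\kappa)^2_2$ via a $\kappa$-complete ultrafilter on a $\kappa$-algebra of subsets of $V_\kappa$, or via $L_{\kappa\kappa}$-compactness).

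There is also an unacknowledged gap in the reverse direction, at a step that fails as literally written. From reflection of your $\Pi^1_1$ sentence to some $\alpha<\kappa$ you conclude ``whence $T\cap V_\alpha=T\!\restriction\!\alpha$,'' but for an arbitrary bijection of the nodes of $T$ with $\kappa$ this is simply false: an ordinal $<\alpha$ can name a node of tree-level $\geq\alpha$, and, conversely, the $T$-predecessors of your chosen level-$\alpha$ node $t$ need not be ordinals $<\alpha$, so the set of predecessors of $t$ need not be a subset of $V_\alpha$ at all, and its trace on $V_\alpha$ need not meet cofinally many levels of the reflected tree. The intended contradiction then evaporates. The standard repair: since $\kappa$ is inaccessible (regularity already suffices here), $|T\!\restriction\!\beta|<\kappa$ for every $\beta<\kappa$, so you may re-enumerate the nodes \emph{level by level}, ensuring every node of level $<\beta$ precedes every node of level $\geq\beta$; then include among the first-order clauses of your sentence $\sigma$ such statements as ``there is no largest ordinal,'' ``every node has a level,'' and ``for every ordinal $\beta$ there is a node of level $\beta$.'' These clauses force any reflecting $\alpha$ to be a limit closure point of the enumeration, at which $T\cap V_\alpha$ really is $T\!\restriction\!\alpha$ and the predecessors of a level-$\alpha$ node of $T$ really are ordinals $<\alpha$, forming the forbidden cofinal branch. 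With that arrangement, and with inaccessibility established first (as your plan already requires), your argument goes through.
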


\begin{theorem}\label{nowklycpt} 
	Suppose that $\kappa$ is a weakly compact cardinal, $n$ is a positive integer, and $A$ is an abelian group of 
	cardinality less than $\kappa$. Then every $n$-coherent family of $A$-valued functions indexed 
	by $\kappa$ is $n$-trivial. In other words, $\chm^n(\kappa, \mathcal{A}_d) = 0$.
\end{theorem}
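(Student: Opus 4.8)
The plan is to use the characterization of weak compactness as $\Pi^1_1$-indescribability (the Hanf--Scott theorem quoted just above) together with the fact, recorded in Example \ref{elpmaxe}, that an $n$-coherent family is \emph{automatically} $n$-trivial on every proper initial segment. The strategy is a reflection argument: suppose toward a contradiction that some $n$-coherent $\Phi_n=\{\varphi_{\vec\alpha}:\alpha_0\to A\,|\,\vec\alpha\in[\kappa]^n\}$ is not $n$-trivial; express this failure of $n$-triviality as a $\Pi^1_1$ assertion over $(V_\kappa,\in,U)$, reflect it down to some $V_\alpha$ with $\alpha<\kappa$, and observe that the reflected family is the initial segment $\Phi_n\!\restriction\!\alpha$, which Example \ref{elpmaxe} shows is $n$-trivial --- a contradiction. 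Note that $\kappa$, being weakly compact, is inaccessible, so from $|A|<\kappa$ we get $A\in V_\kappa$, and the whole family $\Phi_n$ is coded by a single predicate $U\subseteq V_\kappa$ (say the set of pairs $(\vec\alpha,\varphi_{\vec\alpha})$ together with a tag for $A$). I would arrange the coding so that for every limit ordinal $\alpha$ with $A\in V_\alpha$, the trace $U\cap V_\alpha$ codes exactly $\Phi_n\!\restriction\!\alpha=\{\varphi_{\vec\beta}\,|\,\vec\beta\in[\alpha]^n\}$, which holds because a pair $(\vec\beta,\varphi_{\vec\beta})$ has rank below $\alpha$ precisely when $\vec\beta\in[\alpha]^n$.

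Next I would pin down the syntactic complexity. Let $\sigma$ be the sentence over $(V_\kappa,\in,U)$ asserting that $U$ codes an abelian group $A$ together with an $n$-coherent, non-$n$-trivial family of $A$-valued functions. The clause ``$n$-coherent'' is first-order: it says that for every $\vec\alpha\in[\mathrm{Ord}]^{n+1}$ the alternating sum $\sum_{i\leq n}(-1)^i\varphi_{\vec\alpha^i}$ has finite support, and finiteness (hence the relation $=^*$) is absolute for the $V_\gamma$. The clause ``not $n$-trivial'' reads ``$\forall\Psi\,(\Psi\text{ does not }n\text{-trivialize }U)$'', and since ``$\Psi$ $n$-trivializes $U$'' is itself first-order over $(V_\kappa,\in,U,\Psi)$ (one further alternating sum with finite support), this clause is $\Pi^1_1$. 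Crucially, the universal second-order quantifier over $\Psi\subseteq V_\kappa$ ranges over \emph{all} candidate trivializations $\Psi_{n-1}=\{\psi_{\vec\alpha}\,|\,\vec\alpha\in[\kappa]^{n-1}\}$, since each such family is a subset of $V_\kappa$. Thus $\sigma$ is $\Pi^1_1$, and our contradictory hypothesis says precisely that $(V_\kappa,\in,U)\vDash\sigma$.

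By $\Pi^1_1$-indescribability there is then $\alpha<\kappa$ with $(V_\alpha,\in,U\cap V_\alpha)\vDash\sigma$. Including in $\sigma$ a first-order clause asserting that the coefficient set exists and that the ordinals have no largest element, I may take $\alpha$ to be a limit with $A\in V_\alpha$, so that $U\cap V_\alpha$ codes the genuine initial segment $\Phi_n\!\restriction\!\alpha$. But Example \ref{elpmaxe} supplies an explicit $n$-trivialization of this segment, namely $\Psi^{(\alpha)}=\{(-1)^{n+1}\varphi_{\vec\beta\alpha}\,|\,\vec\beta\in[\alpha]^{n-1}\}$. Each member is a function $\beta_0\to A$ with $\beta_0<\alpha$, so $\Psi^{(\alpha)}\subseteq V_\alpha$; it is therefore a legitimate witness for the existential second-order quantifier of ``$U\cap V_\alpha$ is $n$-trivial'' as interpreted inside $(V_\alpha,\in,U\cap V_\alpha)$. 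This contradicts $(V_\alpha,\in,U\cap V_\alpha)\vDash\sigma$, and by Theorem \ref{combicasting} the conclusion $\chm^n(\kappa,\mathcal{A}_d)=0$ follows.

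I expect the one point demanding genuine care to be bookkeeping rather than ideas: namely, verifying that ``not $n$-trivial'' is honestly $\Pi^1_1$ (the sole absoluteness requirement being that of finiteness, which holds at inaccessible $\kappa$), and that the reflecting ordinal can be chosen large enough that $U\cap V_\alpha$ is an honest initial segment \emph{and} that the local trivialization $\Psi^{(\alpha)}$ lies in $\mathcal{P}(V_\alpha)$, the exact range of the reflected second-order quantifier. The structural engine is entirely Example \ref{elpmaxe}: $n$-coherent families are $n$-trivial on initial segments, so weak compactness is needed only to reflect the global failure of triviality down to an initial segment, where it is impossible. For $n=1$ this specializes to the familiar fact that the associated $\kappa$-tree (as in Corollary \ref{aronszajn}) has the tree property, its cofinal branch furnishing the trivialization directly.
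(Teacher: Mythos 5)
Your proposal is correct and is essentially the paper's own argument: the paper likewise reflects the $\Pi^1_1$ statement of non-$n$-triviality via $\Pi^1_1$-indescribability to some $(V_\alpha,\in,\Phi\cap V_\alpha)$ and derives the contradiction from the fact (Example \ref{elpmaxe}) that $\Phi\cap V_\alpha$, being a proper initial segment of an $n$-coherent family, is $n$-trivial. Your write-up merely makes explicit the coding of $\Phi$ and $A$ as a predicate, the choice of a limit reflection point with $A\in V_\alpha$, and the explicit witness $\{(-1)^{n+1}\varphi_{\vec\beta\alpha}\}$ --- details the paper leaves implicit.
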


\begin{proof}
	Since $|A| < \kappa$, we can assume that $A \in V_\kappa$. Suppose for sake of contradiction that 
	$\Phi = \{\varphi_{\vec{\alpha}}:\alpha_0 \rightarrow A \mid \vec{\alpha} \in [\kappa]^n\}$ is 
	$n$-coherent and non-$n$-trivial. Then $(V_\kappa,\in,\Phi)$ satisfies the following $\Pi_1^1$-sentence:
	\begin{align*}\tag{$\sigma$} 
		\forall\,\{\psi_{\vec{\beta}}:\beta_0\rightarrow A\,|\,\vec{\beta}\in[\text{Ord}]^{n-1}\}\,
		\exists\,\vec{\alpha}\in[\text{Ord}]^n\,\left[\sum_{i=0}^{n-1}(-1)^i\psi_{\vec{\alpha}^i}\neq^* \varphi_{\vec{\alpha}}\,\right]
	\end{align*}
	(We have preferred a somewhat more readable statement of $\sigma$ to a more formally correct one.) 
	By the $\Pi^1_1$-indescribability of $\kappa$, then, there is $\alpha < \kappa$ such that $A \in V_\alpha$ and
	$(V_\alpha, \in, \Phi \cap V_\alpha)$ satisfies $\sigma$. But this implies that $\Phi \cap V_\alpha$ is 
	non-$n$-trivial and therefore extends to no larger $n$-coherent family, which, by the discussion around 
	Example~\ref{elpmaxe}, contradicts the fact that $\Phi \cap V_\alpha$ is a strict initial segment of $\Phi$.
\end{proof}

\begin{remark} \label{weakly_compact_remark}
	In Theorem~\ref{nowklycpt}, if $n = 1$, then the requirement $|A| < \kappa$ can be dropped. 
	Suppose otherwise.
	We can always assume that $|A| \leq \kappa$, since at most $\kappa$-many elements of $A$ 
	appear as values in the family of functions. If $|A| = \kappa$, we can code $A$ as a subset of $V_\kappa$, and the sentence 
	$(\sigma)$ is satisfied by the structure $(V_\kappa, \in, \{\Phi, A\})$ and hence is also 
	satisfied by $(V_\alpha, \in, \{\Phi \cap V_\alpha, A \cap V_\alpha\})$ for some $\alpha < \kappa$. 
	In this case, though, the $(\Phi \cap V_\alpha)$-trivializing function $\varphi_\alpha$ takes values in $A \cap V_\alpha$ for all but finitely many elements of 
	its domain, which leads to a contradiction just as above.
\end{remark}

%Hence the weak compactness of $\kappa$ entails the triviality of the \v{C}ech cohomology groups of $\kappa$.
To obtain more global results, we will need to introduce more substantial large cardinal 
notions.

\begin{definition}
	A cardinal $\kappa$ is \emph{strongly compact} if, for every set $X$, every $\kappa$-complete filter on 
	$X$ extends to a $\kappa$-complete ultrafilter on $X$.
\end{definition}

The following weakening of strong compactness was introduced by Bagaria and Magidor in \cite{bagaria_magidor}.

\begin{definition} 
	Suppose that $\mu \leq \kappa$ are uncountable cardinals. Then $\kappa$ is \emph{$\mu$-strongly compact} 
	if, for every set $X$, every $\kappa$-complete filter on $X$ extends to a $\mu$-complete ultrafilter 
	on $X$.
\end{definition}

\begin{remark}
	Notice that if $\kappa$ is $\mu$-strongly compact and $\lambda > \kappa$ then $\lambda$ is also 
	$\mu$-strongly compact. Bagaria and Magidor showed in \cite{bagaria_magidor} that, for a fixed $\mu$, the least 
	$\mu$-strongly compact cardinal need not be regular, though it must be a limit cardinal whose cofinality 
	is at least the first measurable cardinal.
\end{remark}

\begin{theorem} \label{strongly_compact_thm}
	Suppose that $A$ is an abelian group, $\kappa \geq \mu > |A|$ are uncountable cardinals, and $\kappa$ 
	is $\mu$-strongly compact. Then $\chm^n(\lambda, \mathcal{A}_d) = 0$ for every $n \geq 1$ and every 
	regular cardinal $\lambda \geq \kappa$.
\end{theorem}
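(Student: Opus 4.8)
The plan is to trivialize directly: given any $n$-coherent family $\Phi=\{\varphi_{\vec{\alpha}}:\alpha_0\to A\mid\vec{\alpha}\in[\lambda]^n\}$, I will build an explicit $n$-trivialization $\Psi=\{\psi_{\vec{\beta}}:\beta_0\to A\mid\vec{\beta}\in[\lambda]^{n-1}\}$ by averaging, against a suitable ultrafilter, the canonical local trivializations supplied by Example~\ref{elpmaxe}. By Theorem~\ref{combicasting} this suffices, since $\chm^n(\lambda,\mathcal{A}_d)$ is exactly the group of $n$-coherent families modulo the $n$-trivial ones, and it is also where regularity of $\lambda$ enters.

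First I would produce the ultrafilter. Since $\lambda\geq\kappa$ is regular, the filter generated by the tails $\{(\eta,\lambda)\mid\eta<\lambda\}$ is $\lambda$-complete, hence $\kappa$-complete; so the $\mu$-strong compactness of $\kappa$ extends it to a $\mu$-complete ultrafilter $U$ on $\lambda$, necessarily uniform. The two features of $U$ that drive everything are that every tail lies in $U$ and that $U$ is closed under intersections of fewer than $\mu$ of its members.

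Next I would define $\Psi$. Recall from Example~\ref{elpmaxe} that for each $\gamma<\lambda$ the family $\psi^\gamma_{\vec{\beta}}:=(-1)^{n+1}\varphi_{\vec{\beta}\gamma}$ (for $\vec{\beta}\in[\gamma]^{n-1}$) $n$-trivializes $\Phi\!\restriction\!\gamma$. For fixed $\vec{\beta}\in[\lambda]^{n-1}$ and $\xi<\beta_0$ I set $\psi_{\vec{\beta}}(\xi)$ equal to the unique $a\in A$ with $\{\gamma:\psi^\gamma_{\vec{\beta}}(\xi)=a\}\in U$. This is well-defined because the tail above the largest coordinate of $\vec{\beta}$ lies in $U$ and is partitioned according to the value of $\psi^\gamma_{\vec{\beta}}(\xi)$ into $|A|<\mu$ pieces, exactly one of which lies in $U$ by $\mu$-completeness. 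The verification then amounts to checking, for each $\vec{\alpha}\in[\lambda]^n$, that $\sum_{i=0}^{n-1}(-1)^i\psi_{\vec{\alpha}^i}=^*\varphi_{\vec{\alpha}}$. For any $\gamma$ above the top coordinate $\alpha_{n-1}$, applying the $n$-coherence relation to the $(n+1)$-tuple $\vec{\alpha}{}^\frown\gamma$ yields $\varphi_{\vec{\alpha}}(\xi)=\sum_{i=0}^{n-1}(-1)^i\psi^\gamma_{\vec{\alpha}^i}(\xi)$ off a finite set $E_\gamma\subseteq\alpha_0$; and since the top coordinate of every $\vec{\alpha}^i$ is at most $\alpha_{n-1}$, for each $i$ and $\xi$ the set of $\gamma>\alpha_{n-1}$ with $\psi^\gamma_{\vec{\alpha}^i}(\xi)=\psi_{\vec{\alpha}^i}(\xi)$ lies in $U$, so all these majorities may be computed over the common tail above $\alpha_{n-1}$.

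The step I expect to be the crux is bounding the error set $B:=\{\xi<\alpha_0:\sum_{i=0}^{n-1}(-1)^i\psi_{\vec{\alpha}^i}(\xi)\neq\varphi_{\vec{\alpha}}(\xi)\}$, i.e.\ upgrading agreement-mod-$U$ to genuine mod-finite agreement. For each $\xi\in B$ the finitely many agreement sets intersect (finite intersection of $U$-sets) to a member of $U$ on which necessarily $\xi\in E_\gamma$, so $\{\gamma:\xi\in E_\gamma\}\in U$ for every $\xi\in B$. Were $B$ infinite, I would pick countably many distinct $\xi_k\in B$; since $\aleph_0<\mu$, $\mu$-completeness makes $\bigcap_k\{\gamma:\xi_k\in E_\gamma\}$ a member of $U$, hence nonempty, producing a single $\gamma$ with $E_\gamma\supseteq\{\xi_k\mid k\in\omega\}$ infinite --- contradicting the finiteness of $E_\gamma$. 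Thus $B$ is finite, $\Psi$ is the desired $n$-trivialization, and $\chm^n(\lambda,\mathcal{A}_d)=0$. For $n=1$ this is the same argument with $\psi^\gamma=\varphi_\gamma$ and $\Psi$ a single function $\lambda\to A$.
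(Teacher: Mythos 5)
Your proposal is correct and follows essentially the same route as the paper's own proof: both extend a $\kappa$-complete filter on $\lambda$ (you use the tail filter, the paper the co-small filter $\{X \subseteq \lambda \mid |\lambda\setminus X|<\lambda\}$) to a $\mu$-complete ultrafilter, define $\psi_{\vec{\beta}}(\xi)$ as the ultrafilter-majority value of the canonical local trivializers $(\pm 1)\varphi_{\vec{\beta}\gamma}(\xi)$ --- well-defined since $|A|<\mu$ --- and then use countable completeness to convert an infinite set of failure points into a single index $\gamma$ at which a coherence error set would have to be infinite, a contradiction. Your per-point bookkeeping with the sets $E_\gamma$, and your sign $(-1)^{n+1}$ (which is the one consistent with Example~\ref{elpmaxe}), are only cosmetic variations on the paper's argument.
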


\begin{proof} 
	Again we will focus on $n \geq 2$, as the case $n=1$ is similar and easier. 
	Fix $n>1$ and $\lambda \geq \kappa$ as above. Notice that the filter 
	$\mathcal{F} = \{X \subseteq \lambda \mid |\lambda \setminus X| < \lambda\}$ on $\lambda$ is 
	$\kappa$-complete, so we can extend it to a $\mu$-complete ultrafilter $\mathcal{U}$ on 
	$\lambda$. Let $\Phi=\{\varphi_{\vec{\gamma}}:\gamma_0\rightarrow A\,|\,\vec{\gamma}\in [\lambda]^n\}$ 
	be $n$-coherent. We will show that $\Phi$ is also $n$-trivial by constructing a trivializing family 
	$\Psi$.

	For $\vec{\beta}\in [\lambda]^{n-1}$ and $\xi<\beta_0$ let $\psi_{\vec{\beta}}(\xi)$ be the unique $a\in A$ 
	such that 
	\[
	U_{\vec{\beta},\xi}:=\{\gamma\in(\beta_{n-2},\lambda)\,|\,\varphi_{\vec{\beta}\gamma}(\xi)=a\}\in\mathcal{U}.
	\] 
	Such an $a$ exists because $\mathcal{U}$ is $\mu$-complete and $|A| < \mu$.
	We claim that $\Psi=\{(-1)^n\psi_{\vec{\beta}}\,|\,\vec{\beta}\in [\lambda]^{n-1}\}$ $n$-trivializes $\Phi$. 
	If not, then for some $\vec{\gamma}\in [\lambda]^n$ and $x\in [\gamma_0]^{\aleph_0}$, 
	\begin{align}\label{xerror} 
		\sum_{i=0}^{n-1}(-1)^{i+n}\psi_{\vec{\gamma}^i}(\xi)\neq \varphi_{\vec{\gamma}}(\xi)\text{ for all }\xi\in x.
	\end{align} 
	Now let $I$ be the index set $(n-1) \times x$, and take some 
	\[
		\delta\in\bigcap_{(i,\xi)\in I} U_{\vec{\gamma}^i\!,\,\xi}\backslash\gamma_{n-1}
	\]
	By (\ref{xerror}), and the definition of $\Psi$,
	\[
		\sum_{i=0}^{n-1}(-1)^{i+n}\varphi_{\vec{\gamma}^i\delta}(\xi)\neq \varphi_{\vec{\gamma}}(\xi)\text{ for all }\xi\in x.
	\]
	Rearranging terms, this yields 
	\[
		(-1)^{n-1}\varphi_{\vec{\gamma}}(\xi) + \sum_{i=0}^{n-1}(-1)^i \varphi_{\vec{\gamma}^i\delta}(\xi) \neq 0 \text{ for all } \xi \in x.
	\]
	However, this contradicts the $n$-coherence of $\Phi$ applied to the set $\vec{\gamma} \cup \{\delta\} \in [\lambda]^{n+1}$. 
	Hence $\Phi$ is $n$-trivial.
\end{proof}

\subsection{Trivial first cohomology}\label{II.2}\hfill

In this subsection, we present two methods for obtaining trivial first cohomology groups at all regular 
cardinals greater than $\aleph_1$. We begin with the following useful lemma.

\begin{lemma} \label{one_branch_lemma}
	Suppose that $\delta$ is an ordinal of uncountable cofinality, $A$ is an abelian group, 
	$\Phi = \{\varphi_\alpha: \alpha \rightarrow A \mid \alpha < \delta\}$ is a nontrivial coherent family of functions, and $\bb{P}$ is a forcing notion such that 
	$\Vdash_{\bb{P} \times \bb{P}}``\cf(\delta) > \omega"$. Then 
	$\Phi$ remains nontrivial after forcing with $\bb{P}$.
\end{lemma}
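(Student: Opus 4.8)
The plan is to argue by contradiction, passing to the product $\mathbb{P}\times\mathbb{P}$ so as to produce two \emph{mutually generic} trivializations of $\Phi$ and then to distill from them a single trivialization living already in $V$. So suppose $\Phi$ becomes trivial after forcing with $\mathbb{P}$: fix a condition $p$ and a name $\dot\varphi$ with $p\Vdash_{\mathbb{P}}``\dot\varphi\restriction\beta\,=^*\varphi_\beta\text{ for all }\beta<\delta\text{''}$. Let $G_0\times G_1$ be $\mathbb{P}\times\mathbb{P}$-generic below $(p,p)$, so that $G_0,G_1$ are each $\mathbb{P}$-generic over $V$ with $p$ in their respective copies, and set $\varphi^0=\dot\varphi^{G_0}$, $\varphi^1=\dot\varphi^{G_1}$. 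Each $\varphi^i$ trivializes $\Phi$ in $V[G_i]$; since the trivialization relation is a statement about finite symmetric differences ranging over the ground-model index set $\delta$, it is upward absolute, and hence both $\varphi^0$ and $\varphi^1$ trivialize $\Phi$ in $V[G_0\times G_1]$.

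The crux is to compare the two trivializations. Consider $\psi:=\varphi^0-\varphi^1:\delta\to A$. Since both summands trivialize $\Phi$, for every $\beta<\delta$ we have $\psi\restriction\beta\,=^*\varphi_\beta-\varphi_\beta=0$; in the language of Definition \ref{zerotriv}, $\psi$ is a $0$-coherent function of height $\delta$. Here is exactly where the hypothesis on the square enters: a non-$0$-trivial $0$-coherent function is supported on a ladder, i.e.\ a cofinal subset of order type $\omega$, so its existence would witness $\cf(\delta)=\omega$. As $\Vdash_{\mathbb{P}\times\mathbb{P}}``\cf(\delta)>\omega\text{''}$ guarantees $\cf(\delta)>\omega$ in $V[G_0\times G_1]$, the function $\psi$ must be $0$-trivial there, i.e.\ of finite support. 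Thus $\varphi^0=^*\varphi^1$; let $F$ be their (finite) symmetric difference.

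Finally I would descend back to $V$. The set $F$ is a finite set of ordinals and so lies in $V$, and the function $g:=\varphi^0\restriction(\delta\setminus F)=\varphi^1\restriction(\delta\setminus F)$ therefore lies in $V[G_0]\cap V[G_1]$, which equals $V$ by the mutual genericity of $G_0$ and $G_1$. Extending $g$ by $0$ on $F$ yields a function $\varphi^*:\delta\to A$ in $V$ with $\varphi^*=^*\varphi^0$; consequently $\varphi^*\restriction\beta\,=^*\varphi_\beta$ for all $\beta<\delta$, so $\varphi^*$ trivializes $\Phi$ in $V[G_0\times G_1]$, and this fact, being absolute, holds in $V$ as well. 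This contradicts the assumed nontriviality of $\Phi$ in $V$, completing the argument.

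The genuinely load-bearing external ingredient is the mutual-genericity identity $V[G_0]\cap V[G_1]=V$, which is what lets a mod-finite agreement between two objects in the separate extensions be captured in the ground model; I expect this, together with the bookkeeping needed to confirm that $g$ (coded as a subset of $\delta\times A$) genuinely falls under that identity, to be the main point requiring care. The cofinality hypothesis is used precisely once, to force the $0$-coherent difference $\psi$ down to finite support, which is exactly why the statement is phrased for $\mathbb{P}\times\mathbb{P}$ rather than for $\mathbb{P}$ alone.
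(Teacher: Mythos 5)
Your proof is correct and is essentially the paper's own argument: both pass to the product $\mathbb{P}\times\mathbb{P}$, interpret the single trivializing name on each coordinate, use the hypothesis that the product preserves $\mathrm{cf}(\delta)>\omega$ to control the set of disagreements between the two interpretations, and use mutual genericity (the product lemma, i.e.\ $V[G_0]\cap V[G_1]=V$) to reach a contradiction with nontriviality in $V$. The only difference is the order of steps --- the paper first deduces from mutual genericity that the two interpretations must differ on an infinite set and then contradicts coherence below a $\beta<\delta$ capturing infinitely many disagreements, while you run the same facts contrapositively, first showing the disagreement set is finite and then descending an actual trivialization to $V$ --- so the mathematical content coincides.
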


\begin{proof}
	Suppose to the contrary that a condition $p \in \bb{P}$ forces that some 
	$\dot{\psi}:\delta\to A$ trivializes 
	$\Phi$. Let $G \times H$ be a $\bb{P} \times \bb{P}$-generic ultrafilter containing $(p,p)$ 
	and let $\psi_G$ and $\psi_H$ be the realizations of $\dot{\psi}$ in $V[G]$ and $V[H]$, 
	respectively. Observe that $(p,p)\in G \times H$ implies that each of the functions $\psi_G$ and 
	$\psi_H$ trivializes $\Phi$. 
	On the other hand, it follows from the product lemma for forcing and the fact that $\Phi$ is 
	nontrivial in $V$ that $\psi_G \notin V[H]$ and $\psi_H \notin V[G]$; hence there exists in $V[G \times H]$ some infinite set $X \subseteq \delta$ such that 
	$\psi_G(\alpha) \neq \psi_H(\alpha)$ for every $\alpha \in X$. By our assumption on $\mathbb{P}$, there also exists in $V[G \times H]$ a $\beta < \delta$ such that $X \cap \beta$ is infinite. 
	By definition, $\psi_G \!\restriction\! \beta =^* \varphi_\beta =^* \psi_H\! \restriction \!\beta$, 
	but this now contradicts our deduction that $\psi_G$ and $\psi_H$ differ at each of the infinitely many arguments $\alpha$ in $X\cap\beta$.
\end{proof}

\begin{corollary} \label{countably_closed_cor}
	Suppose that $\delta$ is an ordinal of uncountable cofinality, $A$ is an abelian group, 
	$\Phi = \{\varphi_\alpha:\alpha \rightarrow A \mid \alpha < \delta\}$ is a nontrivial coherent family of functions, and $\bb{P}$ is a countably closed forcing notion. Then 
	$\Phi$ remains nontrivial after forcing with $\bb{P}$.
\end{corollary}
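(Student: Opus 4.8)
The plan is to deduce the corollary directly from Lemma~\ref{one_branch_lemma} by verifying its sole nontrivial hypothesis, namely that $\Vdash_{\mathbb{P}\times\mathbb{P}}``\cf(\delta)>\omega"$. Since $\delta$ already has uncountable cofinality in the ground model, everything reduces to showing that the two-step product $\mathbb{P}\times\mathbb{P}$ preserves this fact.

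First I would record the standard observation that the product of two countably closed forcing notions is again countably closed. Given a descending $\omega$-chain of conditions in $\mathbb{P}\times\mathbb{P}$, one projects onto each coordinate to obtain two descending $\omega$-chains in $\mathbb{P}$, applies the countable closure of $\mathbb{P}$ in each coordinate separately to find a lower bound, and then pairs these two bounds to produce a lower bound for the original chain. Hence $\mathbb{P}\times\mathbb{P}$ is countably closed. Next I would invoke the fundamental property of countably closed forcing: it adds no new $\omega$-sequences of ground-model elements, and in particular no new $\omega$-sequences of ordinals. From this the preservation of uncountable cofinality is immediate, for if $\cf(\delta)$ were to drop to $\omega$ in $V^{\mathbb{P}\times\mathbb{P}}$, then there would be a cofinal map $\omega\to\delta$ in the extension, which, being an $\omega$-sequence of ordinals, would already lie in $V$, contradicting $\cf(\delta)>\omega$ in $V$. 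Therefore $\Vdash_{\mathbb{P}\times\mathbb{P}}``\cf(\delta)>\omega"$, and Lemma~\ref{one_branch_lemma} applies verbatim to yield the conclusion.

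I do not expect a genuine obstacle here; the substantive content of the corollary resides entirely in Lemma~\ref{one_branch_lemma}, and the present argument is a routine preservation check. The only point demanding any care is the passage to the \emph{square} $\mathbb{P}\times\mathbb{P}$ rather than to $\mathbb{P}$ itself, which is what forces us to note that countable closure is preserved under finite products; once that is observed, no difficulty remains.
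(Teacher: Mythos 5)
Your proposal is correct and is essentially the paper's own proof: the paper likewise observes that $\bb{P} \times \bb{P}$ is countably closed, hence adds no new $\omega$-sequences of ordinals, and then applies Lemma~\ref{one_branch_lemma}. Your write-up merely spells out the routine details (coordinatewise lower bounds for the product, and the cofinality-preservation argument) that the paper leaves implicit.
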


\begin{proof}
	Since $\bb{P}$ is countably closed, $\bb{P} \times \bb{P}$ is also countably 
	closed and therefore does not add any $\omega$-sequences of ordinals. The result 
	now follows from Lemma~\ref{one_branch_lemma}.
\end{proof}
In the argument below, we follow the essentially standard notations of \cite{cummings_handbook}.
\begin{theorem}\label{stronglycmpct}
	Suppose that $\mu$ is a regular uncountable cardinal, $\kappa > \mu$ is a strongly compact cardinal, and 
	$\bb{P} = \mathrm{Coll}(\mu, <\! \kappa)$. Then for 
	every regular cardinal $\lambda \geq \kappa$ and every abelian group $A$ in the forcing extension of $V$ by $\mathbb{P}$, we have $\chm^1(\lambda, \mathcal{A}_d) = 0$.
\end{theorem}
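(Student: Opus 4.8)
The plan is to work in the extension $V[G]$ by $\mathbb{P}=\mathrm{Coll}(\mu,<\kappa)$ --- in which $\kappa=\mu^+$ and $\mathbb{P}$ is $<\!\mu$-closed and $\kappa$-c.c. --- and to trivialize an arbitrary coherent family by manufacturing a trivialization on the far side of a strongly compact elementary embedding and then reflecting it back through a countably closed tail forcing. Note that this indirection is forced on us precisely because $A$ is now an arbitrary group in $V[G]$: unlike in Theorem~\ref{strongly_compact_thm}, where $|A|<\mu$ lets an ultrafilter pin down values directly, here $|A|$ may be large and no such value exists. So fix in $V[G]$ a regular $\lambda\geq\kappa$, an abelian group $A$, and a coherent family $\Phi=\{\varphi_\alpha:\alpha\to A\mid\alpha<\lambda\}$; the goal is to show $\Phi$ is trivial, whence $\chm^1(\lambda,\mathcal{A}_d)=0$ by Theorem~\ref{combicasting}. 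Observe that $\lambda$ is regular and exceeds $\omega_1$ in $V[G]$, so $\cf(\lambda)$ is uncountable there.

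Back in $V$, I would invoke the strong compactness of $\kappa$ to fix a fine, $\kappa$-complete ultrafilter on $P_\kappa(\lambda)$ and let $j:V\to M$ be the resulting ultrapower embedding. Then $\mathrm{crit}(j)=\kappa$, $M$ is closed under ${<}\kappa$-sequences, and $j''\lambda\in M$ with $M\models|j''\lambda|<j(\kappa)$; as $j(\lambda)$ is regular in $M$ this yields $\eta:=\sup j''\lambda<j(\lambda)$. Since every condition in $\mathbb{P}$ is a partial function of size $<\!\mu$ into $\kappa$ and hence lies in $V_\kappa$, we have $j\!\restriction\!\mathbb{P}=\mathrm{id}$, and by the usual factorization of the L\'evy collapse inside $M$ one gets $j(\mathbb{P})=\mathbb{P}\times\mathbb{Q}$ with $\mathbb{Q}=\mathrm{Coll}(\mu,[\kappa,j(\kappa)))^M\in M\subseteq V$.

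Now force over $V[G]$ with $\mathbb{Q}$ to obtain $G'$. Being a L\'evy collapse with $\mu$ regular uncountable, $\mathbb{Q}$ is $<\!\mu$-closed --- in particular countably closed --- in $V[G]$. Because $G'$ is generic over $V[G]\supseteq M[G]$, the product lemma makes $G\times G'$ generic over $M$ for $j(\mathbb{P})$, and since $j''G=G\subseteq G\times G'$ the embedding lifts to $\hat\jmath:V[G]\to M[G\times G']$. In $M[G\times G']$ the family $\hat\jmath(\Phi)$ is coherent of height $j(\lambda)$, so applying coherence at the index $\eta$ gives a function $t:=\hat\jmath(\Phi)_\eta:\eta\to\hat\jmath(A)$ with $t\!\restriction\!j(\alpha)=^*\hat\jmath(\varphi_\alpha)$ for every $\alpha<\lambda$. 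Pulling the finite exception sets back through the injection $j$, for each $\alpha$ the set $\{\beta<\alpha\mid t(j(\beta))\neq\hat\jmath(\varphi_\alpha(\beta))\}$ is finite; hence the function $\psi:\lambda\to A$ sending $\beta$ to the unique $a\in A$ with $\hat\jmath(a)=t(j(\beta))$ (and to $0$ when there is none) satisfies $\psi\!\restriction\!\alpha=^*\varphi_\alpha$ for all $\alpha$, since on a cofinite subset of each $\alpha$ that value is exactly $\varphi_\alpha(\beta)$. As $t$, $j\!\restriction\!\lambda$, and the restriction $\hat\jmath\!\restriction\!A$ of the lifted embedding all lie in $V[G][G']$, so does $\psi$, and thus $\Phi$ is trivial in $V[G][G']$.

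To finish, I would apply Corollary~\ref{countably_closed_cor}: were $\Phi$ nontrivial in $V[G]$, it would remain nontrivial after the countably closed forcing $\mathbb{Q}$, contradicting its triviality in $V[G][G']$. The main obstacle is the lifting package of the third paragraph --- confirming $j\!\restriction\!\mathbb{P}=\mathrm{id}$ together with the collapse factorization, securing genericity of $G\times G'$ over $M$ by forcing the tail over all of $V[G]$ rather than merely over $M[G]$, and verifying that the pulled-back trivialization $\psi$ genuinely lands in the \emph{countably closed} extension $V[G][G']$, so that Corollary~\ref{countably_closed_cor} can reflect triviality back down to $V[G]$. The mod-finite bookkeeping for $t$ and the closure of $\mathbb{Q}$ are by comparison routine.
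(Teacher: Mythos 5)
Your proof is correct and takes essentially the same route as the paper's: lift a strongly compact embedding $j$ through the collapse by forcing with the countably closed tail (you use the product factorization $j(\bb{P})\cong\bb{P}\times\bb{Q}$ where the paper uses the quotient $j(\bb{P})/G$, a cosmetic difference), apply coherence of the lifted family at $\eta=\sup(j``\lambda)$ and pull back through $j$ to get a trivialization in the larger extension, then invoke Corollary~\ref{countably_closed_cor} to conclude triviality already held in $V[G]$. All the delicate points you flag --- $j\restriction\bb{P}=\mathrm{id}$, genericity of the combined filter over $M$, and membership of the pulled-back trivialization in the countably closed extension --- are handled exactly as in the paper's argument.
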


\begin{proof} 	Let $G$ be $\bb{P}$-generic over $V$.
	Fix in $V$ a regular cardinal $\lambda \geq \kappa$. Since $\kappa$ is strongly compact, there exists an 
	elementary embedding $j:V \rightarrow M$ with $\mathrm{crit}(j) = \kappa$ together with an 
	$X \in M$ such that $j``\lambda \subseteq X$ and $M \models ``|X| < j(\kappa)"$. In particular, 
	$\sup(j``\lambda) < j(\lambda)$. Let $\eta = \sup(j``\lambda)$. 

	Consider now an abelian group $A$ and coherent family $\Phi = \{\varphi_\alpha:\alpha \rightarrow A \mid 
	\alpha < \lambda\}$ in $V[G]$. Let $\dot{\Phi}$ and $\dot{A}$ be $\mathbb{P}$-names for these objects. 
	Observe that the forcing $j(\bb{P})/G = \mathrm{Coll}(\mu, <\! j(\kappa))/G$ is countably closed, and let $H$ be $j(\bb{P})/G$-generic over $V[G]$. By standard arguments, the extension of the $\bb{P}$-generic $G$ to a $j(\bb{P})$-generic $G*H$ induces an extension of the elementary embedding $j:V\to M$ to an elementary 
	embedding $k:V[G] \rightarrow M[G * H]$ existing in $V[G*H]$. In particular, $k(\Phi)$ is a coherent family in $M[G*H]$ indexed by $k(\lambda)$. 
	We denote it $\Psi = \{\psi_\gamma:\gamma \rightarrow k(A) \mid \gamma < k(\lambda)\}$. 

	Observe that $k(\varphi_\alpha(\xi)) = \psi_{k(\alpha)}(k(\xi))$ for all $\xi < \alpha < \lambda$.
	Hence by the coherence of $\Psi$, the equality $\psi_\eta(k(\xi)) = k(\varphi_\alpha(\xi))$ holds for all $\alpha < \lambda$ 
	and all but finitely many $\xi < \alpha$. In particular, 
	$\psi_\eta(k(\xi)) \in k``A$ for all but finitely many $\xi < \lambda$. Define a function 
	$\varphi:\lambda \rightarrow A$ as follows:
	\begin{equation*}
  \varphi(\xi) =
  \begin{cases}
                                  k^{-1}(\psi_\eta(k(\xi))) & \text{if $\psi_\eta(k(\xi))\in k``A$} \\
                                   0 & \text{otherwise}.
  \end{cases}
\end{equation*}
The function $\varphi\in V[G*H]$ trivializes $\Phi$. Since $V[G*H]$ is an extension of 
	$V[G]$ by the countably closed forcing $j(\bb{P})/G$, Corollary~\ref{countably_closed_cor} applies, and
	$\Phi$ must already have been trivial in $V[G]$. Since $\lambda$ and $A$ were arbitrary, this completes the proof 
	of the theorem.
\end{proof}

\begin{corollary}
It is consistent relative to the consistency of the existence of a strongly compact cardinal that $\chm^1(\lambda, \mathcal{A}_d) = 0$ for every abelian group $A$ and regular cardinal $\lambda\geq\aleph_2$.
\end{corollary}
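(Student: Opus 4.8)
The plan is to read the corollary off of Theorem~\ref{stronglycmpct} by making the single economical choice $\mu = \omega_1$. Fix a strongly compact cardinal $\kappa$, set $\mathbb{P} = \mathrm{Coll}(\omega_1, <\!\kappa)$, and let $G$ be $\mathbb{P}$-generic over $V$. The entire content of the argument is the standard observation that this L\'{e}vy collapse arranges $\kappa = \aleph_2^{V[G]}$, so that the conclusion of Theorem~\ref{stronglycmpct} --- triviality of $\chm^1(\lambda, \mathcal{A}_d)$ for all regular $\lambda \geq \kappa$ --- becomes precisely triviality for all regular $\lambda \geq \aleph_2$.

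First I would recall the relevant preservation facts for $\mathbb{P} = \mathrm{Coll}(\omega_1, <\!\kappa)$ (see \cite{cummings_handbook}). Since $\mathbb{P}$ is countably closed it adds no new $\omega$-sequences and in particular preserves $\omega_1$. Since $\kappa$ is strongly compact it is inaccessible, whence $\mathbb{P}$ has the $\kappa$-chain condition and preserves all cardinals $\geq \kappa$. Finally, $\mathbb{P}$ collapses every cardinal $\lambda$ with $\omega_1 \leq \lambda < \kappa$ to have cardinality $\omega_1$. Putting these together, in $V[G]$ the cardinal $\omega_1$ is preserved, there are no cardinals strictly between $\omega_1$ and $\kappa$, and $\kappa$ itself survives; hence $\kappa = \aleph_2^{V[G]}$.

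It then suffices to invoke Theorem~\ref{stronglycmpct} with $\mu = \omega_1$: in $V[G]$, for every regular cardinal $\lambda \geq \kappa$ and every abelian group $A$ (computed in $V[G]$), we have $\chm^1(\lambda, \mathcal{A}_d) = 0$. Because $\kappa = \aleph_2$ in $V[G]$, the regular cardinals $\lambda \geq \aleph_2$ are exactly the regular $\lambda \geq \kappa$, so this is the desired statement. Since we assumed nothing beyond the existence of a strongly compact cardinal, the corollary holds consistently relative to that hypothesis.

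The argument has no genuine obstacle beyond the bookkeeping just described; the only point demanding any care is the verification that $\kappa$ is \emph{preserved} (and becomes exactly $\aleph_2$ rather than being itself collapsed), which is precisely where the inaccessibility supplied by strong compactness is used. Everything else is a direct appeal to Theorem~\ref{stronglycmpct}.
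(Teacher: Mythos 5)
Your proposal is correct and is exactly the intended derivation: the paper states this corollary without proof as an immediate consequence of Theorem~\ref{stronglycmpct}, obtained precisely by taking $\mu = \omega_1$ so that the L\'{e}vy collapse makes $\kappa = \aleph_2$ in the extension. Your verification of the standard preservation facts (countable closure, $\kappa$-cc from inaccessibility, and the identification of regular cardinals $\geq \kappa$ in $V[G]$ with those $\geq \aleph_2^{V[G]}$) supplies the bookkeeping the paper leaves implicit.
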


\begin{remark}\label{local}
	Using similar methods, one can prove a more local result starting with smaller large cardinals. More specifically, 
	if $\mu$ is a regular uncountable cardinal and $\kappa > \mu$ is a weakly compact cardinal then after forcing 
	with $\mathrm{Coll}(\mu, <\!\kappa)$ one obtains a model in which $\kappa = \mu^+$ and $\chm^1(\kappa, \mathcal{A}_d) = 0$ 
	for every abelian group $A$. Details are left to the reader.
	
	Another obvious path to local results is to invoke the tree property: reasoning exactly as for Corollary \ref{aronszajn} shows that if $\kappa$ has the tree property then $\chm^1(\kappa, \mathcal{A}_d) = 0$ for any $A$ of cardinality less than $\kappa$. More careful reasoning will often show more; in the Mitchell model \cite{mitchell} of the tree property at $\aleph_2$, for example, $\chm^1(\aleph_2, \mathcal{A}_d) = 0$ for all abelian groups $A$, as the reader again may verify.
	
	This might be contrasted with the model of Theorem \ref{stronglycmpct} above, when $\mu=\aleph_1$ and the strongly compact $\kappa$ is consequently collapsed to $\aleph_2$. Therein, $\aleph_2$-Aronsjazn trees do exist --- only none of them are \emph{coherent}, in the natural sense, e.g., of \cite{lipschitz}. 
\end{remark}

For global results, one might alternatively apply the \emph{P-Ideal Dichotomy} of \cite{pid}:

\begin{definition} Let $X$ be a set. A \emph{P-ideal on $X$} is an ideal $\mathcal{I}\subseteq [X]^{\leq\aleph_0}$ such that
\begin{enumerate}
\item $\mathcal{I}$ contains all finite subsets of $X$.
\item For every $\{x_n\,|\,n\in\omega\}\subseteq\mathcal{I}$ there exists an $x\in\mathcal{I}$ satisfying $x_n\subseteq^* x$ for all $n$.
\end{enumerate}
The \emph{P-Ideal Dichotomy} is the following assertion:

If $\mathcal{I}$ is a P-ideal on any set $X$, then exactly one of the following alternatives holds:
\begin{enumerate}
\item There exists an uncountable $B\subseteq X$ such that $[B]^{\leq\aleph_0}\subseteq\mathcal{I}$.
\item $X=\bigcup_{i\in\omega}B_i$ with $[B_i]^{\aleph_0}\cap\mathcal{I}=\varnothing$ for each $i$.
\end{enumerate}
\end{definition}

The following theorem is due to Stevo Todorcevic (\cite{StevoPIDtip}), but its proof has not appeared in print.

\begin{theorem}[Todorcevic] \label{pidtheorem} Assume the P-Ideal Dichotomy, and let $A$ be a nontrivial abelian group. Then $\check{\mathrm{H}}^1(\varepsilon,\mathcal{A}_d)\neq 0$ if and only if $\text{cf}(\varepsilon)=\omega_1$.
\end{theorem}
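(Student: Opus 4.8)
The plan is to prove the two implications separately, observing that only the forward direction requires the P-Ideal Dichotomy. For the ``if'' direction, suppose $\mathrm{cf}(\varepsilon)=\omega_1$. Then $\check{\mathrm{H}}^1(\varepsilon,\mathcal{A}_d)\neq 0$ for every nontrivial $A$ is exactly the content of Corollary \ref{omega_one_cor}, so there is nothing further to do and PID is not invoked. For the ``only if'' direction I would argue the contrapositive: assuming $\mathrm{cf}(\varepsilon)\neq\omega_1$, I show $\check{\mathrm{H}}^1(\varepsilon,\mathcal{A}_d)=0$. Since a cofinality is always a regular cardinal, there are two cases. If $\mathrm{cf}(\varepsilon)<\omega_1$, then Theorem \ref{goblot} with $n=1$ already gives $\check{\mathrm{H}}^1(\varepsilon,\mathcal{A}_d)=0$ in $\mathsf{ZFC}$. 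The substantive case is $\mathrm{cf}(\varepsilon)\geq\omega_2$; here I would first reduce to a regular cardinal. By Lemma \ref{iff} it suffices to prove that $\check{\mathrm{H}}^1(\lambda,\mathcal{B}_d)=0$ for \emph{every} abelian group $B$ and every regular $\lambda\geq\omega_2$: the cofinality $\mathrm{cf}(\varepsilon)$ is such a $\lambda$, and the second assertion of that lemma then transfers the vanishing (for all groups, hence in particular for $A$) from $\lambda$ up to $\varepsilon$.

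Everything thus reduces to the core assertion: under PID, every coherent family $\Phi=\{\varphi_\alpha:\alpha\to B\mid\alpha<\lambda\}$ on a regular $\lambda\geq\omega_2$ is trivial. The plan here is a single application of the dichotomy to a P-ideal $\mathcal{I}$ on $\lambda$ (equivalently, on a cofinal subset of order type $\lambda$) designed to encode the obstruction to globally trivializing $\Phi$. The crucial structural input is that, because $\mathrm{cf}(\lambda)\geq\omega_2$, every subset of $\lambda$ of size $\aleph_1$ is bounded; this is precisely what fails at $\omega_1$, and it is the formal source of the special role of $\omega_1$ flagged in Remark \ref{remark}(iv). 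The coherence of $\Phi$ --- the relation $\varphi_\gamma\!\restriction\!\beta=^*\varphi_\beta$, which makes any bounded piece of $\Phi$ automatically and \emph{locally} trivial, as in Example \ref{elpmaxe} --- is what will guarantee the second P-ideal axiom, namely closure under countable mod-finite pseudounions. Verifying this axiom is exactly where the coherence hypothesis does its real work.

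With $\mathcal{I}$ in hand I would apply the dichotomy and treat its two alternatives. The strategy is to engineer $\mathcal{I}$ so that precisely one alternative is compatible with $\Phi$ being nontrivial. One alternative should furnish a single function $\varphi:\lambda\to B$ witnessing that $\Phi$ is trivial, with the regularity of $\lambda$ used to see that the countably many ``$\mathcal{I}$-positive'' pieces of the decomposition cohere into a genuinely global object; the other alternative should produce an uncountable $B\subseteq\lambda$ all of whose countable subsets lie in $\mathcal{I}$, and since $\mathrm{cf}(\lambda)\geq\omega_2$ forces $\sup B<\lambda$, this set is bounded, whereupon the automatic local triviality of $\Phi$ below $\sup B$ yields the desired contradiction. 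The main obstacle is precisely the design and verification of $\mathcal{I}$: one must choose the notion of ``$\mathcal{I}$-smallness'' so that it \emph{simultaneously} (i) satisfies the P-ideal axioms via the coherence of $\Phi$, (ii) makes one alternative deliver a trivialization, and (iii) makes the other alternative incompatible with $\mathrm{cf}(\lambda)\geq\omega_2$. Arranging all three at once is the delicate heart of Todorcevic's argument and is where I would expect essentially all of the effort to lie.
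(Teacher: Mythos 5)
Your outer skeleton coincides exactly with the paper's: the ``if'' direction is Corollary \ref{omega_one_cor} (i.e.\ Corollary \ref{DA} plus Theorem \ref{combicasting}), the countable-cofinality case is Theorem \ref{goblot}, and the reduction to a regular $\lambda=\mathrm{cf}(\varepsilon)\geq\omega_2$ is Lemma \ref{iff}. But everything after that point --- which is the entire mathematical content of Todorcevic's theorem --- is absent from your proposal, as you yourself acknowledge. The paper's proof turns on one specific ideal: for a coherent $\Phi=\{\varphi_\alpha:\alpha\to A\mid\alpha<\varepsilon\}$, set $e(\alpha,\beta)=\{\xi<\alpha\mid\varphi_\alpha(\xi)\neq\varphi_\beta(\xi)\}$ and $\mathcal{I}=\{b\in[\varepsilon]^{\leq\aleph_0}\mid\exists\,\beta\geq\sup(b)\,\forall n\in\omega,\ \{\alpha\in b\mid |e(\alpha,\beta)|\leq n\}\text{ is finite}\}$. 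The P-ideal verification is a short pseudo-union construction, but both alternatives of the dichotomy then require substantive arguments. Moreover, your anticipated mechanism for the ``uncountable homogeneous set'' alternative is not the one that works: it is not that ``automatic local triviality below $\sup B$'' conflicts with $[B]^{\leq\aleph_0}\subseteq\mathcal{I}$ --- indeed it cannot be, since membership in $\mathcal{I}$ asserts only the \emph{existence} of some witness $\beta\geq\sup(b)$, and local triviality of $\Phi$ below $\sup(B)$ is perfectly compatible with that. The actual argument pigeonholes to find $\aleph_1$-many $\alpha_\xi\in B$ with $|e(\alpha_\xi,\gamma)|=m$ for a single $\gamma\geq\sup(B)$ and a single $m$ (here, and only here, is where boundedness of $B$, i.e.\ $\mathrm{cf}(\varepsilon)>\omega_1$, enters as you predicted), then plays a countable subfamily of these against its $\mathcal{I}$-witness $\beta$ to conclude that $|e(\beta,\gamma)|>n$ for every $n$, contradicting the coherence of $\Phi$ at the pair $(\beta,\gamma)$.

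The decomposition alternative is even further from your sketch, and it is where most of the proof lives. From $\varepsilon=\bigcup_{i\in\omega}B_i$ one extracts a \emph{stationary} $B_i$ on which $n^B_\gamma=\max\{|e(\beta,\gamma)|\mid\beta\in B_i\cap\gamma\}$ is well-defined; Claim \ref{thinnthin} then thins to a stationary $S$ with a uniform bound $N$ and a stationary set of accumulation points, via a pressing-down argument that terminates because the bounds strictly decrease; and the trivialization is finally produced by induction on $N$, with Lemma \ref{pdl} invoked again inside the induction step to split into two sub-cases (one restricting $\Phi$ to a cobounded domain and applying the induction hypothesis, the other taking a literal union $\bigcup_{\gamma\in U}\varphi_\gamma$). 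Nothing here is a matter of ``countably many $\mathcal{I}$-positive pieces cohering by regularity'': the naive union works only in the base case $N=0$, and the whole induction exists to reduce to it. In short, your reductions are correct and identical to the paper's, and your diagnosis of where $\mathrm{cf}>\omega_1$ must be used is sound, but the proposal defers precisely the steps that constitute the proof --- the definition of $\mathcal{I}$, the P-ideal verification, and both halves of the dichotomy analysis --- so it cannot be credited as a proof.
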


\begin{proof} The ``if'' implication is immediate from Corollary \ref{DA} and Theorem \ref{combicasting}. For the implication in the other direction, fix an ordinal $\varepsilon$ of cofinality greater than $\omega_1$. By Lemma \ref{iff}, we may without loss of generality assume $\varepsilon$ to be regular. Now fix a coherent family of functions $\Phi=\{\varphi_\alpha:\alpha\rightarrow A\,|\,\alpha<\varepsilon\}$. For any $\alpha<\beta$ in $\varepsilon$ let
$$e(\alpha,\beta)=\{\xi<\alpha\,|\,\varphi_\alpha(\xi)\neq\varphi_\beta(\xi)\}$$ and let $$\mathcal{I}=\{ b\in[\varepsilon]^{\leq\aleph_0}\,|\,\exists\,\beta\geq\sup(b)\,\forall\,n\in\omega\,\{\alpha\in b\,|\;|e(\alpha,\beta)|\leq n\}\text{ is finite}\}$$

\begin{claim} $\mathcal{I}$ is a P-ideal of countable subsets of $\varepsilon$.
\end{claim}
\begin{proof} For $b_i\in\mathcal{I}$ $(i\in\omega)$, take $\gamma\geq\sup(\cup_{i\in\omega}b_i)$, and for $n\in\omega$ let $$b_i(n,\gamma)=\{\beta\in b_i\,|\;|e(\beta,\gamma)|=n\}$$By assumption, each $b_i(n,\gamma)$ is finite; in consequence, $\gamma$ witnesses that $$b:=\cup\{b_i(n,\gamma)\,|\,n>i\}\text{ is in }\mathcal{I}$$ Clearly $b_i\subseteq^* b$ for all $i\in\omega$.
\end{proof}
Assuming the P-Ideal Dichotomy, there are now two possibilities.

\textbf{Case 1}: In the first possibility, there exists a $B\in [\varepsilon]^{\aleph_1}$ with $[B]^{\leq\aleph_0}\subseteq\mathcal{I}$. Write $B$ as an increasing union of $b_i\in [\varepsilon]^{\leq\aleph_0}$ $(i\in\omega_1)$. For $\gamma\geq\sup(B)$ there exists some $m\in\omega$ and increasing $\{\alpha_\xi\,|\,\xi\in\omega_1\}\subseteq B$ such that $|e(\alpha_\xi,\gamma)|=m$ for all $\xi\in\omega_1$. Fix a $b_i$ containing $\{\alpha_\xi\,|\,\xi\in\omega\}$. Then there exists a $\beta\geq\sup(b_i)$ witnessing that $b_i\in\mathcal{I}$. Therefore:
$$\text{For all }n\in\omega\text{ there exists an }\ell(n)\text{ such that }\xi>\ell(n)\Rightarrow|e(\alpha_\xi,\beta)|>m+n$$
As $|e(\alpha_\xi,\gamma)|=m$, though, this implies that $|e(\beta,\gamma)|>n$ for all $n\in\omega$. This contradicts the premise that $\Phi$ is coherent.

\textbf{Case 2}: The second possibility, which by the above argument must hold, is that $\varepsilon=\bigcup_{i\in\omega}B_i$, with no infinite subset of any $B_i$ an element of $\mathcal{I}$. In consequence, there exists some stationary $B=B_i\subseteq\varepsilon$ such that for all $\gamma\in B$,$$n_\gamma^B=\max\{|e(\beta,\gamma)|\;|\,\beta\in B\cap\gamma\}$$ is well-defined.
\begin{claim}\label{thinnthin} For some stationary subset $S\subseteq B$, there exists an $N\in\omega$ such that\begin{enumerate}
\item $n_\gamma^S=\max\{|e(\beta,\gamma)|\;|\,\beta\in S\cap\gamma\}\leq N$ for all $\gamma\in S$, and
\item $T=\{\gamma\in S\,|\,\sup\{\beta\in S\cap\gamma\,|\;|e(\beta,\gamma)|=N\}=\gamma\}$ is stationary in $\varepsilon$.
\end{enumerate}
\end{claim}
\begin{proof}
Thin $B$ first to a stationary $S$ satisfying (1). If (2) fails, then $$\gamma\mapsto\sup\{\beta\in S\cap\gamma\,|\;|e(\beta,\gamma)|=N\}$$ is a regressive function on some stationary $E\subseteq S$, and is therefore constantly $\beta$ on some stationary $S_1\subseteq E\backslash\beta$. Now there exists an $N_1<N$ such that $n_\gamma^{S_1}\leq N_1$ for all $\gamma\in S_1$. If $$T=\{\gamma\in S_1\,|\,\sup\{\beta\in S_1\cap\gamma\,|\;|e(\beta,\gamma)|=N\}=\gamma\}$$ is again nonstationary, we may repeat this process, defining in turn an $N_2<N_1$. This process, defining a strictly decreasing sequence of natural numbers, must at some finite stage $i$ terminate; at this point, $S=S_i$ is as claimed.
\end{proof}
We now argue by induction on $N=n$ that an $S$ as in Claim \ref{thinnthin} witnesses the triviality of $\Phi$. The case $N=0$ is clear: $\bigcup_{\gamma\in S}\,\varphi_\gamma$ trivializes $\Phi$. For the induction step, assume that any $S$ as in Claim \ref{thinnthin} with $N=m<n$ witnesses the triviality of $\Phi$, and consider an $S$ as in Claim \ref{thinnthin} with $N=n$. For all $\gamma\in T$, let $a_\gamma=\{\alpha\in S\,|\;|e(\alpha,\gamma)|=n\}$. Consider then any $\gamma\in T$ and $\delta>\gamma$ in $S$. If $e(\gamma,\delta)\neq\varnothing$, then $e(\alpha,\gamma)\cap e(\gamma,\delta)\neq\varnothing$ for all $\alpha\in a_\gamma\cap (\max(e(\gamma,\delta)),\gamma)$, since by assumption $|e(\alpha,\gamma)|=n$ and $n\geq|e(\alpha,\delta)|\geq |e(\alpha,\gamma)\,\triangle\, e(\gamma,\delta)|$. Therefore one of the two following possibilities holds:
\begin{enumerate}
\item For all $\gamma$ in some stationary $U\subseteq T$ there exists some $\beta_\gamma<\gamma$ such that $e(\alpha,\gamma)\cap\beta_\gamma\neq\varnothing$ for all $\alpha\in a_\gamma\cap (\beta_\gamma,\gamma)$. By the Pressing Down Lemma, $\beta_\gamma=\beta$ for all $\gamma$ in some stationary $V\subseteq U$. Claim \ref{thinnthin} then holds for some $N<n$ and stationary set $W\subseteq V$ for the coherent family $$\Phi\!\restriction\!(\beta,\gamma):=\{\varphi_\gamma\!\restriction\![\beta,\gamma)\,|\,\gamma\in(\beta,\varepsilon)\}.$$By the induction hypothesis, $\Phi\!\restriction\!(\beta,\gamma)$ is trivial; in consequence, $\Phi$ is, as well.
\item For all $\gamma$ in some stationary $U\subseteq T$, $$e(\gamma,\delta)=\varnothing\text{ for all }\delta\in U\backslash\gamma.$$
(This alternative follows from our argument above that if $e(\gamma,\delta)=\varnothing$ for \textit{any} $\delta\in S$ then there exists some $\beta_\gamma<\gamma$ as in item 1.) In this case, $\bigcup_{\gamma\in U}\varphi_\gamma$ trivializes $\Phi$.
\end{enumerate}
In either case, the coherent family $\Phi$ is trivial. This concludes the proof.\end{proof}

It is perhaps worth emphasizing that for all of the compactness or large cardinal properties holding at $\omega_2$ in any of the models under discussion above, height-$\omega_2$ non-2-trivial 2-coherent families of functions nevertheless continue to exist in each, by Theorem \ref{nonntrivncohomegan}.

\subsection{Cohomology in L}\label{II.3}\hfill

In this subsection we show that if $\mathrm{V} = \mathrm{L}$ then the extent of nontriviality in the \v{C}ech cohomology groups of the ordinals is maximal; put differently, in the model $\mathrm{L}$ the only groups $\chm^n(\xi, \mathcal{A}_d)$ that are trivial are those that are required to be so by Theorems \ref{goblot} and \ref{nowklycpt}. We first need a few definitions.
The following principle was isolated by Todorcevic \cite{pairs}, building on Jensen's work on 
combinatorial principles holding in $\mathrm{L}$ \cite{FineStructure}. In what follows, if $\beta$ is a limit ordinal 
and $C$ is a club in $\beta$, then $\mathrm{Lim}(C)$ denotes the set $\{\alpha \in C \mid \sup(C \cap \alpha) = \alpha\}$.

\begin{definition}
	Suppose that $\lambda$ is a regular uncountable cardinal. The principle $\square(\lambda)$ asserts the existence of a sequence 
	$\vec{C} = \langle C_\alpha \mid \alpha < \lambda \rangle$ such that
	\begin{enumerate}
		\item for every limit ordinal $\alpha < \lambda$, $C_\alpha$ is a closed cofinal subset of $\alpha$;
		\item for every limit ordinal $\alpha < \beta$, if $\alpha \in \mathrm{Lim}(C_\beta)$, 
		then $C_\beta \cap \alpha = C_\alpha$;
		\item there is no closed cofinal $D\subseteq\lambda$ such that 
		$D \cap \alpha = C_\alpha$ for every $\alpha \in \mathrm{Lim}(D)$.
	\end{enumerate}
\end{definition}

\begin{remark} \label{square_remark}
A first explicit application of the above principle was Todorcevic's observation \cite{pairs} that, when defined with respect to a $\square(\lambda)$-sequence $\vec{C}$, the $\rho_2$ function $[\lambda]^2\to\mathbb{Z}$ is nontrivial and coherent, in the sense of Theorem \ref{coherenceofrhos}. Via natural coding techniques together with arguments given above, we may phrase this result as follows:
\begin{theorem}[\cite{pairs}] \label{todorcevicsquare} Suppose $\square(\lambda)$ holds for some regular uncountable cardinal $\lambda$. Then $\chm^1(\lambda,\mathcal{A}_d)\neq 0$ for any nontrivial abelian group $A$.
\end{theorem}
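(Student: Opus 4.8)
The plan is to convert the $\square(\lambda)$-sequence into an explicit nontrivial coherent family of $A$-valued functions indexed by $\lambda$; by Lemma~\ref{apply} (equivalently, by the group-theoretic reading of Theorems~\ref{groupreading} and \ref{combicasting}) any such family certifies that $\chm^1(\lambda,\mathcal{A}_d)\neq 0$. So fix a $\square(\lambda)$-sequence $\vec{C}=\langle C_\alpha\mid\alpha<\lambda\rangle$ and regard it as a C-sequence on $\lambda$ in the sense of Section~\ref{I.1}. Following Corollary~\ref{DA} verbatim, I would form the walks determined by $\vec{C}$, extract the last-step function $\rho_3:[\lambda]^2\to\{0,1\}$, fix a nonzero $a\in A$ together with the coding map $e:0\mapsto 0,\ 1\mapsto a$, and set $\varphi_\beta:=e\circ\rho_3(\,\cdot\,,\beta)$. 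It then remains only to check that $\Phi=\{\varphi_\beta\mid\beta<\lambda\}$ is coherent and nontrivial, at which point the theorem follows from Lemma~\ref{apply}. Passing through the $\{0,1\}$-valued $\rho_3$ rather than the $\mathbb{Z}$-valued $\rho_2$ is what makes the coding robust for an \emph{arbitrary} nontrivial $A$, exactly as in Corollary~\ref{DA}.

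Coherence is the soft half and should be recorded first. Here the relevant input is condition~(2) of $\square(\lambda)$: since $C_\beta\cap\alpha=C_\alpha$ whenever $\alpha\in\mathrm{Lim}(C_\beta)$, the walks are ``locally determined,'' and the trace-decomposition identity $\textnormal{Tr}(\alpha,\gamma)=\textnormal{Tr}(\alpha,\beta)\cup\textnormal{Tr}(\beta,\gamma)$ holds once $\alpha$ is large enough below $\beta$. Exactly as in the proof of Theorem~\ref{coherenceofrhos}, this forces $\rho_3(\,\cdot\,,\gamma)\!\restriction\!\beta=^*\rho_3(\,\cdot\,,\beta)$, and hence $\varphi_\gamma\!\restriction\!\beta=^*\varphi_\beta$, for all $\beta<\gamma<\lambda$. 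The one point of care relative to the $\omega_1$ case is that, $\lambda$ being larger, there are ordinals of uncountable cofinality below $\lambda$ at which the auxiliary function $\max L$ of Section~\ref{I.1} need not be well defined; it is precisely the coherence condition~(2) that substitutes for this and keeps the decomposition argument intact.

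The crux, and the sole place where $\square(\lambda)$ is genuinely needed, is nontriviality, and here condition~(3)---the nonexistence of a thread---is exactly the hypothesis required. I would argue by contradiction: assuming some $\tilde\varphi:\lambda\to A$ trivializes $\Phi$, so that $F_\beta=\{\xi<\beta\mid\tilde\varphi(\xi)\neq\varphi_\beta(\xi)\}$ is finite for every $\beta$, the goal is to read off from $\tilde\varphi$ a single closed cofinal $D\subseteq\lambda$ with $D\cap\alpha=C_\alpha$ for all $\alpha\in\mathrm{Lim}(D)$, contradicting condition~(3). The mod-finite stabilization encoded by $\tilde\varphi$ supplies a globally consistent choice of the walks' eventual behaviour, and the coherence condition~(2) guarantees that this selection closes up into a genuine thread; this is Todorcevic's computation from \cite{pairs}, recast in the present language. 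I expect this step to be the main obstacle, for an instructive reason: the stepping-up combinatorics of Claim~\ref{rho2claim} remain available at $\lambda$---the Pressing Down Lemma is as effective there as at $\omega_1$---yet nontriviality cannot follow for an arbitrary C-sequence, since by Section~\ref{II.2} the group $\chm^1(\aleph_2,\mathcal{A}_d)$ may consistently vanish. The obstruction is precisely the stationarily many ordinals of uncountable cofinality below $\lambda$, at which the naive $\omega_1$ argument breaks down; $\square(\lambda)$, through condition~(3), is what restores control, and extracting a thread from a putative trivialization is the delicate point to be carried out in full.
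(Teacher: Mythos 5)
There is a genuine gap, and it sits exactly where you have labeled the argument ``soft.'' For Lemma~\ref{apply} to apply, your family $\Phi=\{e\circ\rho_3(\cdot,\beta)\mid\beta<\lambda\}$ must be coherent \emph{mod finite}, in the sense of Definition~\ref{fixntc}. But the trace-decomposition argument you invoke yields only that $\rho_3(\cdot,\gamma)\!\restriction\!\beta$ and $\rho_3(\cdot,\beta)$ agree on a \emph{co-bounded} subset of $\beta$ (the $\alpha$ above the relevant $\max L$-type ordinal), not a co-finite one, and for $\lambda>\omega_1$ the uncontrolled initial segment is in general infinite. This is not a removable blemish: in the $\omega_1$ theory, mod-finite coherence of the statistics $\rho_1$ and $\rho_3$ is precisely what the hypothesis $(\star)$ (and, for $\rho_3$, its ``mild $\mathsf{ZFC}$ strengthening'' cited in Corollary~\ref{DA}) buys --- every proper initial segment of every $C_\alpha$ is finite, which is what lets finiteness propagate through the inductive coherence proofs. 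No $\square(\lambda)$-sequence with $\lambda>\omega_1$ can satisfy this, since $\mathrm{otp}(C_\alpha)\geq\omega_1$ whenever $\mathrm{cf}(\alpha)=\omega_1$. Note also that Theorem~\ref{coherenceofrhos}, to which you appeal, asserts $\rho_2$-coherence only mod bounded ($=^b$), never mod finite. This is exactly why the paper, following Todorcevic, runs the general-$\lambda$ argument through $\rho_2$ and the $=^b$ and $=^c$ coherence of Theorem~\ref{coherenceofrhos} and Corollary~\ref{locallyconstantcoherence} --- which do survive the passage to square sequences, with condition (2) standing in for the well-definedness of $\max L$, as you correctly note --- and then passes to cohomology via the locally constant sheaf $\mathcal{A}_d$, Lemma~\ref{gddg}, and Theorem~\ref{combicasting}, rather than via Definition~\ref{fixntc} and Lemma~\ref{apply} directly. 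Your motive for preferring $\rho_3$ (a $\{0,1\}$-valued statistic codes robustly into arbitrary $A$, whereas composing a $\mathbb{Z}$-valued family with $n\mapsto na$ can kill nontriviality when $a$ has finite order) identifies a real issue --- it is what the paper's phrase ``natural coding techniques'' is covering --- but your construction does not actually produce the mod-finite family that this coding requires.

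The second gap is that the crux, nontriviality, is only gestured at. Your meta-observations here are good: condition (3) must be the operative hypothesis, and a pressing-down argument alone cannot suffice, since otherwise $\chm^1(\aleph_2,\mathcal{A}_d)\neq 0$ would be a $\mathsf{ZFC}$ theorem, contradicting Section~\ref{II.2}; and ``a trivialization yields a thread'' is indeed the spirit in which non-threadability enters Todorcevic's argument. But the extraction itself --- showing that a putative trivialization stabilizes the walks enough that the stabilized $C$-sets union to a club threading $\vec{C}$ --- is the entire mathematical content of the theorem, and you explicitly defer it. To be fair, the paper does not carry it out either; but it does not claim to: Theorem~\ref{todorcevicsquare} is presented as Todorcevic's result, cited to \cite{pairs}, with the remark that the argument for $\rho_2$ ``was much like that given for Theorem~\ref{coherenceofrhos}.'' As it stands, your proposal neither reproduces that cited argument (it substitutes $\rho_3$ and mod-finite coherence, which fail at this generality) nor supplies the missing one.
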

The argument in \cite{pairs} for $\rho_2$ with arbitrary $\lambda$ and $A=\mathbb{Z}$ was much like that given for Theorem \ref{coherenceofrhos} above; a $\vec{C}$-sequence satisfying the condition $(\star)$ of Section \ref{I.1}, after all, is a witness to the principle $\square(\omega_1)$. Results like Corollary \ref{DA} are from this perspective instances of Theorem \ref{todorcevicsquare}, artifacts of the status of $\square(\omega_1)$ as a $\mathsf{ZFC}$ theorem. On the other hand, for $\lambda$ greater than $\omega_1$, Theorem \ref{todorcevicsquare} is in evident tension with the results of Section \ref{II.2}; the following theorem, also deduced in \cite{pairs} from \cite{FineStructure}, helps to motivate the large cardinal assumptions pervading that section:
\begin{theorem}[\cite{pairs}] Let $\lambda$ be a regular uncountable cardinal. If $\square(\lambda)$ fails, then $\lambda$ is weakly compact in $\mathrm{L}$.
\end{theorem}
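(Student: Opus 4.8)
The plan is to prove the contrapositive: assuming that $\lambda$ is \emph{not} weakly compact in $\mathrm{L}$, I would produce a $\square(\lambda)$-sequence witnessing that $\square(\lambda)$ holds (in $V$). The first, easy observation is that since $\lambda$ is regular and uncountable in $V$ it remains so in the inner model $\mathrm{L}$: any cofinal map of length $<\lambda$ witnessing singularity, or any surjection witnessing a smaller cardinality, would already lie in $\mathrm{L}\subseteq V$ and contradict the corresponding property in $V$. So $\lambda$ is a legitimate candidate for $\square(\lambda)$ as computed in $\mathrm{L}$, and I may work inside $\mathrm{L}$ to build the sequence and then transfer it upward.

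Inside $\mathrm{L}$ I would invoke Jensen's fine-structural constructions \cite{FineStructure} (see also \cite{cummings_handbook}) to obtain a coherent sequence $\vec{C}=\langle C_\alpha\mid\alpha<\lambda\rangle\in\mathrm{L}$ satisfying clauses (1) and (2) of $\square(\lambda)$ and admitting no thread \emph{in} $\mathrm{L}$. It is cleanest to split into two cases. If $\lambda=\mu^+$ is a successor cardinal of $\mathrm{L}$, then $\square_\mu$ holds in $\mathrm{L}$, and its coherent sequence has the additional feature that $\otp(C_\alpha)\le\mu$ for every $\alpha$; this order-type bound is what blocks threads. If $\lambda$ is inaccessible in $\mathrm{L}$, then the failure of weak compactness means, by the Hanf--Scott characterization, that $\lambda$ is not $\Pi^1_1$-indescribable in $\mathrm{L}$, and Jensen's construction converts a non-reflecting $\Pi^1_1$ witness into a coherent sequence whose non-threadability is exactly the assertion that this witness does not reflect.

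The main obstacle is the transfer from $\mathrm{L}$ to $V$: clauses (1) and (2) are $\Delta_0$ in the parameter $\vec{C}$ and hence absolute, but clause (3)---the \emph{non-existence} of a thread---is not upward absolute, since $V$ contains more candidate clubs $D$ than $\mathrm{L}$ does. In the successor case the difficulty evaporates: any $D\in V$ threading $\vec{C}$ would be club in the $V$-regular cardinal $\lambda$, hence of order type $\lambda>\mu$, so it would have a limit point $\alpha$ with $\otp(D\cap\alpha)>\mu$, whereas coherence forces $C_\alpha=D\cap\alpha$ and thus $\otp(C_\alpha)\le\mu$---a contradiction independent of how cardinals are computed in $V$. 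The inaccessible case is the genuine crux. Here I would exploit the fact that coherence forces every proper initial segment $D\cap\alpha$ of a putative $V$-thread to equal $C_\alpha$ and hence to lie in $\mathrm{L}$; feeding these $\mathrm{L}$-initial segments back into the fine-structural definition shows that a thread $D$, even one living only in $V$, would witness the reflection of the $\Pi^1_1$ property that was assumed not to reflect in $\mathrm{L}$. Since that reflection statement is evaluated over $L_\lambda$ and its $\mathrm{L}$-subsets, it is absolute between $\mathrm{L}$ and $V$, so the contradiction goes through in $V$ as well. This absoluteness of the reflection obstruction---rather than of non-threadability per se---is what I expect to require the most care, and it is precisely the point at which Todorcevic's deduction in \cite{pairs} leans on the fine structure of \cite{FineStructure}.
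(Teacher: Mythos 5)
The paper does not actually prove this statement: it appears in Remark \ref{square_remark} purely as a citation, ``also deduced in \cite{pairs} from \cite{FineStructure},'' so there is no in-paper argument to compare yours against. Measured against the argument in the literature that the citation points to, your reconstruction is correct and has the right architecture: the contrapositive, the downward absoluteness of ``regular uncountable'' to $\mathrm{L}$, the case split according to whether $\lambda$ is a successor cardinal or inaccessible in $\mathrm{L}$, and --- most importantly --- the recognition that clauses (1) and (2) transfer upward for free while clause (3) is the entire issue. Your successor-case argument is complete as stated: the $\mathrm{L}$-bound $\otp(C_\alpha)\le\mu$ from $\square_\mu$ kills any thread $D\in V$, since $D$ has order type $\lambda$ by regularity of $\lambda$ in $V$, and this needs no absoluteness beyond arithmetic of order types. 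In the inaccessible case you have also put your finger on the correct mechanism: a putative thread $D\in V$ has all of its limit-point initial segments $D\cap\alpha=C_\alpha$ in $\mathrm{L}$, and Jensen's construction is such that the contradiction extracted from a thread is assembled entirely from $\mathrm{L}$-objects (the $C_\alpha$'s, the relevant levels of $\mathrm{L}$, and the $\Pi^1_1$ non-reflection data, which is $\Sigma^1_1$ and hence upward absolute), so the thread's failure to belong to $\mathrm{L}$ is immaterial. Two caveats. First, your phrase ``non-threadability is exactly the assertion that this witness does not reflect'' overstates an implication as an equivalence; what the construction gives, and all you need, is that a thread \emph{produces} a reflection point. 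Second, the fine-structural core --- the actual definition of $C_\alpha$ from canonical $\mathrm{L}$-levels and the condensation argument showing a thread yields reflection --- is asserted rather than carried out; this is a black box in your write-up, but it is precisely the black box that both Todorcevic's Theorem 1.10 and the present paper delegate to \cite{FineStructure}, so your proposal sits at the same level of completeness as the source it would replace.
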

\end{remark}
Like nontrivial coherence, the principle $\square(\lambda)$ describes relations of local agreement that cannot be globalized. Affinities like those recorded in Theorem \ref{todorcevicsquare} are, from this perspective, perhaps unsurprising. However, we will see in Section \ref{II.5} that $\square(\lambda)$ and height-$\lambda$ nontrivial coherence and several related incompactness principles are nevertheless each quite distinct.

The following is a useful strengthening of $\square(\lambda)$:

\begin{definition}
	Suppose that $\lambda$ is a regular uncountable cardinal and $S \subseteq \lambda$ is stationary. The principle 
	$\square(\lambda, S)$ asserts the existence of a $\square(\lambda)$-sequence $\vec{C}$ such that $\mathrm{Lim}(C_\beta) \cap S = \emptyset$ for 
	all limit ordinals $\beta < \lambda$.
\end{definition}

We will also need the following guessing principle, introduced by Jensen.

\begin{definition}
	Suppose that $\lambda$ is a regular uncountable cardinal and $S \subseteq \lambda$ is stationary. 
	The principle $\diamondsuit_\lambda(S)$ asserts the existence of a sequence $\langle X_\beta \mid 
	\beta \in S \rangle$ such that
	\begin{enumerate}
		\item $X_\beta \subseteq X$ for every $\beta \in S$;
		\item for every $X \subseteq \beta$, the set $\{\beta \in S \mid X_\beta = X \cap \beta\}$ is stationary 
		in $\lambda$.
	\end{enumerate}
\end{definition} 

\begin{remark} \label{diamond_remark}
	For any set $A$ with $|A| < \lambda$ and any positive integer $n$, the principle $\diamondsuit_\lambda(S)$ is equivalent to 
	the existence of a sequence $\langle F_\beta \mid \beta \in S \rangle$ such that
	\begin{enumerate}
		\item $F_\beta$ is of the form $\{f^\beta_{\vec{\alpha}}:\alpha_0 \rightarrow A 
		\mid \vec{\alpha} \in [\beta]^n\}$ for every $\beta \in S$;
		\item for every $\Phi = \{\varphi_{\vec{\alpha}}:\alpha_0 \rightarrow A \mid \vec{\alpha} \in [\lambda]^n\}$, 
		the set $\{\beta \in S \mid F_\beta = \Phi \!\restriction\! \beta\}$ is stationary in $\lambda$.
	\end{enumerate}
	As before, $\Phi \!\restriction\! \beta$ denotes the set $\{\phi_{\vec{\alpha}} \mid \vec{\alpha} \in [\beta]^n\}$.
\end{remark}

Our $\mathrm{V} = \mathrm{L}$ result follows from a more general ``stepping-up" theorem:

\begin{theorem} \label{steppingup}
	Suppose that $\kappa < \lambda$ are regular uncountable cardinals, that $n$ is a positive integer, and that $A$ is an abelian group. 
	Suppose moreover that
	\begin{enumerate}
		\item there is a non-$n$-trivial $n$-coherent family of $A$-valued functions indexed by $\kappa$;
		\item there is a stationary $S \subseteq S^\lambda_\kappa$ such that $\square(\lambda, S)$ and 
		$\diamondsuit_\lambda(S)$ both hold.
	\end{enumerate} 
	Then there is a non-$(n+1)$-trivial $(n+1)$-coherent family of $A$-valued functions indexed by $\lambda$.
\end{theorem}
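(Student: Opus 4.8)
The plan is to produce the desired family by hand and then read off the conclusion via Theorems \ref{groupreading} and \ref{combicasting}: it suffices to construct an $(n+1)$-coherent $\Psi=\{\psi_{\vec\beta}:\beta_0\to A\mid\vec\beta\in[\lambda]^{n+1}\}$ that is not $(n+1)$-trivial. I would fix a $\square(\lambda,S)$-sequence $\vec C=\langle C_\alpha\mid\alpha<\lambda\rangle$, a $\diamondsuit_\lambda(S)$-sequence in the $n$-dimensional form of Remark \ref{diamond_remark} (so each guess $F_\beta$ is an $n$-family $\{f^\beta_{\vec\alpha}\mid\vec\alpha\in[\beta]^n\}$), and the given non-$n$-trivial $n$-coherent $\Phi$ of height $\kappa$. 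For each $\beta\in S$ we have $\mathrm{cf}(\beta)=\kappa$, so the increasing enumeration of $C_\beta$ transfers $\Phi$ to a family $\Phi^\beta$ indexed by $[C_\beta]^n$; by the transformation invariance of Observation \ref{transformation} (cf.\ the stretching of Lemma \ref{iff}), $\Phi^\beta$ is again non-$n$-trivial $n$-coherent as a family over the cofinal set $C_\beta\subseteq\beta$.

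I would build $\Psi$ by recursion on its top index, defining at stage $\gamma$ the slice $\Psi^\gamma=\{\psi_{\vec\alpha\gamma}\mid\vec\alpha\in[\gamma]^n\}$ and maintaining as invariant the coherence of the slices recorded in Example \ref{elpmaxe}: each $\Psi^\gamma$ should $(n+1)$-trivialize the initial segment $\Psi\!\restriction\!\gamma$ (up to sign), and successive slices should agree $n$-trivially, $\Psi^\delta\!\restriction\!\gamma-\Psi^\gamma$ being $n$-trivial for $\gamma<\delta$. The role of $\square(\lambda,S)$ is precisely to carry this coherence through limits: its coherence clause ($C_\beta\cap\alpha=C_\alpha$ for $\alpha\in\mathrm{Lim}(C_\beta)$) provides a canonical way to propagate a chosen $n$-family along the club structure, exactly as the $\square(\lambda)$-sequence organizes the walks underlying Theorem \ref{todorcevicsquare}. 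The hypothesis $\mathrm{Lim}(C_\beta)\cap S=\emptyset$ guarantees that each $\beta\in S$ is a non-accumulation point of every relevant club, so that at stage $\beta$ I may perturb the slice by the planted data without disturbing the propagated coherence below.

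The perturbation is the source of nontriviality and is steered by $\diamondsuit_\lambda(S)$. At each $\beta\in S$ I would set, on the planted indices,
$$\psi_{\vec\xi\beta}=^*\Phi^\beta_{\vec\xi}+(-1)^n f^\beta_{\vec\xi}\qquad(\vec\xi\in[C_\beta]^n),$$
i.e.\ the transferred copy of $\Phi$ corrected by the diamond guess, extending to the rest of $[\beta]^n$ by the coherent propagation. Suppose now toward a contradiction that an $n$-family $\Theta=\{\theta_{\vec\alpha}\mid\vec\alpha\in[\lambda]^n\}$ $(n+1)$-trivializes $\Psi$, so $d\Theta=^*\Psi$. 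Writing $\Theta^\beta=\{\theta_{\vec\gamma\beta}\mid\vec\gamma\in[\beta]^{n-1}\}$ and expanding $d\Theta$ on the tuples $\vec\alpha\beta$ with $\vec\alpha\in[\beta]^n$ gives
$$\psi_{\vec\alpha\beta}=^*\,d(\Theta^\beta)_{\vec\alpha}+(-1)^n\theta_{\vec\alpha}.$$
By $\diamondsuit_\lambda(S)$ the set of $\beta\in S$ with $F_\beta=\Theta\!\restriction\!\beta$ is stationary; fix such a $\beta$ and restrict the last display to $\vec\xi\in[C_\beta]^n$. The two occurrences of the guess cancel, since $f^\beta_{\vec\xi}=\theta_{\vec\xi}$, leaving $\Phi^\beta_{\vec\xi}=^*d(\Theta^\beta)_{\vec\xi}$. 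As the right-hand side depends only on $\Theta^\beta\!\restriction\![C_\beta]^{n-1}$, this exhibits an $n$-trivialization of $\Phi^\beta$ over $C_\beta$, contradicting its choice. Hence no such $\Theta$ exists and $\Psi$ is non-$(n+1)$-trivial.

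I expect the main obstacle to be the simultaneous maintenance of global $(n+1)$-coherence through the recursion while inserting the corrected planted slices: one must verify that the propagation dictated by $\vec C$ and the perturbations at $S$-points combine into genuinely $(n+1)$-coherent data, that is, that all the accumulated $=^*$-errors remain finite. This is where the structural hypotheses pull their weight — $\mathrm{cf}(\beta)=\kappa$ keeps the errors below each $\beta\in S$ bounded, while $\mathrm{Lim}(C_\beta)\cap S=\emptyset$ together with the $\square$-coherence isolates the planted perturbations from one another — and it is the only part of the argument requiring sustained bookkeeping rather than the soft cohomological translation of Section \ref{I.4}.
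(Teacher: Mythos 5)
Your overall architecture coincides with the paper's: a recursion of length $\lambda$ in which the square sequence propagates slices through limit stages, the diamond sequence steers a perturbation at points of $S$ combining the height-$\kappa$ non-$n$-trivial family with the guess, and a final cancellation argument (essentially identical to the paper's) converting any putative $(n+1)$-trivialization $\Theta$ into an $n$-trivialization of a planted copy of $\Phi$. But there is a genuine gap at the planting step: you insert the perturbed slice $\psi_{\vec{\xi}\beta}=^*\Phi^\beta_{\vec{\xi}}+(-1)^n f^\beta_{\vec{\xi}}$ at \emph{every} $\beta\in S$, regardless of whether the guess $F_\beta$ bears any relation to the family built so far, and this destroys $(n+1)$-coherence. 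Concretely, $(n+1)$-coherence of $\Psi\!\restriction\!(\beta+1)$ forces the new slice to satisfy $d(\Psi^\beta)=^*(-1)^n\,\Psi\!\restriction\!\beta$; computing $d$ of your planted values on tuples $\vec{\eta}\in[C_\beta]^{n+1}$ gives $d(\Psi^\beta)_{\vec{\eta}}=^*(-1)^n (dF_\beta)_{\vec{\eta}}$, since $d\Phi^\beta=^*0$, so coherence demands $dF_\beta=^*\Psi\!\restriction\!\beta$ on those tuples, i.e.\ that $F_\beta$ actually $(n+1)$-trivializes the restriction of the family constructed so far. When the guess is garbage --- say $F_\beta$ is identically zero while $\Psi\!\restriction\!\beta$ is not $=^*0$ on $[C_\beta]^{n+1}$ --- no choice of extension to the remaining indices of $[\beta]^n$ can repair this, because the violation already lives on the planted indices. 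The paper handles exactly this with a case split: at $\beta\in S$ it plants $(-1)^nF_\beta+\Psi^\beta$ \emph{only when} $F_\beta$ $(n+1)$-trivializes the current initial segment (its Case 5), and otherwise propagates canonically along $C_\beta$ just as at non-$S$ limit points (its Case 4). The final diagonalization survives this modification unharmed: if $\Theta$ trivializes the full family and $\Theta\!\restriction\!\beta=F_\beta$, then $F_\beta$ automatically trivializes the initial segment, so all such stages fell under Case 5 anyway.

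A secondary weakness: the invariant you propose to maintain --- each slice $(n+1)$-trivializes the preceding segment, and successive slices agree $n$-trivially --- is automatic from $(n+1)$-coherence (Example \ref{elpmaxe} and (\ref{exercise})) and is too weak to pass through limits. At a limit $\delta$ with $\sup(\mathrm{Lim}(C_\delta))=\delta$ one wants to define the new slice as the union of the slices indexed by $\mathrm{Lim}(C_\delta)$, and for that union to be well defined those slices must agree \emph{exactly} on overlapping indices, not merely up to $n$-trivial error; gluing data that is only mod-finite compatible is precisely the triviality problem, which one cannot solve for free. This is why the paper maintains the exact-equality condition $(\dagger)$ (that $\varphi_{\vec{\alpha}\beta}=\varphi_{\vec{\alpha}\gamma}$ whenever $\gamma\notin S$ and $\beta\in\mathrm{Lim}(C_\gamma)$), which the square coherence clause together with $\mathrm{Lim}(C_\gamma)\cap S=\emptyset$ allows it to preserve through the recursion. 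Your invariant needs to be strengthened to this, and the countable-cofinality stages with $\sup(\mathrm{Lim}(C_\delta))<\delta$ need the separate argument via Theorem \ref{goblot}, before the limit stages of your construction go through.
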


\begin{proof}
	Let $\Psi = \{\psi_{\vec{\eta}}:\eta_0 \rightarrow A \mid \vec{\eta} \in [\kappa]^n\}$ be a 
	non-$n$-trivial $n$-coherent family. Since the functions in $\Psi$ take at most $\kappa$ many 
	values in $A$ and since these values generate a subgroup of $A$ of cardinality at most $\kappa$, 
	we may pass to this subgroup and assume that $|A| \leq\kappa< \lambda$. Also fix a stationary $S \subseteq S^\lambda_\kappa$ and a 
	$\diamondsuit_\lambda(S)$-sequence $\mathcal{F} = \langle F_\beta \mid \beta \in S \rangle$ as described in Remark~\ref{diamond_remark} (with respect to the parameters $A$ and $n$) and 
	a $\square(\lambda, S)$-sequence $\vec{C} = \langle C_\alpha \mid \alpha < \lambda \rangle$. 
	For each $\beta \in S$ let $F_\beta = \{f^\beta_{\vec{\alpha}}:\alpha_0 \rightarrow A \mid \vec{\alpha} \in [\beta]^n\}$.

	The argument proceeds as follows: at stages $\delta<\lambda$, we recursively construct the initial segments $\Phi\!\restriction\!(\delta+1)$ of a 
	non-($n+1)$-trivial $(n+1)$-coherent family 
	$\Phi=\{\varphi_{\vec{\beta}}:\beta_0\rightarrow A\,|\,\vec{\beta}\in[\lambda]^{n+1}\}$. 
	In the course of this construction, $\mathcal{F}$ conjoined with the non-$n$-trivial $n$-coherent family of our premise organizes a diagonalization against any potential $(n+1)$-trivialization of $\Phi$; the sequence $\vec{C}$ will bypass the indices $S$ of $\mathcal{F}$ to escort $(n+1)$-coherence relations through the uncountable-cofinality stages of the construction. More precisely, by maintaining the following condition at all stages $\delta < \lambda$ of our construction we will avert the danger of achieving non-$(n+1)$-triviality too early:
	\begin{align*}\tag{$\dagger$} 
		\varphi_{\vec{\alpha}\beta}=\varphi_{\vec{\alpha}\gamma}\text{ for all limit }\gamma\in(\delta+1)\backslash S
		\text{ and }\beta \in \mathrm{Lim}(C_\gamma)\text{ and }\vec{\alpha}\in[\beta]^n
	\end{align*}
	To sum all this up: at stage $\delta$ the construction 
	consists of extending $\Phi\!\restriction\!\delta$ to 
	$\Phi\!\restriction\!(\delta+1)$ by way of a family of functions 
	$\{\varphi_{\vec{\beta}\delta}:\beta_0\rightarrow A\,|\,\vec{\beta}\in[\delta]^n\}$ which preserves 
	$(\dagger)$, coheres with $\Phi\!\restriction\!\delta$, and incorporates any relevant information in 
	$\mathcal{F}$. How we simultaneously achieve these goals will depend on the status of $\delta$.

	\textbf{Case 1: $\delta$ is a successor ordinal}. In this case, any $(n+1)$-coherent extension of $\Phi\!\restriction\!\delta$ to $\Phi\!\restriction\!(\delta+1)$ will do. Such exist since $\Phi\!\restriction\!\delta$ is trivial, by Theorem \ref{goblot}.

	\textbf{Case 2: $\delta\in\text{Lim}\backslash S$, with $\delta>\sup(\mathrm{Lim}(C_\delta))$}. Let $\gamma=\sup(\mathrm{Lim}(C_\delta))$; 
	the condition $(\dagger)$ obliges the definition $\varphi_{\vec{\beta}\delta}=\varphi_{\vec{\beta}\gamma}$ for 
	all $\vec{\beta}\in [\gamma]^n$. More generally, for any $\vec{\beta}\in [\delta]^n$ and $\alpha\in \beta_0\cap\gamma$ define
\[ \varphi_{\vec{\beta}\delta}(\alpha)= \begin{cases} 
      0 & \text{if $\gamma$ is among the coordinates of $\vec{\beta}$} \\
      
      (-1)^s\varphi_{(\vec{\beta}\gamma)}(\alpha) & \text{otherwise}
   \end{cases}
\]
Here $(\vec{\beta}\gamma)$ denotes the increasing ordering of the coordinates of $\vec{\beta}$ together with $\gamma$, while $s$ counts how many such coordinates succeed $\gamma$ in this ordering. Extend this definition to arguments $\alpha\in [\gamma,\beta_0)$ as follows: observe that $\text{cf}(\delta)=\omega$, hence again by Theorem \ref{goblot} some $\Psi=\{\psi_{\vec{\beta}}\,|\,\vec{\beta}\in [\delta]^n\}$ $(n+1)$-trivializes $\Phi\!\restriction\!\delta$. Whenever $\gamma\leq\alpha<\beta_0$ let $\varphi_{\vec{\beta}\delta}(\alpha)=(-1)^n\psi_{\vec{\beta}}(\alpha)$. This defines an $(n+1)$-coherent $\Phi\!\restriction\!(\delta+1)$ satisfying $(\dagger)$.

	\textbf{Case 3: $\delta\in\mathrm{Lim}(\lambda)\setminus S$ and $\sup(\mathrm{Lim}(C_\delta))=\delta$}. For $\vec{\beta}\in [\delta]^n$ 
	let $\varphi_{\vec{\beta}\delta}=\varphi_{\vec{\beta}\gamma}$ where $\gamma=\min(\mathrm{Lim}(C_\delta)\setminus(\beta_{n-1}+1))$. 
	Then $\Phi\!\restriction\!(\delta+1)$ is $(n+1)$-coherent, and $(\dagger)$ is maintained.

	\textbf{Case 4: $\delta\in S$ and $F_\delta$ does not $(n+1)$-trivialize $\Phi \restriction \delta$}. Proceed as in Case 3.

	\textbf{Case 5: $\delta\in S$ and $F_\delta$ does $(n+1)$-trivialize $\Phi \restriction \delta$}. By hypothesis (1) of our theorem 
	together with Lemma~\ref{iff} there exists a non-$n$-trivial $n$-coherent family 
	$\Psi^\delta=\{\psi^\delta_{\vec{\beta}}:\beta_0\rightarrow A\,|\,\vec{\beta}\in [\delta]^n\}$. Let 
	$\varphi_{\vec{\beta}\delta}=(-1)^nf^\delta_{\vec{\beta}}+\psi^\delta_{\vec{\beta}}$ for all $\vec{\beta}\in [\delta]^n$. 
	As the reader may verify, $\Phi\!\restriction\!(\delta+1)$ is $(n+1)$-coherent. As $\delta$ is in $S$, the family $\Phi\!\restriction\!(\delta+1)$ satisfies $(\dagger)$ simply because $\Phi\!\restriction\!\delta$ does.

	Proceeding in this way through stages $\delta<\lambda$, we construct an $(n+1)$-coherent family $\Phi$. Now suppose 
	towards contradiction that $\Upsilon = \{\upsilon_{\vec{\beta}}:\beta_0 \rightarrow A \mid \vec{\beta} \in [\lambda]^n\}$ 
	$(n+1)$-trivializes $\Phi$. Then $\Upsilon\!\restriction\!\delta=F_\delta$ 
	for some $\delta \in S$, hence stage $\delta$ fell in Case 5 of our construction. If we let 
	$\theta_{\vec{\beta}}=\upsilon_{\vec{\beta}\delta}$ for all $\vec{\beta}\in [\delta]^{n-1}$ 
	then the family $\{\theta_{\vec{\beta}}\,|\,\vec{\beta}\in [\delta]^{n-1}\}$ $n$-trivializes the family $\Psi^\delta$: for all $\vec{\beta}\in [\delta]^n$
	\[
		\varphi_{\vec{\beta}\delta}=\sum_{i=0}^n(-1)^i\upsilon_{(\vec{\beta}\delta)^i}=(-1)^n 
		f^\delta_{\vec{\beta}}+\sum_{i=0}^{n-1}(-1)^i\theta_{\vec{\beta}^i}=
		\varphi_{\vec{\beta}\delta}-\psi^\delta_{\vec{\beta}}+\sum_{i=0}^{n-1}(-1)^i\theta_{\vec{\beta}^i}
	\] 
	hence for all $\vec{\beta}\in [\delta]^n$ 
	\[
		\psi^\delta_{\vec{\beta}}=\sum_{i=0}^{n-1}(-1)^i\theta_{\vec{\beta}^i}.
	\]
	This contradicts the non-$n$-triviality of $\Psi^\delta$. Hence $\Phi$ is non-$(n+1)$-trivial.
\end{proof}

\begin{corollary}\label{L}
	Suppose that $\mathrm{V} = \mathrm{L}$. Suppose moreover that $n \geq 1$, $A$ is a non-trivial abelian group, 
	and $\lambda \geq \aleph_n$ is a regular cardinal that is not weakly compact. Then 
	$\chm^n(\lambda, \mathcal{A}_d) \neq 0$.
\end{corollary}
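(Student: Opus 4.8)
The plan is to reduce the statement, via Theorem~\ref{combicasting}, to the combinatorial assertion that for every such $\lambda$ there is a non-$n$-trivial $n$-coherent family of $A$-valued functions of height $\lambda$, and then to prove this by induction on $n\geq 1$, invoking Theorem~\ref{steppingup} at the inductive step. For the base case $n=1$ I would argue as follows: since $\lambda\geq\aleph_1$ is regular and $\mathrm{V}=\mathrm{L}$, the theorem quoted at the close of Remark~\ref{square_remark}, read contrapositively, gives that $\lambda$ not weakly compact implies $\square(\lambda)$ (for $\lambda=\aleph_1$ this is the $\mathsf{ZFC}$ fact that a $\vec{C}$-sequence satisfying $(\star)$ witnesses $\square(\omega_1)$). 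Theorem~\ref{todorcevicsquare} then delivers $\chm^1(\lambda,\mathcal{A}_d)\neq 0$, i.e.\ a non-trivial coherent $A$-valued family of height $\lambda$.

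For the inductive step, suppose the conclusion holds at $n$ and let $\lambda\geq\aleph_{n+1}$ be regular and not weakly compact. I would set $\kappa=\aleph_n$. As $n\geq 1$, this $\kappa$ is a successor cardinal, hence regular, uncountable, not weakly compact, and $\kappa<\aleph_{n+1}\leq\lambda$; so the induction hypothesis, applied at $\kappa$ with the \emph{same} group $A$, supplies a non-$n$-trivial $n$-coherent $A$-valued family of height $\kappa$. This is precisely hypothesis (1) of Theorem~\ref{steppingup}. Granting hypothesis (2) --- a stationary $S\subseteq S^\lambda_\kappa$ with both $\square(\lambda,S)$ and $\diamondsuit_\lambda(S)$ --- Theorem~\ref{steppingup} then produces a non-$(n+1)$-trivial $(n+1)$-coherent $A$-valued family of height $\lambda$, closing the induction and, with it, the proof.

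Everything therefore rests on verifying hypothesis (2), which is where $\mathrm{V}=\mathrm{L}$ does its real work and which I expect to be the main obstacle. The guessing clause is the easy half: in $\mathrm{L}$ one has $\diamondsuit_\lambda(T)$ for every regular uncountable $\lambda$ and every stationary $T\subseteq\lambda$, so once a suitable $S$ is fixed, $\diamondsuit_\lambda(S)$ comes for free. The substantive point is to construct, for $\lambda\geq\aleph_{n+1}$ regular and not weakly compact and $\kappa=\aleph_n<\lambda$, a stationary $S\subseteq S^\lambda_\kappa$ together with a $\square(\lambda)$-sequence $\vec{C}$ all of whose limit-point sets $\mathrm{Lim}(C_\beta)$ avoid $S$. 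This is carried out by Jensen's fine-structural construction of coherent sequences in $\mathrm{L}$ \cite{FineStructure}: the usual $\square(\lambda)$-sequence can be built with enough attention to cofinalities that it bypasses a prescribed stationary set of points of cofinality $\kappa$ (one may in fact take $S$ non-reflecting). I would isolate this as a standalone lemma, ``$\mathrm{V}=\mathrm{L}$ implies $\square(\lambda,S)\wedge\diamondsuit_\lambda(S)$ for some stationary $S\subseteq S^\lambda_\kappa$,'' and prove the corollary by citing it; the remaining inductive bookkeeping is routine once the machinery of Theorem~\ref{steppingup} is in place.
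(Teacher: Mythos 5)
Your proposal is correct and follows essentially the same route as the paper: the paper's proof also works in $\mathrm{L}$, cites Jensen's \cite{FineStructure} for the existence, for each pair of regular uncountable non-weakly-compact $\kappa<\lambda$, of a stationary $S\subseteq S^\lambda_\kappa$ with $\square(\lambda,S)$ and $\diamondsuit_\lambda(S)$, gets the one-dimensional input from Theorem~\ref{todorcevicsquare}, and then iterates Theorem~\ref{steppingup} through $n=2,3,\dots$ exactly as in your induction. Your write-up is in fact somewhat more explicit than the paper's (fixing $\kappa=\aleph_n$ at each step and spelling out the base case $n=1$), but the decomposition and the key ingredients are identical.
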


\begin{proof}
	Work in $\mathrm{L}$. By results of Jensen (see \cite{FineStructure}), for every pair of 
	regular uncountable cardinals $\kappa < \lambda$ neither of which are weakly compact, there exists a stationary 
	$S \subseteq S^\lambda_\kappa$ for which $\square(\lambda, S)$ and $\diamondsuit_S(\lambda)$ both hold. 
	 By Theorem \ref{todorcevicsquare}, for any nontrivial abelian group $A$ and $\lambda\geq\aleph_2$ that is not weakly compact, there exists a $\kappa<\lambda$ for which $\chm^1(\kappa,\mathcal{A}_d)\neq 0$. Hence $\chm^2(\lambda,\mathcal{A}_d)\neq 0$, by Theorem~\ref{steppingup}. 
	Re-applying Theorem~\ref{steppingup} then secures the corollary for $n=3$, then $n=4$, and so on.
\end{proof}

\subsection{Forcing nontrivial coherence}\label{II.4}\hfill

In this subsection we show how to add non-$n$-trivial $n$-coherent families of functions by forcing 
and how to then $n$-trivialize these families via further forcing. Readers aware of forcings 
to add and thread various square sequences will find much that is familiar here.

\begin{definition}
	Suppose that $\delta$ is an ordinal and $\bb{P}$ is a forcing poset with a maximal element $1_\bb{P}$. 
	\begin{enumerate}
		\item The game $\Game_\delta(\bb{P})$ is a two-player game of (potential) length $\delta$ in which Players $\mathsf{Odd}$ and 
		$\mathsf{Even}$ take turns playing conditions $p_\alpha\in\bb{P}$ in $\leq_{\bb{P}}$-descending order. $\mathsf{Odd}$ 
		plays at all odd stages $\alpha$ and $\mathsf{Even}$ plays at all even stages $\alpha$, including the limit ones. The game begins with $\mathsf{Even}$'s 
		play of $p_0 = 1_{\bb{P}}$. If in the course of play a limit ordinal $\beta < \delta$ 
		is reached such that $\langle p_\alpha \mid \alpha < \beta \rangle$ has no lower bound in $\bb{P}$ then 
		$\mathsf{Odd}$ wins. Otherwise $\mathsf{Even}$ wins.
		\item We say that $\bb{P}$ is \emph{$\delta$-strategically closed} if $\mathsf{Even}$ has a winning strategy 
		in $\Game_\delta(\bb{P})$.
	\end{enumerate}
\end{definition}

If $\kappa$ is a cardinal and $\bb{P}$ is $(\kappa+1)$-strategically closed then forcing 
with $\bb{P}$ cannot add any new $\kappa$-sequences of elements from $\mathrm{V}$.

We are now ready to introduce the basic forcing notion to add non-$n$-trivial  $n$-coherent families of functions.

\begin{definition}
	Suppose that $n$ is a positive integer, $A$ is a nontrivial abelian group, and $\lambda \geq \aleph_n$ is a regular cardinal. 
	Let $\bb{P}(n, \lambda, A)$ denote the following forcing notion:
	\begin{itemize}
		\item the conditions of $\bb{P}(n, \lambda, A)$ are all $n$-coherent families of the form 
		$$p = \{\varphi^p_{\vec{\beta}}:\beta_0 \rightarrow A \mid \vec{\beta} \in [\delta_p + 1]^n\}$$ 
		where $\delta_p < \lambda$;
		\item if $p,q \in \bb{P}(n, \lambda, A)$, then $q \leq p$ if 
		\begin{itemize}
			\item $\delta_q \geq \delta_p$
			\item $\varphi^q_{\vec{\beta}} = \varphi^p_{\vec{\beta}}$ for all $\vec{\beta} \in [\delta_p+1]^n$.
		\end{itemize}
	\end{itemize}
\end{definition}

Fix now an integer $n \geq 2$ (the case of $n=1$ is similar and easier), 
a nontrivial abelian group $A$, and a regular cardinal $\lambda \geq \aleph_n$.

\begin{lemma} \label{strategic_lemma}
	The forcing $\bb{P}(n, \lambda, A)$ is $\lambda$-strategically closed and therefore preserves all 
	cardinalities and cofinalities less than or equal to $\lambda$.
\end{lemma}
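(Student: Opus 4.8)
The plan is to exhibit a winning strategy for $\mathsf{Even}$ in the game $\Game_\lambda(\bb{P})$, where I write $\bb{P}$ for $\bb{P}(n,\lambda,A)$; once $\bb{P}$ is known to be $\lambda$-strategically closed it adds no new sequences of length ${<}\lambda$ of ground-model elements, and the preservation of all cardinals and cofinalities $\leq\lambda$ is then the standard consequence (cf. the fact recorded just above that $(\kappa+1)$-strategically closed forcings add no new $\kappa$-sequences). Throughout, note that every condition $p$ is an $n$-coherent family of the \emph{successor} height $\delta_p+1$, hence $n$-trivial by the base case of Theorem~\ref{goblot}; the difficulty is entirely at \emph{limit} stages of the game.

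The central device is that $\mathsf{Even}$ carries, alongside each condition it plays, a \emph{stable trivialization}. After each of $\mathsf{Even}$'s moves producing a condition $p$, $\mathsf{Even}$ will also have fixed a family $\Psi_p=\{\psi_{\vec{\beta}}:\beta_0\to A\mid\vec{\beta}\in[\delta_p]^{n-1}\}$ that $n$-trivializes $p\!\restriction\!\delta_p$ and that satisfies the invariant $\varphi^p_{\vec{\beta}\,\delta_p}=(-1)^{n+1}\psi_{\vec{\beta}}$ for all $\vec{\beta}\in[\delta_p]^{n-1}$; by the computation underlying Example~\ref{elpmaxe}, this invariant is precisely what renders $p$ $n$-coherent at its top level. $\mathsf{Even}$ maintains the $\Psi_p$ $\subseteq$-increasingly along the play. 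The pivotal consequence is that \emph{the cofinality of the limit never enters}: if $\langle p_\alpha\mid\alpha<\beta\rangle$ is the descending sequence reached at a limit stage $\beta$ and $\delta^*=\sup_\alpha\delta_{p_\alpha}$, then $\Psi^*:=\bigcup_\alpha\Psi_{p_\alpha}$ $n$-trivializes the union $\Phi^*=\bigcup_\alpha(p_\alpha\!\restriction\!\delta_{p_\alpha})$, since any fixed $\vec{\beta}\in[\delta^*]^n$ lies in some $p_\alpha\!\restriction\!\delta_{p_\alpha}$ and the $=^*$ relation witnessing the triviality of $\Psi_{p_\alpha}$ transfers. Defining the top level at $\delta^*$ by $\varphi_{\vec{\beta}\,\delta^*}=(-1)^{n+1}\psi^*_{\vec{\beta}}$ then yields a condition $p_\beta$ of height $\delta^*+1$ which is a lower bound of the sequence and which re-establishes the invariant with $\Psi_{p_\beta}=\Psi^*$. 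Thus $\mathsf{Even}$ never loses at a limit.

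It remains to specify $\mathsf{Even}$'s responsive moves and check that the invariant persists; this is where the only genuine work lies. Suppose $\mathsf{Odd}$ has just played $q\leq p_{\mathrm{old}}$, where $p_{\mathrm{old}}$ carries the stable trivialization $\Psi_{\mathrm{old}}$. $\mathsf{Even}$ will play some $p_{\mathrm{new}}\leq q$ with $\delta_{p_{\mathrm{new}}}=\delta_q+1$, and must extend $\Psi_{\mathrm{old}}$ to a stable trivialization of $p_{\mathrm{new}}\!\restriction\!\delta_{p_{\mathrm{new}}}=q$. First, because of the invariant $\varphi^{p_{\mathrm{old}}}_{\vec{\beta}\,\delta_{\mathrm{old}}}=(-1)^{n+1}\psi_{\vec{\beta}}$ together with the identity $(-1)^{n+1}=(-1)^{n-1}$, one checks that adjoining the zero functions at top coordinate $\delta_{\mathrm{old}}$ extends $\Psi_{\mathrm{old}}$ to a trivialization $\hat{\Psi}$ of the whole of $p_{\mathrm{old}}$ (height $\delta_{\mathrm{old}}+1$); crucially this step is insensitive to whether $\delta_{\mathrm{old}}$ is a successor or a (possibly high-cofinality) limit. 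Since $\delta_{\mathrm{old}}+1$ is a successor, $\mathsf{Even}$ may now invoke the extension lemma established within the proof of Theorem~\ref{goblot} — with bottom ordinal $\xi=\delta_{\mathrm{old}}+1$ of cofinality $1$, top ordinal $\eta=\delta_q+1$, and $\Phi=q$, which is $n$-trivial as a successor-height family — to extend $\hat{\Psi}$ across the interval $[\delta_{\mathrm{old}}+1,\delta_q]$, intervening limit ordinals and all, to a trivialization $\Psi_{\mathrm{new}}$ of $q$. Finally $\mathsf{Even}$ sets the top level of $p_{\mathrm{new}}$ at $\delta_q+1$ to be $\varphi_{\vec{\beta}\,(\delta_q+1)}=(-1)^{n+1}\psi^{\mathrm{new}}_{\vec{\beta}}$, which by Example~\ref{elpmaxe} yields an $n$-coherent $p_{\mathrm{new}}$ satisfying the invariant, with $\Psi_{\mathrm{new}}\supseteq\Psi_{\mathrm{old}}$.

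The anticipated main obstacle — and the reason full ${<}\lambda$-closure can genuinely fail once $\lambda>\aleph_n$ — is exactly the limit stages whose cofinality is $\geq\omega_n$, where the union $\Phi^*$ need not be $n$-trivial by Theorem~\ref{goblot} alone, so that an arbitrary descending sequence may admit no lower bound. The stable-trivialization bookkeeping is what circumvents this: by forcing the top level of each condition to \emph{be} a sign times an increasing trivialization, the cap at a limit is produced by unioning trivializations rather than by re-trivializing from scratch, and every appeal to $n$-triviality is made only at successor heights, where Theorem~\ref{goblot} and its extension lemma apply unconditionally. In carrying this out I would verify with care two bookkeeping points: that the extension lemma legitimately extends a trivialization across an interval containing limit ordinals of arbitrary cofinality below $\delta_q$ (it does, being applied from a successor bottom $\xi$, exactly as in the successor step of the induction proving Theorem~\ref{goblot}), and the sign computations sustaining the invariant; both are routine.
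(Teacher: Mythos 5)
Your proof is correct, but it follows a genuinely different route from the paper's. The paper's $\mathsf{Even}$ maintains the invariant $(\dagger(\eta))$ of \emph{exact} coherence --- $\sum_{j=0}^n(-1)^j\varphi_{\vec{\beta}^j}=0$, with no finite error --- over all $(n+1)$-tuples drawn from the set $\mathrm{Ev}(\eta)$ of heights of $\mathsf{Even}$'s previous plays, and defines each new top level pointwise by the diagonal formula $\varphi_{\vec{\beta}\delta}(\xi)=(-1)^{n-1}\varphi_{\alpha_\xi\vec{\beta}}(\xi)$, where $\alpha_\xi=\min(\mathrm{Ev}(\eta)\setminus\xi+1)$; exactness is what lets this patched-together definition cohere (mod-finite errors would accumulate across the infinitely many pieces), it treats successor and limit stages uniformly, and the resulting partial family, defined only over $\mathrm{Ev}(\eta)$-tuples, is then extended to an actual condition by the higher-dimensional version of Observation~\ref{transformation}. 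You instead have $\mathsf{Even}$ carry a $\subseteq$-increasing chain of explicit trivializations whose $(-1)^{n+1}$-multiples \emph{are} the top levels of $\mathsf{Even}$'s conditions; limits are then handled simply by taking unions of trivializations, while $\mathsf{Odd}$'s arbitrary moves are absorbed via the extension lemma inside the proof of Theorem~\ref{goblot}, applied from the successor base $\xi=\delta_{\mathrm{old}}+1$ (as you note, this instance needs only the base case of Claim~\ref{31}, so there is no circularity and no hidden cofinality hypothesis). Both arguments are organized around the same obstruction --- limit stages of cofinality $\geq\omega_n$, where Theorem~\ref{goblot} alone supplies no cap --- and your sign computations, the zero-extension step, and the union/nestedness bookkeeping all check out. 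What the paper's invariant buys is uniformity and self-containment; what yours buys is explicitness (top levels are always defined on all of $[\delta]^{n-1}$, so no appeal to an extend-a-coherent-family-from-cofinally-many-indices principle is needed) and a conceptual dividend: your pairs of a condition and its carried trivialization, linked by $\varphi_{\vec{\alpha}\delta_p}=(-1)^{n+1}\psi_{\vec{\alpha}}$, are precisely the elements of the dense $\lambda$-directed closed subset $\bb{U}\subseteq\bb{P}*\dot{\bb{T}}$ of Lemma~\ref{dense_closed_lemma}, so your strategy in effect anticipates that later closure argument by having $\mathsf{Even}$ play along the projection of $\bb{U}$ to $\bb{P}$.
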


\begin{proof}
	We describe a winning strategy for $\mathsf{Even}$ in $\Game_\lambda(\bb{P}(n, \lambda, A))$. 
	At stage $\eta$ of a game, $\mathrm{Ev}(\eta)$ will denote the set $\{\delta_{p_i} \mid i < \eta \text{ and } 
	i \mbox{ is even}\}$. For $\xi < \sup(\mathrm{Ev}(\eta))$, 
	let $\alpha_\xi = \min(\mathrm{Ev}(\eta) \setminus \xi + 1)$. Since the 
	sequence of played conditions must decrease, for any $\vec{\beta} \in [\cup_{i < \eta} \delta_{p_i} + 1]^n$ the notation $\varphi_{\vec{\beta}}$ is unambiguous, so we
	henceforth omit the superscripts recording the condition in which the function $\varphi_{\vec{\beta}}$ appears. $\mathsf{Even}$'s winning strategy consists in maintaining the following condition at all stages 
	$\eta < \lambda$:
	\begin{align}\label{winstrat3} 
		\tag{$\dagger(\eta)$} \textnormal{for all }\vec{\beta}\in[\mathrm{Ev}(\eta)]^{n+1},\hspace{0.1 cm}\sum_{j=0}^n(-1)^j\varphi_{\vec{\beta}^j}=0.
	\end{align}
	$\mathsf{Even}$ arranges this as follows. Suppose that $\eta < \lambda$ is even and that $(\dagger(\eta))$ holds for the sequence of plays $\langle p_i \mid i < \eta \rangle$. Let 
	$\delta = \sup\{\delta_{p_i} + 1 \mid i < \eta\}$. $\mathsf{Even}$ will play a condition $p_\eta$ 
	for which $\delta_{p_\eta} = \delta$; hence to define $p_\eta$ it will suffice to define 
	$\varphi_{\vec{\beta}\delta}$ for all $\vec{\beta} \in [\delta]^{n-1}$. We first define 
	$\varphi_{\vec{\beta}\delta}$ for $\vec{\beta} \in [\mathrm{Ev}(\eta)]^{n-1}$: for ease of notation, 
	adopt the convention that $\varphi_{\vec{\gamma}} = 0$ if any of the terms of the $n$-tuple $\vec{\gamma}$ repeat and let 
	
	$$\varphi_{\vec{\beta}\delta}(\xi) = (-1)^{n-1}\varphi_{\alpha_\xi\vec{\beta}}(\xi)$$
	for any $\vec{\beta} \in [\mathrm{Ev}(\eta)]^{n-1}$ and $\xi < \beta_0$. (In particular, 
	$\varphi_{\vec{\beta}\delta}(\xi) = 0$ if $\alpha_\xi = \beta_0$.) We check that this
	partial definition of $p_\eta$ achieves $(\dagger(\eta + 1))$: for such a $\vec{\beta}$ and $\xi$,
	\begin{align*}
		\sum_{j=0}^n(-1)^j\varphi_{(\vec{\beta}\delta)^j}(\xi) &= (-1)^n\varphi_{\vec{\beta}}(\xi) + \sum_{j=0}^{n-1} (-1)^j \varphi_{\vec{\beta}^j\delta} (\xi) \\ 
		&= (-1)^n \varphi_{\vec{\beta}}(\xi) + (-1)^{n-1} \sum_{j=0}^{n-1} (-1)^j \varphi_{\alpha_\xi\vec{\beta}^j}(\xi) 
	\end{align*}
	If $\alpha_\xi = \beta_0$ then this expression reduces to $(-1)^n \varphi_{\vec{\beta}}(\xi) + (-1)^{n-1} \varphi_{\vec{\beta}}(\xi) = 0$. 
	If $\alpha_\xi < \beta_0$ then it reduces to 
	\[
		(-1)^n\sum_{j=0}^n \varphi_{(\alpha_\xi\vec{\beta})^j}(\xi),
	\]
	which is $0$ by the assumption $(\dagger(\eta))$. In consequence, $\{\varphi_{\vec{\beta}\delta} \mid \vec{\beta} \in [\mathrm{Ev}(\eta)]^{n-1}\} \cup \bigcup_{\alpha < \eta} p_\alpha$ is an $n$-coherent family 
	witnessing $(\dagger(\eta + 1))$. $\mathsf{Even}$'s winning strategy is thus to play any $n$-coherent extension 
	\[
		p_\eta = \{\varphi_{\vec{\beta}\delta} \mid \vec{\beta} \in [\delta]^{n-1}\} \cup \bigcup_{\alpha < \eta} p_\alpha
	\]
	of this family at stage $\eta$. Such an extension exists by the higher-dimensional versions of Observation~\ref{transformation}.
\end{proof}

\begin{lemma} \label{nontrivial_generic_lemma}
	Suppose that $G$ is a $\bb{P}(n, \lambda, A)$-generic filter over $\mathrm{V}$ and 
	$\kappa < \lambda$ is an infinite regular cardinal for which there exists a height-$\kappa$ $A$-valued non-$(n-1)$-trivial $(n-1)$-coherent family of functions. Then 
	$\bigcup G$ is a non-$n$-trivial $n$-coherent family of functions $\Phi = \{\varphi_{\vec{\beta}} 
	: \beta_0 \rightarrow A \mid \vec{\beta} \in [\lambda]^n\}$.
\end{lemma}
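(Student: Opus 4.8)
The plan is to verify first that $\Phi := \bigcup G$ is a well-defined $n$-coherent family indexed by $[\lambda]^n$, and then to establish non-$n$-triviality by a density argument that transplants Case~5 of the proof of Theorem~\ref{steppingup} into the forcing context; the role played there by a $\diamondsuit$-guessed trivialization will here be played by a \emph{forced} initial segment of a name for a trivialization. For totality, note that for each $\delta<\lambda$ the set of conditions $p$ with $\delta_p\geq\delta$ is dense: the initial segment of any $p$ is trivial by Theorem~\ref{goblot}, so by the higher-dimensional form of Observation~\ref{transformation} it extends to an $n$-coherent family of strictly larger height. Since $\bb{P}(n,\lambda,A)$ preserves the regularity of $\lambda$ (Lemma~\ref{strategic_lemma}), genericity makes $\Phi$ total on $[\lambda]^n$; and for any $\vec{\gamma}\in[\lambda]^{n+1}$ some $p\in G$ with $\delta_p\geq\gamma_n$ already witnesses the $n$-coherence relation at $\vec{\gamma}$, each condition being itself $n$-coherent.

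For non-$n$-triviality, suppose toward a contradiction that some $p^*$ forces a name $\dot{\Upsilon}=\{\dot{\upsilon}_{\vec{\beta}}\mid\vec{\beta}\in[\lambda]^{n-1}\}$ to $n$-trivialize $\Phi$. Using the $\lambda$-strategic closure of $\bb{P}(n,\lambda,A)$, I would build below $p^*$ a descending sequence $\langle p_\xi\mid\xi<\kappa\rangle$ whose heights $\delta_\xi=\delta_{p_\xi}$ strictly increase, organized in $\kappa$ blocks so that a common lower bound completely decides $\dot{\upsilon}_{\vec{\beta}}$, say as some $u_{\vec{\beta}}\in V$, for every $\vec{\beta}\in[\delta]^{n-1}$, where $\delta=\sup_{\xi<\kappa}\delta_\xi$; then $\cf(\delta)=\kappa$ and $\delta<\lambda$. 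Writing $\Phi'=\bigcup_\xi p_\xi$ for the resulting $n$-coherent family on $[\delta]^n$, the crucial observation is that any common lower bound of the $p_\xi$ extends $p^*$ and decides the instances of the trivialization relation below $\delta$; since these are statements about the ground-model objects $u_{\vec{\beta}}$ and $\Phi'$, they hold in $V$. Thus in $V$ the family $\{u_{\vec{\beta}}\mid\vec{\beta}\in[\delta]^{n-1}\}$ genuinely $n$-trivializes $\Phi'$, i.e.\ $\sum_{i=0}^{n-1}(-1)^i u_{\vec{\alpha}^i}=^*\varphi_{\vec{\alpha}}$ for all $\vec{\alpha}\in[\delta]^n$. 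This is the forcing analogue of the guess $F_\delta$ in Case~5.

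Next I would fix in $V$ a non-$(n-1)$-trivial $(n-1)$-coherent family $\Theta=\{\theta^\delta_{\vec{\beta}}\mid\vec{\beta}\in[\delta]^{n-1}\}$ of height $\delta$, which exists because $\cf(\delta)=\kappa$ stretches the given height-$\kappa$ example via Lemma~\ref{iff}. Define a top slice by $\varphi_{\vec{\beta}\delta}:=(-1)^{n-1}u_{\vec{\beta}}+\theta^\delta_{\vec{\beta}}$ for $\vec{\beta}\in[\delta]^{n-1}$, and put $q:=\Phi'\cup\{\varphi_{\vec{\beta}\delta}\mid\vec{\beta}\in[\delta]^{n-1}\}$. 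The same cancellation as in Case~5 shows $q$ is $n$-coherent, hence a condition: at a tuple $\vec{\alpha}\delta$ with $\vec{\alpha}\in[\delta]^n$ the alternating sum collapses to $\big((-1)^{n-1}+(-1)^n\big)\varphi_{\vec{\alpha}}=^*0$, using that $\{u_{\vec{\beta}}\}$ trivializes $\Phi'$ and that $\Theta$ is $(n-1)$-coherent. Since $q$ end-extends every $p_\xi$ and lies below $p^*$, it is a legitimate lower bound forcing $\dot{\Upsilon}$ to $n$-trivialize $\Phi$.

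Finally I would pass to a generic $G\ni q$. In $V[G]$ the trivialization relation at each $\vec{\beta}\delta$ ($\vec{\beta}\in[\delta]^{n-1}$) reads $\sum_{i=0}^{n-2}(-1)^i\upsilon_{\vec{\beta}^i\delta}+(-1)^{n-1}\upsilon_{\vec{\beta}}=^*\varphi_{\vec{\beta}\delta}$; since $\upsilon_{\vec{\beta}}=u_{\vec{\beta}}$ and $\varphi_{\vec{\beta}\delta}=(-1)^{n-1}u_{\vec{\beta}}+\theta^\delta_{\vec{\beta}}$, the $u_{\vec{\beta}}$ terms cancel and $\{\upsilon_{\vec{\gamma}\delta}\mid\vec{\gamma}\in[\delta]^{n-2}\}$ is seen to $(n-1)$-trivialize $\Theta$. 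But $\bb{P}(n,\lambda,A)$ is $\lambda$-strategically closed and $\delta<\lambda$, so it adds no new sequence of length $<\lambda$ over $V$; as a putative $(n-1)$-trivialization of $\Theta$ is coded by such a sequence of length $|\delta|<\lambda$, $\Theta$ remains non-$(n-1)$-trivial in $V[G]$, a contradiction. Hence no condition forces a trivialization and $\Phi$ is non-$n$-trivial. (The case $n=1$ is easier: there $\Theta$ is simply a non-$0$-trivial $0$-coherent function and the bookkeeping collapses.) I expect the main obstacle to be the simultaneous fulfillment of the two demands on the top slice --- that $q$ be a genuine condition and that it defeat the trivialization --- which are reconciled only because the forced values $u_{\vec{\beta}}$ constitute an honest ground-model $n$-trivialization of $\Phi'$; recognizing and exploiting this reflection of a forced check-statement into $V$ is the conceptual crux.
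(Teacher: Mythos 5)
Your proof is correct and follows essentially the same route as the paper's: a descending $\kappa$-sequence of conditions below the offending $p^*$ that decides the putative trivialization's values $u_{\vec{\beta}}$, a top slice $\varphi_{\vec{\beta}\delta}=(-1)^{n-1}u_{\vec{\beta}}+\theta^\delta_{\vec{\beta}}$ built from those values together with a height-$\delta$ non-$(n-1)$-trivial family obtained by stretching the height-$\kappa$ one, and the same two cancellation computations (coherence of $q$, then cancellation of the $u_{\vec{\beta}}$ terms to extract an $(n-1)$-trivialization of $\Theta$). The only difference is cosmetic: the paper finishes inside $\mathrm{V}$ by passing to a single condition $r\leq q$ deciding the values $\dot{\upsilon}_{\vec{\gamma}\delta}$, whereas you pass to a generic extension containing $q$ and pull the resulting trivialization back to $\mathrm{V}$ via the fact that the $\lambda$-strategically closed forcing adds no new sequences of length $<\lambda$ --- two interchangeable phrasings of the same absoluteness argument.
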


\begin{proof}
	Let us assume that $n > 1$. The case $n=1$ is similar and easier.
	For all $\beta < \lambda$ the argument of Lemma~\ref{strategic_lemma} shows that the set $D_\beta := \{p \mid \delta_p > \beta\}$ 
	is dense in $\bb{P}(n, \lambda, A)$. Hence $\bigcup G$ is an $n$-coherent family of the form 
	$\Phi = \{\varphi_{\vec{\beta}} \mid \vec{\beta} \in [\lambda]^n\}$. Suppose for the sake of contradiction 
	that $\Phi$ is $n$-trivial and fix a name $\dot{\Psi} = \{\dot{\psi}_{\vec{\beta}} \mid \vec{\beta} \in [\lambda]^{n-1}\}$ 
	and a condition $p \in G$ such that $p \Vdash ``\dot{\Psi} \text{ $n$-trivializes } \dot{\Phi}".$

	Work now in $V$. Use $\mathsf{Even}$'s winning strategy in $\Game_\lambda(\bb{P}(n, \lambda, A))$ to 
	define a strictly decreasing sequence $\langle p_i \mid i < \kappa \rangle$ of conditions from 
	$\bb{P}(n, \lambda, A)$ with $p_0 = p$ such that for all $i < \kappa$ and all $\vec{\beta} \in 
	[\delta_{p_i}]^{n-1}$ the condition $p_{i+1}$ decides the value of $\dot{\psi}_{\vec{\beta}}$ to be some 
	$\psi_{\vec{\beta}}\in V$. Let $\delta = \sup\{\delta_{p_i} \mid i < \kappa\}$. 

	By assumption (together with Lemma~\ref{iff}), there exists a 
	non-$(n-1)$-trivial $(n-1)$-coherent family $\Upsilon = \{\upsilon_{\vec{\beta}}:\beta_0 \rightarrow A \mid 
	\vec{\beta} \in [\delta]^{n-1}\}$. For all $\vec{\beta} \in [\delta]^{n-1}$ define 
	$\varphi_{\vec{\beta}\delta}:\beta_0 \rightarrow A$ by letting $\varphi_{\vec{\beta}\delta} = 
	\upsilon_{\vec{\beta}} + (-1)^{n+1}\psi_{\vec{\beta}}$, and let 
	\[
		q = \{\varphi_{\vec{\beta}\delta} \mid \vec{\beta} \in [\delta]^{n-1}\} \cup \bigcup_{i < \kappa} p_i
	\]

	\begin{claim}
		$q$ is $n$-coherent and therefore a lower bound for $\langle p_i \mid i < \kappa \rangle$ in 
		$\bb{P}(n, \lambda, A)$.
	\end{claim}

	\begin{proof}
		Each $p_i$ is $n$-coherent; hence it suffices to check for $n$-coherence on index-sets of the form 
		$\vec{\beta} \cup \{\delta\}$, where $\vec{\beta} \in [\delta]^n$. In these cases, 
		\begin{align*}
			\sum_{j=0}^n(-1)^j \varphi_{(\vec{\beta}\delta)^j} &= 
			(-1)^n \varphi_{\vec{\beta}} + \sum_{j=0}^{n-1}(-1)^j\varphi_{\vec{\beta}^j\delta}  \\ 
			&= (-1)^n\left(\varphi_{\vec{\beta}} - \sum_{j=0}^{n-1} \psi_{\vec{\beta}^j}\right) 
			+ \sum_{j=0}^{n-1} \upsilon_{\vec{\beta}^j} =^* 0.
		\end{align*}
	\end{proof}

	Now take an $r\leq q$ such that for every $\beta \in [\delta]^{n-2}$ the condition $r$ decides the value of 
	$\dot{\psi}_{\vec{\beta}\delta}$ to be some $\psi^*_{\vec{\beta}}:\beta_0 \rightarrow A$ in $V$. Since $r$ 
	extends $p$ and therefore forces that $\dot{\Psi}$ $n$-trivializes $\dot{\Phi}$, it follows that for 
	every $\vec{\beta} \in [\delta]^{n-1}$
	\begin{align*}
		\sum_{j=0}^{n-2}\psi^*_{\vec{\beta}^j} =^* (-1)^n\psi_{\vec{\beta}} + \varphi_{\vec{\beta}\delta} 
		= (-1)^n\psi_{\vec{\beta}} + \upsilon_{\vec{\beta}} + (-1)^{n+1}\psi_{\vec{\beta}} = \upsilon_{\vec{\beta}}.
	\end{align*}
	The family $\Psi^* = \{\psi^*_{\vec{\beta}} \mid \vec{\beta} \in [\delta]^{n-2}\}$ in other words
	$(n-1)$-trivializes $\Upsilon$, contradicting our assumption that $\Upsilon$ is non-$(n-1)$-trivial.
\end{proof}

\begin{remark}
	For every nontrivial abelian group $A$ there exists a non-$0$-trivial $0$-coherent function 
	$\psi:\omega \rightarrow A$ (namely, any such function with infinite support). It follows that 
	for every abelian group $A$ and every regular uncountable $\lambda$, if $G$ is $\bb{P}(1, \lambda, A)$-generic
	over $V$, then $\bigcup G$ is nontrivial coherent. 

	Similarly, by Corollary~\ref{DA}, for every nontrivial abelian group $A$ there exists an $A$-valued non-$1$-trivial $1$-coherent
	family of functions indexed by $\omega_1$. It follows that for every 
	such $A$ and regular cardinal $\lambda > \aleph_1$, if $G$ is $\bb{P}(2, \lambda, A)$-generic 
	over $V$ then $\bigcup G$ is a height-$\lambda$ $A$-valued non-$2$-trivial $2$-coherent family of functions.

	Finally, by Theorem~\ref{nonntrivncohomegan}, for every $2 \leq n < \omega$ there is an abelian group $A_n$ 
	and an $A_n$-valued non-$n$-trivial $n$-coherent family of functions indexed by $[\omega_n]^n$. 
	It follows that for every $n \geq 2$ and regular $\lambda > \aleph_n$, if 
	$G$ is $\bb{P}(n+1, \lambda, A_n)$-generic over $V$, then $\bigcup G$ is height-$\lambda$ $A_n$-valued non-$n$-trivial $n$-coherent family of functions. Alternately, for any nontrivial abelian group $A$ and regular $\lambda\geq\aleph_n$, one may add a height-$\lambda$ $A$-valued non-$n$-trivial $n$-coherent family of functions via repeated applications of Lemma \ref{nontrivial_generic_lemma}.
	
	Such constructions, along with those of the previous subsection, are a partial heuristic for the $\mathsf{ZFC}$ theorems \ref{goblot} and \ref{nonntrivncohomegan} of Section \ref{I.5}: the ``space'' of repeated ``jumps'' in cardinality is needed in order for higher-dimensional nontrivial coherence relations to appear.
\end{remark}

Once a non-$n$-trivial $n$-coherent family has been added by forcing, there is a natural further forcing 
notion to $n$-trivialize it. Our definition will look slightly different depending on whether or not $n = 1$.

\begin{definition}
	Suppose that $\lambda$ is a regular uncountable cardinal and $\Phi = \{\varphi_\beta:\beta 
	\rightarrow A \mid \beta \in \lambda\}$ is a non-$1$-trivial $1$-coherent family. Then 
	$\bb{T}(\Phi)$ is the following forcing notion:
	\begin{itemize}
		\item The conditions of $\bb{T}(\Phi)$ are all functions $t:\delta_t \rightarrow A$ such that 
		$\delta_t < \lambda$ and $t =^* \varphi_{\delta_t}$.
		\item If $t,r \in \bb{T}(\Phi)$ then $r \leq t$ if $\delta_r \geq \delta_t$ and 
		$r \restriction \delta_t = t$.
	\end{itemize}
\end{definition}

\begin{definition}
	Suppose that $n \geq 2$ and $\lambda$ is a regular uncountable cardinal  and $\Phi = \{\varphi_{\vec{\beta}}:
	\beta_0 \rightarrow A \mid \vec{\beta} \in [\lambda]^n\}$ is a non-$n$-trivial $n$-coherent family. 
	Then $\bb{T}(\Phi)$ is the following forcing notion:
	\begin{itemize}
		\item The conditions of $\bb{T}(\Phi)$ are all families $t = \{\psi^t_{\vec{\alpha}}:\alpha_0 \rightarrow A 
		\mid \vec{\alpha} \in [\delta_t]^{n-1}\}$ such that for all $\vec{\beta} \in [\delta_t]^n$ 
		\[
			\sum_{j=0}^{n-1}(-1)^j \psi^t_{\vec{\beta}^j} =^* \varphi_{\vec{\beta}}.
		\]
		\item If $t, r \in \bb{T}(\Phi)$ then $r \leq t$ if $\delta_r \geq \delta_t$ and $\psi^r_{\vec{\alpha}} = \psi^t_{\vec{\alpha}}$ for all 
		$\vec{\alpha} \in [\delta_t]^{n-1}$.
	\end{itemize}
\end{definition}

For $\bb{P}(n, \lambda, A)$-generic families of functions $\Phi=\cup\, G$, the forcing $\bb{T}(\Phi)$ is particularly well-behaved, in senses the following lemma makes precise. This lemma, a modification of arguments in Kunen's in \cite[\S 3]{kunen_saturated_ideals}, will play a major role in the next subsection.

\begin{lemma} \label{dense_closed_lemma}
	Suppose that $n$ is a positive integer, $A$ is an abelian group, and $\lambda \geq \aleph_n$ is a regular cardinal.
	Let $\bb{P} = \bb{P}(n, \lambda, A)$ and let $\dot{\Phi} = \{\dot{\varphi}_{\vec{\alpha}} \mid \vec{\alpha} \in [\lambda]^n\}$ 
	be a canonical $\bb{P}$-name for the union of the $\bb{P}$-generic filter. Let $\dot{\bb{T}}$ be 
	a canonical $\bb{P}$-name for $\bb{T}(\dot{\Phi})$. Then the following two statements hold.
	\begin{enumerate}
		\item $\bb{P} * \dot{\bb{T}}$ has a dense $\lambda$-directed closed subset of size $\lambda^{<\lambda}$.
		\item If $n = 1$ and $|A| < \lambda$, then $\dot{\bb{T}}$ is forced by the empty condition to have the 
			$\lambda$-cc in $V^{\bb{P}}$.
	\end{enumerate}
\end{lemma}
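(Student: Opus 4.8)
The plan is to treat the two assertions by quite different means. For (1) I would exhibit the dense $\lambda$-directed closed subset explicitly as the set $D$ of \emph{aligned} conditions $(p,\check{t})\in\bb{P}*\dot{\bb{T}}$: those for which $t$ is a genuine ground-model element of $\bb{T}(\Phi)$ with $\delta_t=\delta_p+1$ and for which the defining relations of $\bb{T}$ connecting $t$ to $\Phi$ hold outright in $V$ (they are decided by $p$, since $p$ determines $\dot\Phi\restriction(\delta_p+1)$, so that the relevant $=^*$ statements are statements about $V$-objects). The first thing to check is that $D$ is dense. Given any $(q,\dot t)$, I would run $\mathsf{Even}$'s winning strategy in $\Game_\lambda(\bb{P})$ from $q$ (Lemma~\ref{strategic_lemma}) to produce a descending sequence deciding in turn $\delta_{\dot t}$ and each of the fewer-than-$\lambda$ values of $\dot t$; since $\lambda$ is regular the heights stay below $\lambda$ and strategic closure supplies lower bounds at limits, so we reach a single $p'$ forcing $\dot t=\check{t_0}$ for some $t_0\in V$. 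Aligning the levels is then a matter of extending $t_0$ to a trivialization of $\dot\Phi\restriction(\delta_{p'}+1)$, which exists because every initial segment of an $n$-coherent family is $n$-trivial (Theorem~\ref{goblot}, cf.\ Example~\ref{elpmaxe}).

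For $\lambda$-directed closure of $D$, I would take a directed family $\{(p_\xi,\check{t_\xi}):\xi<\mu\}$ with $\mu<\lambda$; directedness forces the $p_\xi$ (and the $t_\xi$) to cohere on common domains, so their union is a well-defined approximation of height $\bar\delta=\sup_\xi\delta_{p_\xi}<\lambda$. The only real work is to define the top level at $\bar\delta$, and here I would reuse verbatim the $n$-coherent extension recipe from the proof of Lemma~\ref{strategic_lemma} (taking $\alpha_\xi=\min\{\delta_{p_\zeta}:\delta_{p_\zeta}>\xi\}$) for the $\bb{P}$-part, together with the triviality of $\dot\Phi\restriction(\bar\delta+1)$ for the $\bb{T}$-part; this produces a lower bound again lying in $D$. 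The cardinality bound is a routine count: a condition of $D$ is coded by a $\bb{P}$-condition of height $<\lambda$, and there are at most $\lambda^{<\lambda}$ of these once $|A|\le\lambda$, as holds in our intended applications.

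The substance of the lemma is (2). Here the key preliminary observation is that in $\bb{T}(\Phi)$, for $n=1$, two conditions are compatible if and only if they are comparable under end-extension; thus $\bb{T}(\Phi)$ is literally the tree forcing associated to the height-$\lambda$ tree $T(\Phi)=\{t:\delta\to A\mid \delta<\lambda,\ t=^*\varphi_\delta\}$ (as in Corollary~\ref{aronszajn}), and its antichains are exactly the antichains of $T(\Phi)$. Since $|A|<\lambda$ and $|\delta|<\lambda$, each level of $T(\Phi)$ has fewer than $\lambda$ nodes — a function $=^*\varphi_\delta$ is determined by its finite deviation from $\varphi_\delta$ — and the generic $\Phi$ has no cofinal branch by Lemma~\ref{nontrivial_generic_lemma}. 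Hence the $\lambda$-cc of $\bb{T}(\Phi)$ is equivalent to the assertion that $\bb{P}(1,\lambda,A)$ forces $T(\dot\Phi)$ to be $\lambda$-Suslin, i.e.\ to have no antichain of size $\lambda$.

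To prove this last statement I would argue by sealing, in the style of the cited Kunen construction. Given a name $\dot{\mathcal A}$ and $p\Vdash``\dot{\mathcal A}$ is a maximal antichain'', I would use $\mathsf{Even}$'s strategy to build a descending play together with a continuous increasing sequence $\langle\gamma_i:i<\lambda\rangle$ so that, at each successor step, the condition reached decides, for every one of the $<\lambda$ nodes of $T(\dot\Phi)$ below level $\gamma_i$, an element of $\dot{\mathcal A}$ comparable with it and of height $<\gamma_{i+1}$. The aim is to reach a club of levels $\gamma$ at which $\dot{\mathcal A}\restriction\gamma$ is already a maximal antichain of $T(\dot\Phi)\restriction\gamma$ through which every node of level $\ge\gamma$ must pass, forcing $\dot{\mathcal A}\subseteq T(\dot\Phi)\restriction\gamma$ and hence $|\dot{\mathcal A}|<\lambda$. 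I expect the main obstacle to be precisely the point where the naive argument breaks down: because coherence is only a \emph{mod finite} relation, two nodes at comparable levels agree only up to a finite error, so ensuring that a limit-level node genuinely \emph{end-extends} (rather than merely almost-agrees with) a previously sealed antichain element is delicate. Managing this will require combining the sealing with a $\Delta$-system and pressing-down analysis of the finite deviations, and exploiting the freedom in the generic construction of $\dot\Phi$ at limit stages to absorb the finite discrepancies into already-decided data.
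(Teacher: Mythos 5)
Your overall architecture matches the paper's (a dense set of ground-model pairs $(p,\check{t})$ for (1), and a Kunen-style sealing argument over $V$ for (2)), but in both parts the execution has a genuine gap at exactly the crux point. For (1), the density argument breaks at the step ``extending $t_0$ to a trivialization of $\dot\Phi\restriction(\delta_{p'}+1)$, which exists because every initial segment of an $n$-coherent family is $n$-trivial.'' Theorem \ref{goblot} gives the existence of \emph{some} $n$-trivialization of $p'$, not one \emph{end-extending} $t_0$, and in general none exists: take $n=2$, let $\Phi'=\{\varphi'_\alpha\mid\alpha<\omega_1\}$ be nontrivial coherent, and let $p'$ have $\delta_{p'}=\omega_1$ with $\varphi^{p'}_{\alpha\beta}=0$ for $\beta<\omega_1$ and $\varphi^{p'}_{\alpha\omega_1}=\varphi'_\alpha$; then $t_0=\{\psi_\alpha=0\mid\alpha<\omega_1\}$ witnesses that $(p',\check{t}_0)$ is a condition, but any extension of $t_0$ trivializing $p'$ would require a $\psi_{\omega_1}$ with $\psi_{\omega_1}\restriction\alpha=^*\varphi'_\alpha$ for all $\alpha$, i.e.\ a trivialization of $\Phi'$. (This non-extendability is precisely why the lemma inside the proof of Theorem \ref{goblot} carries cofinality hypotheses.) The paper's fix is not to thread $t_0$ through the existing levels of $p'$ but to add \emph{one new top level} to $p'$, defined piecewise so that it equals $(-1)^{n+1}\psi^{t_0}_{\vec\alpha}$ on $[\delta_{t_0}]^{n-1}$, and to read the new trivialization off from that top level; its dense set $\bb{U}$ accordingly demands the \emph{exact} equality $\varphi_{\vec\alpha\delta_p}=(-1)^{n+1}\psi^t_{\vec\alpha}$, not merely (as in your $D$) that the mod-finite defining relations of $\bb{T}$ hold in $V$. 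That exact tie is also what makes limits work: your plan to reuse the extension recipe of Lemma \ref{strategic_lemma} verbatim fails because its verification needs the exact cocycle identities $(\dagger)$, which conditions of your $D$ need not satisfy; the correct limit move is to define the new top level from $\bigcup_\xi t_\xi$ --- which is again exactly the paper's alignment.

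For (2), your reduction to sealing and your diagnosis of the obstacle (mod-finite coherence means a node above a sealed level need not literally end-extend any sealed antichain element) are both correct, but that obstacle is the entire content of the proof, and you leave it unresolved; moreover the tools you propose for it are the wrong ones. The paper's mechanism is: enumerate in advance all finitely supported $f:\lambda\to A$; build a single descending sequence $\langle p_\alpha\rangle$ whose top levels agree \emph{exactly} on common domains; at stage $\alpha+1$, seal an element $t^*_\alpha$ of $\dot{X}$ comparable with $\varphi^{p_\alpha}_{\delta_{p_\alpha}}+f_\alpha$, and then \emph{steer} the generic by requiring the next top level to agree with $t^*_\alpha$ on $[\delta_{p_\alpha},\delta^*_\alpha)$. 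Steering is indispensable: in your block-sealing variant, the sealed element comparable with $t\restriction\delta$ may be a proper extension that later diverges from $t$, so without forcing the generic branch to pass through each sealed element, a node of height $\geq\beta$ need not be comparable with anything sealed below $\beta$. The club argument then pins down a level $\beta$ with $\delta^*_\beta=\beta$ at which every finitely supported deviation with support in $\beta$ has already been handled, whence $p^*_\beta$ forces every element of $\dot{X}$ to have domain inside $\beta$. By contrast, a $\Delta$-system and pressing-down analysis of the finite deviations, carried out in $V[G]$ on an arbitrary antichain, cannot succeed: for an \emph{arbitrary} nontrivial coherent $\Phi$ the poset $\bb{T}(\Phi)$ can fail the $\lambda$-cc badly (the family's own functions $\{\varphi_\delta\mid\delta<\lambda\}$ typically form an antichain of size $\lambda$, since coherence is only mod finite); the $\lambda$-cc is a special feature of the \emph{generic} $\Phi$, provable only by exploiting the $\bb{P}$-name as above.
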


\begin{proof}
	(1) Assume for ease of notation that $n \geq 2$. The case in which $n=1$ is similar and easier.
	Let $\bb{U}$ be the set of conditions $(p, \dot{t}) \in \bb{P} * \dot{\bb{T}}$ such that
	\begin{itemize}
		\item there is a $t \in \mathrm{V}$ such that $\dot{t} = \check{t}$;
		\item $\delta_p = \delta_t$;
		\item $\varphi_{\vec{\alpha}\delta_p} = (-1)^{n+1}\psi^t_{\vec{\alpha}}$ for all $\vec{\alpha} \in [\delta_p]^{n-1}$.
	\end{itemize}
	Clearly $|\bb{U}| = \lambda^{<\lambda}$.
	We claim that $\bb{U}$ is a dense $\lambda$-closed subset of $\bb{P} * \dot{\bb{T}}$. We first argue its 
	density in $\bb{P} * \dot{\bb{T}}$. To that end, fix $(p_0, \dot{t}_0) \in \bb{P} * \dot{\bb{T}}$. Since $\bb{P}$ is $\lambda$-strategically 
	closed, we can assume by extending $p_0$ if necessary that there is 
	a $t_0 \in \mathrm{V}$ such that $p_0 \Vdash_{\bb{P}}``\dot{t}_0 = \check{t}_0"$ and also that $\delta_{p_0} > \delta_{t_0}$.
	We then extend $p_0$ to a condition $p$ with $\delta_p = \delta_{p_0} + 1$; as in the proof of Lemma \ref{strategic_lemma} it suffices 
	to specify $\varphi^p_{\vec{\alpha}\delta_p}$ for all $\vec{\alpha} \in [\delta_p]^{n-1}$. This task breaks into three cases:

	\textbf{Case 1: $\vec{\alpha} \in [\delta_{t_0}]^{n-1}$.} In this case, let $\varphi^p_{\vec{\alpha}\delta_p} 
	= (-1)^{n+1} \psi^t_{\vec{\alpha}}$.

	\textbf{Case 2: $\vec{\alpha} \in [\delta_{p_0}]^{n-1}$ and $\alpha_{n-2} \geq \delta_{t_0}$.} In this case, 
	let $\varphi^p_{\vec{\alpha}\delta_p} = \varphi^{p_0}_{\vec{\alpha}\delta_{p_0}}$.

	\textbf{Case 3: $\alpha_{n-2} = \delta_{p_0}$.} In this case, let $\varphi^p_{\vec{\alpha}\delta_p} = 0$.

	Using the fact that $(p_0, \dot{t}_0) \in \bb{P} * \dot{\bb{T}}$ and hence that 
	$\{(-1)^{n+1} \psi^t_{\vec{\alpha}} \mid \vec{\alpha} \in [\delta_{t_0}]^{n-1}\}$ $n$-trivializes 
	$\{\varphi^{p_0}_{\vec{\beta}} \mid \vec{\beta} \in [\delta_{t_0}]^n\}$, it is straightforward to 
	verify that $p$ is $n$-coherent. Define 
	$t = \{\psi^t_{\vec{\alpha}} \mid \vec{\alpha} \in [\delta_p]^{n-1}\}$ by letting 
	$\psi^t_{\vec{\alpha}} = (-1)^{n+1} \varphi^p_{\vec{\alpha}\delta_p}$ for all 
	$\vec{\alpha} \in [\delta_p]^{n-1}$, and let $\dot{t} = \check{t}$. It is now routine to 
	verify that $(p, \dot{t})$ extends $(p_0, \dot{t}_0)$ and is in $\bb{U}$. It follows that 
	$\bb{U}$ is dense in $\bb{P} * \dot{\bb{T}}$.

	We now show that $\bb{U}$ is $\lambda$-directed closed. First note that $\bb{U}$ is \emph{tree-like}, 
	i.e., if $u,v,w \in \bb{U}$ and $w \leq_{\bb{U}} u,v$, then $u$ and $v$ are $\leq_\bb{U}$-comparable. 
	It thus suffices to show that $\bb{U}$ is $\lambda$-closed. To this end, let $\eta < \lambda$ 
	be a limit ordinal and let $\langle (p_\xi, \dot{t}_\xi) \mid \xi < \eta \rangle$ be a 
	strictly decreasing sequence of conditions in $\bb{U}$. Let $\delta^* = \sup\{\delta_{p_\xi} \mid 
	\xi < \eta\}$. We will define a condition $p^* \in \bb{P}$ with $\delta_{p^*} = \delta^*$ 
	such that $p^*$ is a lower bound for $\langle p_\xi \mid \xi < \eta \rangle$ by defining, as usual, $\varphi^{p^*}_{\vec{\alpha}\delta^*}$ for all 
	$\vec{\alpha} \in [\delta^*]^{n-1}$. To this end, fix such a $\vec{\alpha}$ and a $\xi < \eta$ such that $\alpha_{n-2} < \delta_\xi$ and let 
	$\varphi^{p^*}_{\vec{\alpha}\delta^*} = \varphi^{p_\xi}_{\vec{\alpha}\delta_\xi}$. (Note that this 
	definition is independent of our choice of $\xi$.) Finally, let $t^* = \bigcup_{\xi < \eta} 
	t_\xi$ and let $\dot{t}^* = \check{t}^*$. The condition $(p^*, \dot{t}^*)$ is a lower bound for 
	$\langle (p_\xi, \dot{t}_\xi) \mid \xi < \eta \rangle$ in $\bb{U}$, hence 
	$\bb{U}$ is $\lambda$-closed.

	(2) Consider now the case of $n = 1$ and $|A|<\lambda$. Fix a condition $p \in \bb{P}$ and a $\bb{P}$-name $\dot{X}$ 
	for a maximal antichain in $\dot{\bb{T}}$. We will extend $p$ to a condition $p^*$ such that $p^* \Vdash_{\bb{P}}``|\dot{X}| < \check{\lambda}"$.
	Enumerate the collection of all finitely-supported functions from $\lambda$ to $A$ as $\langle f_\alpha \mid \alpha < \lambda \rangle$. 
	This collection guides the recursively construction of a strictly decreasing sequence of conditions $\langle p_\alpha \mid \alpha < \lambda \rangle$ in 
	$\bb{P}$ as follows. Throughout the construction, we maintain the following two hypotheses:
	\begin{itemize}
		\item $\mathrm{supp}(f_\alpha) \subseteq \delta_{p_\alpha}$ for all $\alpha < \lambda$, and
	 	\item $\varphi^{p_\beta}_{\delta_{p_\beta}} \restriction \delta_{p_\alpha} 
	 	= \varphi^{p_\alpha}_{\delta_{p_\alpha}}$ for all $\alpha < \beta < \lambda$. 
	\end{itemize}
	To begin, let $p_0$ be any extension of $p$ such that $\mathrm{supp}(f_0) \subseteq \delta_{p_0}$. Next, at any successor stage $\alpha+1<\lambda$ of the construction, $p_\alpha$ has been defined; let $t_\alpha$ denote the function $\varphi^{p_\alpha}_{\delta_{p_\alpha}} + f_\alpha$. Observe that 
	$p_\alpha \Vdash_{\bb{P}}``\check{t_\alpha} \in \dot{\bb{T}}"$. Since $p_\alpha$ also forces 
	that $\dot{X}$ is a maximal antichain in $\dot{\bb{T}}$, there exists a condition $p^*_{\alpha + 1} \leq_{\bb{P}} p_\alpha$
	and a function $t^*_\alpha:\delta^*_\alpha \rightarrow A$ such that 
	\begin{itemize}
	  \item either $t^*_\alpha$ end-extends $t_\alpha$ or vice versa, and
	  \item $p^*_{\alpha + 1} \Vdash_{\bb{P}}``\check{t}^*_\alpha \in \dot{X}"$.
	\end{itemize}
	Now let $p_{\alpha + 1}$ be any extension of $p^*_{\alpha + 1}$ such that
	\begin{itemize}
		\item $(\mathrm{supp}(f_{\alpha + 1}) \cup \delta^*_\alpha) \subseteq \delta_{p_{\alpha + 1}}$;
		\item $\varphi^{p_{\alpha + 1}}_{\delta_{p_{\alpha + 1}}} \restriction \delta_{p_\alpha} = \varphi^{p_\alpha}_{\delta_{p_\alpha}}$;
		\item if $\delta^*_\alpha > \delta_{p_\alpha}$, then $\varphi^{p_{\alpha + 1}}_{\delta_{p_{\alpha + 1}}} 
			\restriction [\delta_{p_\alpha}, \delta^*_\alpha) = t^*_\alpha \restriction [\delta_{p_\alpha}, \delta^*_\alpha)$.
	\end{itemize}
	The essential point here is that $(\varphi^{p_{\alpha + 1}}_{\delta_{p_{\alpha + 1}}} + f_\alpha) 
	\restriction \delta^*_\alpha = t^*_\alpha$.

	Finally, suppose that $\beta < \lambda$ is a limit ordinal and that $\langle p_\alpha \mid \alpha < \beta \rangle$ 
	has been constructed. Let $\delta^*_\beta = \sup\{\delta_{p_\alpha} \mid \alpha < \beta\}$. By our hypotheses, letting $\varphi^{p^*_\beta}_{\delta^*_\beta} = \bigcup_{\alpha < \beta} \varphi^{p_\alpha}_{\delta_{p_\alpha}}$ defines a lower bound $p_\beta^*$ 
	for $\langle p_\alpha \mid \alpha < \beta \rangle$, for which $\delta_{p^*_\beta} = \delta^*_\beta$.
	Let $p_\beta$ be any extension of $p^*_\beta$ for which $\mathrm{supp}(f_\beta) \subseteq \delta_{p_\beta}$ 
	and $\varphi^{p_\beta}_{\delta_{p_\beta}} \restriction \delta^*_\beta = \varphi^{p^*_\beta}_{\delta^*_\beta}$.

	Observe that the ordinals $\{\delta_\beta^* \mid \beta \in \mathrm{Lim}(\lambda)\}$ of the above construction form a club subset of $\lambda$. Therefore there exists a 
	$\beta \in \mathrm{Lim}(\lambda)\}$ such that
	\begin{itemize}
		\item $\delta_\beta^* = \beta$;
		\item $\{f_\alpha \mid \alpha < \beta\}$ enumerates all finitely supported functions from 
			$\lambda$ to $A$ with support contained in $\beta$.
	\end{itemize}
	We claim that the condition $p_\beta^*$ defined at stage $\beta$ of the above construction is the $p^*\leq p$ we had desired. In particular, we claim that $p_\beta^*$ forces every element 
	of $\dot{X}$ to have domain contained in $\beta$. Since every 
	element of $\dot{X}$ must then be a restriction of a finite modification of $\varphi^{p_\beta^*}_\beta$, 
	the condition $p_\beta^*$ will force $``|\dot{X}| \leq \max\{|\beta|, |A|\} < \check{\lambda}$"; this will conclude the proof.

	To see this, suppose towards contradiction that there exists a $q \leq_{\bb{P}} p_\beta^*$ and 
	$t$ such that $\mathrm{dom}(t) \nsubseteq \beta$ and $q \Vdash_{\bb{P}}``\check{t} \in \dot{X}"$.
	Let $g = t \!\restriction \!\beta - \varphi^{p^*_\beta}_\beta$, and let $f:\lambda \rightarrow A$ be the trivial extension of 
	$g$, so that $\mathrm{supp}(f) = \mathrm{supp}(g)$. Since the finite support of $f$ is contained in $\beta$, there exists some $\alpha < \beta$ for which $f = f_\alpha$. Consider now 
	the function $t_\alpha^*$ constructed at stage $\alpha + 1$ of the construction. Unwinding 
	definitions, we find both that $t_\alpha^* = t\! \restriction\! \delta_\alpha^*$ and that 
	$p_\beta^* \Vdash_{\bb{P}}``\dot{X} \text{ is an antichain and } \check{t}, \check{t_\alpha^*} \in \dot{X}"$. This is the desired contradiction.
\end{proof}

\subsection{Separating compactness principles}\label{II.5}

We conclude this section with two results separating the assertion that $\chm^1(\lambda, A) = 0$ 
for all abelian groups $A$ from other compactness principles at $\lambda$. We first show that, 
although by Theorem \ref{todorcevicsquare} the aforementioned assertion implies the failure of $\square(\lambda)$, it does not imply the failure of the weaker principle $\square(\lambda, 2)$. 
We then show that it is not implied by stationary reflection at $\lambda$. Since the proofs of 
these results are minor variants of proofs appearing elsewhere, we mainly sketch their arguments, providing references for readers wanting more detail. We begin with the relevant definitions.

\begin{definition}
	Suppose that $\lambda$ is a regular uncountable cardinal and $\kappa$ is a cardinal with $1 \leq \kappa < \lambda$. 
	The principle $\square(\lambda, \kappa)$ 
	asserts the existence of a sequence $\vec{\mathcal{C}} = \langle \mathcal{C}_\alpha \mid \alpha < \lambda \rangle$ 
	such that 
	\begin{enumerate}
		\item for every limit ordinal $\alpha < \lambda$, $\mathcal{C}_\alpha$ is a collection of 
		clubs in $\alpha$ such that $1 \leq |\mathcal{C}_\alpha| \leq \kappa$;
		\item for all limit ordinals $\alpha < \beta < \lambda$ and all $C \in \mathcal{C}_\beta$, 
		if $\alpha \in \mathrm{Lim}(C)$, then $C \cap \alpha \in \mathcal{C}_\alpha$;
		\item there is no club $D$ in $\lambda$ such that 
		$D \cap \alpha \in \mathcal{C}_\alpha$ for every $\alpha \in \mathrm{Lim}(D)$.
	\end{enumerate}
\end{definition}

Note that $\square(\lambda) = \square(\lambda, 1)$ and that if $1 \leq \kappa_0 < \kappa_1 < \lambda$ then $\square(\lambda, \kappa_0)$ implies 
$\square(\lambda, \kappa_1)$. Though the weakening of $\square(\lambda)$ to $\square(\lambda, 2)$ may appear innocuous, we will see momentarily that these two principles carry significantly different implications for the vanishing of $\chm^1(\lambda, \mathcal{A}_d)$.

\begin{definition}
	Suppose that $\lambda$ is a regular uncountable cardinal, $S$ is a stationary subset of $\lambda$, 
	and $\beta < \lambda$ has uncountable cofinality.
	We say that $S$ \emph{reflects at $\beta$} if $S \cap \beta$ is stationary in $\beta$. We simply say 
	that $S$ \emph{reflects} if there is $\beta < \lambda$ of uncountable cofinality such that 
	$S$ reflects at $\beta$.
\end{definition}

The failure of $\square(\lambda)$, the failure of $\square(\lambda, 2)$, the assertion that 
every stationary subset of $\lambda$ reflects, and the triviality of $\chm^1(\lambda, A)$ 
are clearly all assertions of compactness at $\lambda$. The point of our last two results 
is to show that these are all essentially different principles. 

\begin{theorem} \label{square_separation_thm}
	Suppose that $\lambda$ is weakly compact. Then there is a forcing extension in which
	\begin{enumerate}
		\item $\lambda$ remains inaccessible;
		\item $\square(\lambda, 2)$ holds;
		\item $\chm^1(\lambda, \mathcal{A}_d) = 0$ for every abelian group $A$.
	\end{enumerate}
\end{theorem}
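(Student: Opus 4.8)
The plan is to force with the natural poset $\mathbb{S}$ that adds a $\square(\lambda,2)$-sequence by approximations: conditions are sequences $p = \langle \mathcal{C}^p_\alpha \mid \alpha \leq \gamma_p \rangle$ with $\gamma_p < \lambda$ satisfying clauses (1)--(2) of the definition of $\square(\lambda,2)$ below $\gamma_p$, ordered by end-extension. First I would dispatch the routine structural facts. The forcing $\mathbb{S}$ is $(\mu+1)$-strategically closed for every $\mu < \lambda$: $\mathsf{Even}$'s strategy is to thread her own previously played top-clubs through each limit stage of the game, and the freedom to place a \emph{second} club in $\mathcal{C}_\alpha$ is exactly what gives her room to do this consistently with clause (2). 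Since $\lambda$ is inaccessible, $|\mathbb{S}| = \lambda^{<\lambda} = \lambda$, so $\mathbb{S}$ adds no new bounded subsets of $\lambda$ and preserves all cardinals and cofinalities; hence $\lambda$ remains inaccessible, giving (1). That the generic $\vec{\mathcal{C}} = \bigcup G$ is a genuine $\square(\lambda,2)$-sequence --- in particular that clause (3), non-threadability, holds --- follows from a standard density argument diagonalizing against any name $\dot D$ for a thread, just as for the usual square forcings. This gives (2).

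The crux is (3). Since any coherent $\Phi = \{\varphi_\alpha : \alpha \to A \mid \alpha < \lambda\}$ realizes at most $\lambda$ values, I may assume $|A| \leq \lambda$, and by Theorem~\ref{combicasting} it suffices to show that $\mathbb{S}$ forces every such $\Phi$ to be trivial. For the subcase $|A| < \lambda$ I would reduce to the tree property: reasoning as in Corollary~\ref{aronszajn}, a nontrivial coherent family of functions into a group of size $<\lambda$ gives (under inaccessibility of $\lambda$) a tree $T(\Phi) = \{\varphi_\beta\!\restriction\!\alpha \mid \alpha \leq \beta < \lambda\}$ of height $\lambda$ with levels of size $<\lambda$ whose cofinal branches are exactly the trivializations of $\Phi$. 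Thus it is enough to show that $\mathbb{S}$ preserves the tree property at $\lambda$, and I would prove this by a branch/reflection lemma: if some $\lambda$-tree acquired a new cofinal branch in $V[G]$, then a suitable factorization of $\mathbb{S}$ together with the $\Pi^1_1$-indescribability of $\lambda$ (reflecting the tree, the relevant condition, and the putative branch to a level $\alpha < \lambda$) would contradict weak compactness, exactly in the spirit of Theorem~\ref{nowklycpt}. The remaining case $|A| = \lambda$ would be handled by the sharper reflection argument of Remark~\ref{weakly_compact_remark}, in which it is the $(\Phi\cap V_\alpha)$-trivializing \emph{function} itself that is reflected and shown to take values in $A \cap V_\alpha$ for all but finitely many arguments.

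The main obstacle is precisely this preservation step, and it is here that the distinction between $\square(\lambda)$ and $\square(\lambda,2)$ is essential and must be respected: by Theorem~\ref{todorcevicsquare} a single-club sequence $\square(\lambda) = \square(\lambda,1)$ \emph{directly codes} a nontrivial coherent family (via $\rho_2$), so no forcing adding $\square(\lambda)$ could possibly preserve $\chm^1(\lambda,\mathcal{A}_d) = 0$; the branch lemma must therefore genuinely exploit the two-club slack, which is what prevents $\vec{\mathcal{C}}$ from assembling such a coding. Concretely, the danger is that a naive lift of a weakly compact embedding $j : M \to N$ through $\mathbb{S}$ fails, since a master condition would require a thread through the generic $\vec{\mathcal{C}}$ lying in $N$, which the genericity of $\vec{\mathcal{C}}$ rules out; so the preservation argument cannot proceed by lifting and must instead be a direct reflection/branch analysis, with the second club supplying the needed room. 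Once the preservation is in hand, Corollary~\ref{countably_closed_cor} and the closure of $\mathbb{S}$ guarantee that no nontrivial coherent family from an intermediate model can reappear, and combining the two subcases yields $\chm^1(\lambda,\mathcal{A}_d) = 0$ for every abelian group $A$, establishing (3).
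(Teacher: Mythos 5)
Your opening moves (forcing $\square(\lambda,2)$ by initial segments, the strategic-closure and density arguments for conclusions (1) and (2)) match the paper's, but your argument for conclusion (3) has a genuine gap, located exactly where you write that the preservation ``must instead be a direct reflection/branch analysis, with the second club supplying the needed room.'' Three problems. First, your branch lemma is aimed in the wrong direction: preserving the tree property is not about old trees failing to \emph{acquire} branches (new branches only help the tree property); it is about every $\lambda$-tree of $V[G]$ \emph{having} a branch in $V[G]$. Since $\lambda$ is no longer weakly compact in $V[G]$ (it carries a $\square(\lambda,2)$-sequence, which already refutes the simultaneous stationary reflection that weak compactness implies), and since, as you yourself observe, $\mathbb{S}$ admits no master conditions and does not factor at $\alpha < \lambda$ in any way friendly to reflection, your sketch provides no mechanism whatsoever for producing such branches. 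Second, the general principle your plan would need --- that a forcing whose square preserves cofinalities cannot add cofinal branches to $\lambda$-trees --- is simply false: consistently there is a $\lambda$-Souslin tree $T$ whose square is $\lambda$-distributive, and forcing with $T$ adds a branch to $T$. What makes the analogous statement for coherent families true (Lemma~\ref{one_branch_lemma}) is the special $=^*$-rigidity of trivializations, not any general fact about trees; so routing through the tree property is both unsupported and strictly harder than the problem at hand. Third, even granting tree-property preservation, the tree $T(\Phi)$ has small levels only when $|A| < \lambda$, and your appeal to Remark~\ref{weakly_compact_remark} for $|A| = \lambda$ is illegitimate: that is a $\Pi^1_1$-indescribability argument and must be run at a cardinal that actually \emph{is} weakly compact, which $\lambda$ is not in $V[G]$.

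The paper's proof supplies precisely the two ingredients your outline is missing. (a) A preliminary preparation (Silver) makes the weak compactness of $\lambda$ indestructible under $\lambda$-directed closed forcings of size $\leq \lambda$. (b) By results of \cite{hayut_lh}, the square forcing $\mathbb{P}$ carries a threading poset $\dot{\mathbb{T}}$ such that both $\mathbb{P} * \dot{\mathbb{T}}$ and $\mathbb{P} * \dot{\mathbb{T}}^2$ have dense $\lambda$-directed closed subsets of size $\lambda$. Given a putative nontrivial coherent $\Phi$ in $V[G]$, one forces with $\mathbb{T}$: by (a) and (b) this \emph{resurrects} the weak compactness of $\lambda$, so $\Phi$ becomes trivial in $V[G*H]$ by Theorem~\ref{nowklycpt} and Remark~\ref{weakly_compact_remark}, applied there legitimately. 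On the other hand, since $\mathbb{P} * \dot{\mathbb{T}}^2$ is densely $\lambda$-directed closed, $\mathbb{T} \times \mathbb{T}$ preserves $\cf(\lambda) > \omega$ over $V[G]$, so Lemma~\ref{one_branch_lemma} shows $\Phi$ remains \emph{non}trivial after forcing with $\mathbb{T}$ --- a contradiction, whence no such $\Phi$ existed. Note that this resolves, rather than sidesteps, the master-condition obstruction you identified (one recovers compactness only \emph{after} threading), and the ``two-club slack'' you rightly flag as essential is spent at one precise point: it is what makes $\mathbb{P} * \dot{\mathbb{T}}^2$, and not merely $\mathbb{P} * \dot{\mathbb{T}}$, well-behaved --- exactly what fails for $\square(\lambda) = \square(\lambda,1)$, consistently with Theorem~\ref{todorcevicsquare}. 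Finally, your closing appeal to Corollary~\ref{countably_closed_cor} is inverted: that corollary \emph{preserves nontriviality}, which in the correct argument is the engine of the contradiction, not a tool for preventing trivializing families from ``reappearing.''
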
 

\begin{proof}
	By way of a preliminary forcing, we may assume that the weak compactness of 
	$\lambda$ is indestructible by any forcing $\mathbb{P}$ such that $\mathbb{P}$ is $\lambda$-directed closed and $|\mathbb{P}|\leq\lambda$ (this observation is due to Silver; see \cite[Example 16.2]{cummings_handbook} for 
	details). Let $\bb{P}$ be the standard forcing to add a $\square(\lambda, 2)$-sequence 
	by initial segments (see \cite[Definition 3.6]{hayut_lh}) and let 
	$G$ be $\bb{P}$-generic over $V$. The following is shown in \cite[Lemma 3.2 and Corollary 3.2]{hayut_lh}:
	\begin{enumerate}
		\item[(i)] $\square(\lambda, 2)$ holds in $V[G]$;
		\item[(ii)] There is a $\bb{P}$-name $\dot{\bb{T}}$ for a poset such that, in $V$, 
		both $\bb{P} * \dot{\bb{T}}$ and $\bb{P} * \dot{\bb{T}}^2$ have dense 
		$\lambda$-directed closed subsets of size $\lambda$.
	\end{enumerate}
	$V[G]$ will be our desired model. By item (ii) above, forcing with $\bb{P}$ preserves 
	all cofinalities less than or equal to $\lambda$, so $\lambda$ remains inaccessible in 
	$V[G]$. Hence conclusions (1) and (2) of the theorem are clear. 

	It remains to show that $\chm^1(\lambda, \mathcal{A}_d) = 0$ for every abelian group $A$ in $V[G]$.
	Suppose to the contrary that
	$\Phi$ is 
	a height-$\lambda$ $A$-valued nontrivial coherent family in $V[G]$. By item (ii) together with our 
	indestructibility assumptions, forcing over $V[G]$ with either $\bb{T}$ or $\bb{T} \times \bb{T}$ resurrects the weak compactness of $\lambda$. Hence by Theorem~\ref{nowklycpt} 
	and Remark~\ref{weakly_compact_remark}, $\Phi$ becomes trivial after forcing with $\bb{T}$. However, 
	since $\bb{T} \times \bb{T}$ preserves the uncountable cofinality of $\lambda$,
	Lemma~\ref{one_branch_lemma} implies that $\Phi$ remains nontrivial after 
	forcing with $\bb{T}$. This contradiction completes the proof.
\end{proof}

\begin{theorem}\label{stationaryreflectiontheorem}
	Suppose that $\lambda$ is weakly compact. Then there is a forcing extension in which 
	\begin{enumerate}
		\item $\lambda$ remains inaccessible;
		\item $\chm^1(\lambda, \mathcal{A}_d) \neq 0$ for every abelian group $A$;
		\item for every collection $\bar{S} = \{S_\eta \mid \eta < \lambda\}$ of stationary subsets of 
		$\lambda$, the set 
		\[
			T_{\bar{S}} = \{\beta < \lambda \mid \mathrm{cf}(\beta) > \omega \text{ and, for all }\alpha < \beta, ~ 
			S_\alpha \text{ reflects at } \beta\}
		\]
		is stationary in $\lambda$;
		\item $\square(\lambda, \kappa)$ fails for all $\kappa < \lambda$.
	\end{enumerate}
\end{theorem}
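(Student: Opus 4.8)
The plan is to mimic the strategy of Theorem \ref{square_separation_thm}, but now \emph{adding} a nontrivial coherent family rather than trivializing one. First I would pass, via a standard Silver-style preparation, to a model in which $\lambda$ is weakly compact with this weak compactness indestructible under $\lambda$-directed closed forcing of size $\leq\lambda$; as $\lambda$ remains inaccessible, $\lambda^{<\lambda}=\lambda$. Working now in this model $V$, fix the finite group $A_0=\mathbb{Z}/2\mathbb{Z}$ and force with $\bb{P}=\bb{P}(1,\lambda,A_0)$; let $G$ be generic and $\Phi=\bigcup G$. By Lemma \ref{strategic_lemma}, $\bb{P}$ is $\lambda$-strategically closed and of size $\lambda$, so it adds no bounded subsets of $\lambda$ and preserves the inaccessibility of $\lambda$, giving conclusion (1). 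By Lemma \ref{nontrivial_generic_lemma} (applied with $\kappa=\omega$, using that $A_0$ carries a non-$0$-trivial $0$-coherent function $\omega\to A_0$), $\Phi$ is nontrivial coherent in $V[G]$; composing the $\{0,1\}$-valued $\Phi$ with the map $e\colon 0\mapsto 0,\,1\mapsto a$ for any nonzero $a$ in an arbitrary nontrivial abelian group $A$ yields a nontrivial coherent $A$-valued family (postcomposition preserves coherence, and a trivialization of the composite would, via $e^{-1}$, trivialize $\Phi$). With Lemma \ref{apply} and Theorem \ref{combicasting} this yields conclusion (2).

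The two remaining conclusions both flow from the interaction, recorded in Lemma \ref{dense_closed_lemma}, between the trivializing forcing $\bb{T}=\bb{T}(\Phi)$ and $\bb{P}$. The two facts I would lean on are: (a) $\bb{P}*\dot{\bb{T}}$ has a dense $\lambda$-directed closed subset of size $\lambda^{<\lambda}=\lambda$, so by our indestructibility assumption $\lambda$ is again weakly compact after forcing with $\bb{P}*\dot{\bb{T}}$; and (b) since $n=1$ and $|A_0|<\lambda$, $\bb{T}$ has the $\lambda$-cc in $V[G]$. Moreover, because $\bb{P}*\dot{\bb{T}}$ and $\bb{P}$ each add no bounded subsets of $\lambda$, the quotient $\bb{T}$ over $V[G]$ adds none either; in particular $\bb{T}$ is $<\lambda$-distributive, so for $\beta<\lambda$ the notions $\cf(\beta)$ and ``$S_\alpha\cap\beta$ is stationary in $\beta$'' are absolute between $V[G]$ and its $\bb{T}$-extension $V[G][H]$.

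For conclusion (3), fix a collection $\bar S=\{S_\eta\mid\eta<\lambda\}$ of stationary subsets of $\lambda$ in $V[G]$ and force with $\bb{T}$ to obtain $V[G][H]$. By (b) each $S_\eta$ remains stationary in $V[G][H]$, and by (a) $\lambda$ is weakly compact there; the usual $\Pi^1_1$-reflection argument at a weakly compact cardinal then shows that $T_{\bar S}$, computed in $V[G][H]$, is stationary. Since $T_{\bar S}$ is defined through the bounded, hence absolute, notions above, it is the same set in $V[G]$ and in $V[G][H]$, and as every club of $V[G]$ is a club of $V[G][H]$, stationarity of $T_{\bar S}$ descends to $V[G]$. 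For conclusion (4), suppose toward a contradiction that $\vec{\mathcal C}$ is a $\square(\lambda,\kappa)$-sequence in $V[G]$ for some $\kappa<\lambda$, and again force with $\bb{T}$. The key preservation fact—that a $\lambda$-cc forcing adds no thread through a $\square(\lambda,\kappa)$-sequence with $\kappa<\lambda$, so that $\vec{\mathcal C}$ remains a genuine $\square(\lambda,\kappa)$-sequence in $V[G][H]$—then collides with the fact that a weakly compact cardinal carries no $\square(\lambda,\kappa)$-sequence for $\kappa<\lambda$. This contradiction shows $\square(\lambda,\kappa)$ fails in $V[G]$ for every $\kappa<\lambda$.

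The step I expect to demand real care is the preservation fact underlying conclusion (4): one must verify that $\bb{T}$ genuinely preserves $\square(\lambda,\kappa)$-sequences. The cleanest route is the branch-preservation lemma for trees of height $\lambda$ and width $<\lambda$, which in its sharpest form wants $\bb{T}\times\bb{T}$ (not merely $\bb{T}$) to have the $\lambda$-cc, equivalently that $\bb{P}*\dot{\bb{T}}^2$ be suitably closed—exactly what was arranged in \cite{hayut_lh} for the analogous square-threading posets. Establishing the corresponding productive chain condition (or $\lambda$-Knasterness) for the cohomological $\bb{T}(\Phi)$, against the mod-finite ``coherence errors'' that defeat a naive $\Delta$-system argument, is the one genuinely nonroutine point; the remaining verifications are adaptations of \cite{hayut_lh} together with the arguments of Section \ref{II.1}.
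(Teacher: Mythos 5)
Your handling of conclusions (1)--(3) is correct and is essentially the paper's own argument: the same Silver-style preparation, the same poset $\bb{P}(1,\lambda,\bb{Z}_2)$, and the same use of Lemma \ref{dense_closed_lemma} --- the dense $\lambda$-directed closed subset of $\bb{P}*\dot{\bb{T}}$ to resurrect weak compactness, the $\lambda$-cc of $\bb{T}$ to preserve the stationarity of each $S_\eta$ --- followed by $\Pi^1_1$-reflection in $V[G][H]$ and downward absoluteness of stationarity (your absoluteness argument via ``no new bounded subsets of $\lambda$'' is the same as the paper's observation that $(V_\lambda)^{V[G]}=(V_\lambda)^{V[G*H]}$).

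Conclusion (4), however, contains a genuine gap, and the repair you sketch is provably unavailable. You propose to show that $\bb{T}=\bb{T}(\Phi)$ adds no thread through a $\square(\lambda,\kappa)$-sequence via a branch-preservation lemma whose hypothesis is that $\bb{T}\times\bb{T}$ is $\lambda$-cc (equivalently, that $\bb{P}*\dot{\bb{T}}^2$ is suitably closed, as for the threading posets of \cite{hayut_lh}). But for the cohomological trivializing forcing this hypothesis is false, and badly so. Since every condition of $\bb{T}$ forces that $\bigcup H$ trivializes $\Phi$, while $\Phi$ is nontrivial in $V[G]$, the contrapositive of Lemma \ref{one_branch_lemma} shows that $\bb{T}\times\bb{T}$ cannot force $\cf(\lambda)>\omega$: below some condition the product collapses the cofinality of $\lambda$ to $\omega$. (Directly: the two generic trivializations $\psi_1,\psi_2$ differ on an infinite set by mutual genericity, yet $(\psi_1-\psi_2)\restriction\beta$ is finite for every $\beta<\lambda$, so their difference is supported on a cofinal set of order type $\omega$.) In particular $\bb{T}\times\bb{T}$ is not $\lambda$-cc and $\bb{P}*\dot{\bb{T}}^2$ has no dense $\lambda$-directed closed subset. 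One can also see the failure concretely: in $V[G][H]$, writing $\psi=\bigcup H$, the conditions $t_\alpha$ obtained from $\psi\restriction(\alpha+1)$ by flipping the single value at $\alpha$ form a $\lambda$-sized antichain, so $\bb{T}$ loses its $\lambda$-cc after a single application of itself. This is exactly the disanalogy with $\square(\lambda,2)$-threading in \cite{hayut_lh}: width $2$ lets two mutually generic threads coexist, whereas two trivializations of one coherent family must agree mod finite, which cannot be maintained through limits of countable cofinality.

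The paper avoids preservation of square sequences entirely: conclusion (4) is deduced from conclusion (3) inside $V[G]$, with no further forcing. By \cite[Theorem 2.3]{hayut_lh}, if $\square(\lambda,\kappa)$ holds then there are stationary sets $\bar{S}_0=\{S_\eta \mid \eta<\kappa\}$ such that every $\beta<\lambda$ of uncountable cofinality fails to reflect some $S_\eta$; padding this to a $\lambda$-indexed family by setting $S_\eta=\lambda$ for $\kappa\leq\eta<\lambda$ yields $\bar{S}$ with $T_{\bar{S}}\setminus\kappa=\emptyset$, contradicting (3). You should replace the final step of your argument with this purely combinatorial implication.
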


\begin{proof}
	As in the proof of Theorem~\ref{square_separation_thm}, we may assume that the weak compactness 
	of $\lambda$ is indestructible under $\lambda$-directed closed forcings of size $\lambda$. 
	Let $\bb{P} = \bb{P}(1, \lambda, \bb{Z}_2)$ 
	be the forcing poset described above for adding a height-$\lambda$ nontrivial coherent family of $\mathbb{Z}_2$-valued functions
	$\Phi = \cup\, G$, where $G$ is a $\bb{P}$-generic ultrafilter over $V$. We claim that $V[G]$ is the forcing extension desired. By the arguments of the previous subsection,
	$\lambda$ will be inaccessible and $\Phi$ will be nontrivial coherent in $V[G]$. 
	It follows then from Corollary~\ref{DA} and Lemma~\ref{gddg} that $\chm^1(\lambda, \mathcal{A}_d) \neq 0$ for 
	every abelian group $A$ in $V[G]$.

	We next argue item (3) in $V[G]$. To this end, fix in $V[G]$ a collection 
	$\bar{S} = \{S_\eta \mid \eta < \lambda\}$ of stationary subsets of $\lambda$. Also in $V[G]$, let 
	$\bb{T} = \bb{T}(\Phi)$ be the forcing to trivialize $\Phi$, and let $H$ be $\bb{T}$-generic over $V[G]$.
	By Lemma~\ref{dense_closed_lemma} applied in $V$, the forcing $\bb{P} * \dot{\bb{T}}$ has a dense 
	$\lambda$-directed closed subset of size $\lambda^{<\lambda} = \lambda$. Hence by our indestructibility 
	assumptions, $\lambda$ is weakly compact in $V[G*H]$. Moreover, by Clause (2) of Lemma~\ref{dense_closed_lemma}, 
	$\bb{T}$ has the $\lambda$-cc in $V[G]$ and therefore preserves all stationary subsets of $\lambda$. 
	It follows that each $S_\eta$ $(\eta < \lambda)$ remains stationary in $V[G*H]$. A standard application of 
	$\Pi^1_1$-indescribability yields that, in $V[G*H]$, $T_{\bar{S}}$ is stationary in $\lambda$. To conclude, observe that
	$(V_\lambda)^{V[G]}=(V_\lambda)^{V[G*H]}$, hence $T_{\bar{S}}$ defined in $V[G]$ is the same as 
	$T_{\bar{S}}$ defined in $V[G*H]$ and so, since stationarity is downwards absolute, $T_{\bar{S}}$ is 
	stationary in $V[G]$.

	We lastly argue item (4). Suppose instead for contradiction that $\kappa < \lambda$ and 
	$\square(\lambda, \kappa)$ holds in $V[G]$. By \cite[Theorem 2.3]{hayut_lh}, it follows that there is 
	a collection $\bar{S}_0 = \{S_\eta \mid \eta < \kappa\}$ such that, for every $\beta < \lambda$ 
	of uncountable cofinality, there is $\eta < \kappa$ such that $S_\eta$ does not reflect at $\beta$. 
	Extend $\bar{S}_0$ to $\bar{S} = \{S_\eta \mid \eta < \lambda\}$ by letting $S_\eta = \lambda$ 
	for all $\eta > \kappa$. It follows from our choice of $\bar{S}_0$ that $T_{\bar{S}} \setminus \kappa 
	= \emptyset$, contradicting (3).
\end{proof}
In particular, comparing the case $\kappa=1$ of the above theorem with Theorem \ref{todorcevicsquare}, the existence of a $\square(\lambda)$ sequence is a strictly stronger assumption than the existence of a height-$\lambda$ nontrivial coherent family of functions.
\begin{remark}
	Using similar techniques, we could obtain models similar to those obtained in Theorems 
	\ref{square_separation_thm} and \ref{stationaryreflectiontheorem} in which, in the final model, 
	$\lambda$ is either the successor of a regular cardinal (e.g., $\lambda = \aleph_2$) or, 
	if in our initial model $\lambda$ is the successor of a singular limit of supercompact 
	cardinals, $\lambda$ is the successor of a singular cardinal (e.g., $\lambda = \aleph_{\omega + 1}$). 
	We refer interested readers to \cite{hayut_lh} for details regarding how to modify the 
	arguments of Theorems \ref{square_separation_thm} and \ref{stationaryreflectiontheorem} 
	to obtain such models.
\end{remark}

\section{Conclusion and questions}\hfill

We close with the following summary diagram. Boldfaced are $\mathsf{ZFC}$ phenomena: the vanishing and non-vanishing cohomology groups, respectively, of Theorem \ref{goblot}, Theorem \ref{nonntrivncohomegan}, and Example \ref{zerogroup}. Lightfaced are sensitivities to assumptions supplementary to the $\mathsf{ZFC}$ axioms: \begin{enumerate}
\item The vanishing or non-vanishing, in local or global senses, of $\chm^1(\kappa,\mathcal{A}_d)$ for regular cardinals $\kappa>\omega_1$ that are not weakly compact, by Theorem \ref{stronglycmpct}, Remark \ref{local}, Theorem \ref{pidtheorem}, and Theorem \ref{todorcevicsquare}. 
\item The consistent global non-vanishing of $\chm^n(\kappa,\mathcal{A}_d)$ for all $n\geq 2$ and regular cardinals $\kappa>\omega_n$ that are not weakly compact, by Corollary \ref{L}.
\end{enumerate}
How the pattern of known possibilities extends along the diagram's ellipses should at this point be plain. Note moreover that, by Lemma \ref{iff}, the columns of the diagram may be read as describing the cohomological behavior of the \emph{classes} of ordinals $\text{Cof}(\omega)$, $\text{Cof}(\omega_1)$, etcetera, so that the diagram in its ideal extension in fact charts the known cohomological possibilites for any ordinal $\xi$.
\begin{center}
\vspace{.5 cm}
\begin{tabular}{l | *{5}{c}r}

$\,\vdots\!$ & $\vdots$ & $\vdots$ & $\vdots$ & $\vdots$ & \reflectbox{$\ddots$} \\
$\chm^3$ & \textbf{0} & \textbf{0} & \textbf{0} & \textbf{nonzero} & \dots  \\
$\chm^2$         & \textbf{0} & \textbf{0} & \textbf{nonzero} & consistently nonzero & \dots  \\
$\chm^1$          & \textbf{0} & \textbf{nonzero} & \emph{independent} & \emph{independent} & \dots  \\
$\chm^0$     & \textbf{nonzero} & \textbf{nonzero} & \textbf{nonzero} & \textbf{nonzero} & \dots  \\
\hline
& $\omega$ & $\omega_1$ & $\omega_2$ & $\omega_3$ & \dots \\
\end{tabular}
\end{center}
\vspace{.5 cm}
Pictured, plainly, are phenomena of dimension; an evocation of $n$-spheres in the above diagram is strong. Conspicuous as well in the above is the case $n=2$ and $\lambda=\omega_3$  of the most immediate open question:

\begin{question}
	Suppose that $n > 1$ and that $\lambda>\omega_n$ is a regular cardinal that is not weakly compact. Is it consistent (relative to large cardinal assumptions) 
	that $\chm^n(\lambda, \mathcal{A}_d) = 0$ for every 
	abelian group $A$? More globally, is it consistent that $\chm^n(\lambda, \mathcal{A}_d) = 0$ for every 
	abelian group $A$ and every regular cardinal $\lambda > \omega_n$?
\end{question}

In perhaps the simplest scenario, the argument of Theorem \ref{stronglycmpct} would adapt wholesale, and the L\'{e}vy collapse of a strongly compact $\kappa$ to $\omega_3$, for example, would secure the vanishing of $\chm^2(\lambda,\mathcal{A}_d)=0$ for all abelian groups $A$ and cardinals $\lambda\geq \kappa$. Observe that this scenario would follow immediately from an affirmative answer to any of several variants of the following:

\begin{question}
	Is there a generalization of Lemma~\ref{one_branch_lemma} to higher-dimensional nontrivial coherent 
	families? More concretely, fix $n > 1$ and an ordinal $\delta$ of cofinality at least $\omega_n$, and let $\Phi$ be a height-$\delta$ non-$n$-trivial $n$-coherent 
	family and let $\bb{P}$ be a forcing poset such that $\Vdash_{\bb{P} \times \bb{P}}``\cf(\delta) \geq \omega_n"$. 
	Must $\Phi$ remain non-$n$-trivial after forcing with $\bb{P}$?
\end{question}
Like all of our material above, these questions underscore a broad and stark disparity between what we as set-theorists understand of what might be called ``two-dimensional'' or ``first cohomological'' incompactness principles, and what we can say of incompactness principles of higher order. Classical examples of the former are the existence of square or nontrivial coherent sequences, or of Aronsjazn trees; as noted, each of these principles essentially extrapolates the $\mathsf{ZFC}$ combinatorics of $\omega_1$. It appears likely, in conclusion, that what we do succeed in understanding of higher-dimensional incompactness principles will not be unconnected to what we succeed in understanding of the ordinals $\omega_n$ $(n>1)$ themselves.\\

{\bf Acknowledgements.} Portions of this material appeared in the first author's Ph.D. thesis; he would like to thank the members of his committee --- Justin Moore, Slawomir Solecki, and Jim West --- as well as Stevo Todorcevic, for all their support and instruction. He would like to thank Assaf Rinot as well, to whom he owes his awareness of the relative strengths of the principles $\square(\omega_2)$ and $\chm^1(\omega_2)\neq 0$, and of the relevance of \cite{kunen_saturated_ideals} to these considerations. 

\bibliographystyle{plain}
\bibliography{CohomologyOfOrdinalsI}

\begin{thebibliography}{10}

\bibitem{bagaria_magidor}
Joan Bagaria and Menachem Magidor.
\newblock Group radicals and strongly compact cardinals.
\newblock {\em Trans. Amer. Math. Soc.}, 366(4):1857--1877, 2014.

\bibitem{CohomologyII}
Jeffrey Bergfalk.
\newblock Cohomology of ordinals {I}{I}: {Z}{F}{C} results.
\newblock In preparation.

\bibitem{BergfalkThesis}
Jeffrey Bergfalk.
\newblock {\em Dimensions of ordinals: set theory, homology theory, and the
  first omega alephs}.
\newblock ProQuest LLC, Ann Arbor, MI, 2018.
\newblock Thesis (Ph.D.) -- Cornell University.

\bibitem{blass}
Andreas Blass.
\newblock Cohomology detects failures of the axiom of choice.
\newblock {\em Trans. Amer. Math. Soc.}, 279(1):257--269, 1983.

\bibitem{bredonsheaves}
Glen~E. Bredon.
\newblock {\em Sheaf theory}, volume 170 of {\em Graduate Texts in
  Mathematics}.
\newblock Springer-Verlag, New York, second edition, 1997.

\bibitem{compactness}
James Cummings.
\newblock Compactness and incompactness phenomena in set theory.
\newblock In {\em Logic {C}olloquium '01}, volume~20 of {\em Lect. Notes Log.},
  pages 139--150. Assoc. Symbol. Logic, Urbana, IL, 2005.

\bibitem{cummings_handbook}
James Cummings.
\newblock Iterated forcing and elementary embeddings.
\newblock In {\em Handbook of set theory. {V}ols. 1, 2, 3}, pages 775--883.
  Springer, Dordrecht, 2010.

\bibitem{Dowker}
C.~H. Dowker.
\newblock Homology groups of relations.
\newblock {\em Ann. of Math. (2)}, 56:84--95, 1952.

\bibitem{goblot}
R\'emi Goblot.
\newblock Sur les d\'eriv\'es de certaines limites projectives. {A}pplications
  aux modules.
\newblock {\em Bull. Sci. Math. (2)}, 94:251--255, 1970.

\bibitem{Godement}
Roger Godement.
\newblock {\em Topologie alg\'{e}brique et th\'{e}orie des faisceaux}.
\newblock Hermann, Paris, 1973.
\newblock Troisi\`eme \'{e}dition revue et corrig\'{e}e, Publications de
  l'Institut de Math\'{e}matique de l'Universit\'{e} de Strasbourg, XIII,
  Actualit\'{e}s Scientifiques et Industrielles, No. 1252.

\bibitem{Tohoku}
Alexander Grothendieck.
\newblock Sur quelques points d'alg\`ebre homologique.
\newblock {\em T\^{o}hoku Math. J. (2)}, 9:119--221, 1957.

\bibitem{Hartshorne}
Robin Hartshorne.
\newblock {\em Algebraic geometry}.
\newblock Springer-Verlag, New York-Heidelberg, 1977.
\newblock Graduate Texts in Mathematics, No. 52.

\bibitem{hatcher}
Allen Hatcher.
\newblock {\em Algebraic topology}.
\newblock Cambridge University Press, Cambridge, 2002.

\bibitem{hayut_lh}
Yair Hayut and Chris Lambie-Hanson.
\newblock Simultaneous stationary reflection and square sequences.
\newblock {\em J. Math. Log.}, 17(2):1750010--27, 2017.

\bibitem{husainov}
A.~A. Husainov.
\newblock Homological dimension theory of small categories.
\newblock {\em J. Math. Sci. (New York)}, 110(1):2273--2321, 2002.
\newblock Algebra, 17.

\bibitem{jensencu}
C.~U. Jensen.
\newblock {\em Les foncteurs d\'{e}riv\'{e}s de {$\varprojlim$} et leurs
  applications en th\'{e}orie des modules}.
\newblock Lecture Notes in Mathematics, Vol. 254. Springer-Verlag, Berlin-New
  York, 1972.

\bibitem{FineStructure}
R.~Bj\"{o}rn Jensen.
\newblock The fine structure of the constructible hierarchy.
\newblock {\em Ann. Math. Logic}, 4:229--308; erratum, ibid. 4 (1972), 443,
  1972.
\newblock With a section by Jack Silver.

\bibitem{kanamori}
Akihiro Kanamori.
\newblock {\em The higher infinite}.
\newblock Springer Monographs in Mathematics. Springer-Verlag, Berlin, second
  edition, 2009.
\newblock Large cardinals in set theory from their beginnings, Paperback
  reprint of the 2003 edition.

\bibitem{kunen_saturated_ideals}
Kenneth Kunen.
\newblock Saturated ideals.
\newblock {\em J. Symbolic Logic}, 43(1):65--76, 1978.

\bibitem{kunen}
Kenneth Kunen.
\newblock {\em Set theory}, volume 102 of {\em Studies in Logic and the
  Foundations of Mathematics}.
\newblock North-Holland Publishing Co., Amsterdam, 1983.
\newblock An introduction to independence proofs, Reprint of the 1980 original.

\bibitem{strongshape}
Sibe Marde\v{s}i\'{c}.
\newblock {\em Strong shape and homology}.
\newblock Springer Monographs in Mathematics. Springer-Verlag, Berlin, 2000.

\bibitem{rings}
Barry Mitchell.
\newblock Rings with several objects.
\newblock {\em Advances in Math.}, 8:1--161, 1972.

\bibitem{mitchell}
William Mitchell.
\newblock Aronszajn trees and the independence of the transfer property.
\newblock {\em Ann. Math. Logic}, 5:21--46, 1972/73.

\bibitem{MooreLSpace}
Justin~Tatch Moore.
\newblock A solution to the {$L$} space problem.
\newblock {\em J. Amer. Math. Soc.}, 19(3):717--736, 2006.

\bibitem{osofsky2}
Barbara~L. Osofsky.
\newblock {\em Homological dimensions of modules}.
\newblock American Mathematical Society, Providence, R. I., 1973.
\newblock Conference Board of the Mathematical Sciences Regional Conference
  Series in Mathematics, No. 12.

\bibitem{osofsky1}
Barbara~L. Osofsky.
\newblock The subscript of {$\aleph \sb{n}$}, projective dimension, and the
  vanishing of {$\varprojlim\sp{(n)}$}.
\newblock {\em Bull. Amer. Math. Soc.}, 80:8--26, 1974.

\bibitem{pierce}
R.~S. Pierce.
\newblock The cohomology of {B}oolean rings.
\newblock {\em Advances in Math.}, 13:323--381, 1974.

\bibitem{spanier}
Edwin~H. Spanier.
\newblock {\em Algebraic topology}.
\newblock Springer-Verlag, New York, [1995?].
\newblock Corrected reprint of the 1966 original.

\bibitem{talaycoi}
Daniel~E. Talayco.
\newblock Applications of cohomology to set theory. {I}. {H}ausdorff gaps.
\newblock {\em Ann. Pure Appl. Logic}, 71(1):69--106, 1995.

\bibitem{talaycoii}
Daniel~E. Talayco.
\newblock Applications of cohomology to set theory. {II}. {T}odor\v cevi\'c
  trees.
\newblock {\em Ann. Pure Appl. Logic}, 77(3):279--299, 1996.

\bibitem{pairs}
Stevo Todorcevic.
\newblock Partitioning pairs of countable ordinals.
\newblock {\em Acta Math.}, 159(3-4):261--294, 1987.

\bibitem{pid}
Stevo Todorcevic.
\newblock A dichotomy for {P}-ideals of countable sets.
\newblock {\em Fund. Math.}, 166(3):251--267, 2000.

\bibitem{lipschitz}
Stevo Todorcevic.
\newblock Lipschitz maps on trees.
\newblock {\em J. Inst. Math. Jussieu}, 6(3):527--556, 2007.

\bibitem{walks}
Stevo Todorcevic.
\newblock {\em Walks on ordinals and their characteristics}, volume 263 of {\em
  Progress in Mathematics}.
\newblock Birkh\"{a}user Verlag, Basel, 2007.

\bibitem{Coherentsequences}
Stevo Todorcevic.
\newblock Coherent sequences.
\newblock In {\em Handbook of set theory. {V}ols. 1, 2, 3}, pages 215--296.
  Springer, Dordrecht, 2010.

\bibitem{StevoPIDtip}
Stevo Todorcevic.
\newblock Personal communication, Spring 2016.

\bibitem{weibel}
Charles~A. Weibel.
\newblock {\em An introduction to homological algebra}, volume~38 of {\em
  Cambridge Studies in Advanced Mathematics}.
\newblock Cambridge University Press, Cambridge, 1994.

\end{thebibliography}
\end{document}